\crefname{equation}{}{}
\numberwithin{equation}{section} % Gleichungen nach Section nummerieren
\theoremstyle{plain}
\newtheorem{proposition}{Proposition}[section]
\newaliascnt{lemma}{proposition} 
\newtheorem{lemma}[lemma]{Lemma}%[section]
\Crefname{lemma}{Lemma}{Lemmas}
\newaliascnt{theorem}{proposition} 
\newtheorem{theorem}[theorem]{Theorem}%[section]
\newaliascnt{corollary}{proposition} 
\newtheorem{corollary}[corollary]{Corollary}%[section]
\newaliascnt{hypothesis}{proposition}
\theoremstyle{definition}
\newaliascnt{definition}{proposition} 
\newtheorem{definition}[definition]{Definition}%[section
\Crefname{definition}{Definition}{Definitions}
\newaliascnt{problem}{proposition} 
\newaliascnt{example}{proposition} 
\newtheorem{example}[example]{Example}%[section]
\theoremstyle{remark}
\newaliascnt{remark}{proposition} 
\newtheorem{remark}[remark]{Remark}%[section]
\newaliascnt{notation}{proposition} 
\newtheorem{notation}[notation]{Notation}%[section]
\def\equationautorefname~#1\null{%
	%Equation
	(#1)\null
}
\newcommand{\sgn}{sign}
\newcommand{\R}{\mathbb{R}}
\newcommand{\C}{\mathbb{C}}
\newcommand{\Z}{\mathbb{Z}}
\newcommand{\N}{\mathbb{N}}
\newcommand{\E}{\mathcal{E}}
\newcommand{\curv}{\vec{\kappa}}
\newcommand{\scurv}{\kappa}
\renewcommand{\S}{\mathbb{S}}
\newcommand{\X}{\mathbb{X}}
\newcommand{\A}{\mathcal{A}}
\renewcommand{\H}{\mathbb{H}}
\newcommand{\Haus}{\mathcal{H}}
\newcommand{\W}{\mathcal{W}}
\newcommand{\tr}{\mathrm{tr}}
\newcommand{\defeq}{\vcentcolon=}
\newcommand{\Ll}{\mathcal{L}}
\newcommand{\U}{\mathcal{U}}
\newcommand{\cn}{\mathrm{cn}}
\newcommand{\sn}{\mathrm{sn}}
\newcommand{\dn}{\mathrm{dn}}
\newcommand{\am}{\mathrm{am}}
\newcommand*{\dd}{\mathop{}\!\mathrm{d}}
\newcommand{\Real}{\mathrm{Re}}
\newcommand{\Imag}{\mathrm{Im}}
\def\nicefrac#1#2{%
    \raise.5ex\hbox{$#1$}%
    \kern-.15em/\kern-.05em%
    \lower.25ex\hbox{$#2$}}
\begin{document}

\title[Willmore flow with Dirichlet boundary conditions]{On the convergence of the Willmore flow with Dirichlet boundary conditions}
\author{Manuel Schlierf}
\address{Institute of Applied Analysis, Helmholtzstra\ss e 18, 89081 Ulm, Germany.}
\email{manuel.schlierf@uni-ulm.de}

\keywords{Willmore flow, elastic flow, hyperbolic plane, open hyperbolic elastica, \L ojasiewicz inequality, Li-Yau inequality\\ \indent \emph{Declarations of interest:} none.}
\subjclass[2020]{53E40 (primary), 35B40, 35K41 (secondary)}

\begin{abstract}
    Very little is yet known regarding the Willmore flow of surfaces with Dirichlet boundary conditions. We consider surfaces with a rotational symmetry as initial data and prove a global existence and convergence result for solutions of the Willmore flow with initial data below an explicit, sharp energy threshold. Strikingly, this threshold depends on the prescribed boundary conditions --- it can even be made to be $0$. We show sharpness for some critical boundary data by constructing surfaces above this energy threshold so that the corresponding Willmore flow develops a singularity. Finally, a Li-Yau inequality for open curves in $\H^2$ is proved.
\end{abstract}

\maketitle

%-------------------------------------------------------------------------------------------------------
%-------------------------------------------------------------------------------------------------------
%-----Content-------------------------------------------------------------------------------------------
%-------------------------------------------------------------------------------------------------------
%-------------------------------------------------------------------------------------------------------
%-------------------------------------------------------------------------------------------------------
%-----Introduction--------------------------------------------------------------------------------------
%-------------------------------------------------------------------------------------------------------

\section{Introduction and main results}

For an immersion $f\colon\Sigma\to\R^3$ of an oriented surface $\Sigma$ with boundary, one defines its \emph{Willmore energy} by
\begin{equation}
    \W(f) \defeq \int_{\Sigma} H^2\dd\mu.
\end{equation}
% Here $H=\langle \vec{H},\vec{N}\rangle$ denotes the scalar mean curvature, i.e., the arithmetic mean of its principal curvatures, $\mu$ the surface measure on $\Sigma$ induced by the pull-back metric $g_f=f^*\langle\cdot,\cdot\rangle$ of the Euclidean metric in $\R^3$, and $\vec{N}$ is a smooth choice of a unit normal to $f$ depending on the orientation of $\Sigma$. 
Here the following notation is used. The surface $\Sigma$ is equipped with the pull-back metric $g_f=f^*\langle\cdot,\cdot\rangle$ and $\mu$ denotes the induced measure on $\Sigma$. In local coordinates $(x^{(1)},x^{(2)})$ in the orientation of $\Sigma$, define $\nu=\frac{1}{|\partial_1 f\times \partial_2f|}\partial_1f\times \partial_2 f$. Then $\nu$ can be extended to a global normal field along $f$.  Moreover, $A_{ij}=\langle \partial^2_{ij}f,\nu\rangle$ denotes the second fundamental form of $f$ in local coordinates and $H=\frac12\tr A=\frac12g^{ij}A_{ij}$ is the mean curvature. Then $A^0_{ij}=A_{ij}-Hg_{ij}$ denotes the trace-free second fundamental form, and we write
\begin{equation}
    \W_0(f)\defeq \int_{\Sigma} |A^0|^2\dd\mu.
\end{equation}
Initially, the study of the Willmore energy focused mainly on closed surfaces, i.e., the case $\partial\Sigma=\emptyset$. Among compact, closed immersed surfaces in $\R^3$, round spheres are the only minimizers of $\W$ and have energy $4\pi$. Remarkably, for such surfaces, the Willmore functional is invariant with respect to smooth conformal transformations.

Regarding the minimization of the Willmore energy among surfaces with boundary, there are several results on the existence of minimizers and critical points where different boundary conditions are prescribed, cf. \cite{schaetzle2010,dallacquadeckelnickgrunau2008,dallacquafroehlichgrunauschieweck2011,dallacquadeckelnickwheeler2013,bergnerdallacquafroehlich2010,deckelnickgrunau2007,deckelnickgrunau2009,novagapozzetta2020,eichmanngrunau2019}. In the recent contribution \cite{metsch2022}, Metsch covers the area-preserving Willmore flow with free boundary conditions. More precisely, he considers surfaces which are suitably close to a half-sphere with a small radius and which are sliding on the boundary of a domain while meeting it orthogonally. In \cite{ruppspener2020}, the one-dimensional analog of the Willmore flow, the so-called elastic flow with Dirichlet (or equivalently \emph{clamped}) boundary conditions is studied under a length preserving constraint. The authors prove global existence and convergence to constrained critical points for initial data only in the energy space $W^{2,2}$.

Early work by Kuwert-Schätzle in \cite{kuwertschaetzle2001} sparked great interest in the analytical properties, especially regarding long-time behavior, of the Willmore flow. Together with their results in \cite{kuwertschaetzle2004} and \cite{kuwertschaetzle2002}, employing a suitable blow-up analysis, Kuwert-Schätzle managed to prove the following. If the initial datum $f_0$ is a smooth, spherical immersion (that is, $\Sigma=\S^2$) whose Willmore energy is below $8\pi$, then the Willmore flow exists globally and, after suitable reparametrization, converges to a round sphere, i.e. to the global minimizer of the Willmore functional. Further, as it turns out, the energy constraint of $8\pi$ is in general sharp. Indeed, in \cite{blatt2009}, Blatt constructs spherical immersions whose Willmore energy approaches $8\pi$ arbitrarily close from above, but the Willmore flow starting in each of those immersions develops a singularity. Various numerical studies on the singular behavior of the Willmore flow are carried out in \cite{mayersimonett2002,barrettgarckenuernberg2021}.

In the case of immersed tori with a rotational symmetry in $\R^3$, similar findings are made in \cite{dallacquamullerschatzlespener2020}. As long as the Willmore energy of the initial torus does not exceed $8\pi$, Dall'Acqua-Müller-Schätzle-Spener prove global existence and convergence to the Clifford torus, the global Willmore minimizer among immersed tori. Further, they construct singular examples showing the optimality of the threshold.

The aim of this work is to give a contribution to the Willmore flow with \emph{Dirichlet} boundary conditions. This being to the author's knowledge the first work on this problem, due to its complexity, we allow ourselves to focus on a special class of initial data. This enables to go beyond perturbations of critical points of the Willmore functional.

More precisely, our analysis focuses on initial data topologically given as a cylinder $\Sigma=[0,1]\times\S^1$ with a rotational symmetry satisfying prescribed Dirichlet boundary conditions. Denote by $\eta$ the outward pointing unit conormal on the boundary $\partial\Sigma$, cf. \cite[Proposition 2.17]{lee2018}. In particular, $\frac{\partial f}{\partial\eta}\colon\partial\Sigma\to\R^3$ is tangential to the surface and orthogonal to its boundary. We show global existence and convergence of solutions to the quasi-linear fourth-order Willmore boundary- and initial value problem below an explicit energy threshold that depends only on the data of the problem. Denoting by $\H^2$ the hyperbolic half-plane and by $\Delta_g$ the Laplace-Beltrami operator on $(\Sigma,g)$, our main result is the following.
\begin{theorem}\label{thm:th1}
    Let $u_0\colon[0,1]\to\H^2$, $u_0=(u_0^{(1)},u_0^{(2)})^t$, denote an immersed profile curve and  $f_{u_0}$ the associated surface of revolution on $\Sigma=[0,1]\times\S^1$, cf. \eqref{eq:surf-rev}. If $\W_0(f_{u_0})\leq 8\pi$ or, equivalently,
    \begin{equation}\label{eq:will-thresh}
        \W(f_{u_0}) \leq 4\pi - 2\pi \frac{\partial_xu_0^{(2)}}{|\partial_xu_0|}\Big|_0^1
    \end{equation}
    and if $f\colon [0,T)\times\Sigma\to\R^3$ is a maximal solution to the Willmore flow with Dirichlet boundary conditions, i.e., to
    \begin{equation}\label{eq:will-flow}
        \begin{cases}
            \partial_t f = - (\Delta_{g_f} {H} + |A^0|^2H)\nu &\text{in $[0,T)\times\Sigma$}\\
            f(0,\cdot) = f_{u_0}&\text{in $\Sigma$}\\
            f(t,y) = f_{u_0}(y) &\text{for $0\leq t < T$ and $y\in\partial\Sigma$}\\
            \frac{\partial f}{\partial\eta}(t,y) = \frac{\partial f_{u_0}}{\partial\eta}(y) &\text{for $0\leq t < T$ and $y\in\partial\Sigma$},
        \end{cases}
    \end{equation}
    then $T=\infty$. Moreover, $f(t,\cdot)$ is a surface of revolution for all $t\geq 0$ and converges up to reparametrization smoothly to a Willmore surface of revolution $f_{\infty}$ for $t\to\infty$. Lastly, $f(t,\cdot)$ for all $t\geq 0$ and $f_{\infty}$ are necessarily embedded.
\end{theorem}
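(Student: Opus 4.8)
\emph{Reduction to a curve flow.} The plan is to pass to the profile curve and recognize the evolution as a hyperbolic elastic flow. Since \eqref{eq:will-flow} is a quasilinear fourth-order parabolic boundary value problem, short-time existence and uniqueness of a smooth solution hold by standard linearization together with a fixed-point argument; uniqueness then forces $f(t,\cdot)$ to inherit the rotational symmetry of $f_{u_0}$, so $f$ is encoded by a curve $u(t,\cdot)\colon[0,1]\to\H^2$. Using the conformal identity for surfaces of revolution, $\W(f_u)=\tfrac{\pi}{2}\int_u\scurv^2\ds$ up to an explicit boundary contribution, where $\scurv$ and $\ds$ are the geodesic curvature and arclength of $u$ in $\H^2$, the right-hand side of \eqref{eq:will-thresh} becomes a threshold for the hyperbolic elastic energy $\E(u)=\tfrac12\int_u\scurv^2\ds$. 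Correspondingly, \eqref{eq:will-flow} becomes, after a tangential reparametrization, the $L^2$-gradient (elastic) flow of $\E$ on curves in $\H^2$, and the two Dirichlet conditions $f=f_{u_0}$ and $\nu_f=\nu_{f_{u_0}}$ on $\partial\Sigma$ translate into clamped boundary conditions for $u$: fixed endpoints and fixed unit tangents. This yields a unique maximal smooth solution $u$ on some $[0,T_{\max})$, $T_{\max}\in(0,\infty]$.

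\emph{The threshold controls the geometry.} Along the gradient flow, $t\mapsto\W(f(t,\cdot))$ is nonincreasing, so \eqref{eq:will-thresh} persists. The crucial point is that this bound keeps $u(t,\cdot)$ from degenerating, and this is exactly where the Li-Yau inequality for open curves in $\H^2$ enters: I would prove that any admissible immersed curve $u\colon[0,1]\to\overline{\H^2}$ whose image meets the ideal boundary $\partial\H^2$ --- equivalently, for which $f_u$ loses embeddedness by reaching the rotation axis --- satisfies, in a suitable limiting sense, $\W(f_u)\geq 4\pi-2\pi\,\tfrac{\partial_xu^{(2)}}{|\partial_xu|}\big|_0^1$. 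The mechanism is to close the open curve by a controlled arc and invoke the Li-Yau inequality for closed surfaces of revolution (so that a point of multiplicity $\geq2$ forces energy $\geq8\pi$), the endpoint tangent angles of $u$ contributing, via Gauss-Bonnet, exactly the $\sin$-type boundary term in \eqref{eq:will-thresh}. Hence, under \eqref{eq:will-thresh} the curves $u(t,\cdot)$ remain embedded and contained in a fixed compact subset of $\H^2$ away from $\partial\H^2$; equality in \eqref{eq:will-thresh} is dealt with by the strict energy decay for non-stationary initial data, so that the strict bound holds for $t>0$.

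\emph{Global existence and convergence.} With $u(t,\cdot)$ confined to a compact part of $\H^2$ and $\|\scurv\|_{L^2}$ bounded, I would run a parabolic bootstrap in the spirit of Dziuk-Kuwert-Sch\"atzle and Kuwert-Sch\"atzle: first exclude escape of $u(t,\cdot)$ to the metric or ideal boundary of the non-compact manifold $\H^2$ using the fixed endpoints and the energy bound, then use Gagliardo-Nirenberg-type interpolation along the flow to bound all higher-order curvature quantities uniformly on $[0,T_{\max})$. This forces $T_{\max}=\infty$ and yields smooth subconvergence of $u(t_k,\cdot)$, up to reparametrization, along any sequence $t_k\to\infty$. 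To upgrade this to convergence of the full flow to a single Willmore surface of revolution $f_\infty$, I would establish a \L{}ojasiewicz--Simon gradient inequality for $\E$ with clamped boundary conditions near a critical point and combine it with the energy decay by the standard Simon-type scheme. Finally, passing the embeddedness and the separation from $\partial\H^2$ obtained above to the limit shows $f(t,\cdot)$ and $f_\infty$ are embedded.

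\emph{Main obstacle.} I expect the principal difficulties to be, first, establishing the sharp Li-Yau inequality for open curves in $\H^2$ with precisely the boundary term of \eqref{eq:will-thresh}, together with the verification that this is the quantity governing every possible degeneration of $f_u$; and, second, the a priori estimates ruling out that the profile curve runs off to the boundary of the non-compact target $\H^2$ --- both absent in the closed-surface and torus-of-revolution settings, where the ambient space is compact or the profile curve is closed.
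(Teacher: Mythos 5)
Your skeleton (reduce to the profile curve, translate \eqref{eq:will-thresh} via \eqref{eq:el-vs-will} into the elastic-energy threshold $\E(u_0)\leq 8$, prove compactness of the evolving curves, run interpolation estimates for global existence, upgrade subconvergence by a \L ojasiewicz--Simon inequality, and get embeddedness from a Li--Yau-type statement) matches the paper's architecture, but there is a genuine error at the very first step that propagates through the whole argument. The Willmore flow of the surfaces of revolution is \emph{not}, up to tangential reparametrization, the $L^2(ds_u)$-gradient flow of $\E$: by \eqref{eq:l2-grads} the profile curves satisfy \eqref{eq:wf-eq}, i.e. $\partial_tu=-\tfrac{1}{4(u^{(2)})^4}\nabla_{L^2(ds_u)}\E(u)$. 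The factor $-\tfrac{1}{4(u^{(2)})^4}$ acts on the \emph{normal} velocity and cannot be removed by reparametrization; it degenerates both as $u^{(2)}\to 0$ (it blows up) and as $u^{(2)}\to\infty$ (loss of parabolicity). Handling this weight is exactly the new analytic content: the paper first establishes, from $\E<8$ and the fixed endpoints alone, a uniform lower bound on $u^{(2)}$ (Lemma \ref{lem:bbl}), uniform boundedness (Lemma \ref{lem:el}) and a uniform length bound (Corollary \ref{cor:el-ctrl-len}), so that the coefficient $a(u)$ in \eqref{eq:lte-1} is pinched between negative constants; only then do the energy estimates of Theorem \ref{thm:lte} close, and even there one needs the weighted quantities $|a|^{\eta}(\nablatp)^mu$ with the specific choice $\eta=-\tfrac12$ so that the top-order terms involving $\partial_sa$ cancel. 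By treating the flow as the plain elastic flow you bypass all of this, which is the heart of the problem (and the reason the known elastic-flow results do not directly apply).

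The mechanism you propose for non-degeneration is also not adequate as stated. Touching the rotation axis is not equivalent to loss of embeddedness (a profile curve reaching $\{y=0\}$ may well bound an embedded surface), and "closing the curve by a controlled arc plus Li--Yau for closed surfaces plus Gauss--Bonnet" does not yield the quantitative statements actually needed, namely a uniform positive lower bound on $u^{(2)}$, an upper bound on $|u|$, and a length bound along the flow; the paper obtains these by the direct energy-quantization argument of Lemma \ref{lem:bbl} (each approach to the axis costs elastic energy $4+o(1)$ on either side of the minimum, hence $\geq 8$ in total), a M\"obius inversion for boundedness, and an area-versus-Willmore-energy comparison for the length. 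Moreover, the equality case $\E(u_0)=8$ must be handled as in Corollary \ref{cor:ex-sc-below-8} (strict decay for non-critical data), which you do note. Finally, embeddedness of $f(t,\cdot)$ and $f_\infty$ rests on the Li--Yau inequality for \emph{open} curves in $\H^2$ with threshold $8$ (Theorem \ref{thm:li-yau}); this is a nontrivial new result proved in the paper by minimizing among non-embedded competitors with clamped data and classifying the resulting free elastica via closing conditions, and in your proposal it is only postulated. So the proposal needs, at minimum, (i) the correct weighted evolution equation and estimates robust to the weight, (ii) a genuine proof of the a-priori confinement under $\E<8$, and (iii) a proof, not an assumption, of the open-curve Li--Yau inequality.
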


We actually show global existence and convergence if one replaces \eqref{eq:will-thresh} by the assumption that the hyperbolic lengths of the profile curves of $f(t,\cdot)$ remain uniformly bounded in time.

In comparison to the aforementioned results in \cite{kuwertschaetzle2004,dallacquamullerschatzlespener2020}, the energy threshold \eqref{eq:will-thresh} now depends on the Dirichlet boundary data. Strikingly, as a new feature appearing in this problem, depending on the data, it can attain any value in $[0,8\pi]$. 

Knowing that, thanks to the well-posedness of the problem, solutions to \eqref{eq:will-flow} retain their rotational symmetry along the flow, we rewrite \eqref{eq:will-flow} as an evolution equation for the profile curves. The hyperbolic plane $\H^2$ naturally comes up when studying Willmore surfaces of revolution. Consider the hyperbolic elastic energy
\begin{equation}\label{eq:def-el-en}
    \E(u)\defeq  \int_I |\curv|_g^2\dd s
\end{equation}
where $\curv$ is the hyperbolic curvature. Going back to observations in \cite{bryantgriffiths1986}, $\E(u)$, $\W_0(f_u)$ and the Willmore energy $\W(f_u)$ are related by
\begin{equation}\label{eq:el-vs-will}
    \frac{1}{\pi}\W_0(f_u)=\E(u) \quad\text{and}\quad \frac{2}{\pi}\W(f_u) = \E(u) - 4 \frac{\partial_xu^{(2)}}{|\partial_xu|}\Big|_{\partial I}.
\end{equation}
For the second equation, also see \cite[Equation~(2.11)]{eichmanngrunau2019}. More details, especially some explicit computations are included in \Cref{app:geom-form-surf-rev}.

However, the corresponding gradient flows for $\E$ and $\W$ differ. Even though the evolution of the profile curves of surfaces evolving by Willmore flow resembles the equation for the hyperbolic elastic flow, cf. \Cref{sec:prelim}, they differ by a non-constant factor that becomes singular as the curves approach the rotation-axis, cf.  \cite{dallacquaspener2018}. Therefore, it is not clear why results on the elastic flow should aid in understanding the Willmore flow --- even in the rotationally symmetric setting. The methods based on interpolation inequalities used in the study of many non-linear evolution PDEs can however be adapted to the problem. The energy threshold provides control of the singular factor. To the author's knowledge, this is the first contribution where \eqref{eq:will-flow} is solved with an argument of this kind. In particular, the method differs from the one employed in \cite{kuwertschaetzle2004,dallacquamullerschatzlespener2020}.

Note that \Cref{thm:th1} also yields existence of Willmore surfaces of revolution of cylindrical type with prescribed Dirichlet boundary data. Fixing $p_y\in\H^2$ and $\tau_y\in\S^1\subseteq T_{p_y}\H^2$ for $y=0,1$, if \eqref{eq:will-thresh} is satisfied for at least one immersion $u_0$ with $u_0(y)=p_y$ and $\partial_su_0(y)=\tau_y$ for $y=0,1$, then the theorem yields the existence of a Willmore surface of revolution with the induced Dirichlet boundary data. There are already corresponding results in the literature. Consider the case where $\tau_y^{(2)}=0$ and $\tau_y^{(1)}>0$ for $y=0,1$. Then our threshold \eqref{eq:will-thresh} becomes $4\pi$ and, correspondingly, in \cite[Theorem 1.1]{eichmanngrunau2019}, the authors prove the existence of minimizers of $\W$ in the class of surfaces of revolution with Dirichlet boundary data induced by $p_y$ and $\tau_y$ as long as the corresponding infimum is below $4\pi$. Uniqueness of such critical points however is a delicate question and counterexamples in the class of cylindrical Willmore surfaces of revolution are given in \cite{eichmann2016}. Further existence results for minimizers of the Willmore functional with Dirichlet boundary data in the class of cylindrical surfaces of revolution are obtained in \cite{dallacquadeckelnickgrunau2008,dallacquafroehlichgrunauschieweck2011}.

Furthermore, we construct initial data for which the Willmore flow \eqref{eq:will-flow} becomes singular. More precisely, we give a sequence of initial data satisfying fixed critical Dirichlet boundary conditions showing that \eqref{eq:will-thresh} is sharp for this choice.

\begin{theorem}\label{prop:sing-ex}
    There are immersed profile curves $u_{0}^n\colon[0,1]\to\H^2$, $n\in\N$, with boundary data $u_0^n(0)=u_{0}^n(1)=(0,1)^t$ and $\partial_su_0^n(0)=-(0,1)^t$, $\partial_su_0^n(1)=(0,1)^t$ satisfying
    \begin{equation}\label{eq:intro-sing-ex-1}
        \W(f_{u_0^n}) \searrow 0 \quad\text{for $n\to\infty$}
    \end{equation} 
    such that, for any $n\in\N$, the maximal solution $f^n$ of \eqref{eq:will-flow} with $u_0=u_0^n$ develops a singularity. More precisely, the hyperbolic lengths of the profile curves of $f^n(t,\cdot)$ are unbounded in time $t$.
\end{theorem}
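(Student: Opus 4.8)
\emph{Construction of the initial data.} The plan is to build the $u_0^n$ from degenerating catenoids. By \eqref{eq:el-vs-will}, for the boundary data at hand one has $\frac{2}{\pi}\W(f_{u_0^n})=\E(u_0^n)-8$, so it suffices to produce immersed profile curves realizing the prescribed position and tangent at both endpoints with $\E(u_0^n)\to 8$ from above. The model is a catenary: for $b\in(0,1)$ the arc $\gamma_b(t)=(t,b\cosh(t/b))$, $t\in[-b\operatorname{arccosh}(1/b),\,b\operatorname{arccosh}(1/b)]$, has hyperbolic curvature $2/\cosh(t/b)$ and hyperbolic length element $b^{-1}\,dt$, whence
\begin{equation*}
    \E(\gamma_b)=\int \frac{4}{\cosh^2(t/b)}\,\frac{dt}{b}=8\tanh(\operatorname{arccosh}(1/b))=8\sqrt{1-b^2}\ \nearrow\ 8 \quad(b\searrow 0),
\end{equation*}
and its surface of revolution is a catenoid piece with neck radius $b$. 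Both endpoints of $\gamma_b$ lie at height $1$, the (Euclidean) unit tangents there equal $(b,\mp\sqrt{1-b^2})$, and the two endpoints are a horizontal distance $2b\operatorname{arccosh}(1/b)\to 0$ apart. For $b=b_n\searrow 0$ I would translate $\gamma_{b_n}$ so that one endpoint sits at $(0,1)^t$, and then modify it in two shrinking neighborhoods of its endpoints so that both endpoints coincide at $(0,1)^t$ and the unit tangents there become exactly $\mp(0,1)^t$; since the tangents are already nearly vertical and the endpoints nearly coincide, these corrections cost only $o(1)$ in elastic energy. The resulting immersed curves $u_0^n$ then satisfy the boundary conditions in the statement and $\E(u_0^n)\to 8$, i.e.\ $\W(f_{u_0^n})\searrow 0$, which is \eqref{eq:intro-sing-ex-1}; note that $\W(f_{u_0^n})>0$ since for these data \eqref{eq:will-thresh} holds only as a strict inequality.

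\emph{Reduction of the blow-up statement.} Fix $n$ and suppose, for contradiction, that the maximal solution $f^n$ of \eqref{eq:will-flow} does not develop a singularity, i.e.\ that the hyperbolic lengths of its profile curves stay bounded in time. By the strengthened version of Theorem \ref{thm:th1} recorded after its statement, $f^n$ is then global and converges, smoothly and up to reparametrization, to a Willmore surface of revolution $f_\infty^n$; since $\W$ is non-increasing along \eqref{eq:will-flow}, $\W(f_\infty^n)\le\W(f_{u_0^n})$. Because the boundary conditions in \eqref{eq:will-flow} are preserved by the flow, the profile curve $u_\infty^n$ of $f_\infty^n$ is a smooth immersed curve in $\H^2$ with $u_\infty^n(0)=u_\infty^n(1)=(0,1)^t$, $\partial_s u_\infty^n(0)=-(0,1)^t$, $\partial_s u_\infty^n(1)=(0,1)^t$; and since the boundary term in \eqref{eq:el-vs-will} is fixed by these data, $u_\infty^n$ is a critical point of $\E$, i.e.\ an open hyperbolic elastica, with $\E(u_\infty^n)=\frac{2}{\pi}\W(f_\infty^n)+8\le\frac{2}{\pi}\W(f_{u_0^n})+8$.

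\emph{The obstruction, and where the difficulty lies.} It remains to show that no open hyperbolic elastica can satisfy the boundary data above --- equivalently, that there is $\delta_0>0$ such that every Willmore surface of revolution with these data has $\W\ge\delta_0$ --- and then to fix the $b_n$ so small that $\W(f_{u_0^n})<\delta_0$ for every $n$; monotonicity of $\W$ along the flow then contradicts the existence of the limit $f_\infty^n$, so $f^n$ must develop a singularity and its profile curves have unbounded hyperbolic length. To establish this I would use the classification of open elasticae in $\H^2$ (explicit in terms of Jacobi elliptic functions) together with the Li-Yau inequality for open curves in $\H^2$: an elastica issuing from $(0,1)^t$ straight toward the rotation axis cannot return to $(0,1)^t$ with the opposite tangent unless its elastic energy stays bounded away from $8$, the value $8$ being approached only in the degenerate limit of pinching catenoids, which are not elasticae. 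Carrying this out rigorously --- ruling out that the relevant elliptic-function elasticae close up at $(0,1)^t$, equivalently producing the strictly positive lower bound $\delta_0$ --- is the crux of the proof and the step I expect to be the most delicate; the catenoid construction above is tailored precisely to certify that this threshold is attained in the limit, hence that \eqref{eq:will-thresh} is sharp for these boundary data.
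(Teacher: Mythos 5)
Your reduction (assume bounded lengths, invoke the strengthened Theorem \ref{thm:th1}, pass to a limiting critical point $u_\infty^n$ of $\E$ with the same clamped data and $\E(u_\infty^n)\le\E(u_0^n)\searrow 8$) is exactly the paper's strategy, and your catenary-based construction of $u_0^n$ is a plausible alternative to the paper's figure-eight-plus-circular-arc construction (though the claim that closing up the catenary costs only $o(1)$ energy is asserted, not proved; the paper instead attaches explicit arcs of circular elastica whose hyperbolic length tends to $0$). The genuine gap is the step you yourself flag as "the crux": you never rule out the limiting elastica. This is not a routine verification. Note first that the Li-Yau inequality cannot help here: $\E(u_\infty^n)$ is only known to be $\le\E(u_0^n)$, which lies strictly \emph{above} $8$, so Theorem \ref{thm:li-yau} says nothing about the (non-embedded) limit, and the soft bound "$\E>8$ for self-intersecting free elastica" (Proposition \ref{cor:clos-cond}, Lemmata \ref{lem:exl-orb}, \ref{lem:en-orb}) is likewise not a contradiction. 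What you would need is either a uniform gap $\delta_0>0$ for \emph{all} free elasticae with these boundary data, or outright non-existence of such elasticae; neither statement is proved in the paper, and neither follows directly from the classification results you cite.

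What the paper actually does is strictly weaker and relies on a mechanism your plan discards: the initial data are built so that $u_0^n(-x)=-\overline{u_0^n(x)}$, this reflection symmetry is preserved by the flow (Lemma \ref{lem:sym-pres}, Remark \ref{rem:sym-un-speed}) and hence inherited by $u_\infty^n$. The symmetry forces $u_\infty^n(0)\in i\R_{>0}$ with horizontal tangent and even curvature, so after excluding circular and wave-like elastica via the closing condition, either $\scurv(0)^2=\scurv_0^2(1-p^2)$, which Lemma \ref{lem:exl-heart} rules out by a uniform energy gap $8+\varepsilon$, or $u_\infty^n$ is canonically parametrized \emph{in its original frame}; only then does the explicit formula \eqref{eq:par-el-1} apply at the self-intersection $\omega i$, where the prescribed vertical tangent forces $\scurv^2=0$, impossible for orbit-like elastica since $\dn\neq 0$. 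Without arranging symmetric initial data (your "translate so one endpoint sits at $(0,1)^t$" breaks the symmetry, though a symmetric variant of your catenary gluing could restore it) and without invoking its preservation along the flow, you cannot pin down the parametrization point of the limiting orbit-like elastica, and the general non-existence/uniform-gap statement you propose to prove "from the classification" would require a new, substantially harder argument than anything in the paper. As written, the proposal therefore has a missing core step, and the route sketched for it is not known to close.
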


Since the hyperbolic length is not uniformly bounded in time, one either obtaines that the maximal existence time is finite or that reparametrizations of the solution do not converge for $t\to\infty$.

In the construction of the singular examples of \Cref{prop:sing-ex}, the setting of open curves introduces several difficulties compared to the case of closed curves. For instance, the singular examples constructed in \cite{blatt2009,muellerspener2020,dallacquamullerschatzlespener2020} are based upon the concept of winding numbers. The winding number is constant along the flow and this fact allows to rule out all possible limits. However, for open curves, arguments based on winding numbers cannot be applied in this way. One original contribution of this work is a new argument without the aid of such topological invariants. Furthermore, understanding all potential limits is significantly more involved now that there is no closing condition.

The last statement in the formulation of \Cref{thm:th1} is based on a result which may be of interest by itself. Namely, we prove the following Li-Yau inequality (cf. \cite{liyau1982}) for open curves in $\H^2$. 

\begin{theorem}
    Any immersion $u\in W^{2,2}([0,1],\H^2)$ with $\E(u)\leq 8$ is an embedding.
\end{theorem}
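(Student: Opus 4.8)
The plan is to mimic the classical Li–Yau argument, which for a closed curve relates the multiplicity of a point on the curve to a bending-energy integral. The key idea in the closed case is: if a curve passes through a point $p$ with multiplicity $m$, then projecting the curve from $p$ (via an inversion centered at $p$) yields a closed curve whose total curvature is at least $2\pi m$, and by the Gauss–Bonnet/Fenchel-type bound together with the conformal transformation law for the bending energy $\int\kappa^2\,ds$, one gets $\E \geq 4\pi m$ (in the suitable normalization, here $\E \geq 4m$ after the factor conventions). So if $\E(u) \leq 8$, then $m \leq 1$ at every point, i.e.\ $u$ is injective.

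The natural route here is to reduce the hyperbolic statement to the known closed-curve (or known line) statements via a conformal change of the ambient metric. First I would recall that $\H^2$ is conformal to (a half-plane in) $\R^2$ with the Euclidean metric, so a curve $u$ in $\H^2$ corresponds to a curve $\tilde u$ in $\R^2$, and the hyperbolic bending energy $\E(u) = \int_I |\vec\kappa|_g^2\,ds$ transforms — up to boundary terms — into a Euclidean bending energy plus controlled terms; relation \eqref{eq:el-vs-will} between $\E$ and the Willmore energy of the surface of revolution is exactly the kind of conformal bookkeeping needed. Alternatively, and perhaps more cleanly, one can use the fact that the surface of revolution $f_u$ over an immersion $u$ is an actual surface in $\R^3$, and invoke the Li–Yau inequality for the Willmore energy of surfaces: a point of multiplicity $m$ on $f_u$ forces $\W(f_u) \geq 4\pi m$. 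Via \eqref{eq:el-vs-will}, $\W(f_u) = \frac{\pi}{2}(\E(u) - 4\,\partial_x u^{(2)}/|\partial_x u|\,\big|_{\partial I})$; if one can control the boundary term, the hypothesis $\E(u) \leq 8$ translates into a bound on $\W(f_u)$ below $8\pi$, hence multiplicity $\leq 1$ on the surface, which in turn should force injectivity of the profile curve $u$ itself (self-intersections of $u$ away from the axis give self-intersections of $f_u$; intersections with the axis need a separate, direct, elementary argument since they correspond to smooth points of $f_u$).

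The cleanest self-contained approach, though, would be the direct one: suppose $u(x_1) = u(x_2) = p \in \H^2$ with $x_1 \neq x_2$. Apply a hyperbolic isometry moving $p$ to a convenient point, then use an inversion/stereographic-type conformal map of $\R^2$ centered at the Euclidean location of $p$ so that the two subarcs $u|_{[0,x_1]}\cup u|_{[x_1,x_2]}$ and $u|_{[x_1,x_2]}\cup u|_{[x_2,1]}$ become, after the conformal change, curves that are "almost closed" and whose total curvature can be estimated from below by $2\pi$ on each loop (using that a closed loop in the plane has total curvature $\geq 2\pi$, and that a non-closed arc between two points contributes at least enough when combined). Then the conformal invariance (up to boundary data) of $\int |\vec\kappa|^2\,ds$ under Möbius transformations — which holds in the Willmore/elastic setting because $\int(\kappa^2+1)\,ds$ or an appropriate combination is conformally tied to the Willmore energy — forces $\E(u) > 8$, a contradiction. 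I would need the $W^{2,2}$ regularity precisely here: it guarantees that $\vec\kappa \in L^2$, that $u$ is $C^1$, and that the total-curvature and Gauss–Bonnet-type integral identities make sense and can be approximated by smooth curves.

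The main obstacle I anticipate is the handling of the \emph{boundary terms} and the fact that $u$ is an \emph{open} curve: the classical Li–Yau trick wants closed loops, and here one only has arcs with endpoints on $\partial I$, so one must either (i) carefully choose the inversion center to be an interior multiple point so that each of the two "halves" between consecutive preimages is a genuine closed loop after inversion — which works for an interior self-intersection but needs care if a double point sits at the image of a boundary point — or (ii) absorb the open endpoints into the estimate, controlling the extra boundary contribution $-4\,\partial_x u^{(2)}/|\partial_x u|\,\big|_{\partial I}$, which lies in $[-4,4]$ pointwise per endpoint. Reconciling the sharp constant $8$ with these boundary contributions — in particular checking that the worst case still yields $\E(u) > 8$ strictly under a self-intersection — is the delicate point, and I expect it is where the hyperbolic (rather than Euclidean) geometry is genuinely used and where the value $8$ (rather than, say, $4$) comes from.
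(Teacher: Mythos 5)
There is a genuine gap: your plan never actually produces the lower bound it needs, because the two tools you lean on do not apply in the stated setting. For the surface route, the Li--Yau inequality $\W\geq 4\pi m$ is a statement about \emph{closed} surfaces; the surface of revolution $f_u$ here has boundary, and for surfaces with boundary Simon's monotonicity formula only gives $\Haus^0(f^{-1}(p))\pi \leq \tfrac14\W(f) + \tfrac12\int_{\partial\Sigma}\frac{\nu_f\cdot(f-p)}{|f-p|^2}\,d\sigma_f$, i.e.\ a multiplicity bound only when $\W(f)$ lies strictly below $8\pi$ \emph{minus} a boundary integral. The hypothesis $\E(u)\leq 8$ gives, via \eqref{eq:el-vs-will}, only $\W(f_u)\leq \tfrac{\pi}{2}(8+8)=8\pi$ (the boundary term $\partial_xu^{(2)}/|\partial_xu|\big|_{\partial I}$ can be as bad as $-2$), and it gives no control whatsoever on the extra boundary integral in the monotonicity formula; so this route does not close, and the paper explicitly points this out as the obstruction to the ``apply surface Li--Yau to $f_u$'' shortcut. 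For the direct inversion route, the one-dimensional bending energy $\int|\curv|^2\,ds$ is \emph{not} Möbius/conformally invariant (only invariance under isometries of $\H^2$ is available), and a total-curvature bound of $2\pi$ per loop cannot be converted into an energy bound by Cauchy--Schwarz without an a priori bound on the hyperbolic length, which you do not have. The sharp threshold $8$ is attained in the limit by halves of $\lambda$-figure-eights, so no soft Fenchel-type argument can reach it: one genuinely needs the fine structure of hyperbolic elastica. Finally, the borderline case $\E(u)=8$ requires its own argument (the paper handles it by running the clamped elastic flow and using the strict-inequality case), which your sketch does not address.

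For comparison, the paper's proof is variational rather than conformal: it minimizes $\E$ over the class of \emph{non-embedded} $W^{2,2}$ curves with the clamped data of $u$, uses the a priori bounds valid under $\E<8$ (Lemmas \ref{lem:bbl}, \ref{lem:el}, Corollary \ref{cor:el-ctrl-len}) to extract a non-embedded minimizer, shows that the arc between two parameters of a self-intersection is a segment of a free elastica, and then uses the explicit parametrization and closing conditions for free hyperbolic elastica (Proposition \ref{cor:clos-cond}, Lemmas \ref{lem:exl-orb} and \ref{lem:en-orb}) to show any such non-embedded segment has energy strictly above $8$ --- this classification step is exactly what replaces the unavailable conformal-invariance argument and is where the constant $8$ comes from.
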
 

We briefly comment on different naming conventions for the boundary conditions in \eqref{eq:will-flow} which we either refer to as \emph{Dirichlet} or \emph{clamped} boundary conditions. For fourth-order problems, both conventions are used in the literature to describe boundary conditions of this type, cf. \cite[(1.9) on p.7 and (2.20) on p.35]{gazzolagrunausweers2010}. For instance, while in \cite{pozzetta2021,palmer2000}, the term \emph{clamped} is used for the boundary data in the stationary problem associated to \eqref{eq:will-flow}, in \cite{dallacquadeckelnickgrunau2008,dallacquafroehlichgrunauschieweck2011,eichmanngrunau2019,schaetzle2010}, the conditions are referred to as \emph{Dirichlet}. 

This paper is structured as follows. In \Cref{sec:prelim}, the geometric background is developed. Firstly, the evolution equation for the profile curves is recalled. Afterwards, in \Cref{sec:priori-bounds}, the singular factor in the evolution equation of the profile curves is controlled by suitable a-priori bounds for the distance of a surface of revolution to the rotation axis. \Cref{sec:lte} is dedicated to proving global existence of \eqref{eq:will-flow} and subconvergence. Using a \L ojasiewicz-Simon gradient inequality for $\E$, the subconvergence is promoted to convergence in \Cref{sec:conv}. \Cref{sec:opti} is dedicated to the rigorous construction of singular examples for \eqref{eq:will-flow}. To this end, various properties for segments of hyperbolic elastica which may be interesting in their own right are proved. Finally, \Cref{sec:liyau} contains a proof of the Li-Yau inequality for open curves in $\H^2$. 

%-------------------------------------------------------------------------------------------------------
%-----Preliminaries and LTE/SC--------------------------------------------------------------------------
%-------------------------------------------------------------------------------------------------------

\section{Preliminaries}\label{sec:prelim}

Throughout this article, we consider the half-plane model for the hyperbolic space, i.e., the Riemannian manifold $(\H^2,g)$ with $\H^2\defeq \{(y^{(1)},y^{(2)})^t\in\R^2:y^{(2)}>0\}$ and $g_{(y^{(1)},y^{(2)})} = \frac{1}{(y^{(2)})^2} \langle \cdot,\cdot\rangle.$ where $\langle\cdot,\cdot\rangle$ and $|\cdot|$ denote the Euclidean scalar product and norm. Given a smooth curve $u\colon I\to\H^2$ on a compact interval $I\subseteq\R$, we define $\dd s\defeq \dd s_u \defeq |\partial_xu|_g \dd x$ and $\partial_s\,\cdot \defeq \partial_{s_u}\cdot \defeq \frac{\partial_x\,\cdot}{|\partial_xu|_g}$ so that $\Ll_{\H^2}(u)\defeq \int_I\dd s$. For a smooth vector field $X=(X^{(1)},X^{(2)})^t\colon I\to \R^2$ along $u$,
\begin{equation}\label{eq:cov-der-h2}
    \nabla_x X = \begin{pmatrix}
        \partial_xX^{(1)}-\frac{1}{u^{(2)}}(X^{(1)}\partial_xu^{(2)}+X^{(2)}\partial_xu^{(1)})\\
        \partial_xX^{(2)}+\frac{1}{u^{(2)}}(X^{(1)}\partial_xu^{(1)}-X^{(2)}\partial_xu^{(2)})
    \end{pmatrix}
\end{equation}
denotes its covariant derivative in the global coordinates $(y^{(1)},y^{(2)})$ making the identification $\partial_{y^{(1)}}=(1,0)^t$ and $\partial_{y^{(2)}}=(0,1)^t$. Moreover, we denote $\nabla_s \, \cdot =\frac{1}{|\partial_xu|_g}\nabla_x\,\cdot\,$. The curvature $\curv$ of $u$ is
\begin{equation}\label{eq:def-curv}
    \curv \defeq \nabla_s \partial_s u
\end{equation}
and its elastic energy $
    \E(u)\defeq  \int_I |\curv|_g^2\dd s$. 
Consider a smooth family of immersions $u\colon(-\varepsilon,\varepsilon)\times I\to\H^2$ and set $V=\partial_tu|_{t=0}$. If $V=0$ and $\nabla_s^{\bot}V=0$ on $\partial I$, As in \cite[Remark 2.5]{dallacquaspener2017}, one finds
\begin{equation}\label{eq:1-var-elen}
    \frac{\dd}{\dd t} \E(u(t,\cdot))\Big|_{t=0} = \int_I \bigr\langle 2(\nabla_s^{\bot})^2\curv + |\curv|_g^2\curv - 2\curv, V \bigl\rangle_g\dd s_u
\end{equation}
where $\nabla_s^{\perp} \cdot \defeq \nabla_s\cdot \, -\, \langle \nabla_s\cdot\,,\partial_su\rangle\partial_su$. Therefore, for an immersion $u\colon I\to\H^2$, we define
\begin{equation}\label{eq:nabla-L2}
    \nabla\E (u)\defeq\nabla_{L^2(\dd s_u)}\E (u)\defeq 2(\nabla_s^{\bot})^2\curv + |\curv|_g^2\curv - 2 \curv.
\end{equation}
\begin{remark}\label{rem:el-en-isom}
    Clearly, $\E$ is invariant with respect to isometries of $\H^2$. Identifying $\H^2$ with the subset $\{x+iy:y>0\}\subseteq \C$
    \begin{equation}
        \H^2\to\H^2,\quad z=x+iy \mapsto \frac{az+b}{cz+d}
    \end{equation}
    is an isometry of $\H^2$ for any $a,b,c,d\in\R$ with $ad-bc>0$.
\end{remark}
Furthermore, $f_u$ denotes the surface of revolution associated to $u$ where 
\begin{equation}\label{eq:surf-rev}
    f_u\colon I\times \S^1 \to \R^3, \quad (x,\theta) \mapsto (u^{(1)}(x),u^{(2)}(x)\cos\theta,u^{(2)}(x)\sin\theta)^t
\end{equation}
where we identify $\S^1=\R/(2\pi\Z)$. For the induced area measure $\mu_{f_u}$ on $I\times\S^1$, 
\begin{equation}\label{eq:sur-rev-area}
    \mu_{f_u}(I\times\S^1) = 2\pi \int_I u^{(2)} |\partial_xu|\dd x.
\end{equation}
%Bryant and Griffiths found in \cite{bryantgriffiths1986} that the hyperbolic elastic energy $\E(u)$ of $u$ and the Willmore energy $\W(f_u)$ of its surface of revolution are related as follows:
%\begin{equation}\label{eq:el-vs-will}
  %  \frac{2}{\pi}\W(f_u) = \E(u) - 4 \frac{\partial_xu^{(2)}}{|\partial_xu|}\Big|_{\partial I}.
%\end{equation}
Furthermore, if $g\colon[a,b]\to (0,\infty)$ is a smooth map and $u(x)=(x,g(x))^t$ for $x\in[a,b]$, one finds as in \cite[Equation (2.14)]{eichmanngrunau2019} that
\begin{equation}\label{eq:el-en-graphs}
    \E(u) =  \int_a^b \frac{(\partial_x^2g(x))^2g(x)}{(1+(\partial_xg(x))^2)^{\frac{5}{2}}}+\frac{1}{g(x)\sqrt{1+(\partial_xg(x))^2}}\dd x + 2 \frac{\partial_x g}{\sqrt{1+(\partial_xg)^2}}\Big|_{a}^b.
\end{equation}

The equality in \eqref{eq:el-vs-will} yields that, for fixed clamped boundary data, if $f_u$ is a Willmore surface, then $u$ is a critical point of $\E$. For the converse:

\begin{lemma}[{\cite[Thm. 4.1]{dallacquaspener2018}}]
    For a smooth immersion $u\colon I\to\H^2$, one has 
    \begin{equation}\label{eq:l2-grads}
        (\nabla_s^{\bot})^2\curv + \frac{1}{2}|\curv|^2_g\curv - \curv = 2(u^{(2)})^4 \left(\Delta_{g_{f_u}} H+|A^0|^2H\right) \vec{n}
    \end{equation}
    where $\vec{n}=\frac{1}{|\partial_xu|}(\partial_xu^{(2)},-\partial_xu^{(1)})^t$. 
\end{lemma}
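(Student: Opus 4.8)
The plan is to derive \eqref{eq:l2-grads} by comparing first variations, using the Bryant--Griffiths identity \eqref{eq:el-vs-will} rather than computing $\Delta_{g_{f_u}}H$ head-on. Write $L\defeq(\nablasp)^2\curv+\tfrac12|\curv|_g^2\curv-\curv$ for the left-hand side of \eqref{eq:l2-grads}. Both $L$ and the right-hand side are vector fields along $u$ that are $g$-orthogonal to $\partial_s u$: the right-hand side is a multiple of $\vec{n}$, while on the left $\curv=\nablas\partial_s u$ is the curvature vector and $(\nablasp)^2\curv$ is normal by construction. Hence it suffices to show that $L$ and $2(u^{(2)})^4(\Delta_{g_{f_u}}H+2H(H^2-K))\vec{n}$ have the same integral against $ds_u$ after pairing with an arbitrary normal field $V=\psi\,\vec{n}$, where $\psi\in C^\infty(I)$ vanishes together with $\partial_x\psi$ on $\partial I$; since such $\psi$ are dense in $L^2(I)$ and everything is smooth, this yields the asserted pointwise identity.

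Fix such a $\psi$ and consider the variation $U(t,\cdot)\defeq u+t\psi\,\vec{n}$, which for small $|t|$ takes values in $\H^2$, is an immersion, is clamped (since $\psi=\partial_x\psi=0$ on $\partial I$ makes $U$ and $\partial_x U$ constant in $t$ at $\partial I$), and has $\partial_t U(0,\cdot)=V=\psi\vec{n}$. By \eqref{eq:1-var-elen}--\eqref{eq:nabla-L2}, $\frac{d}{dt}\E(U(t,\cdot))\big|_{t=0}=\int_I\langle 2L,\psi\vec{n}\rangle_g\,ds_u$. Differentiating \eqref{eq:el-vs-will} in $t$ and using that the boundary term $\partial_x U^{(2)}/|\partial_x U|$ is constant in $t$ at $\partial I$ (as $\partial_x U$ is), we get $\frac{2}{\pi}\frac{d}{dt}\W(f_{U(t,\cdot)})\big|_{t=0}=\frac{d}{dt}\E(U(t,\cdot))\big|_{t=0}$.

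It remains to identify $\frac{d}{dt}\W(f_{U(t,\cdot)})\big|_{t=0}$. Writing $\vec{n}=(n^{(1)},n^{(2)})$, inspection of \eqref{eq:surf-rev} shows that replacing $u$ by $u+t\psi\vec{n}=(u^{(1)}+t\psi n^{(1)},u^{(2)}+t\psi n^{(2)})$ moves $f_u(x,\theta)$ by $t\psi(x)(n^{(1)},n^{(2)}\cos\theta,n^{(2)}\sin\theta)=t\psi(x)\vec{N}(x,\theta)$, where $\vec{N}=(n^{(1)},n^{(2)}\cos\theta,n^{(2)}\sin\theta)$ is a unit normal of $f_u$ ($|\vec{N}|=|\vec{n}|_{\euc}=1$); we compute $H$ with respect to this $\vec{N}$, noting that flipping $\vec{N}$ flips both $H$ and the $\vec{n}$ on the right of \eqref{eq:l2-grads}, so the identity is orientation-independent anyway. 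Thus the induced surface variation is purely normal with speed $\psi$, and the standard first variation formula for the Willmore energy of a surface with boundary --- all boundary terms dropping since $\psi$ and its conormal derivative vanish on $\partial\Sigma$ --- gives $\frac{d}{dt}\W(f_{U(t,\cdot)})\big|_{t=0}=\int_\Sigma(\Delta_{g_{f_u}}H+2H(H^2-K))\psi\,d\mu_{f_u}$. Now combine: using $d\mu_{f_u}=u^{(2)}|\partial_x u|\,dx\,d\theta$ from \eqref{eq:sur-rev-area} and integrating over $\theta\in\S^1$ on the Willmore side, and writing $L=c\,\vec{n}$ with $|\vec{n}|_g^2=(u^{(2)})^{-2}$, $ds_u=|\partial_x u|(u^{(2)})^{-1}dx$ on the elastic side, the equality of first variations becomes
\[
4\int_I(\Delta_{g_{f_u}}H+2H(H^2-K))\,u^{(2)}|\partial_x u|\,\psi\,dx=\int_I\frac{2c\,|\partial_x u|}{(u^{(2)})^{3}}\,\psi\,dx .
\]
As $\psi$ is arbitrary this forces $c=2(u^{(2)})^4(\Delta_{g_{f_u}}H+2H(H^2-K))$, i.e. $L=2(u^{(2)})^4(\Delta_{g_{f_u}}H+2H(H^2-K))\vec{n}$, which is \eqref{eq:l2-grads}.

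The only real work lies in the bookkeeping for the two first-variation formulas on a domain with boundary: \eqref{eq:1-var-elen} is quoted from \cite[Rem.~2.5]{dallacquaspener2017}, and on the Willmore side one must confirm that ``$U$, $\partial_s U$ fixed on $\partial I$'' translates via \eqref{eq:surf-rev} exactly into ``normal speed and its conormal derivative vanish on $\partial\Sigma$'', so that the two clamping conditions match and the boundary contributions to $\delta\W$ genuinely vanish; this compatibility is the crux of the argument. A self-contained alternative that bypasses \eqref{eq:el-vs-will} is to compute $H$ and $K$ of $f_u$ explicitly in terms of the signed hyperbolic curvature of $u$ (via the conformal rescaling formula $\kappa_{\H^2}=u^{(2)}\kappa_{\euc}-n^{(2)}$, up to sign) and then evaluate $\Delta_{g_{f_u}}H$ directly --- more laborious but elementary, and a good check on the constant $2(u^{(2)})^4$.
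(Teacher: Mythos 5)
Your argument is correct, but it is not the paper's route: the paper gives no proof of this lemma at all --- it is quoted from \cite{dallacquaspener2018}, where it is obtained by computing $H$, $K$ and $\Delta_{g_{f_u}}H$ of the surface of revolution explicitly in terms of the hyperbolic curvature of the profile curve, essentially the ``self-contained alternative'' you sketch in your last lines. Your proof instead dualizes: both sides are sections of the rank-one normal bundle along $u$, so you test against clamped normal fields $\psi\vec{n}$, convert $\tfrac{d}{dt}\E$ via \eqref{eq:1-var-elen}, transfer to the Willmore side via the Bryant--Griffiths relation \eqref{eq:el-vs-will} (whose boundary term is indeed $t$-independent for your variation, since $\partial_xU$ is frozen at $\partial I$), and identify $\tfrac{d}{dt}\W$ through the first variation of $\W$ for normal variations whose field and first derivatives vanish on $\partial\Sigma$. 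The bookkeeping ($d\mu_{f_u}=u^{(2)}|\partial_xu|\,dx\,d\theta$, $|\vec{n}|_g=1/u^{(2)}$, $ds_u=|\partial_xu|/u^{(2)}\,dx$) does reproduce the factor $2(u^{(2)})^4$, and the density/continuity argument legitimately upgrades the weak identity to the pointwise one. What your route buys is brevity and an automatic check of the constant; what it costs is reliance on two external inputs of depth comparable to the lemma itself, namely the first variation formula for $\W$ on surfaces with boundary (with its boundary terms identified and seen to vanish for clamped data, and with $H$ taken with respect to the $\vec{N}$ induced by $\vec{n}$, exactly the convention fixed in the Corollary following the lemma, so the sign pairing is consistent with the paper's flow equation) and the identity \eqref{eq:el-vs-will}; the quoted proof is a direct, if laborious, computation that needs neither.
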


We obtain the following result motivating the studies of this article.

\begin{corollary}
    Consider $u\colon[0,T)\times I\to \H^2$ smooth solving
    \begin{equation}\label{eq:wf-eq}
        \partial_t u = -\frac{1}{2(u^{(2)})^4} \bigl( (\nabla_s^{\bot})^2\curv + \frac{1}{2}|\curv|_g^2\curv -\curv \bigr) = -\frac{1}{4(u^{(2)})^4} \nabla\E(u)
    \end{equation}
    on $[0,T)\times I$. Then $f_u$ solves the Willmore flow equation 
    \begin{equation}
        \partial_tf_u = - \left( \Delta_{g_{f_u}} H + |A^0|^2H \right) \nu
    \end{equation}
    on $[0,T)\times (I\times \S^1)$ where $\nu=\frac{1}{|\partial_xu|}(\partial_xu^{(2)}(x),-\partial_xu^{(1)}(x)\cos\theta,-\partial_xu^{(1)}(x)\sin\theta)^t$.
\end{corollary}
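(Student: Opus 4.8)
The plan is to read the assertion off directly from the identity \eqref{eq:l2-grads} of the cited Lemma, combined with a one-line pointwise computation of $\partial_t f_u$. First I would rewrite the profile-curve evolution \eqref{eq:wf-eq}: equating the two expressions in \eqref{eq:wf-eq} via \eqref{eq:nabla-L2} and then inserting \eqref{eq:l2-grads} gives
\begin{equation}
    \partial_t u = -\frac{1}{2(u^{(2)})^4}\Bigl( (\nablasp)^2\curv + \tfrac12|\curv|_g^2\curv - \curv \Bigr) = -\bigl(\Delta_{g_{f_u}} H + 2H(H^2-K)\bigr)\,\vec{n},
\end{equation}
where the factors $2(u^{(2)})^4$ cancel; this is legitimate because $u^{(2)}>0$ (as $u$ maps into $\H^2$), so every quantity involved stays smooth. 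Here $\vec{n} = \frac{1}{|\partial_xu|}(\partial_xu^{(2)},-\partial_xu^{(1)})^t$ and $H$, $K$ are the scalar mean and Gaussian curvatures of $f_u$, exactly as supplied by the Lemma.

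Next I would differentiate the defining formula \eqref{eq:surf-rev} in time. Since $\theta$ is a spatial variable, $\partial_t f_u(t,x,\theta) = (\partial_t u^{(1)}, \partial_t u^{(2)}\cos\theta, \partial_t u^{(2)}\sin\theta)^t$. Abbreviating $\Psi \defeq \Delta_{g_{f_u}} H + 2H(H^2-K)$ and reading off the components of $\partial_t u = -\Psi\,\vec{n}$, namely $\partial_t u^{(1)} = -\Psi\,\partial_xu^{(2)}/|\partial_xu|$ and $\partial_t u^{(2)} = \Psi\,\partial_xu^{(1)}/|\partial_xu|$, I would substitute to obtain
\begin{equation}
    \partial_t f_u = -\Psi\cdot\frac{1}{|\partial_xu|}\bigl(\partial_xu^{(2)},\,-\partial_xu^{(1)}\cos\theta,\,-\partial_xu^{(1)}\sin\theta\bigr)^t = -\Psi\,\vec{N},
\end{equation}
which is precisely the Willmore flow equation with the $\vec{N}$ stated in the Corollary.

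What then remains is the bookkeeping check that this $\vec{N}$ really is the unit normal to $f_u$ in $\R^3$: a direct computation from \eqref{eq:surf-rev} gives $\langle \vec{N},\partial_x f_u\rangle = \langle \vec{N},\partial_\theta f_u\rangle = 0$ and $|\vec{N}|=1$, and one checks it is the lift of the planar normal $\vec{n}$ used in the Lemma, so the sign conventions under which $H$, $K$ and the flow are defined are consistent. I expect this convention-matching — confirming that $H$ and $K$ in \eqref{eq:l2-grads} are genuinely the curvatures of the induced metric $g_{f_u}$ and that $\vec{N}$ carries the orientation fixed in the introduction — to be the only point needing any care; all the analytic content sits in \eqref{eq:l2-grads}. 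Note also that the computation above yields the full vector identity $\partial_t f_u = -\Psi\,\vec{N}$, not merely equality of normal speeds, so no tangential reparametrization correction is required.
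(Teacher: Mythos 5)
Your proposal is correct and is exactly the argument the paper leaves implicit: substitute the identity \eqref{eq:l2-grads} into \eqref{eq:wf-eq} so that the factors $2(u^{(2)})^4$ cancel, then differentiate \eqref{eq:surf-rev} in $t$ and observe that the resulting vector is $-\bigl(\Delta_{g_{f_u}}H+2H(H^2-K)\bigr)\vec{N}$ componentwise. The closing remarks — that $\vec{N}$ is the unit normal lifting $\vec{n}$ and that one obtains the full vector identity rather than only the normal speed — are the right points to check and are handled correctly.
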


\subsection{A-priori bounds for curves with elastic energy below 8}\label{sec:priori-bounds}

The following result slightly generalizes the argument of \cite[Lemma 4.2]{eichmanngrunau2019}.
\begin{lemma}\label{lem:bbl}
    Consider a sequence $(u_n)_{n\in\N}$ of smooth, immersed curves $[0,1]\to \H^2$ such that, for $\alpha>0$, $u_n^{(2)}(0)\geq\alpha$ and $u_n^{(2)}(1)\geq\alpha$ for each $n\in\N$. Furthermore, suppose that
    \begin{equation}\label{eq:bbl-0}
        \sup_{n\in\N} \E(u_n) < 8.
    \end{equation}
    Then there exists $c>0$ with $u_n^{(2)}\geq c$ for any $n\in\N$.
\end{lemma}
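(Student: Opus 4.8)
The plan is to argue by contradiction, exploiting the formula \eqref{eq:el-en-graphs} for the elastic energy of a graph together with the isometry invariance of $\E$ noted in Remark \ref{rem:el-en-isom}. Suppose no such $c>0$ exists. Then, after passing to a subsequence, there are points $x_n\in[0,1]$ with $u_n^{(2)}(x_n)\to 0$. Since $u_n^{(2)}(0),u_n^{(2)}(1)\geq\alpha$, the value $\alpha/2$ lies strictly between $u_n^{(2)}(x_n)$ and the boundary values for large $n$, so by the intermediate value theorem there is a subinterval of $[0,1]$ on which $u_n^{(2)}$ decreases from $\alpha/2$ down to some $\varepsilon_n\to 0$ and, on an adjacent subinterval, increases from $\varepsilon_n$ back up to $\alpha/2$. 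The idea is that each such ``descent'' or ``ascent'' of the profile curve from height $\alpha/2$ to a very small height must cost a definite amount of elastic energy, and in the limit $\varepsilon_n\to 0$ this cost approaches $4$; having both a descent and an ascent then forces $\E(u_n)\to$ something $\geq 8$, contradicting \eqref{eq:bbl-0}.

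To make the per-descent cost precise, I would localize to the relevant arc and reparametrize: on the arc where $u_n^{(2)}$ is monotone between heights $\varepsilon_n$ and $\alpha/2$, one can use $y=u^{(2)}$ as the independent variable, i.e.\ write the arc (up to a hyperbolic isometry, which leaves $\E$ invariant) as a graph $x=h_n(y)$ over $y\in[\varepsilon_n,\alpha/2]$. Feeding this into the graph formula and discarding the nonnegative term involving the second derivative, one gets a lower bound of the form
\begin{equation}
    \E(u_n;\text{arc}) \;\geq\; \int_{\varepsilon_n}^{\alpha/2} \frac{1}{y\sqrt{1+(h_n'(y))^2}}\,dy \;+\; \Big[\text{boundary term}\Big],
\end{equation}
and the boundary term from \eqref{eq:el-en-graphs}, which is $2\,\partial_x g/\sqrt{1+(\partial_x g)^2}$ evaluated at the endpoints, contributes (in the right parametrization) a quantity bounded below by something tending to $2$ as $\varepsilon_n\to 0$, since the unit tangent at the low end becomes nearly vertical. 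Combining the two descents/ascents and taking $n\to\infty$ yields $\liminf \E(u_n)\geq 8$, the desired contradiction. Here one must be slightly careful: the curve need not be globally a graph in $y=u^{(2)}$, but it suffices to extract, via the monotonicity forced by the intermediate value theorem, two disjoint arcs on each of which such a reparametrization is valid, and to estimate each separately; the energy on the rest of the curve is simply dropped, being nonnegative.

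The main obstacle, I expect, is controlling the boundary contribution and making the ``cost $\to 4$ per crossing'' rigorous without a clean monotonicity of $u_n^{(2)}$ — in particular, handling the case where $u_n^{(2)}$ oscillates many times near small heights, so that the relevant low-height excursion is not realized by a single monotone arc. The cleanest fix is to not insist on reaching the true minimum $u_n^{(2)}(x_n)$, but to fix a small threshold $\delta>0$ and count: if $u_n^{(2)}$ dips below $\delta$, then there exist at least one descent from $\alpha/2$ to $\delta$ and one ascent from $\delta$ back to $\alpha/2$ (by the boundary conditions), each of which is a monotone arc after further subdivision; estimate the energy of these two arcs as above to get a lower bound $\geq g(\delta)$ with $g(\delta)\to 8$ as $\delta\to 0$; and since $\E(u_n)<8-\eta$ for some fixed $\eta>0$ and all $n$ by \eqref{eq:bbl-0}, choosing $\delta$ small enough that $g(\delta)>8-\eta$ shows $u_n^{(2)}$ can never dip below that $\delta$, which is exactly the claim with $c=\delta$. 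This also conveniently sidesteps taking a limit of curves and instead gives a direct, quantitative bound, closely paralleling \cite[Lemma 4.2]{eichmanngrunau2019}.
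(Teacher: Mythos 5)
Your proposal follows the same broad contradiction strategy as the paper, but its central quantitative claim is wrong. A ``descent'' of the profile curve from height $\alpha/2$ to a small height $\delta$ does \emph{not} cost a definite amount of energy: a vertical segment is a geodesic of $\H^2$, so it descends from any height to any smaller positive height with $\E=0$. Correspondingly, the lower bound you propose, $\E(u_n;\mathrm{arc})\geq\int_{\varepsilon_n}^{\alpha/2}\frac{dy}{y\sqrt{1+(h_n'(y))^2}}+[\text{boundary}]$, cannot be correct — it would assign logarithmically divergent energy to a nearly vertical arc ($h_n'\approx 0$). The mistake lies in transplanting \eqref{eq:el-en-graphs} to graphs over the \emph{vertical} axis: no isometry of $\H^2$ exchanges the two coordinate directions (vertical lines are geodesics, horizontal ones are not), so Remark \ref{rem:el-en-isom} does not help here, and a direct computation for a curve $y\mapsto (h(y),y)$ gives the energy $\int \frac{y(h'')^2}{(1+(h')^2)^{5/2}}+\frac{(h')^2}{y\sqrt{1+(h')^2}}\,dy$ plus the boundary term $\frac{2}{\sqrt{1+(h')^2}}$ evaluated at the endpoints; the potential term vanishes precisely in the near-vertical regime where you need it to diverge. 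In addition, the claim that the boundary term contributes ``something tending to $2$ because the tangent at the low end is nearly vertical'' is unjustified: nothing forces near-verticality at the endpoint of your arc, and producing \emph{some} point with nearly vertical tangent is exactly the nontrivial step. Finally, the reduction to monotone arcs ``after further subdivision'' is not available in general, since $u_n^{(2)}$ may oscillate so that no single monotone piece spans the heights from $\alpha/2$ down to $\delta$ (though this particular point could be repaired).

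For comparison, the paper's mechanism is different and is the ingredient your argument is missing. First (Claim 1 there), if the tangent stayed uniformly non-vertical between the entry height $\alpha$ and the minimum, the curve would be a bounded-slope graph over the \emph{horizontal} axis, and then \eqref{eq:el-en-graphs} does produce the divergent term $\int\frac{dx}{g\sqrt{1+(g')^2}}\geq \frac{1}{1+C^2}\log(\alpha/\varepsilon_n)$; hence there must exist intermediate points $x_n^{\pm}$ where $\partial_xu_n^{(2)}/|\partial_xu_n|\to\pm1$ in absolute value. Second, the cost of $4$ per side is then extracted not from the descent itself but from the total variation of the vertical component of the unit tangent, converted into energy via \eqref{eq:el-vs-will} together with $\W\geq 0$: between $x_n^{-}$ (nearly vertical tangent) and either the interior minimum $x_n$ (horizontal tangent) or the entry point $a_n$ (where $\partial_xu_n^{(2)}\leq 0$), one gets $\E\geq 4+o(1)$, and likewise on the other side of $x_n$, giving $\E(u_n)\geq 8+o(1)$. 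Your proposal uses neither \eqref{eq:el-vs-will} nor the horizontal tangent at the minimum, and without these the threshold $8$ is not reachable along the route you describe.
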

\begin{proof}
    Suppose that the statement is false. That is, choosing $(x_n)_{n\in\N}\subseteq[0,1]$ with $u_n^{(2)}(x_n)=\min_{[0,1]}u_n^{(2)}$, we have $u_n^{(2)}(x_n)\to 0$ after passing on to a subsequence. Since $u_n^{(2)}(0)\geq \alpha$ and $u_n^{(2)}(1)\geq\alpha$, we may w.l.o.g. suppose that $u_n^{(2)}(x_n)<\alpha$ and therefore $x_n\in (0,1)$ with $\partial_xu_n^{(2)}(x_n)=0$ for any $n\in\N$.

    \textbf{Claim 1.} There exist $0\leq a_n\leq x_n^-\leq x_n\leq x_n^+\leq b_n\leq 1$ such that 
    \begin{equation}\label{eq:bbl-1}
        \partial_xu_n^{(2)}(a_n)\leq 0, u_n^{(2)}(a_n)=\alpha\quad\text{and}\quad \partial_xu_n^{(2)}(b_n)\geq 0,u_n^{(2)}(b_n)=\alpha,
    \end{equation}
    and 
    \begin{equation}\label{eq:bbl-1.1}
        \bigg| \frac{\partial_xu_n^{(2)}}{|\partial_xu_n|}(x_n^-) \bigg| , \bigg| \frac{\partial_xu_n^{(2)}}{|\partial_xu_n|}(x_n^+) \bigg|\to 1
    \end{equation}
    for $n\to\infty$. Notice that \eqref{eq:bbl-1.1} and the fact that $\partial_xu_n^{(2)}(x_n)=0$ yield $x_n^-<x_n<x_n^+$ for sufficiently large values of $n$.

    \begin{proof}[Proof of Claim 1.]\renewcommand{\qedsymbol}{}
        Since $u_n^{(2)}(x_n)<\alpha$, setting 
        \begin{equation}
            a_n = \sup\{x\in[0,x_n]:u_n^{(2)}(x)\geq \alpha\}\quad\text{and}\quad b_n=\inf\{ x\in [x_n,1]:u_n^{(2)}(x)\geq \alpha \},
        \end{equation}
        \eqref{eq:bbl-1} follows. Proceeding by contradiction, suppose that no such sequence $(x_n^-)$ exists. After passing to a subsequence without relabeling, we may w.l.o.g. assume 
        \begin{equation}\label{eq:bbl-2}
            \sup_{x\in[a_n,x_n]} \bigg| \frac{\partial_xu_n^{(2)}}{|\partial_xu_n|}(x) \bigg|\leq 1-\delta.
        \end{equation}
        Particularly, \eqref{eq:bbl-2} yields
        \begin{equation}
            0\leq (\partial_xu_n^{(2)})^2(1-(1-\delta)^2)\leq (1-\delta)^2(\partial_xu_n^{(1)})^2.
        \end{equation}
        Hence, since each $u_n$ is an immersion, the property $|\partial_xu_n^{(1)}|>0$ follows on $[a_n,x_n]$ for any $n\in\N$. Particularly, one can reparametrize $u_n$ as a graph on $[a_n,x_n]$. Indeed, define $\varphi_n\colon [a_n,x_n]\to [u_n^{(1)}(a_n),u_n^{(1)}(x_n)]$ by $\varphi_n\defeq(u_n^{(1)})|_{[a_n,x_n]}$. Since $|\partial_xu_n^{(1)}|>0$, we obtain that $\varphi_n$ is a diffeomorphism and $\psi_n\defeq\varphi_n^{-1}$ satisfies $\partial_y\psi_n = \frac{1}{\partial_xu_n^{(1)}\circ\psi_n}$. Defining $g_n\colon [u_n^{(1)}(a_n),u_n^{(1)}(x_n)]\to (0,\infty)$ by $y\mapsto u_n^{(2)}\circ \psi_n(y)$, we have 
        \begin{equation}
            u_n(\psi_n(y)) = (u_n^{(1)}(\psi_n(y)),g_n(y))^t = (y,g_n(y))^t
        \end{equation} 
        for $y\in [u_n^{(1)}(a_n),u_n^{(1)}(x_n)]$. Moreover, observe that 
        \begin{equation}\label{eq:bbl-3}
            \partial_x u_n^{(1)}=\partial_x\varphi_n\quad\text{and}\quad \partial_xu_n^{(2)}=\partial_x(g_n\circ\varphi_n) = \partial_yg_n\circ\varphi_n\cdot\partial_x\varphi_n.
        \end{equation}
        Thus, \eqref{eq:bbl-2} and \eqref{eq:bbl-3} yield
        \begin{equation}
            \sup_{y\in [u_n^{(1)}(a_n),u_n^{(1)}(x_n)]} \bigg|  \frac{\partial_yg_n}{\sqrt{1+(\partial_yg_n)^2}} (y)  \bigg| \leq 1-\delta.
        \end{equation}
        Particularly, since $1-\delta <1$, one obtains that there exists $C>0$ with
        \begin{equation}\label{eq:bbl-4}
            \sup_{y\in[u_n^{(1)}(a_n),u_n^{(1)}(x_n)]}|\partial_yg_n(y)| \leq C\quad\text{for all }n\in\N.
        \end{equation}
        Using \eqref{eq:el-en-graphs}, we conclude that
        \begin{equation}
            \begin{aligned}
                8&\geq \E(u_n) \geq \E(u_n|_{[a_n,x_n]}) = \E(u_n\circ\psi_n |_{[u_n^{(1)}(a_n),u_n^{(1)} (x_n)]}) \\&\underset{\eqref{eq:el-en-graphs}}{\geq} \int_{u_n^{(1)}(a_n)}^{u_n^{(1)}(x_n)} \frac{1}{g_n\sqrt{1+(\partial_yg_n)^2}} \dd y-4= \int_{u_n^{(1)}(a_n)}^{u_n^{(1)}(x_n)} \frac{\sqrt{1+(\partial_yg_n)^2}}{g_n\cdot(1+(\partial_yg_n)^2)} \dd y -4\\
                &\underset{\eqref{eq:bbl-4}}{\geq} \frac{1}{1+C^2} \cdot \bigg|\int_{u_n^{(1)}(a_n)}^{u_n^{(1)}(x_n)} \frac{\partial_yg_n}{g_n} \dd y\bigg| -4
                = \frac{1}{1+C^2} \cdot \int_{u_n^{(2)}(x_n)}^{\alpha} \frac{1}{r}\dd r -4 \to\infty
            \end{aligned}
        \end{equation}
        since $u_n^{(2)}(x_n)\to 0$. As this is a contradiction, one obtains the existence of the sequence $(x_n^-)$. Similarly, working on $[x_n,b_n]$, one obtains the existence of $(x_n^+)$. This concludes the proof of Claim 1.
    \end{proof}
    Consider now $n\in\N$. Then we distinguish two cases. Firstly, suppose that $\partial_xu_n^{(2)}(x_n^-)<0$. Using \eqref{eq:el-vs-will},
    \begin{equation}
        \begin{aligned}
            \E(u_n|_{[0,x_n]}) &\geq \E(u_n|_{[x_n^-,x_n]}) \underset{\eqref{eq:el-vs-will}}{=} \frac{2}{\pi} \W(f_{u_n|_{[x_n^-,x_n]}}) + 4 \frac{\partial_xu_n^{(2)}}{|\partial_xu_n|}\Big|_{x_n^-}^{x_n}\\
            &\geq 4\bigg( \frac{\partial_xu_n^{(2)}(x_n)}{|\partial_xu_n(x_n)|} - \frac{\partial_xu_n^{(2)}(x_n^-)}{|\partial_xu_n(x_n^-)|}\bigg)=-4 \frac{\partial_xu_n^{(2)}(x_n^-)}{|\partial_xu_n(x_n^-)|}.
        \end{aligned}
    \end{equation}
    Secondly, if $\partial_xu_n^{(2)}(x_n^-)\geq 0$, then
    \begin{equation}
        \begin{aligned}
            \E(u_n|_{[0,x_n]}) &\geq \E(u_n|_{[a_n,x_n^-]}) \underset{\eqref{eq:el-vs-will}}{=} \frac{2}{\pi} \W(f_{u_n|_{[a_n,x_n^-]}}) + 4 \frac{\partial_xu_n^{(2)}}{|\partial_xu_n|}\Big|_{a_n}^{x_n^-}\\
            &\geq 4\bigg( \frac{\partial_xu_n^{(2)}(x_n^-)}{|\partial_xu_n(x_n^-)|} - \frac{\partial_xu_n^{(2)}(a_n)}{|\partial_xu_n(a_n)|}\bigg) \underset{\eqref{eq:bbl-1}}{\geq} 4 \frac{\partial_xu_n^{(2)}(x_n^-)}{|\partial_xu_n(x_n^-)|}.
        \end{aligned}
    \end{equation}
    Using \eqref{eq:bbl-1.1}, we thus obtain that
    \begin{equation}\label{eq:bbl-5}
        \E(u_n|_{[0,x_n]}) \geq 4 + o(1).
    \end{equation}
    Similarly, suppose that $\partial_xu_n^{(2)}(x_n^+)>0$. Using \eqref{eq:el-vs-will},
    \begin{equation}
        \begin{aligned}
            \E(u_n|_{[x_n,1]}) &\geq \E(u_n|_{[x_n,x_n^+]}) \underset{\eqref{eq:el-vs-will}}{=} \frac{2}{\pi} \W(f_{u_n|_{[x_n,x_n^+]}}) + 4 \frac{\partial_xu_n^{(2)}}{|\partial_xu_n|}\Big|_{x_n}^{x_n^+}\\
            &\geq 4\bigg( \frac{\partial_xu_n^{(2)}(x_n^+)}{|\partial_xu_n(x_n^+)|} - \frac{\partial_xu_n^{(2)}(x_n)}{|\partial_xu_n(x_n)|} \bigg)=4 \frac{\partial_xu_n^{(2)}(x_n^+)}{|\partial_xu_n(x_n^+)|} .
        \end{aligned}
    \end{equation}
    Conversely, if $\partial_xu_n^2(x_n^+)\leq 0$, then
    \begin{equation}
        \begin{aligned}
            \E(u_n|_{[x_n,1]}) &\geq \E(u_n|_{[x_n^+,b_n]}) \underset{\eqref{eq:el-vs-will}}{=} \frac{2}{\pi} \W(f_{u_n|_{[x_n^+,b_n]}}) + 4 \frac{\partial_xu_n^{(2)}}{|\partial_xu_n|}\Big|_{x_n^+}^{b_n}\\
            &\geq 4\bigg( \frac{\partial_xu_n^{(2)}(b_n)}{|\partial_xu_n(b_n)|} - \frac{\partial_xu_n^{(2)}(x_n^+)}{|\partial_xu_n(x_n^+)|}\bigg) \underset{\eqref{eq:bbl-1}}{\geq} -4 \frac{\partial_xu_n^{(2)}(x_n^+)}{|\partial_xu_n(x_n^+)|}.
        \end{aligned}
    \end{equation}
    Using again \eqref{eq:bbl-1.1}, we also obtain $\E(u_n|_{[x_n,1]}) \geq 4 + o(1)$, so together with \eqref{eq:bbl-5}, $\E(u_n) = \E(u_n|_{[0,x_n]}) + \E(u_n|_{[x_n,1]}) \geq 8 + o(1)$
    which contradicts \eqref{eq:bbl-0}.
\end{proof}

\begin{remark}
    The energy bound in \eqref{eq:bbl-0} is actually sharp. Indeed, for some $a>0$ and for $\varepsilon>0$ sufficiently small, consider the catenary curves $u_{\varepsilon}\colon [-a,a]\to \H^2$, $x\mapsto (x,{\varepsilon}\cdot \cosh(x/{\varepsilon}))$. Using \eqref{eq:el-en-graphs} with $g_{\varepsilon}(x)\defeq {\varepsilon}\cosh(x/{\varepsilon})$, one computes
    \begin{equation}
        \begin{aligned}
            \E(u_{\varepsilon}) &= \frac{2}{{\varepsilon}}\int_{-a}^{a} \frac{1}{\cosh^2(x/{\varepsilon})}\dd x + 2\bigg[ \frac{\partial_xg_{\varepsilon}}{\sqrt{1+(\partial_xg_{\varepsilon})^2}}(a) -  \frac{\partial_xg_{\varepsilon}}{\sqrt{1+(\partial_xg_{\varepsilon})^2}}(-a) \bigg]\\
            &= 4\frac{e^{2a/\varepsilon}-1}{e^{2a/\varepsilon}+1} + 4 \tanh(a/\varepsilon) \to 8
        \end{aligned}
    \end{equation}
    for $\varepsilon\searrow 0$. Hence, $(u_{\varepsilon})_{\varepsilon>0}$ satisfy $\sup_{\varepsilon>0}\E(u_{\varepsilon})=8$, $u_{\varepsilon}^{(2)}(\pm a)\geq \frac{3a}{2}>0$ for all $\varepsilon>0$, but $u_{\varepsilon}^{(2)}(0)=\varepsilon\to 0$. \Cref{lem:bbl} might seem counter-intuitive since $\E$ is invariant with respect to scaling by any parameter $\varepsilon>0$ (cf. \Cref{rem:el-en-isom}). However, the condition $\smash{u_n^{(2)}(0),u_n^{(2)}(1)\geq \alpha}$ prevents unfavorable scaling-effects.
\end{remark}

\begin{lemma}\label{lem:el}
    Consider a sequence $(u_n)_{n\in\N}$ of smooth, immersed curves $[0,1]\to \H^2$ such that, for $z_1,z_2\in\H^2$, $u_n(0)=z_1$ and $u_n(1)=z_2$ for each $n\in\N$. If
    \begin{equation}\label{eq:el-0}
        \sup_{n\in\N} \E(u_n) < 8,
    \end{equation}
    then there exists $C>0$ with $|u_n|\leq C$ for any $n\in\N$.
\end{lemma}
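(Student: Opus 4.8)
The plan is to reduce the claim to Lemma \ref{lem:bbl} by applying a Möbius isometry of $\H^2$ that interchanges the region near the axis $\{y^{(2)}=0\}$ with the region far from the origin. Writing points of $\H^2$ as complex numbers $z=y^{(1)}+iy^{(2)}$ as in Remark \ref{rem:el-en-isom}, I would consider $\iota\colon\H^2\to\H^2$, $\iota(z)\defeq -1/z$. This is an isometry of $\H^2$ by Remark \ref{rem:el-en-isom} (it corresponds to $a=d=0$, $b=-1$, $c=1$, for which $ad-bc=1>0$), and a direct computation gives $\iota(z)=(-y^{(1)},y^{(2)})^t/|z|^2$; in particular its second component equals $y^{(2)}/|z|^2>0$, so $\iota$ indeed maps $\H^2$ to $\H^2$.

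Next I would set $\tilde u_n\defeq\iota\circ u_n$. Since $\iota$ is a diffeomorphism, each $\tilde u_n$ is again a smooth immersed curve $[0,1]\to\H^2$; since $\E$ is invariant under isometries of $\H^2$ (Remark \ref{rem:el-en-isom}), $\sup_{n}\E(\tilde u_n)=\sup_n\E(u_n)<8$; and the endpoints stay fixed, $\tilde u_n(0)=\iota(z_1)$ and $\tilde u_n(1)=\iota(z_2)$ for all $n$. Thus, with $\alpha\defeq\min\{\iota(z_1)^{(2)},\iota(z_2)^{(2)}\}>0$, the sequence $(\tilde u_n)$ satisfies $\tilde u_n^{(2)}(0)\geq\alpha$, $\tilde u_n^{(2)}(1)\geq\alpha$, and the energy bound \eqref{eq:bbl-0}. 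Hence Lemma \ref{lem:bbl} applies and yields $\tilde c>0$ with $\tilde u_n^{(2)}\geq\tilde c$ on $[0,1]$ for every $n$.

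Finally I would translate this lower bound back. From $\tilde u_n^{(2)}(x)=u_n^{(2)}(x)/|u_n(x)|^2$ together with the elementary inequality $0<u_n^{(2)}(x)\leq|u_n(x)|$ one gets
\[
\tilde c\ \leq\ \frac{u_n^{(2)}(x)}{|u_n(x)|^2}\ \leq\ \frac{1}{|u_n(x)|}\qquad\text{for all }x\in[0,1],\ n\in\N,
\]
so that $|u_n|\leq 1/\tilde c=:C$ uniformly in $n$, which is the assertion. I do not expect a genuine obstacle here, since the whole argument is short once the inversion trick is found; the only points that need a little care are checking that $\iota$ has the admissible form of Remark \ref{rem:el-en-isom} and that its second component is $y^{(2)}/|z|^2$ — this is precisely what makes the hypotheses of Lemma \ref{lem:bbl} available for $(\tilde u_n)$ — and observing the elementary bound $u_n^{(2)}\leq|u_n|$ used at the end. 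The conceptual content is simply that, in the half‑plane model, "escaping to infinity" is isometrically the same as "collapsing onto the boundary axis", which Lemma \ref{lem:bbl} already forbids below energy $8$.
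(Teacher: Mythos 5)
Your proposal is correct and uses essentially the same argument as the paper: the inversion $z\mapsto -1/z$ from Remark \ref{rem:el-en-isom} combined with Lemma \ref{lem:bbl}. The only (cosmetic) difference is that the paper argues by contradiction, assuming $|u_n|$ unbounded and deducing $\min_{[0,1]}(R\circ u_n)^{(2)}\to 0$, whereas you apply Lemma \ref{lem:bbl} directly to $\iota\circ u_n$ and convert the lower bound $\tilde c$ into the explicit estimate $|u_n|\leq 1/\tilde c$, which is a perfectly valid and slightly more quantitative rendering of the same idea.
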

\begin{proof}
    Suppose that $(|u_n|)_{n\in\N}$ is unbounded. As in the proof of \cite[Lemma 4.3]{eichmanngrunau2019}, one constructs an inversion (and thus an isometry) $R$ of the hyperbolic plane $\H^2$ such that, after passing to a subsequence, $\min_{[0,1]} (R\circ u_n)^{(2)}\to 0$. For instance, one might choose $R(z)\defeq -\frac{1}{z}$ for $z\in\H^2\subseteq\C$. Using \Cref{rem:el-en-isom} and \eqref{eq:el-0}, $
        \sup_{n\in\N} \E(R\circ u_n) < 8 $ and $ (R\circ u_n)(0) = R(z_1),\, (R\circ u_n)(1)=R(z_2) $  for all $n\in\N$.
    By \Cref{lem:bbl}, $(R\circ u_n)^{(2)}$ is uniformly bounded from below, a contradiction.
\end{proof}

\begin{corollary}\label{cor:el-ctrl-len}
    Consider $p_1,p_2\in\H^2$ and a subset $\U$ of the class of smooth immersions $[a,b]\to\H^2$ with $u(a)=p_1$ and $u(b)=p_2$ for all $u\in\U$. If $\sup_{u\in\U} \E(u) < 8$, then $\sup_{u\in\U} \Ll_{\H^2}(u) <\infty$.
\end{corollary}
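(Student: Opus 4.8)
The plan is to argue by contradiction. Suppose $\sup_{u\in\U}\Ll_{\H^2}(u)=\infty$ and choose $(u_n)_{n\in\N}\subseteq\U$ with $\Ll_{\H^2}(u_n)\to\infty$; since $\E$ and $\Ll_{\H^2}$ are reparametrization-invariant we may assume $[a,b]=[0,1]$. As $u_n(0)=p_1$, $u_n(1)=p_2$ and $\E(u_n)\leq\sup_{\U}\E<8$, Lemma~\ref{lem:el} yields $C>0$ with $|u_n|\leq C$ for all $n$, and Lemma~\ref{lem:bbl}, applied with $\alpha\defeq\min\{p_1^{(2)},p_2^{(2)}\}>0$, yields $c>0$ with $u_n^{(2)}\geq c$ for all $n$. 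Hence every $u_n$ takes values in the compact set $K\defeq\{y\in\H^2:|y|\leq C,\ y^{(2)}\geq c\}\subseteq\H^2$, whose hyperbolic diameter $D\defeq\sup_{p,q\in K}d_{\H^2}(p,q)$ is finite.

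Now I would reparametrize $u_n$ by hyperbolic arc length, obtaining smooth unit-speed curves $\gamma_n\colon[0,L_n]\to K$ with $L_n=\Ll_{\H^2}(u_n)\to\infty$, curvature $\curv$ as in \eqref{eq:def-curv}, and $\int_0^{L_n}|\curv|_g^2\ds=\E(u_n)<8$. By Cauchy--Schwarz the total curvature grows sublinearly in $L_n$:
\begin{equation}
    \Theta_n\defeq\int_0^{L_n}|\curv|_g\ds\leq L_n^{1/2}\,\E(u_n)^{1/2}\leq\sqrt{8L_n}.
\end{equation}
This I would play against a \emph{linear} lower bound on the length forced by the confinement, via the non-positive curvature of $\H^2$. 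Fix $n$, put $r(p)\defeq d_{\H^2}(\gamma_n(0),p)$ and $h(s)\defeq\tfrac12\,r(\gamma_n(s))^2$. Since $\H^2$ is a Hadamard manifold of curvature $\leq0$, the Hessian comparison theorem gives $\nabla^2(\tfrac12 r^2)\geq g$ pointwise on $\H^2$ (in $\H^2$ even $\nabla^2(\tfrac12 r^2)=dr\otimes dr+r\coth r\,(g-dr\otimes dr)$), so
\begin{equation}
\begin{aligned}
    h''(s)&=\nabla^2\big(\tfrac12 r^2\big)(\partial_s\gamma_n,\partial_s\gamma_n)+\big\langle\nabla(\tfrac12 r^2),\curv\big\rangle_g\\
    &\geq1-r(\gamma_n(s))\,|\curv|_g\geq1-D\,|\curv|_g,
\end{aligned}
\end{equation}
using $|\partial_s\gamma_n|_g=1$, $|\nabla r|_g=1$, Cauchy--Schwarz, and $r\leq D$ on $K$. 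As $h(0)=h'(0)=0$ (the gradient of $\tfrac12 r^2$ vanishes at $\gamma_n(0)$), integrating twice gives $h(L_n)\geq\tfrac12 L_n^2-D\,\Theta_n L_n$, while $h(L_n)=\tfrac12\,d_{\H^2}(p_1,p_2)^2\leq\tfrac12 D^2$. Combining with $\Theta_n\leq\sqrt{8L_n}$ yields $L_n^2\leq2\sqrt8\,D\,L_n^{3/2}+D^2$, whence $L_n^{1/2}\leq(3+2\sqrt2)D$ whenever $L_n\geq1$; thus $\sup_n L_n<\infty$, contradicting $L_n\to\infty$.

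The genuinely non-routine point is this last estimate. Confinement to a bounded set does not by itself bound a curve's length, since $\H^2$-geodesics (which have $\curv\equiv0$) contained in a fixed compact subset of $\H^2$ can be arbitrarily long; and passing to the Euclidean metric on $K$ --- equivalent to $g$ there --- does not help, because such long geodesics carry large Euclidean but vanishing hyperbolic curvature, so that $\int\kappa_{\euc}^2$ grows linearly with length and the Euclidean Cauchy--Schwarz bound degenerates. It is the intrinsic use of the negative curvature of $\H^2$, through the convexity inequality $\nabla^2(\tfrac12 d_{\H^2}(o,\cdot)^2)\geq g$, that converts ``little total curvature'' into ``little length'' and makes the bound on $\Theta_n$ decisive. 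The remaining ingredients --- the two a-priori Lemmas~\ref{lem:el} and~\ref{lem:bbl}, the arc-length reparametrization, and the double integration --- are routine.
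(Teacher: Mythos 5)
Your argument is correct, and after the common first step it takes a genuinely different route from the paper's. Both proofs begin identically: Lemmas \ref{lem:bbl} and \ref{lem:el} confine all the curves to a fixed compact set $K\subseteq\H^2$. The paper then leaves the curve picture and passes to the surfaces of revolution: since $u^{(2)}\geq c$ on $K$, the area of $f_u$ dominates $2\pi c^2\,\Ll_{\H^2}(u)$, and the area is in turn controlled by the estimate of Novaga--Pozzetta in terms of the Willmore energy (at most $8\pi$ by \eqref{eq:el-vs-will}), the diameter and the fixed boundary circles. You instead stay intrinsic: after arc-length reparametrization you bound the total curvature by $\sqrt{8L_n}$ via Cauchy--Schwarz, and then use the Hessian comparison $\nabla^2\bigl(\tfrac12 d_{\H^2}(o,\cdot)^2\bigr)\geq g$, valid since $\H^2$ is a Hadamard manifold, to show that a unit-speed curve confined to a set of diameter $D$ satisfies $\tfrac12 L^2 - D\,\Theta L\leq\tfrac12 D^2$; combined with the sublinear bound on $\Theta$ this forces $L$ to stay bounded. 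The details check out: $\tfrac12 r^2$ is smooth on a Hadamard manifold, $h'(0)=0$ because the gradient vanishes at the base point, the second-derivative formula along the curve and the double integration are correct, and even your explicit constant $(3+2\sqrt2)D$ is consistent (though for the contradiction with $L_n\to\infty$ the crude inequality $L_n^2\leq 4\sqrt2\,D L_n^{3/2}+D^2$ already suffices). What your route buys is self-containedness and transparency: it avoids both the detour through surfaces of revolution and the citation of the boundary area estimate, and it isolates the geometric mechanism --- nonpositive curvature converts confinement plus small total curvature into a length bound --- in a form that would work verbatim for curves in any Hadamard manifold. What the paper's route buys is brevity given the cited result and coherence with the surface-of-revolution framework used throughout the article, where the bound $\W(f_u)\leq 8\pi$ is the natural quantity. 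Both are valid proofs of the corollary.
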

\begin{proof}
    By \Cref{lem:bbl,lem:el}, there exists $K\subseteq \H^2$ compact with $u([a,b])\subseteq K$ for all $u\in\U$. Particularly, there exists $c>0$ with $u^{(2)}>c$ for all $u\in\U$. 
    
    Fix now any $u\in\U$. For the associated surface of revolution $f_u$, one has
    \begin{equation}\label{eq:sc-1*}
        \mu_{f_u}([a,b]\times\S^1) = 2\pi \int_I u^{(2)} |\partial_xu|\dd x = 2\pi \int_I (u^{(2)})^2 |\partial_xu|_{g}\dd x \geq  2\pi c^2 \Ll_{\H^2}(u).
    \end{equation}
    Moreover, the area can be estimated by the Willmore energy, the diameter and a term only depending on the boundary-set $f_u(\{a,b\}\times \S^1)$, cf. \cite[bottom of p.538]{novagapozzetta2020}. 
    Since the diameter is controlled by $K$ and since, by \eqref{eq:el-vs-will}, $\W(f_u) \leq \frac{\pi}{2} \cdot (8+8) = 8\pi$, $\sup_{u\in\U} \Ll_{\H^2}(u) \leq C(\frac{1}{c},K,8\pi,p_1,p_2)$ which concludes the proof.
\end{proof}

\begin{remark}\label{rem:len-cpct-ctrl}
    Let $p\in\H^2$ and consider a subset $\U$ of the class of smooth immersions $[a,b]\to\H^2$ with $u(b)=p$. If $\smash{\sup_{u\in\U}\Ll_{\H^2}(u)<\infty}$, then there clearly exists $K\subseteq\H^2$ compact with $u([a,b])\subseteq K$ for all $u\in\U$.
\end{remark}

\section{Global existence and subconvergence}\label{sec:lte}

\subsection{Evolution equations}
Consider the following evolution equations of the relevant geometric quantities.

\begin{lemma}[{\cite[Lemma~2.4]{dallacquaspener2017}}]\label{lem:ev-eq}
    Let $u\colon[0,T)\times I\to \H^2$ be a smooth, immersed curve with $\partial_t u=V$ such that $\langle V,\partial_su \rangle_g=0$. Further, consider a vector field $N\colon [0,T)\times I\to T\H^2$ which is normal to $\partial_s u$. Then the following formulae hold.
    \begin{align}
        \partial_t(\dd s_u) &= (\partial_s\varphi-\langle V,\curv \rangle_g )\dd s_u\label{eq:ev-eq-3}\\
        \nabla_s N &= \nabla_s^{\bot}  N-\langle N,\curv \rangle_g \partial_su\label{eq:ev-eq-5}\\
        \nabla_t \partial_su &= \nabla_s^{\bot}  V\label{eq:ev-eq-6}\\
        \quad(\nabla_t^{\bot} \nabla_s^{\bot}  - \nabla_s^{\bot} \nabla_t^{\bot})  N &= \langle V,\curv\rangle_g \nabla_s^{\bot}  N+\langle N,\curv\rangle_g \nabla_s^{\bot} V-\langle N,\nabla_s^{\bot} V\rangle_g \curv \qquad\label{eq:ev-eq-9}\\
        \nabla_t^{\bot}  \curv &= (\nabla_s^{\bot} )^2 V+\langle V,\curv \rangle_g \curv - V.\label{eq:ev-eq-10}
    \end{align} 
\end{lemma}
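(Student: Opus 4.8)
The plan is simply to reproduce the standard derivation of these evolution identities; indeed the statement is quoted verbatim from \cite[Lemma~2.4]{dallacquaspener2017}, so in the paper itself it suffices to cite that reference. Throughout one uses that $\nabla$ is the Levi-Civita connection of $(\H^2,g)$ --- hence torsion-free and metric-compatible --- that $\partial_x$ and $\partial_t$ commute so that $\nablat\partial_x u = \nablax\partial_t u = \nablax V$, that $\H^2$ has constant sectional curvature $-1$ so that $R(X,Y)Z = \langle X,Z\rangle_g Y - \langle Y,Z\rangle_g X$, and repeatedly that $|\partial_su|_g = 1$, that $\curv = \nablas\partial_su$ is normal to $\partial_su$, that $N$, $V$, $\curv$ are all normal to $\partial_su$, together with the defining relations $\nablasp W = \nablas W - \langle\nablas W,\partial_su\rangle_g\,\partial_su$ and $\nablatp W = \nablat W - \langle\nablat W,\partial_su\rangle_g\,\partial_su$.

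I would start with the length element \eqref{eq:ev-eq-3}. Writing $\lambda \defeq |\partial_xu|_g$, metric-compatibility together with $\nablat\partial_x u = \nablax V$ gives $\partial_t\lambda^2 = 2\langle\nablax V,\partial_xu\rangle_g$, hence $\partial_t\lambda = \lambda\,\langle\nablas V,\partial_su\rangle_g$ and therefore $\partial_t(ds_u) = \langle\nablas V,\partial_su\rangle_g\,ds_u$; decomposing $V$ into its tangential part (whose speed is the quantity $\varphi$ in the statement) and its normal part yields the form $(\partial_s\varphi - \langle V,\curv\rangle_g)\,ds_u$. The same manipulation --- differentiating $\langle N,\partial_su\rangle_g = 0$ in $s$, respectively in $t$ --- gives the auxiliary relations $\langle\nablas N,\partial_su\rangle_g = -\langle N,\curv\rangle_g$ and $\langle\nablat N,\partial_su\rangle_g = -\langle N,\nablat\partial_su\rangle_g$. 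Then \eqref{eq:ev-eq-5} is immediate: it is the orthogonal splitting of $\nablas N$ into its $\partial_su$-component, which by the first auxiliary relation equals $-\langle N,\curv\rangle_g\partial_su$, plus its normal part $\nablasp N$. For \eqref{eq:ev-eq-6} I would differentiate $\partial_su = \lambda^{-1}\partial_xu$ in $t$, using $\nablat\partial_x u = \nablax V$ and the formula for $\partial_t\lambda$ above; the result is exactly $\nablas V - \langle\nablas V,\partial_su\rangle_g\partial_su = \nablasp V$. Substituting this back into the second auxiliary relation upgrades it to $\langle\nablat N,\partial_su\rangle_g = -\langle N,\nablasp V\rangle_g$, which feeds into the next step.

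The core is the commutator identity \eqref{eq:ev-eq-9}, and \eqref{eq:ev-eq-10} then follows along the same lines. I would first record the \emph{ambient} commutator for an arbitrary vector field $W$ along $u$: rescaling from $\partial_x$ to $\partial_s$, using $[\partial_t,\partial_x] = 0$ and the formula for $\partial_t\lambda$, and using the normality of $V$ to write $\langle\nablas V,\partial_su\rangle_g = -\langle V,\curv\rangle_g$, one obtains
\[
    \nablat\nablas W - \nablas\nablat W = \langle V,\curv\rangle_g\,\nablas W + R(V,\partial_su)W .
\]
To pass to the perpendicular connections in \eqref{eq:ev-eq-9}, write $\nablatp\nablasp N = P\bigl(\nablat\,P(\nablas N)\bigr)$ and $\nablasp\nablatp N = P\bigl(\nablas\,P(\nablat N)\bigr)$, where $P$ is the orthogonal projection onto the normal line (so $\nablasp = P\circ\nablas$ and $\nablatp = P\circ\nablat$). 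Each time $P$ is commuted past a covariant derivative it leaves behind one tangential correction term --- coming from $\nablat\partial_su = \nablasp V$ and $\nablas\partial_su = \curv$ together with the auxiliary relations --- and these are, up to sign, precisely the two terms $\langle N,\curv\rangle_g\nablasp V$ and $\langle N,\nablasp V\rangle_g\curv$ appearing in \eqref{eq:ev-eq-9}. Subtracting, inserting the ambient commutator and projecting, the curvature contribution $R(V,\partial_su)N = \langle V,N\rangle_g\partial_su$ is purely tangential and is annihilated by $P$; what remains is exactly the right-hand side of \eqref{eq:ev-eq-9}. For \eqref{eq:ev-eq-10} I would apply the ambient commutator with $W = \partial_su$, note that $\curv = \nablas\partial_su$ is already normal and that $\nablat\partial_su = \nablasp V$ by \eqref{eq:ev-eq-6}, compute $R(V,\partial_su)\partial_su = -V$ from constant curvature $-1$ and $|\partial_su|_g = 1$, and project onto the normal line; since $P(\nablas\nablasp V) = (\nablasp)^2 V$ this gives $\nablatp\curv = (\nablasp)^2 V + \langle V,\curv\rangle_g\curv - V$.

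The one place demanding care is the bookkeeping in \eqref{eq:ev-eq-9}: one must keep straight which covariant derivatives are the full ones and which are the perpendicular ones, and correctly account for the two tangential corrections produced when $P$ is moved across $\nablat$ and $\nablas$. It is worth stressing that these two terms do \emph{not} originate from the ambient curvature --- which drops out under normal projection --- but from the non-parallelism of $\partial_su$ in $s$ and $t$. A secondary point, best fixed once at the outset, is the sign/normalization convention for $R$; with the one above it is the curvature of $\H^2$ that supplies the $-V$ in \eqref{eq:ev-eq-10}. Everything else is routine differentiation.
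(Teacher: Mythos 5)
Your derivation is correct and complete: the paper itself gives no proof of this lemma but simply cites \cite[Lemma~2.4]{dallacquaspener2017}, and your argument is precisely the standard computation carried out there (metric compatibility and $[\partial_t,\partial_x]=0$ for \eqref{eq:ev-eq-3}, \eqref{eq:ev-eq-5}, \eqref{eq:ev-eq-6}; the pullback commutator plus the constant-curvature identity $R(X,Y)Z=\langle X,Z\rangle_g Y-\langle Y,Z\rangle_g X$ and careful projection bookkeeping for \eqref{eq:ev-eq-9} and \eqref{eq:ev-eq-10}). Your reading of $\varphi$ as the tangential speed (which vanishes here since $V$ is normal, leaving $\partial_s\varphi$ as a vestige of the general formula) and the sign check producing the $-V$ term are both consistent with the statement, so there is nothing to correct.
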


\begin{notation}\label{not:tensors}
    Firstly, consider integers $a,b,d\in\N_0$. We denote terms of type 
    \begin{gather}
        T_u(\partial_su,\dots,\partial_su,(\nabla_s^{\bot})^{i_1}\curv,\dots,(\nabla_s^{\bot})^{i_{b-1}}\curv) \cdot (\nabla_s^{\bot})^{i_{b}}\curv \quad
        \text{or}\label{eq:not-tens-1} \\ 
        T_u(\partial_su,\dots,\partial_su,(\nabla_s^{\bot})^{i_1}\curv,\dots,(\nabla_s^{\bot})^{i_{b}}\curv)
    \end{gather}
    by $P_a^{b,d}$ where $T$ is a smooth tensor field on $\H^2$ with  $\sum_{j=1}^b i_j = a $ and $\displaystyle\max_{1\leq i\leq b}i_j \leq d$.
\end{notation}
\begin{remark}\label{rem:def-C(P,K)}
    For a particular term $P_a^{b,d}$ and compact $K\subseteq \H^2$ containing the trace of $u$, define
    \begin{equation}\label{eq:def-C(P,K)-1}
        C(P_a^{b,d},K) \defeq \max_{p\in K} \max_{\substack{v_1,v_2,\dots\in T_p\H^2,\\|v_1|_g,|v_{2}|_g,\dots\leq 1}} T_p(v_1,v_2,\dots).
    \end{equation}
    Writing $|x|_g=|x|$ with an abuse of notation for reals $x\in\R$, one then obtains
    \begin{equation}\label{eq:def-C(P,K)}
        |P_a^{b,d}|_g \leq C(P_a^{b,d},K) \cdot |(\nabla_s^{\bot})^{i_1}\curv|_g\cdots |(\nabla_s^{\bot})^{i_b}\curv|_g.
    \end{equation}
\end{remark}
\begin{notation}\label{not:tensors-1}
    Consider integers $A,B,d\in\N_0$ with $B\geq 1$. We write $\llbracket A,B\rrbracket_d$ for any finite sum of terms of type $P_a^{b,d}$ where 
    \begin{equation}\label{eq:req-A-B}
        a+\frac{b}{2} \leq A+\frac{B}{2}.
    \end{equation}
\end{notation}

\begin{example}
    The requirement on $A,B$ in \eqref{eq:req-A-B} describes the right algebra to apply an interpolation inequality (cf. \Cref{prop:gni-2}). Moreover, the structure in \eqref{eq:not-tens-1} enables us to easily keep track of (derivatives of) lower-order terms in \eqref{eq:wf-eq}. Indeed, consider for instance 
    \begin{equation}\label{ex:tens-str}
        -\frac{1}{2(u^{(2)})^4}  |\curv|_g^2\curv = T_u(\curv,\curv)\curv = \llbracket 0,3\rrbracket_0
    \end{equation} 
    with $T_{(x,y)}(v_1,v_2)\defeq -\frac{1}{2 y^4} \langle v_1,v_2\rangle_g$. By differentiating, we obtain
    \begin{equation}
        \nabla_s^{\bot}\bigg( - \frac{1}{2(u^{(2)})^4} |\curv|_g^2\curv \bigg)  
        = \frac{2}{(u^{(2)})^5}|\scurv|_g^2\cdot\partial_su^{(2)}\cdot  \curv + 2T_u(\nabla_s^{\bot}\curv,\curv)\curv + T(\curv,\curv)\nabla_s^{\bot}\curv.
    \end{equation}
    The lower order terms in this computation can be efficiently taken care of by using the tensorial structure in \eqref{ex:tens-str} as in the following lemma.
\end{example}

\begin{lemma}\label{lem:diff-tensors}
    Using Notation~\ref{not:tensors-1},
    \begin{equation}\label{eq:diff-tensors}
        \nabla_s^{\bot} \llbracket A,B\rrbracket _d = \llbracket A+1,B\rrbracket_{d+1}.
    \end{equation}
\end{lemma}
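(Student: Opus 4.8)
The plan is to reduce, by $\R$-linearity of $\nablasp$ and since $\llbracket A,B\rrbracket_d$ is by definition a \emph{finite} sum, to showing $\nablasp P_a^{b,d}\in\llbracket A+1,B\rrbracket_{d+1}$ for a single term $P_a^{b,d}$ satisfying \eqref{eq:req-A-B}. Since the $\llbracket\cdot,\cdot\rrbracket$-terms relevant here are vector-valued, I would write $\nablasp X=\nablas X-\langle\nablas X,\partial_su\rangle_g\,\partial_su$. If $\nablas X$ is a sum of terms of type $P$, then the subtracted term arises from each of them by contracting with $\partial_su$ (which, absorbing the metric into the coefficient tensor, leaves $(a,b,d)$ untouched) and re-multiplying by $\partial_su$ (one extra $\partial_su$-slot, again harmless); so it suffices to expand $\nablas P_a^{b,d}$ via the Leibniz rule.

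Writing $P_a^{b,d}$ as a contraction — via the metric $g$ and a smooth tensor field $T$ on $\H^2$, all evaluated along $u$ — of some number of slots $\partial_su$ with exactly $b$ curvature factors $(\nablasp)^{i_1}\curv,\dots,(\nablasp)^{i_b}\curv$, where $\sum_j i_j=a$ and $\max_j i_j\le d$, the Leibniz rule produces three types of contributions. \textbf{(i)} The coefficient tensor is differentiated: this contributes the smooth tensor field $\nabla T$ on $\H^2$ evaluated along $u$ with one extra slot filled by $\partial_su$, so $a,b,d$ are all unchanged and $a+\tfrac b2\le A+\tfrac B2<(A+1)+\tfrac B2$. \textbf{(ii)} A slot $\partial_su$ is differentiated: by \eqref{eq:def-curv}, $\nablas\partial_su=\curv=(\nablasp)^0\curv$, which is normal, so a $\partial_su$-slot is replaced by a curvature factor of order $0$; thus $a$ is unchanged, $b$ becomes $b+1$, $d$ is unchanged, and $a+\tfrac{b+1}2=(a+\tfrac b2)+\tfrac12\le(A+\tfrac B2)+\tfrac12\le(A+1)+\tfrac B2$. \textbf{(iii)} A curvature factor $(\nablasp)^k\curv$ is differentiated: by \eqref{eq:ev-eq-5}, $\nablas(\nablasp)^k\curv=(\nablasp)^{k+1}\curv-\langle(\nablasp)^k\curv,\curv\rangle_g\,\partial_su$; the first summand raises that factor's order by $1$, so $a\mapsto a+1$, $b$ unchanged, $d\mapsto\max(d,k+1)\le d+1$, giving $(a+1)+\tfrac b2\le(A+1)+\tfrac B2$; the second summand, after writing $\langle\cdot,\cdot\rangle_g=g_u(\cdot,\cdot)$ with the smooth metric tensor, replaces the order-$k$ factor by a $\partial_su$-slot while introducing the two curvature factors $(\nablasp)^k\curv$ and $\curv$ of orders $k$ and $0$, so $a$ is unchanged, $b$ becomes $b+1$, $d$ is unchanged, and $a+\tfrac{b+1}2\le(A+1)+\tfrac B2$ as in \textbf{(ii)}. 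If $P_a^{b,d}$ is written in the split form with $(\nablasp)^{i_b}\curv$ factored out, then $\nablas$ additionally hits this external factor, covered by \textbf{(iii)}, while its action on the scalar coefficient is covered by \textbf{(i)}--\textbf{(iii)}.

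Collecting these, every summand of $\nablas P_a^{b,d}$, hence of $\nablasp P_a^{b,d}$, is of type $P_{a'}^{b',d'}$ with $a'+\tfrac{b'}2\le(A+1)+\tfrac B2$ and $d'\le d+1$, i.e., lies in $\llbracket A+1,B\rrbracket_{d+1}$; summing over the finitely many summands of $\llbracket A,B\rrbracket_d$ yields \eqref{eq:diff-tensors}. The bookkeeping crux is case \textbf{(iii)}: the term $-\langle(\nablasp)^k\curv,\curv\rangle_g\,\partial_su$ genuinely increases the curvature count $b$ by one, but since it does \emph{not} increase $a$, the quantity $a+\tfrac b2$ grows by only $\tfrac12$ — exactly within the budget of $1$ available when passing from $A+\tfrac B2$ to $(A+1)+\tfrac B2$, the same slack $\tfrac12\le1$ also absorbing case \textbf{(ii)}; the bound $d'\le d+1$ is immediate since no single differentiation raises the order of any one curvature factor by more than $1$.
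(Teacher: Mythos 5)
Your proposal is correct and follows essentially the same route as the paper: expand $\nablas$ of a single term $P_a^{b,d}$ by the Leibniz rule, use $\nablas\partial_su=\curv$ and \eqref{eq:ev-eq-5} to rewrite $\nablas(\nablasp)^k\curv$, and check that each resulting summand satisfies the bound $a'+\tfrac{b'}{2}\leq (A+1)+\tfrac{B}{2}$ with $d'\leq d+1$. Your extra care with the tangential projection and the split form \eqref{eq:not-tens-1} is consistent with (and slightly more explicit than) the paper's argument, so there is nothing to correct.
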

\emph{On the notation.} By \eqref{eq:not-tens-1}, $\llbracket A,B\rrbracket_d$ is either scalar or a vector field normal to $u$. In the scalar case, we understand that $\nabla_s^{\bot} \llbracket A,B\rrbracket _d\defeq \partial_s \llbracket A,B\rrbracket_d$ and proceed analogously for further directional derivatives such as $\nabla_t^{\bot}$.
\begin{proof}
    Consider any scalar term $P_a^{b,d}$ with $a+\frac{b}{2}\leq A+\frac{B}{2}$ given as in \eqref{eq:not-tens-1}. Firstly,
    \begin{align}
        \partial_s T_u(\partial_su,\dots\, &,\partial_su,(\nabla_s^{\bot})^{i_1}\curv,\dots,(\nabla_s^{\bot})^{i_b}\curv) = (\nabla T)_u(\partial_su,\dots,(\nabla_s^{\bot})^{i_b}\curv,\partial_su) \\
        &\quad + T_u(\curv,\partial_su,\dots,\partial_su,(\nabla_s^{\bot})^{i_1}\curv,\dots,(\nabla_s^{\bot})^{i_b}\curv)\\
        &\quad + T_u(\partial_su,\dots,\partial_su,\curv,(\nabla_s^{\bot})^{i_1}\curv,\dots,(\nabla_s^{\bot})^{i_b}\curv)\\
        &\quad + T_u(\partial_su,\dots,\partial_su,\nabla_s(\nabla_s^{\bot})^{i_1}\curv,\dots,(\nabla_s^{\bot})^{i_b}\curv)\\        
        &\quad + T_u(\partial_su,\dots,\partial_su,(\nabla_s^{\bot})^{i_1}\curv,\dots,\nabla_s(\nabla_s^{\bot})^{i_b}\curv).
    \end{align}
    Using \eqref{eq:ev-eq-5},
     $\partial_s\left(T_u(\partial_su,\dots,(\nabla_s^{\bot})^{i_b}\curv)\right) = \llbracket A+1,B\rrbracket_{d+1}$ and thus \eqref{eq:diff-tensors}. 
\end{proof}

\begin{remark}\label{rem:prop-of-V}
    Consider $u\colon[0,T)\times I\to M$ smooth and 
    \begin{equation}\label{eq:def-V}
        V=a(u)\cdot((\nabla_s^{\bot})^2\curv +\frac{1}{2}|\curv|_g^2\curv - \curv)
    \end{equation}
    where $a\colon \H^2\to (-\infty,0)$ is smooth. Then, writing $a=a(u)$ for short, 
    \begin{equation}\label{eq:prop-of-V}
        \begin{aligned}
            V&=a(\nabla_s^{\bot})^2\curv+ \llbracket 0,3 \rrbracket_0 = \llbracket 2,1 \rrbracket_2\quad\text{and thus}\\
            \nabla_s^{\bot} V &= a (\nabla_s^{\bot})^3\curv + \partial_sa(\nabla_s^{\bot})^2\curv + \llbracket1,3\rrbracket_1 = \llbracket 3,1 \rrbracket_3,\\
            (\nabla_s^{\bot})^2V &= a(\nabla_s^{\bot})^4\curv + 2\partial_sa(\nabla_s^{\bot})^3\curv + \llbracket 2,3\rrbracket_2.
        \end{aligned}
    \end{equation}
    Thus, by \eqref{eq:ev-eq-10},
    \begin{equation}\label{eq:nablat-curv}
        \nabla_t^{\bot} \curv = (\nabla_s^{\bot})^2 V + \llbracket 2,3 \rrbracket_2= a(\nabla_s^{\bot})^4\curv + 2\partial_sa(\nabla_s^{\bot})^3\curv + \llbracket 2,3\rrbracket_2=\llbracket 4,1\rrbracket_4. 
    \end{equation}
\end{remark}

\begin{lemma}\label{lem:tech-lemm}
    Consider $u\colon[0,T)\times I\to \H^2$ smooth with $\partial_t u = a(u) \nabla_{L^2(\dd s_u)}\E(u)$ where $a\colon\H^2\to(-\infty,0)$ is smooth and write $\phi_l\defeq(\nabla_s^{\bot} )^l\curv$. For any $l\in\N_0$ and $k,m\in\N$, writing $a=a(u)$ for short, we then obtain the following formulae.
    \begin{align}
        \nabla_t^{\bot}(\nabla_s^{\bot})^k\phi_l - (\nabla_s^{\bot})^k\nabla_t^{\bot} \phi_l &= \llbracket k+l+2,3 \rrbracket_{\max\{k+l,k+2\}} \label{eq:tech-lemm-1},\\
        (\nabla_t^{\bot}) \llbracket A,B\rrbracket_d &= \llbracket 4+A,B\rrbracket_{4+A} \label{eq:tech-lemm-2},\\
        (\nabla_t^{\bot})^m u - a^m (\nabla_s^{\bot})^{4m-2}\curv &=  2(m-1)m \cdot a^{m-1}\partial_sa\cdot (\nabla_s^{\bot})^{4m-3}\curv \\
        &\quad+ \llbracket 4m-4,3 \rrbracket_{4m-4}\label{eq:tech-lemm-4}.
    \end{align}
\end{lemma}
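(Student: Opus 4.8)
The plan is to prove the three identities in the stated order, since each feeds into the next, and to track throughout, for a term $P_a^{b,d}$ from Notation~\ref{not:tensors}, both its \emph{weight} $a+\frac b2$ (which decides the $\llbracket\cdot,\cdot\rrbracket$-class it belongs to) and its \emph{order} $\max_j i_j$ (which decides the subscript). The basic inputs are the commutation and evolution formulae of Lemma~\ref{lem:ev-eq}, the identities $V=\llbracket 2,1\rrbracket_2$, $\nablasp V=\llbracket 3,1\rrbracket_3$ and $\nablatp\curv=\llbracket 4,1\rrbracket_4$ recorded in Remark~\ref{rem:prop-of-V}, and the differentiation rule $\nablasp\llbracket A,B\rrbracket_d=\llbracket A+1,B\rrbracket_{d+1}$ of Lemma~\ref{lem:diff-tensors}, i.e.\ that $\nablasp$ raises weight and order each by one. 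For \eqref{eq:tech-lemm-1} I would telescope the commutator one factor $\nablasp$ at a time,
\[
\nablatp(\nablasp)^k\phi_l-(\nablasp)^k\nablatp\phi_l=\sum_{j=0}^{k-1}(\nablasp)^j(\nablatp\nablasp-\nablasp\nablatp)(\nablasp)^{k-1-j}\phi_l,
\]
apply \eqref{eq:ev-eq-9} with the normal field $N=(\nablasp)^{k-1-j}\phi_l=\phi_{k-1-j+l}$, and substitute $V=\llbracket 2,1\rrbracket_2$ and $\nablasp V=\llbracket 3,1\rrbracket_3$. Each of the three summands of the commutator becomes a sum of terms of weight $(k-1-j+l)+\frac72$ and order at most $\max\{k-1-j+l,3\}$; applying $(\nablasp)^j$ via Lemma~\ref{lem:diff-tensors} raises these to weight $(k+l)+\frac72$, which is that of $\llbracket k+l+2,3\rrbracket$, and to order at most $\max\{k+l,j+3\}$, which is $\le\max\{k+l,k+2\}$ precisely because $j\le k-1$. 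Summing over the finitely many $j$ finishes it.

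For \eqref{eq:tech-lemm-2} I would apply the Leibniz rule for $\nablatp$ to a single term $P_a^{b,d}$ with $a+\frac b2\le A+\frac B2$. The $t$-derivative lands either on the base point of the tensor $T$, producing $(\nabla T)_u(\dots,\partial_tu)=(\nabla T)_u(\dots,V)$ with $V=\llbracket 2,1\rrbracket_2$ in an extra slot; or on a factor $\partial_su$, which by \eqref{eq:ev-eq-6} contributes $\nablasp V=\llbracket 3,1\rrbracket_3$ up to a tangential term removed by the perpendicular projection; or on a curvature factor $(\nablasp)^i\curv$, for which \eqref{eq:ev-eq-5}, \eqref{eq:tech-lemm-1} and $\nablatp\curv=\llbracket 4,1\rrbracket_4$ yield $\nablatp(\nablasp)^i\curv=\llbracket i+4,1\rrbracket_{i+4}$. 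In each case the weight grows by at most $4$ and the maximal curvature order by at most $4$; assembling the finitely many contributions then gives $\llbracket 4+A,B\rrbracket_{4+A}$.

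For \eqref{eq:tech-lemm-4} I would induct on $m$, with base case $m=1$ being exactly $\partial_tu=V=a(\nablasp)^2\curv+\llbracket 0,3\rrbracket_0$ from \eqref{eq:prop-of-V} (the $\partial_sa$-term absent since $m-1=0$). For the step, apply $\nablatp$ to the level-$m$ expansion. On the leading term $a^m(\nablasp)^{4m-2}\curv$ the $t$-derivative either hits the prefactor $a^m$, a strictly lower-order contribution, or produces $a^m\nablatp(\nablasp)^{4m-2}\curv$, which by \eqref{eq:tech-lemm-1} and \eqref{eq:nablat-curv} equals $a^m(\nablasp)^{4m-2}\nablatp\curv$ modulo $\llbracket 4m,3\rrbracket_{4m}$; expanding $(\nablasp)^{4m-2}(a(\nablasp)^4\curv+2\partial_sa(\nablasp)^3\curv+\llbracket 2,3\rrbracket_2)$ by Leibniz and multiplying by $a^m$ extracts $a^{m+1}(\nablasp)^{4(m+1)-2}\curv+4m\,a^m\partial_sa(\nablasp)^{4(m+1)-3}\curv$ modulo $\llbracket 4(m+1)-4,3\rrbracket_{4(m+1)-4}$, the coefficient $4m=(4m-2)+2$ counting how a single $\nablasp$ may differentiate a prefactor rather than $\curv$. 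By the same mechanism the term $2(m-1)m\,a^{m-1}\partial_sa(\nablasp)^{4m-3}\curv$ contributes $2(m-1)m\,a^m\partial_sa(\nablasp)^{4(m+1)-3}\curv$ modulo the same error class, and $\nablatp\llbracket 4m-4,3\rrbracket_{4m-4}=\llbracket 4(m+1)-4,3\rrbracket_{4(m+1)-4}$ by \eqref{eq:tech-lemm-2}. Collecting the order-$(4(m+1)-3)$ contributions, the coefficient then satisfies the recursion $c_{m+1}=c_m+4m$ with $c_1=0$, hence $c_m=2(m-1)m$, which is the asserted coefficient.

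I expect the real work to lie entirely in the index bookkeeping, the differential algebra being routine Leibniz. For \eqref{eq:tech-lemm-1} the delicate point is that redistributing $(\nablasp)^j$ must never push the order beyond $\max\{k+l,k+2\}$, which is where $j\le k-1$ enters; for \eqref{eq:tech-lemm-2} one similarly has to verify that the subscript lands exactly at $4+A$. For \eqref{eq:tech-lemm-4} the main obstacle is that one must follow not merely the leading order but the two highest-order coefficients, and must recognise that the subleading $\partial_sa$-term is exactly what appears whenever $\nablasp$ differentiates the prefactor $a(u)$ instead of $\curv$; it is the counting of these configurations through the induction that converts the linear increment $4m$ into the quadratic coefficient $2(m-1)m$.
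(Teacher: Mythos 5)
Your proposal is correct and takes essentially the same route as the paper: telescoping the commutator through \eqref{eq:ev-eq-9} for \eqref{eq:tech-lemm-1}, a Leibniz argument using \eqref{eq:ev-eq-6} and $\nablatp\phi_l=\llbracket 4+l,1\rrbracket_{4+l}$ for \eqref{eq:tech-lemm-2}, and the same induction for \eqref{eq:tech-lemm-4} with the coefficient recursion $c_{m+1}=c_m+4m$, $c_1=0$, exactly as in the paper's computation $((4m-2)+2)+2(m-1)m=2m(m+1)$. The only blemish is the intermediate weight in the commutator step, which should be $(k+l-j)+\tfrac72$ rather than $(k-1-j+l)+\tfrac72$; this is a harmless slip, since your final count $(k+l)+\tfrac72$ and the order bound $\max\{k+l,j+3\}\leq\max\{k+l,k+2\}$ are correct.
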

\begin{proof}
    For \eqref{eq:tech-lemm-1}, observe that, for $A\in\N$, \eqref{eq:ev-eq-9} yields
    \begin{equation}
    \begin{aligned}
        \nabla_t^{\bot}\nabla_s^{\bot}\phi_A - \nabla_s^{\bot}\nabla_t^{\bot}\phi_A &= \langle V,\curv\rangle_g\nabla_s^{\bot}  \phi_A+\langle \phi_A,\curv\rangle_g \nabla_s^{\bot} V-\langle \phi_A,\nabla_s^{\bot} V\rangle_g\curv\\
        &= \llbracket A+3,3\rrbracket_{\max\{A+1,3\}}
    \end{aligned}
    \end{equation}
    using \Cref{rem:prop-of-V}. Since
    \begin{equation}
        \nabla_t^{\bot}(\nabla_s^{\bot})^k\phi_l - (\nabla_s^{\bot})^k\nabla_t^{\bot} \phi_l = \sum_{i=0}^{k-1} (\nabla_s^{\bot})^i \left( \nabla_t^{\bot}\nabla_s^{\bot}\phi_{k+l-1-i} - \nabla_s^{\bot}\nabla_t^{\bot}\phi_{k+l-1-i} \right),
    \end{equation}
    \eqref{eq:tech-lemm-1} follows. For \eqref{eq:tech-lemm-2}, we firstly verify $\nabla_t^{\bot} \phi_l = \llbracket 4+l,1\rrbracket_{4+l}$. By \eqref{eq:nablat-curv}, \eqref{eq:tech-lemm-1},
    \begin{align}
        \nabla_t^{\bot} \phi_l &= \nabla_t^{\bot} (\nabla_s^{\bot})^l \curv = (\nabla_s^{\bot})^l \nabla_t^{\bot} \curv + \llbracket l+2,3\rrbracket_{l+2} \\
        &= (\nabla_s^{\bot})^l \llbracket 4,1\rrbracket_4 + \llbracket l+2,3\rrbracket_{l+2} = \llbracket 4+l,1\rrbracket_{4+l}.
    \end{align}
    Recalling the definition of $\llbracket A,B\rrbracket_d$, this immediately yields \eqref{eq:tech-lemm-2} with a similar argument as in the proof of \Cref{lem:diff-tensors} using \eqref{eq:ev-eq-6} to deal with lower-order terms such as $\nabla_t\partial_su$.  
    Lastly, regarding \eqref{eq:tech-lemm-4}, note that the case $m=1$ is a consequence of $\partial_tu=V$ and \eqref{eq:prop-of-V}. Before showing the general case by induction, consider
    \begin{equation}\label{eq:pr-tl-1.1}
        \partial_t a =  a'(u)\cdot\partial_tu =  a'(u) \cdot V =\llbracket 2,1\rrbracket_2,
    \end{equation}
    using \eqref{eq:prop-of-V}, and
    \begin{equation}\label{eq:pr-tl-1.2}
        \begin{aligned}
            \partial_t\partial_sa &= \partial_t\left( a'(u) \cdot\partial_su \right) = a''(u)(\partial_su,V)  + a'(u) \partial_t\partial_su \\
            &= \llbracket 2,1\rrbracket_2 + a'(u)\cdot \bigl( \partial_t\left[\nicefrac{1}{|\partial_xu|_g}\right] \partial_x u + \partial_s\partial_tu \bigr) \\
            &= \llbracket 2,1\rrbracket_2 + a'(u)\cdot \left( \langle V,\curv\rangle_g \partial_su + \partial_s V \right) = \llbracket 3,1\rrbracket_3,
        \end{aligned}
    \end{equation}
    using \eqref{eq:prop-of-V}. Suppose that \eqref{eq:tech-lemm-4} holds for some $m\in\N$. Using \eqref{eq:tech-lemm-1}, \eqref{eq:tech-lemm-2},
    \begin{equation}\label{eq:pr-tl-2}
        \begin{aligned}
           (\nabla_t^{\bot} )^{m+1}u
            =\,&a^m(\nabla_s^{\bot})^{4m-2}\nabla_t^{\bot}\curv \\
            &+ 2(m-1)m\cdot a^{m-1}\partial_sa\cdot (\nabla_s^{\bot})^{4m-3}\nabla_t^{\bot} \curv + \llbracket 4m,3\rrbracket_{4m}
        \end{aligned}
    \end{equation}
    where we also used \eqref{eq:pr-tl-1.1} and \eqref{eq:pr-tl-1.2} in the last equality. Using \eqref{eq:nablat-curv}, 
    \begin{equation}\label{eq:pr-tl-3}
        \begin{aligned}
            (&\nabla_s^{\bot})^{4m-2}\nabla_t^{\bot}\curv  \\
            &= \sum_{j=0}^{4m-2} {4m-2\choose j} \left( \partial_s^{4m-2-j}a \cdot (\nabla_s^{\bot})^{4+j}\curv + 2\partial_s^{4m-1-j}a \cdot (\nabla_s^{\bot})^{3+j}\curv \right) + \llbracket 4m,3\rrbracket_{4m}\\
            &= a(\nabla_s^{\bot})^{4m+2}\curv + ((4m-2) + 2)\cdot \partial_sa\cdot (\nabla_s^{\bot})^{4m+1}\curv + \llbracket 4m,3\rrbracket_{4m}
        \end{aligned}
    \end{equation}
    and similarly
    \begin{equation}\label{eq:pr-tl-4}
        \begin{aligned}
            (\nabla_s^{\bot})^{4m-3}\nabla_t^{\bot}\curv
            = a (\nabla_s^{\bot})^{4m+1}\curv + \llbracket 4m,3\rrbracket_{4m}.
        \end{aligned}
    \end{equation}
    Combining, \eqref{eq:pr-tl-2}, \eqref{eq:pr-tl-3} and \eqref{eq:pr-tl-4} yields \eqref{eq:tech-lemm-4}.
\end{proof}

\begin{lemma}\label{lem:ibp-lemma}
    Consider $u\colon [0,T)\times  I\to \H^2$ smooth with $\partial_tu=\vcentcolon V$ normal to $\partial_su$. Consider another normal vector field $N\colon[0,T)\times  I\to T\H^2$ along $u$ and, for $a\colon\H^2\to(-\infty,0)$ smooth, define $Y\defeq\nabla_t^{\bot} N + (\nabla_s^{\bot})^2\left( |a(u)|\cdot (\nabla_s^{\bot})^2 N\right)$. If $N,\nabla_s^{\bot} N=0$ on $(0,T)\times \partial I$, then 
    \begin{equation}
        \frac{\dd}{\dd t} \frac{1}{2}\int_I |N|_g^2\dd s + \int_I |a(u)|\cdot \big| (\nabla_s^{\bot})^2N \big|_g^2\dd s = \int_I \langle Y,N \rangle_g\dd s - \frac{1}{2}\int_I \left| N \right|_g^2\langle \curv,V \rangle_g\dd s.
    \end{equation}
\end{lemma}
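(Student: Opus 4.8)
The plan is to differentiate $\frac{1}{2}\int_I|N|_g^2\ds$ in time, insert the definition of $Y$ in place of $\nablatp N$, and then integrate by parts twice in the resulting fourth-order term; the two boundary conditions $N=\nablasp N=0$ on $(0,T)\times\partial I$ will annihilate exactly the two boundary terms that arise.

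First I would invoke Lemma~\ref{lem:ev-eq}. Since $V=\partial_tu$ is normal to $\partial_su$, the reparametrization term in \eqref{eq:ev-eq-3} vanishes, so $\partial_t(\ds)=-\langle V,\curv\rangle_g\ds$ and hence
\[
    \frac{d}{dt}\frac{1}{2}\int_I|N|_g^2\ds=\int_I\langle\nablat N,N\rangle_g\ds-\frac{1}{2}\int_I|N|_g^2\langle V,\curv\rangle_g\ds.
\]
Because $N$ is normal and $\nablat N-\nablatp N$ is a multiple of $\partial_su$, one has $\langle\nablat N,N\rangle_g=\langle\nablatp N,N\rangle_g$. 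Substituting $\nablatp N=Y-(\nablasp)^2\bigl(|a(u)|\,(\nablasp)^2N\bigr)$ gives
\[
    \int_I\langle\nablatp N,N\rangle_g\ds=\int_I\langle Y,N\rangle_g\ds-\int_I\bigl\langle(\nablasp)^2\bigl(|a(u)|\,(\nablasp)^2N\bigr),N\bigr\rangle_g\ds.
\]

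For the last integral I would integrate by parts twice. The point is that the normal bundle of a curve in $\H^2$ is a line bundle, so $\nablasp$ applied to any field along $u$ is automatically orthogonal to $\partial_su$; in particular all the fields $(\nablasp)^kN$ and $|a(u)|(\nablasp)^2N$ are normal along $u$. For normal fields $X,Z$ along $u$ the Leibniz rule gives $\partial_s\langle X,Z\rangle_g=\langle\nablasp X,Z\rangle_g+\langle X,\nablasp Z\rangle_g$, whose integral over $I$ equals the boundary term $[\langle X,Z\rangle_g]_{\partial I}$. Applying this first with $X=\nablasp\bigl(|a(u)|(\nablasp)^2N\bigr)$, $Z=N$, and then with $X=|a(u)|(\nablasp)^2N$, $Z=\nablasp N$, and discarding both boundary terms since $N=\nablasp N=0$ on $(0,T)\times\partial I$, one obtains
\[
    \int_I\bigl\langle(\nablasp)^2\bigl(|a(u)|\,(\nablasp)^2N\bigr),N\bigr\rangle_g\ds=\int_I|a(u)|\,\bigl|(\nablasp)^2N\bigr|_g^2\ds.
\]
Plugging this back in and moving the $|a(u)|$-term to the left-hand side yields the asserted identity (using $\langle V,\curv\rangle_g=\langle\curv,V\rangle_g$).

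I do not foresee a genuine obstacle: this is purely an integration-by-parts identity. The only points requiring care are verifying that each iterated normal derivative stays normal — so that the projected integration by parts produces no tangential corrections — and observing that the tangential speed in \eqref{eq:ev-eq-3} vanishes here because the velocity $V$ has been chosen purely normal.
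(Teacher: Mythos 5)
Your argument is correct and is exactly the "simple integration by parts argument" the paper alludes to: differentiate under the integral using \eqref{eq:ev-eq-3} with vanishing tangential speed, replace $\nablatp N$ via the definition of $Y$, and integrate by parts twice, the boundary terms dying by $N=\nablasp N=0$ on $\partial I$. The only remark is that normality of the iterated derivatives needs no line-bundle argument — $\nablasp$ of any field is normal by its very definition as a projection — but this does not affect the validity of your proof.
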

\begin{proof}
    The claim follows by a simple integration by parts argument. 
\end{proof}

\begin{lemma}
    Under the assumptions of \Cref{lem:tech-lemm}, it holds that $\nabla_s\curv = \nabla_s^{\bot} \curv -|\curv|_g^2\partial_su$ and, for all $m\geq 2$,
    \begin{equation}\label{eq:s-sp-1}
        \nabla_s^m\curv = (\nabla_s^{\bot})^m\curv + \llbracket m-1,2\rrbracket_{m-1}\cdot\partial_su + \llbracket m-2,3\rrbracket_{m-2}.
    \end{equation}
\end{lemma}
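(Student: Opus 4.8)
The plan is to reduce the whole statement to the single identity \eqref{eq:ev-eq-5} and then induct on $m$. I would start by noting that $\curv$ is normal to $\partial_su$: differentiating $\langle\partial_su,\partial_su\rangle_g\equiv 1$ gives $\langle\curv,\partial_su\rangle_g=0$, so \eqref{eq:ev-eq-5} with $N=\curv$ immediately yields $\nablas\curv=\nablasp\curv-\langle\curv,\curv\rangle_g\partial_su=\nablasp\curv-|\curv|_g^2\partial_su$, which is the first claimed identity.

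For \eqref{eq:s-sp-1} I would argue by induction on $m\geq 2$. \emph{Base case $m=2$:} differentiate the identity just obtained. By \eqref{eq:ev-eq-5} applied to the normal field $\nablasp\curv$ one gets $\nablas(\nablasp\curv)=(\nablasp)^2\curv-\langle\nablasp\curv,\curv\rangle_g\partial_su$; moreover $\nablas\big(-|\curv|_g^2\partial_su\big)=-\partial_s(|\curv|_g^2)\partial_su-|\curv|_g^2\curv$ with $\partial_s(|\curv|_g^2)=2\langle\nablas\curv,\curv\rangle_g=2\langle\nablasp\curv,\curv\rangle_g$. Hence $\nablas^2\curv=(\nablasp)^2\curv-3\langle\nablasp\curv,\curv\rangle_g\partial_su-|\curv|_g^2\curv$; since $\langle\nablasp\curv,\curv\rangle_g$ is a $P_1^{2,1}$-term, i.e.\ lies in $\llbracket 1,2\rrbracket_1$, and $|\curv|_g^2\curv\in\llbracket 0,3\rrbracket_0$, this is exactly \eqref{eq:s-sp-1} for $m=2$.

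\emph{Inductive step:} assume \eqref{eq:s-sp-1} for some $m\geq 2$ and apply $\nablas$ to all three summands. On $(\nablasp)^m\curv$, \eqref{eq:ev-eq-5} produces $(\nablasp)^{m+1}\curv$ plus the tangential remainder $-\langle(\nablasp)^m\curv,\curv\rangle_g\partial_su$, whose coefficient is a $P_m^{2,m}$-term, hence in $\llbracket m,2\rrbracket_m$. On the term $\llbracket m-1,2\rrbracket_{m-1}\cdot\partial_su$ (with scalar bracket) the product rule gives $\big(\partial_s\llbracket m-1,2\rrbracket_{m-1}\big)\partial_su+\llbracket m-1,2\rrbracket_{m-1}\cdot\curv$; by Lemma \ref{lem:diff-tensors} the first summand is $\llbracket m,2\rrbracket_m\cdot\partial_su$, and absorbing the extra undifferentiated curvature factor in the second puts it into $\llbracket m-1,3\rrbracket_{m-1}$. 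On the normal term $\llbracket m-2,3\rrbracket_{m-2}$, \eqref{eq:ev-eq-5} and Lemma \ref{lem:diff-tensors} give $\nablasp\llbracket m-2,3\rrbracket_{m-2}=\llbracket m-1,3\rrbracket_{m-1}$ together with the tangential remainder $-\langle\llbracket m-2,3\rrbracket_{m-2},\curv\rangle_g\partial_su$, whose coefficient lies in $\llbracket m-1,2\rrbracket_{m-1}\subseteq\llbracket m,2\rrbracket_m$. Adding everything, and using that a finite sum of $\llbracket m,2\rrbracket_m$-terms is again of that type (and likewise for $\llbracket m-1,3\rrbracket_{m-1}$), I obtain $\nablas^{m+1}\curv=(\nablasp)^{m+1}\curv+\llbracket m,2\rrbracket_m\cdot\partial_su+\llbracket m-1,3\rrbracket_{m-1}$, which is \eqref{eq:s-sp-1} for $m+1$ since $\llbracket m,2\rrbracket_m=\llbracket (m+1)-1,2\rrbracket_{(m+1)-1}$ and $\llbracket m-1,3\rrbracket_{m-1}=\llbracket (m+1)-2,3\rrbracket_{(m+1)-2}$.

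The only non-mechanical part is the index bookkeeping: one must check that every tangential remainder $\langle\cdot,\curv\rangle_g\partial_su$ coming from \eqref{eq:ev-eq-5} and every curvature factor created by $\nablas\partial_su=\curv$ stays inside the advertised brackets $\llbracket m-1,2\rrbracket_{m-1}$ and $\llbracket m-2,3\rrbracket_{m-2}$. This reduces to the weight bookkeeping in \eqref{eq:req-A-B} — appending an undifferentiated $\curv$ raises the weight $a+\tfrac b2$ by exactly $\tfrac12$, while $\partial_s$ raises it by $1$ via Lemma \ref{lem:diff-tensors} — together with the evident monotonicity $\llbracket A,B\rrbracket_d\subseteq\llbracket A',B\rrbracket_{d'}$ whenever $A\leq A'$ and $d\leq d'$. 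I expect this to be the most error-prone, though entirely routine, step.
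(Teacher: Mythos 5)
Your proposal is correct and follows exactly the paper's (very terse) argument: the paper's proof is literally ``Follows from \eqref{eq:ev-eq-5} by induction'', i.e.\ apply the normal/tangential splitting $\nablas N=\nablasp N-\langle N,\curv\rangle_g\partial_su$ to $N=\curv$ and to the higher derivatives, and track the remainders in the bracket calculus via Lemma \ref{lem:diff-tensors}. Your base case and the weight bookkeeping in the inductive step check out against Notations \ref{not:tensors}--\ref{not:tensors-1}, so you have simply written out the details the paper omits.
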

\begin{proof}
    Follows from \eqref{eq:ev-eq-5} by induction.
\end{proof}

\subsection{Global existence and subconvergence}

\begin{notation}
    For a smooth immersion $u\colon I\to\H^2$, for $1\leq p<\infty$, write 
    \begin{equation}
        \|(\nabla_s^{\bot})^j\curv\|_p\defeq\Big(\int_I |(\nabla_s^{\bot})^j\curv|_g^p\dd s\Big)^{\frac{1}{p}}\quad\text{and}\quad \|(\nabla_s^{\bot})^j\curv\|_{\infty} \defeq \mathrm{sup}_I |(\nabla_s^{\bot})^j\curv|_g.
    \end{equation}
    Then $\|\curv\|_{m,p}\defeq\sum_{j=0}^m\|(\nabla_s^{\bot})^j\curv\|_p$ for $p\in [1,\infty]$.
\end{notation}

The following interpolation results will be used repeatedly.

\begin{proposition}\label{prop:gni-1}
    Consider a smooth immersion $u\colon I\to \H^2$ with $\Ll_{\H^2}(u)>0$. Let $k\in\N$, $p\in [2,\infty]$ and $0\leq i<k$. Then there exists $C=C(i,k,p,1/\Ll_{\H^2}(u))$ such that, writing $\alpha=(i+\frac{1}{2}-\frac{1}{p})/k$,
    \begin{equation}\label{eq:in-ieq-0}
        \|(\nabla_s^{\bot})^i\curv\|_p \leq C\cdot \|\curv\|_{k,2}^{\alpha} \|\curv\|_{2}^{1-\alpha}.
    \end{equation}
\end{proposition}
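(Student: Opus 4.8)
The plan is to reduce \eqref{eq:in-ieq-0} to the classical one–dimensional Gagliardo–Nirenberg interpolation inequality for scalar functions. Since every norm appearing in \eqref{eq:in-ieq-0} is invariant under reparametrisation, I would first assume that $u$ is parametrised by arc length, so that $I=[0,L]$ with $L\defeq\Ll_{\H^2}(u)>0$, $\partial_s=\frac{d}{ds}$, and $ds$ is Lebesgue measure on $[0,L]$. Next I would pass from the vectorial quantity $\curv$ to a scalar: since $|\partial_su|_g\equiv1$, one has $\langle\curv,\partial_su\rangle_g=0$, hence $\curv=\scurv\,\vec n$ for a smooth function $\scurv\colon[0,L]\to\R$, where $\vec n$ is the smooth unit normal along $u$. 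Because the normal bundle of a curve in the \emph{surface} $\H^2$ is a line bundle and $|\vec n|_g\equiv1$, the field $\vec n$ is parallel in the normal bundle, i.e.\ $\nablasp\vec n=0$; the Leibniz rule then gives $(\nablasp)^j\curv=(\partial_s^j\scurv)\,\vec n$ and $|(\nablasp)^j\curv|_g=|\partial_s^j\scurv|$ for all $j\in\N_0$. Consequently \eqref{eq:in-ieq-0} is equivalent to the scalar estimate
\[
  \|\partial_s^i\scurv\|_{L^p([0,L])}\le C\,\Bigl(\sum_{j=0}^k\|\partial_s^j\scurv\|_{L^2([0,L])}\Bigr)^{\!\alpha}\,\|\scurv\|_{L^2([0,L])}^{1-\alpha},\qquad \alpha=\tfrac1k\bigl(i+\tfrac12-\tfrac1p\bigr),
\]
with $C=C(i,k,p,1/L)$.

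To prove this scalar inequality I would start from the classical Gagliardo–Nirenberg inequality on the unit interval, $\|\partial^i h\|_{L^p([0,1])}\le c(i,k,p)\bigl(\|\partial^k h\|_{L^2([0,1])}+\|h\|_{L^2([0,1])}\bigr)^{\alpha}\|h\|_{L^2([0,1])}^{1-\alpha}$ for $h\in W^{k,2}([0,1])$ (valid in the present range, since $2\le p\le\infty$ and $0\le i<k$ force $\tfrac ik\le\alpha<1$), apply it to $h(y)\defeq\scurv(Ly)$, and rescale. Using the exact identities $\|\partial^i h\|_{L^p([0,1])}=L^{i-1/p}\|\partial_s^i\scurv\|_{L^p([0,L])}$, $\|\partial^k h\|_{L^2([0,1])}=L^{k-1/2}\|\partial_s^k\scurv\|_{L^2([0,L])}$ and $\|h\|_{L^2([0,1])}=L^{-1/2}\|\scurv\|_{L^2([0,L])}$ together with the key relation $k\alpha=i+\tfrac12-\tfrac1p$, the overall power of $L$ becomes zero and one is left with $\|\partial_s^i\scurv\|_{L^p([0,L])}\le c(i,k,p)\bigl(\|\partial_s^k\scurv\|_{L^2([0,L])}+L^{-k}\|\scurv\|_{L^2([0,L])}\bigr)^{\alpha}\|\scurv\|_{L^2([0,L])}^{1-\alpha}$. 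Since $\|\partial_s^k\scurv\|_{L^2}+L^{-k}\|\scurv\|_{L^2}\le\max\{1,L^{-k}\}\sum_{j=0}^k\|\partial_s^j\scurv\|_{L^2}$, taking $C\defeq c(i,k,p)\max\{1,(1/L)^k\}$ — which depends only on $i,k,p$ and $1/\Ll_{\H^2}(u)$ — and translating back via $|(\nablasp)^j\curv|_g=|\partial_s^j\scurv|$ and $\|\curv\|_2=\|\scurv\|_{L^2}$ yields \eqref{eq:in-ieq-0}.

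I do not expect a genuine obstacle here; the argument is essentially bookkeeping, and the two points that need care are (i) the reduction to scalars, which crucially exploits that a curve in a \emph{surface} has a flat normal connection ($\nablasp\vec n=0$), so that covariant derivatives of $\curv$ become plain derivatives of $\scurv$ with no ambient–curvature corrections, and (ii) tracking the powers of $L$ in the scaling so that the constant depends only on $1/\Ll_{\H^2}(u)$ and degenerates only as the curve shrinks, not as it lengthens — which is precisely why the identity $k\alpha=i+\tfrac12-\tfrac1p$ must be used exactly rather than merely estimated. It only remains to recall that the Gagliardo–Nirenberg inequality on $[0,1]$ invoked above is classical (Nirenberg's interpolation inequality).
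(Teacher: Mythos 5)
Your argument is correct. The key reduction is legitimate: since $\H^2$ is two-dimensional, the unit normal $\vec n$ along $u$ satisfies $\langle\nablas\vec n,\vec n\rangle_g=0$, so $\nablas\vec n$ is purely tangential and hence $\nablasp\vec n=0$; by induction $(\nablasp)^j\curv=(\partial_s^j\scurv)\,\vec n$ and $|(\nablasp)^j\curv|_g=|\partial_s^j\scurv|$, so the statement becomes a scalar inequality on $[0,L]$ with $L=\Ll_{\H^2}(u)$. Your bookkeeping of the rescaling to $[0,1]$ is also right: the powers of $L$ cancel exactly because $k\alpha=i+\tfrac12-\tfrac1p$, and the leftover factor $\max\{1,L^{-k}\}$ gives a constant depending only on $i,k,p$ and $1/\Ll_{\H^2}(u)$, as claimed; the parameter range $p\in[2,\infty]$, $0\le i<k$ indeed puts $\alpha\in[i/k,1)$, so the classical Nirenberg inequality on the unit interval applies. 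This is, however, a different route from the paper's: the paper simply proves the result ``analogously to'' Dall'Acqua--Spener, i.e.\ by adapting the Dziuk--Kuwert--Sch\"atzle interpolation machinery, which works directly with normal vector fields $\phi$ along the curve, using Kato-type estimates such as $|\partial_s|\phi|_g|\le|\nablasp\phi|_g$ together with integration by parts and scaling in the length. Your reduction avoids all of that by exploiting the flatness of the normal line bundle over a curve in a \emph{surface}, which makes covariant derivatives of $\curv$ literally derivatives of the scalar $\scurv$; this buys a shorter, self-contained proof from the classical one-dimensional Gagliardo--Nirenberg inequality, at the price of being specific to codimension one (the cited argument extends verbatim to normal fields in higher-codimension settings, which is why it is the standard reference). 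Both approaches yield the same statement with the same dependence of the constant, so there is no gap.
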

\begin{proof}
    This can be proved analogously to \cite[Proposition 4.1]{dallacquaspener2017}.
\end{proof}

\begin{proposition}\label{prop:gni-2}
    Consider a smooth immersion $u\colon I\to \H^2$ with $\Ll_{\H^2}(u)>0$. Further suppose that there exists $K\subseteq \H^2$ compact with $u(I)\subseteq K$. For any $k\in\N$ and integers $a,b,d$ with $b\geq 2$, $d\leq k-1$ and $a+\frac{b}{2}<2k+1$, there is a constant $C$ depending only on $k$, $a$, $b$, ${1}/{\Ll_{\H^2}(u)}$ and $C(P_a^{b,d},K)$ from \eqref{eq:def-C(P,K)-1} with 
    \begin{equation}\label{eq:in-ieq-1}
        \int_I |P_a^{b,d}|_g\dd s \leq C \cdot \|\curv\|_{k,2}^{\gamma}\cdot \|\curv \|_2^{b-\gamma}
    \end{equation}
    where $\gamma=(a+b/2-1)/k$. Moreover, for any $\varepsilon\in (0,1)$,
    \begin{equation}\label{eq:in-ieq-2}
        \int_I |P_a^{b,d}|_g\dd s \leq \varepsilon \|(\nabla_s^{\bot})^k\curv\|_2^2 + C\varepsilon^{-\frac{\gamma}{2-\gamma}}\|\curv\|_2^{2\frac{b-\gamma}{2-\gamma}}+C\|\curv\|_2^b.
    \end{equation}
\end{proposition}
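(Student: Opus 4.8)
The plan is to reduce the estimate to a product of single-derivative norms and then invoke the Gagliardo--Nirenberg-type inequality of Proposition \ref{prop:gni-1} factor by factor. Concretely, writing $P_a^{b,d}$ in the form of Notation \ref{not:tensors} with curvature arguments $(\nablasp)^{i_1}\curv,\dots,(\nablasp)^{i_b}\curv$ where $\sum_{j=1}^b i_j=a$ and $\max_j i_j\le d$, the pointwise bound \eqref{eq:def-C(P,K)} gives $|P_a^{b,d}|_g\le C(P_a^{b,d},K)\prod_{j=1}^b|(\nablasp)^{i_j}\curv|_g$ on $I$. Since $b\ge 2$, I would apply Hölder's inequality with the $b$ equal conjugate exponents $p_1=\dots=p_b=b\in[2,\infty)$ (so $\sum_j 1/p_j=1$), obtaining $\int_I|P_a^{b,d}|_g\ds\le C(P_a^{b,d},K)\prod_{j=1}^b\|(\nablasp)^{i_j}\curv\|_b$. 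Because $0\le i_j\le d\le k-1<k$, Proposition \ref{prop:gni-1} applies to each factor with $p=b$ and $i=i_j$, yielding $\|(\nablasp)^{i_j}\curv\|_b\le C\,\|\curv\|_{k,2}^{\alpha_j}\|\curv\|_2^{1-\alpha_j}$ with $\alpha_j=(i_j+\frac12-\frac1b)/k\in[0,1)$ and $C$ depending only on $k$, $b$ and $1/\Ll_{\H^2}(u)$. Multiplying the $b$ resulting inequalities and using $\sum_{j=1}^b\alpha_j=\frac1k\big(\sum_j i_j+\frac b2-1\big)=\frac{a+b/2-1}{k}=\gamma$ produces \eqref{eq:in-ieq-1}, the constant depending only on $k,a,b,1/\Ll_{\H^2}(u)$ and $C(P_a^{b,d},K)$.

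For the absorption form \eqref{eq:in-ieq-2}, I would first upgrade $\|\curv\|_{k,2}$ on the right-hand side of \eqref{eq:in-ieq-1} to $\|(\nablasp)^k\curv\|_2+\|\curv\|_2$. This is standard: for each $0<j<k$, Proposition \ref{prop:gni-1} with $p=2$ gives $\|(\nablasp)^j\curv\|_2\le C\|\curv\|_{k,2}^{j/k}\|\curv\|_2^{1-j/k}$, and since $j/k<1$, Young's inequality turns this into $\|(\nablasp)^j\curv\|_2\le\eta\|\curv\|_{k,2}+C_\eta\|\curv\|_2$ for any $\eta>0$; summing over $j=1,\dots,k-1$, adding $\|(\nablasp)^k\curv\|_2+\|\curv\|_2$, and choosing $\eta$ small enough to absorb a multiple of $\|\curv\|_{k,2}$ into the left-hand side yields $\|\curv\|_{k,2}\le C(\|(\nablasp)^k\curv\|_2+\|\curv\|_2)$ with $C=C(k,1/\Ll_{\H^2}(u))$ (the case $k=1$ being trivial). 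Inserting this into \eqref{eq:in-ieq-1}, bounding $(x+y)^\gamma\le 4(x^\gamma+y^\gamma)$ since $\gamma<2$, and finally applying Young's inequality to the term $\|(\nablasp)^k\curv\|_2^\gamma\|\curv\|_2^{b-\gamma}$ with conjugate exponents $2/\gamma$ and $2/(2-\gamma)$ --- which is legitimate precisely because the hypothesis $a+\frac b2<2k+1$ forces $\gamma<2$ --- produces the three terms on the right of \eqref{eq:in-ieq-2}, with the stated $\varepsilon$-dependence $\varepsilon^{-\gamma/(2-\gamma)}$.

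This argument is largely routine once Proposition \ref{prop:gni-1} is in hand, so I do not anticipate a serious conceptual obstacle; the points requiring care are bookkeeping rather than ideas. One must check that the exponents $p_j$ can all be chosen in $[2,\infty]$ (this is exactly where $b\ge 2$ enters) and that every curvature derivative appearing has order $i_j\le d\le k-1<k$, so that Proposition \ref{prop:gni-1} is applicable (this is where $d\le k-1$ enters). One must also keep track that the constant from Proposition \ref{prop:gni-1}, after maximizing over the finitely many admissible values $i_j\in\{0,\dots,k-1\}$, depends only on $k$, $b$ and $1/\Ll_{\H^2}(u)$, so that multiplying by $C(P_a^{b,d},K)$ keeps the final constant within the advertised dependencies. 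The only place where the strict inequality $a+\frac b2<2k+1$ is genuinely used is the final Young step for \eqref{eq:in-ieq-2}; inequality \eqref{eq:in-ieq-1} itself holds under the weaker hypotheses $b\ge 2$ and $d\le k-1$ alone, since these already force $\gamma<b$ and hence $b-\gamma>0$.
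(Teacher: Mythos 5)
Your proposal is correct and is essentially the argument the paper relies on: the paper simply invokes the pointwise bound \eqref{eq:def-C(P,K)} and refers to \cite[Proposition 4.3]{dallacquaspener2017}, whose proof is exactly your scheme of Hölder with exponent $b$, the interpolation inequality of Proposition \ref{prop:gni-1} applied factorwise with $\sum_j\alpha_j=\gamma$, the equivalence $\|\curv\|_{k,2}\leq C(\|(\nablasp)^k\curv\|_2+\|\curv\|_2)$, and a final Young step with exponents $2/\gamma$, $2/(2-\gamma)$. The bookkeeping of where $b\geq 2$, $d\leq k-1$ and $a+\frac{b}{2}<2k+1$ enter matches the cited proof (the degenerate case $\gamma=0$ being trivial since \eqref{eq:in-ieq-1} then already gives \eqref{eq:in-ieq-2}).
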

\begin{proof}
    Using \eqref{eq:def-C(P,K)}, the statement is shown just as \cite[Proposition 4.3]{dallacquaspener2017}.
\end{proof}

\begin{theorem}\label{thm:lte}
    Let $u_0\colon I\to \H^2$ be a smooth immersion, $a\colon\H^2\to (-\infty,0)$ smooth and suppose that $u\colon [0,T)\times I\to \H^2$ is a time-maximal solution of
    \begin{equation}\label{eq:lte-1}
        \begin{cases}
            \partial_t u = a(u) ((\nabla_s^{\bot})^2\curv + \frac{1}{2}|\curv|_g^2\curv-\curv )&\text{on } [0,T)\times I\\
            u(0,\cdot)=u_0&\text{on }I\\
            u(t,y)=u_0(y)&\text{for }(t,y)\in [0,T)\times\partial I\\
            \partial_su(t,y)=\partial_su_0(y)&\text{for }(t,y)\in [0,T)\times\partial I.
        \end{cases}
    \end{equation}
    with
    \begin{equation}\label{eq:lte-1.1}
        \sup_{t\in [0,T)} \Ll_{\H^2}(u(t,\cdot)) < \infty.
    \end{equation}
    Then the solution exists globally in time, that is, $T=\infty$.
\end{theorem}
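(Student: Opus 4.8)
The plan is a contradiction argument in the style of \cite{dallacquaspener2017}: assume $T<\infty$, derive uniform-in-$t$ smooth bounds on $u(t,\cdot)$ as $t\uparrow T$, and then restart the (short-time) flow at $t=T$ to contradict the time-maximality of the solution. First I would fix the geometric setting. By \eqref{eq:lte-1.1} the hyperbolic length stays bounded, and since $u(t,0)=u_0(0)$ is fixed, every curve $u(t,\cdot)$ lies in the closed hyperbolic ball of radius $\sup_{t}\Ll_{\H^2}(u(t,\cdot))$ about $u_0(0)$, a fixed compact set $K\subseteq\H^2$ (Remark \ref{rem:len-cpct-ctrl}). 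Hence $u^{(2)}\geq c>0$ and $0<\min_K|a|\leq|a(u)|\leq\max_K|a|$ uniformly in $t$; one also checks $1/\Ll_{\H^2}(u(t,\cdot))$ stays bounded (a curve of small hyperbolic length confined to $K$ has large elastic energy), so the interpolation constants of Propositions \ref{prop:gni-1} and \ref{prop:gni-2} are uniform along the flow. Moreover, since $\partial_tu=\tfrac12 a(u)\nabla_{L^2(ds_u)}\E(u)$ with $a<0$ and the clamped data are time-independent, the first-variation formula \eqref{eq:1-var-elen} gives $\tfrac{d}{dt}\E(u(t,\cdot))=\tfrac12\int_I a(u)\,|\nabla_{L^2(ds_u)}\E(u)|_g^2\ds\leq 0$, so $\|\curv(t,\cdot)\|_2^2=\E(u(t,\cdot))\leq\E(u_0)$ for all $t$.

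The core of the argument is the higher-order energy estimates, and the choice of test field is dictated by the clamped boundary conditions: instead of $N=(\nablasp)^m\curv$, whose normal derivatives are not controlled at $\partial I$, one applies Lemma \ref{lem:ibp-lemma} with $N=(\nablatp)^jV$ for $j\in\N_0$. Since $u(\cdot,y)$ and $\partial_su(\cdot,y)$ are stationary for $y\in\partial I$, one has $V=0$ and (via \eqref{eq:ev-eq-6}) $\nablasp V=0$ on $(0,T)\times\partial I$; differentiating in $t$ and converting $\nablatp\leftrightarrow\nablasp$ by \eqref{eq:ev-eq-9}, whose commutator terms all carry a factor $V$ or $\nablasp V$, shows inductively that $(\nablatp)^jV$ and $\nablasp(\nablatp)^jV$ vanish on $(0,T)\times\partial I$, so Lemma \ref{lem:ibp-lemma} is applicable. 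Next, using Remark \ref{rem:prop-of-V} and Lemma \ref{lem:tech-lemm}, one verifies that $Y\defeq\nablatp N+(\nablasp)^2(|a(u)|(\nablasp)^2N)$ is of \emph{lower} order: the leading fourth-order parabolic contributions cancel because $|a|=-a$, leaving $Y$ (and $\langle\curv,V\rangle_g$) as finite sums of $P$-terms that, by \eqref{eq:in-ieq-2}, are absorbed into $\varepsilon\int_I|a(u)||(\nablasp)^2N|_g^2\ds$ plus a constant depending only on $K$, $c$, $\E(u_0)$ and $1/\Ll_{\H^2}$. This yields, for each $j$,
\[
\frac{d}{dt}\,\frac12\int_I |(\nablatp)^jV|_g^2\ds+\frac12\int_I |a(u)|\,|(\nablasp)^2(\nablatp)^jV|_g^2\ds\leq C_j,
\]
and integrating over $[0,t]$ with $t<T<\infty$ gives $\sup_{[0,T)}\int_I|(\nablatp)^jV|_g^2\ds<\infty$ together with $\int_0^T\!\!\int_I|(\nablasp)^2(\nablatp)^jV|_g^2\ds\,dt<\infty$ for every $j$. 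Finally, \eqref{eq:tech-lemm-4} expresses $(\nablatp)^{j+1}u$ as $a^{j+1}(\nablasp)^{4j+2}\curv$ plus lower-order $P$-terms, so an induction on $j$ (again using the interpolation inequalities for the remainders) converts these bounds into $\sup_{[0,T)}\|\curv(t,\cdot)\|_{m,2}<\infty$ for all $m\in\N$.

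To conclude, I would note that \eqref{eq:ev-eq-3} and the bounds already obtained keep $|\partial_xu(t,\cdot)|_g$ bounded above and below on $[0,T)$ (with a lower bound deteriorating at most like $\delta_0e^{-CT}$, harmless as $T<\infty$), so the $\|\curv\|_{m,2}$-bounds upgrade, in the fixed parametrization, to uniform $C^k$-bounds for $u(t,\cdot)$, and \eqref{eq:tech-lemm-4} likewise bounds all $\partial_t^ku$; near $t=0$ one combines this with the smoothness of $u_0$ and short-time regularity. By Arzel\`a--Ascoli, $u(t,\cdot)\to u_T$ in $C^\infty(I,\H^2)$ as $t\uparrow T$, with $u_T$ a smooth immersion satisfying $u_T=u_0$ and $\partial_su_T=\partial_su_0$ on $\partial I$; short-time existence started at $u_T$ extends the solution beyond $T$, contradicting maximality, so $T=\infty$.

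The step I expect to be the main obstacle is the boundary-term bookkeeping in the higher-order estimates: with merely clamped ($C^1$) Dirichlet data the curvature and its derivatives are free at $\partial I$, so one cannot test with $(\nablasp)^m\curv$; the fix is to run the whole estimate hierarchy through the time-derivative fields $(\nablatp)^jV$, and the technical heart is (i) proving these fields and their first $\nablasp$-derivatives vanish on $\partial I$ and (ii) checking, via the $P$-term calculus of Lemmata \ref{lem:diff-tensors} and \ref{lem:tech-lemm}, that the leading parabolic terms in $Y$ cancel so that everything left is absorbable by the damping term through Proposition \ref{prop:gni-2}.
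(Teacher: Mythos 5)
Your strategy is essentially the paper's (testing with time-derivative fields because the clamped data leave the curvature free at $\partial I$, the $P$-term calculus, interpolation, control of the parametrization, extension beyond $T$), but the technical heart of your estimate contains a genuine gap: the cancellation in $Y$ is \emph{not} complete for the unweighted test field $N=(\nablatp)^jV=(\nablatp)^{m}u$. Writing $a_s=\partial_s a$ and using \eqref{eq:tech-lemm-4} one finds
\begin{align}
    \nablatp N &= a^{m+1}(\nablasp)^{4m+2}\curv + 2m(m+1)\,a^m a_s\,(\nablasp)^{4m+1}\curv + \llbracket 4m,3\rrbracket_{4m},\\
    (\nablasp)^2\bigl(|a|(\nablasp)^2N\bigr) &= -a^{m+1}(\nablasp)^{4m+2}\curv - (2m^2+2m+2)\,a^m a_s\,(\nablasp)^{4m+1}\curv + \llbracket 4m,3\rrbracket_{4m},
\end{align}
so the top order cancels (this is your ``$|a|=-a$'' observation), but the order-$(4m+1)$ terms do not: $Y=-2\,a^m a_s\,(\nablasp)^{4m+1}\curv+\llbracket 4m,3\rrbracket_{4m}$, and $\partial_sa\not\equiv0$ for the Willmore weight $a=-1/(2y^4)$. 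This is exactly the term the paper eliminates by the weighted test field $N=|a|^{\eta}(\nablatp)^mu$ with $\eta=-\tfrac12$, chosen so that the coefficient $2\eta+1$ in \eqref{eq:lte-3.1} vanishes. Your claim that the remaining terms are ``absorbed by \eqref{eq:in-ieq-2}'' fails here on two counts: Proposition \ref{prop:gni-2} with $k=4m$ requires $d\le k-1=4m-1$, so neither the stray order-$(4m+1)$ term nor even the order-$4m$ terms inside $\llbracket 4m,3\rrbracket_{4m}$ may be interpolated directly, and the damping term only controls $\|(\nablasp)^{4m}\curv\|_2$. The paper handles the order-$4m$ terms by an additional integration by parts (Claim~1 in its proof, using $(\nablatp)^mu=0$ on $\partial I$) to reduce everything to $\llbracket 8m-2,4\rrbracket_{4m-1}$ before interpolating; your sketch omits this step as well. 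The gap is repairable --- either adopt the weight $|a|^{-1/2}$, or integrate the leftover $(\nablasp)^{4m+1}\curv$-term by parts once more (legitimate, since $N$ vanishes on $\partial I$) and then use Cauchy--Schwarz against the damping together with \eqref{eq:in-ieq-2} --- but as written the key estimate is not justified.

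A secondary point: your justification of the uniform lower bound on $\Ll_{\H^2}(u(t,\cdot))$ (``a curve of small hyperbolic length confined to $K$ has large elastic energy'') is false as stated, since short geodesic arcs have arbitrarily small length and zero elastic energy; the bound genuinely uses the boundary conditions, and the paper argues by cases: distinct endpoints give a distance bound, equal endpoints with distinct tangents give a bound via $\int_I|\langle\curv,v\rangle_g|\ds$, and fully closed data are handled by Fenchel's theorem in $\H^2$. The remaining steps of your proposal (vanishing of $(\nablatp)^jV$ and $\nablasp(\nablatp)^jV$ on $\partial I$, finite-time integration in place of the paper's Gronwall argument, control of the parametrization, and extension past $T$ to contradict maximality) are in line with the paper.
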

\begin{remark}\label{rem:ste}
    A note on the well-posedness of \eqref{eq:lte-1} is given in Appendix~\ref{app:wp} in the case of the Willmore flow, i.e. for the special choice $a(x,y)=-1/(2y^4)$. Similar arguments also apply in the general case of \eqref{eq:lte-1}.
\end{remark}
\begin{proof}[Proof of \Cref{thm:lte}]
    Firstly, using \eqref{eq:1-var-elen}, 
    \begin{equation}\label{eq:lte-1.2}
        \frac{\dd}{\dd t} \E(u(t,\cdot)) = - 2 \int_I |a(u)| \cdot |(\nabla_s^{\bot})^2\curv+\frac{1}{2}|\curv|_g^2\curv-\curv|_g^2 \leq 0.
    \end{equation}
    Moreover, by \eqref{eq:lte-1.1} and \Cref{rem:len-cpct-ctrl} with $p=u_0(y)$ for some $y\in\partial I$, there exists a compact $K\subseteq\H^2$ with $u(t,x) \in K$ for all $t\in [0,T)$ and $x\in I$.

    Secondly, we establish a lower bound on the length of each $u(t,\cdot)$. Write $I=[\alpha,\beta]$. If $u(\alpha)\neq u(\beta)$, this is clear. Next, if $u(\alpha)$ = $u(\beta)$ but $\partial_su(\alpha)\neq \partial_su(\beta)$, then there exists $v\in \R^2$ with $|v|=1$ such that $\langle \partial_su(\alpha),v\rangle_g \neq \langle \partial_su(\beta),v\rangle_g$. Using \eqref{eq:cov-der-h2}, one computes that
    \begin{align}
        \nabla_sv &= \partial_s v + \frac{1}{u^{(2)}} \begin{pmatrix}
            -v^{(1)}\partial_su^{(2)}-v^{(2)}\partial_su^{(1)}\\ v^{(1)}\partial_su^{(1)} - v^{(2)}\partial_su^{(2)} \end{pmatrix} = \frac{1}{u^{(2)}} \begin{pmatrix}
                -v^{(1)}\partial_su^{(2)}-v^{(2)}\partial_su^{(1)}\\ v^{(1)}\partial_su^{(1)} - v^{(2)}\partial_su^{(2)} \end{pmatrix}       
    \end{align} 
    so that, since $|v|=1$,
    \begin{align}
        |\nabla_sv|_g^2 &= \frac{1}{(u^{(2)})^4} \Big((v^{(1)}\partial_su^{(2)}+v^{(2)}\partial_su^{(1)})^2+(v^{(1)}\partial_su^{(1)}-v^{(2)}\partial_su^{(2)})^2\Big)\\
        &= \frac{1}{(u^{(2)})^4} \Big((\partial_su^{(2)})^2((v^{(1)})^2+(v^{(2)})^2) + (\partial_su^{(1)})^2((v^{(1)})^2+(v^{(2)})^2)\Big) = \frac{|\partial_su|^2}{(u^{(2)})^4} = \frac{1}{(u^{(2)})^2}.
    \end{align}
    Thus, by the fundamental theorem of calculus, writing $\delta=\min_{(x,y)^t\in K}y>0$,
    \begin{equation}
        \begin{aligned}
            0  &< \zeta\defeq  |\langle \partial_su(\beta),v\rangle_g - \langle \partial_su(\alpha),v\rangle_g| \leq \int_I |\partial_s \langle \partial_su,v\rangle_g| \dd s \leq \int_I |\langle \curv,v\rangle_g| + |\nabla_s v|_g\dd s\\
            &\leq \Big((\E(u))^{\frac12}+(\Ll_{\H^2}(u))^{\frac12}\Big)\cdot \bigg(\int_I \frac{1}{(u^{(2)})^2} \dd s\bigg)^{\frac12} \leq (\Ll_{\H^2}(u))^{\frac12}\frac{(\E(u_0))^{\frac12}}{\delta} + \frac{1}{\delta}\Ll_{\H^2}(u)\\
            &\leq \frac{1}{2}\zeta + \Ll_{\H^2}(u)\Big(\frac{1}{2\zeta} \frac{\E(u_0)}{\delta^2} + \frac{1}{\delta}\Big),
        \end{aligned}
    \end{equation}
    using Young's inequality in the last step. This shows boundedness of the hyperbolic length from below. Lastly, if $u(\alpha)=u(\beta)$ and $\partial_su(\alpha)=\partial_su(\beta)$, Fenchel's theorem in $\H^2$ (cf. \cite[Theorem 2.3]{dallacquaspener2017}) yields
    \begin{equation}
        2\pi \leq \int_I |\curv|_g\dd s \leq \sqrt{\Ll_{\H^2}(u)}\sqrt{\E(u)} \leq \sqrt{\E(u_0)}\cdot \sqrt{\Ll_{\H^2}(u)}
    \end{equation}
    and thus again boundedness of $\Ll_{\H^2}(u)$ from below. From now on, write $a=a(u)$.

    \textbf{Step 1:} Time-uniform bounds on $\|(\nabla_s^{\bot})^{l} \curv\|_{2}$. Firstly, we apply \Cref{lem:ibp-lemma} with $N=|a|^{\alpha}(\nabla_t^{\bot})^mu$ for fixed $m\in\N$ where $\alpha\defeq-\frac{1}{2}$. Notice that $N,\nabla_s^{\bot} N=0$ on $[0,T)\times\partial I$ by the boundary conditions, cf. \cite[Remark 2.5]{dallacquapozzi2014}. One obtains
    \begin{equation}\label{eq:lte-2}
        \begin{aligned}
            \frac{\dd}{\dd t}\frac{1}{2}&\int_I |a|^{2\alpha}\cdot|(\nabla_t^{\bot})^mu|_g^2\dd s + \int_I |a|\cdot |(\nabla_s^{\bot})^2[|a|^{\alpha}(\nabla_t^{\bot})^m u]|_g^2\dd s \\
            &= \int_I \langle Y,|a|^{\alpha}(\nabla_t^{\bot})^mu\rangle_g\dd s - \frac{1}{2}\int_I |a|^{2\alpha}\cdot|(\nabla_t^{\bot})^mu|_g^2\langle \curv,V\rangle_g\dd s
        \end{aligned}
    \end{equation}
    where 
    \begin{equation}\label{eq:lte-2.1}
        Y=|a|^{\alpha}(\nabla_t^{\bot})^{m+1}u+(\nabla_s^{\bot})^2\left[|a|\cdot(\nabla_s^{\bot})^2(|a|^{\alpha}\cdot(\nabla_t^{\bot})^mu)\right]+\llbracket 4m,3\rrbracket_{4m}
    \end{equation}
    and $V$ is as in \eqref{eq:prop-of-V}. In computing $Y$ in \eqref{eq:lte-2.1}, terms involving $\partial_ta$ contribute only to $\llbracket 4m,3\rrbracket_{4m}$, cf. \eqref{eq:pr-tl-1.1}. 
    Using $|a|=-a$, we compute
    \begin{equation}\label{eq:lte-3.1}
        \begin{aligned}
            (&\nabla_s^{\bot})^2\left[|a|\cdot(\nabla_s^{\bot})^2(|a|^{\alpha}\cdot(\nabla_t^{\bot})^mu)\right] \\
            &= (\nabla_s^{\bot})^2\Bigl[|a|\cdot\bigl(|a|^{\alpha}(\nabla_s^{\bot})^2(\nabla_t^{\bot})^mu-2\alpha |a|^{\alpha-1}\partial_sa\nabla_s^{\bot}(\nabla_t^{\bot})^mu\bigr) \\
            &\quad+ \llbracket 4m-2,3\rrbracket_{4m-2}\Bigr] \\
            &= -a|a|^{\alpha} (\nabla_s^{\bot})^{4}(\nabla_t^{\bot})^m u -2 (\alpha+1)|a|^{\alpha}\partial_sa(\nabla_s^{\bot})^3(\nabla_t^{\bot})^mu \\
            &\quad-2\alpha |a|^{\alpha} \partial_sa (\nabla_s^{\bot})^3(\nabla_t^{\bot})^m u  + \llbracket 4m,3\rrbracket_{4m}\\
            &= -a|a|^{\alpha} (\nabla_s^{\bot})^{4}(\nabla_t^{\bot})^m u -2 \smash{\underbrace{(2\alpha+1)}_{\mathclap{=\,0\text{ by the above choice of $\alpha$.}}}}|a|^{\alpha}\partial_sa(\nabla_s^{\bot})^3(\nabla_t^{\bot})^mu + \llbracket 4m,3\rrbracket_{4m}
        \end{aligned}
    \end{equation}
    and, using \eqref{eq:tech-lemm-4},
    \begin{equation}\label{eq:lte-3.2}
        \begin{aligned}
            (&\nabla_s^{\bot})^4(\nabla_t^{\bot})^mu\\
            &= a^m (\nabla_s^{\bot})^{4m+2}\curv + 2m(m+1)a^{m-1}\partial_sa (\nabla_s^{\bot})^{4m+1}\curv + \llbracket 4m,3\rrbracket_{4m}.
        \end{aligned}
    \end{equation}
    Hence, combining \eqref{eq:lte-3.1} and \eqref{eq:lte-3.2}, and observing that the terms of order $4m+2$ and $4m+1$ in $\curv$ cancel with the corresponding ones in $(\nabla_t^{\bot})^{m+1}u$, using \eqref{eq:tech-lemm-4} and \eqref{eq:lte-2.1}, $        Y=\llbracket 4m,3\rrbracket_{4m}$.
    Since we would like to apply \Cref{prop:gni-2} to the right-hand-side of \eqref{eq:lte-2} with $k=4m$, we need to make sure that there are no terms of order $4m$ to which we apply the interpolation inequality.

    \textbf{Claim 1.} We have that 
    \begin{equation}
        \int_I \langle Y,|a|^{\alpha}(\nabla_t^{\bot})^m u\rangle \dd s = \int_I \llbracket 8m-2,4\rrbracket_{4m-1}\dd s.
    \end{equation}
    \begin{proof}[Proof of Claim 1.]\renewcommand{\qedsymbol}{}
        Consider any term in $Y=\llbracket 4m,3\rrbracket_{4m}$ which contains a term $(\nabla_s^{\bot})^{4m}\curv$. In general, one such term is given by
        \begin{equation}
            T_u(\underbrace{\partial_su,\dots,\partial_su}_{c\in\N_0\text{-times}},\underbrace{\curv,\dots,\curv}_{b\in\N_0\text{-times}},(\nabla_s^{\bot})^{4m}\curv)^{\perp}
        \end{equation}
        where $T\in\mathcal{T}^{c+b+1}_1(\H^2)$ is a vector-valued tensor field on $\H^2$ and 
        \begin{equation}\label{eq:lte-c1-1}
            4m+\frac12(b+1)\leq 4m+\frac{3}{2}\quad\Longleftrightarrow\quad b\leq 2.
        \end{equation}
        By the product rule for covariant differentiation of tensors and \eqref{eq:ev-eq-5}, \eqref{eq:lte-c1-1},
        \begin{equation}\label{eq:lte-c1-2}
            T(\partial_su,\dots,(\nabla_s^{\bot})^{4m}\curv)^{\perp} = \nabla_s^{\bot}\bigl( \smash{\underbrace{T(\partial_su,\dots,(\nabla_s^{\bot})^{4m-1}\curv)^{\perp}}_{=\,\llbracket 4m-1,3\rrbracket_{4m-1}}} \bigr) + \llbracket 4m,3 \rrbracket_{4m-1}.
        \end{equation}
        Thus, integration by parts yields
        \begin{equation}
            \begin{aligned}
                &\int_I \langle T(\partial_su,\dots,(\nabla_s^{\bot})^{4m}\curv)^{\perp},|a|^{\alpha}(\nabla_t^{\bot})^m u \rangle_g\dd s \\
                &=  \int_I -\langle \llbracket 4m-1,3\rrbracket_{4m-1},\nabla_s^{\bot}\bigl[ |a|^{\alpha} (\nabla_t^{\bot})^m u\bigr]\rangle_g +\langle \llbracket 4m,3\rrbracket_{4m-1},|a|^{\alpha}(\nabla_t^{\bot})^m u\rangle_g \dd s\\
                &=\int_I \llbracket 8m-2,4\rrbracket_{4m-1}\dd s
            \end{aligned}
        \end{equation}
        since $(\nabla_t^{\bot})^mu=0$ on $\partial I$ due to the clamped boundary conditions. So Claim~1 follows.
    \end{proof}
    Altogether, recalling from \eqref{eq:prop-of-V} that $V=\llbracket 2,1\rrbracket_2$ and using \emph{Claim 1}, one obtains
    \begin{equation}\label{eq:lte-3}
        \int_I \langle Y,|a|^{\alpha}(\nabla_t^{\bot})^mu\rangle_g - \frac{1}{2} |a|^{2\alpha}|(\nabla_t^{\bot})^mu|_g^2\langle \curv,V\rangle_g \dd s = \int_I \llbracket 8m-2,4\rrbracket_{4m-1}\dd s.
    \end{equation}
    Furthermore,
    \begin{equation}\label{eq:lte-4.1}
        (\nabla_s^{\bot})^2\Bigl[|a|^{\alpha}\cdot (\nabla_t^{\bot})^m u\Bigr] = |a|^{\alpha} a^m\cdot  (\nabla_s^{\bot})^{4m}\curv + \llbracket 4m-1,1\rrbracket_{4m-1}
    \end{equation}
    by \eqref{eq:tech-lemm-4} and 
    \begin{equation}\label{eq:lte-4.2}
        \int_I |(\nabla_t^{\bot})^mu|_g^2\dd s = \int_I \llbracket 8m-4,2\rrbracket_{4m-2}\dd s.
    \end{equation}
    From  \eqref{eq:lte-2} and \eqref{eq:lte-3}, \eqref{eq:lte-4.1} and \eqref{eq:lte-4.2}, one obtains the following relation.
    \begin{equation}\label{eq:lte-4.3}
        \begin{aligned}
            \frac{\dd}{\dd t}\frac{1}{2}&\int_I |a|^{2\alpha}|(\nabla_t^{\bot})^mu|_g^2\dd s + \frac{1}{2} \int_I |a|^{2\alpha}|(\nabla_t^{\bot})^m u|_g^2\dd s + \frac{1}{2}\int_I  |a|^{2\alpha m+1} |(\nabla_s^{\bot})^{4m} \curv|_g^2\dd s \\
            &\leq \int_I |\llbracket 8m-2,4\rrbracket_{4m-1}|\dd s.
        \end{aligned}
    \end{equation}
    Now we apply the interpolation inequality in \Cref{prop:gni-2} to the above. To this end, apply \eqref{eq:in-ieq-2} to each term $P_a^{b,4m-1}$ in $\llbracket 8m-2,4 \rrbracket_{4m-1}$ from \eqref{eq:lte-4.3} with $k=4m$. This is possible since
    \begin{equation}
        a+\frac{b}{2}\leq 8m-2+\frac{4}{2} = 8m < 8m+1 = 2k+1,\quad 4m-1<4m
    \end{equation}
    and since $\Ll_{\H^2}(u(t,\cdot))$ is uniformly bounded from below by the considerations at the very beginning. Moreover, $\|\curv\|_2$ is uniformly bounded in $t\in [0,T)$ due to \eqref{eq:lte-1.2}. Hence, by \eqref{eq:in-ieq-2} and \Cref{prop:gni-2}, there exists a constant $C_m$ only depending on $k=4m$, the uniform lower bound on the length and on all parameters $a,b$ as well as all constants $\smash{C(P_a^{b,4m-1},K)}$ for the finitely many terms $\smash{P_a^{b,4m-1}}$ comprising $\smash{\llbracket 8m-2,4\rrbracket_{4m-1}}$ from \eqref{eq:lte-4.3} such that 
    \begin{equation}\label{eq:lte-4.5}
        \int_I |\llbracket 8m-2,4\rrbracket_{4m-1}|\dd s \leq \frac{\min_{K}|a|^{2\alpha m+1}}{2} \|(\nabla_s^{\bot})^{4m}\curv\|_2^2 + \frac12 C_m.
    \end{equation}
    Combining \eqref{eq:lte-4.3} and \eqref{eq:lte-4.5},
    \begin{equation}
        \frac{\dd}{\dd t}\int_I |a|^{2\alpha}|(\nabla_t^{\bot})^mu|_g^2\dd s + \int_I |a|^{2\alpha}|(\nabla_t^{\bot})^m u|_g^2\dd s \leq C_m
    \end{equation}
    uniformly in $t\in (0,T)$. Therefore, by Gronwall's inequality and the time-uniform bounds on $|a(u)|$,
    \begin{equation}\label{eq:lte-5}
        \sup_{t\in(0,T)} \|(\nabla_t^{\bot})^mu\|_2(t) < \infty.
    \end{equation}
    By \eqref{eq:tech-lemm-4}, we have
    \begin{equation}\label{eq:lte-5.1}
        \int_I |(\nabla_s^{\bot})^{4m-2}\curv|_g^2\dd s \leq \frac{2}{\min_K|a|^m} \int_I |(\nabla_t^{\bot})^m u|_g^2\dd s + \int_I \llbracket 8m-6,2\rrbracket_{4m-3}\dd s.
    \end{equation}
    Consequently, \eqref{eq:in-ieq-2} with $k=4m-2$ and $\varepsilon=\frac{1}{2}$ applied to the second term on the right-hand-side of \eqref{eq:lte-5.1} and \eqref{eq:lte-5} yield $\sup_{t\in (0,T)}\|(\nabla_s^{\bot})^{4m-2}\curv\|_2(t) <\infty$. 
    Applying again \eqref{eq:in-ieq-2}, we obtain
    \begin{equation}\label{eq:lte-7}
        \sup_{t\in (0,T)}\|(\nabla_s^{\bot})^{l}\curv\|_2(t) < \infty \quad\text{for all $l\in\N$}.
    \end{equation}

    \textbf{Step 2:} Time-uniform bounds on $\||\nabla_s^{l}\curv|_g\|_{\infty}$. Since the length of the curves $u(t,\cdot)$ is uniformly bounded from below, \eqref{eq:in-ieq-0} in \Cref{prop:gni-1} and \eqref{eq:lte-7} yield 
    \begin{equation}\label{eq:lte-8.50}
        \sup_{t\in (0,T)} \||(\nabla_s^{\bot})^l\curv|_g\|_{\infty}(t) <\infty\quad\text{for all $l\in\N$}.
    \end{equation}  
    Using \eqref{eq:s-sp-1}, for any $l\in\N$, one immediately obtains that also
    \begin{equation}\label{eq:lte-8.5}
        \sup_{t\in (0,T)} \||\nabla_s^l\curv|_g\|_{\infty}(t)<\infty\quad\text{for all $l\in\N$}.
    \end{equation}

    \textbf{Step 3:} Time-uniform bounds on $\||\nabla_x^l\curv|_g\|_{\infty}$. Suppose that $T<\infty$. Then one can proceed as in \cite[p.21]{dallacquaspener2017} to conclude from \eqref{eq:lte-8.50} that $\frac1C\leq |\partial_xu|_g\leq C$ for some $C>0$ and 
    \begin{equation}
        \sup_{t\in (0,T)}\bigl\|\partial_x^l |\partial_xu|_g\bigr\|_{\infty}(t) <\infty\quad\text{for all $l\in\N$. }
    \end{equation}
    That is, one obtains control of the parametrization in finite time. Then one easily deduces 
    \begin{equation}\label{eq:lte-9}
        \sup_{t\in (0,T)} \||\nabla_x^l\curv|_g\|_{\infty}(t)<\infty
    \end{equation}
    for any $l\in\N$.

    \paragraph{Conclusion.} Suppose that $T<\infty$. As $\sup_{t\in(0,T)}\Ll_{\H^2}(u(t,\cdot))<\infty$ and by \eqref{eq:lte-9}, one concludes that $u$ extends to a smooth solution of \eqref{eq:lte-1} on $[0,T]$ which contradicts the maximality of $T$ due to short-time existence, cf. \Cref{rem:ste}.
\end{proof}

\begin{proposition}[Subconvergence result]\label{prop:sc}
    Let $u_0\colon  I\to \H^2$ be a smooth immersion and suppose that $u\colon[0,\infty)\times I\to \H^2$ is a smooth solution of \eqref{eq:lte-1} with
    \begin{equation}\label{eq:sc-1}
        \sup_{t\in[0,\infty)} \Ll_{\H^2}(u(t,\cdot))<\infty.
    \end{equation}
    As $(t_k)_{k\in\N}\subseteq (0,\infty)$ diverges to $\infty$, the reparametrizations of $u(t_k,\cdot)\colon I\to \H^2$ on $[0,1]$ with constant $g_{\H^2}$-velocity converge in $C^{\infty}([0,1],\H^2)$ to some $u_{\infty}$ up to subsequence. Moreover, $f_{u_{\infty}}$ is a Willmore surface (equivalently: $u_{\infty}$ is a critical point of $\E$) with the given boundary data.
\end{proposition}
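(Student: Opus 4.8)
\emph{Proof strategy.} The plan is to run the by-now standard subconvergence argument for geometric gradient flows, feeding it the a-priori estimates already produced in the proof of Theorem~\ref{thm:lte}. Set $\Lambda\defeq\sup_{t\geq0}\Ll_{\H^2}(u(t,\cdot))<\infty$. By Remark~\ref{rem:len-cpct-ctrl} (with $p=u_0(y)$, $y\in\partial I$) there is a compact $K\subseteq\H^2$ with $u(t,x)\in K$ for all $t,x$, and the estimate at the beginning of the proof of Theorem~\ref{thm:lte} gives a uniform lower bound $\ell_0>0$ for $\Ll_{\H^2}(u(t,\cdot))$. Moreover Steps~1 and~2 of that proof only use the length bounds (not $T<\infty$), so $\sup_{t\geq0}\|(\nablasp)^l\curv\|_2(t)<\infty$ and $\sup_{t\geq0}\||(\nablasp)^l\curv|_g\|_\infty(t)<\infty$ for every $l\in\N_0$. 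By \eqref{eq:lte-1.2} the map $t\mapsto\E(u(t,\cdot))$ is non-increasing and bounded below by $0$, hence converges; since $a\colon\H^2\to(-\infty,0)$ is continuous, $|a(u)|\geq c_0>0$ on $K$, and integrating \eqref{eq:lte-1.2} in time yields
\begin{equation}
    \int_0^\infty \big\| (\nablasp)^2\curv+\tfrac12|\curv|_g^2\curv-\curv\big\|_{L^2(ds_{u(t,\cdot)})}^2\,dt \leq \frac{\E(u_0)}{c_0} < \infty .
\end{equation}

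Next I would upgrade this to a genuine limit. The function $g(t)\defeq\int_I|(\nablasp)^2\curv+\tfrac12|\curv|_g^2\curv-\curv|_g^2\,ds_{u(t,\cdot)}$ is Lipschitz on $[0,\infty)$: differentiating under the integral sign, using \eqref{eq:ev-eq-3} for $\partial_t(ds_u)$ and the commutator/evolution formulae of Lemmata~\ref{lem:ev-eq} and~\ref{lem:tech-lemm}, one writes $g'(t)$ as an integral of terms each of which is controlled by the Step~1--2 bounds together with the bounds on $|a(u)|$ and its derivatives on $K$. A nonnegative, uniformly continuous, integrable function on $[0,\infty)$ tends to $0$, so $g(t)\to0$ as $t\to\infty$; equivalently $\|\nabla_{L^2(ds)}\E(u(t,\cdot))\|_{L^2(ds)}\to0$.

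Now pass to the curves. Let $\tilde u_k\colon[0,1]\to\H^2$ be the reparametrization of $u(t_k,\cdot)$ with constant $g_{\H^2}$-speed, so that $|\partial_x\tilde u_k|_g\equiv L_k\defeq\Ll_{\H^2}(u(t_k,\cdot))\in[\ell_0,\Lambda]$. Since $\curv$, its $\nablasp$-derivatives, the hyperbolic length and $\|\nabla_{L^2(ds)}\E\|_{L^2(ds)}$ are all reparametrization-invariant, the $\tilde u_k$ have trace in $K$, uniformly bounded $\|(\nablasp)^l\curv\|_2$ for all $l$, and $\|\nabla_{L^2(ds_{\tilde u_k})}\E(\tilde u_k)\|_{L^2(ds_{\tilde u_k})}\to0$. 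Using $\curv=L_k^{-2}\nablax\partial_x\tilde u_k$, the relation between $\nablasp$ and $\nablax$ for constant-speed curves, and the boundedness of the Christoffel symbols of $(\H^2,g)$ together with all their derivatives on $K$, one converts these covariant bounds inductively into bounds on $\tilde u_k$ in $W^{l,2}([0,1],\R^2)$ for every $l\in\N$; since also $\tilde u_k([0,1])\subseteq K$, Sobolev embedding and Arzel\`a--Ascoli give, after passing to a subsequence, $\tilde u_k\to u_\infty$ in $C^\infty([0,1],\H^2)$ with $L_k\to L_\infty\in[\ell_0,\Lambda]$. Then $u_\infty$ is a smooth immersion, as $|\partial_x u_\infty|_g\equiv L_\infty>0$ and $u_\infty([0,1])\subseteq K\subseteq\H^2$. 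The clamped boundary data passes to the limit: a constant-speed reparametrization by an increasing diffeomorphism fixes the endpoints $u_0(\partial I)$ and the unit tangents $\partial_s u_0$ there, and these are transferred to $u_\infty$ by the $C^1$-convergence.

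Finally, $C^\infty$-convergence gives $\nabla_{L^2(ds)}\E(\tilde u_k)\to\nabla_{L^2(ds)}\E(u_\infty)$ uniformly on $[0,1]$, while $\|\nabla_{L^2(ds_{\tilde u_k})}\E(\tilde u_k)\|_{L^2(ds_{\tilde u_k})}\to0$ and $L_k\to L_\infty>0$, so $\nabla_{L^2(ds_{u_\infty})}\E(u_\infty)=0$; by \eqref{eq:1-var-elen} this means $u_\infty$ is a critical point of $\E$ among immersions with the prescribed clamped boundary data. By \eqref{eq:l2-grads}, and since $(u_\infty^{(2)})^4>0$ and $|\vec{n}|=1$, the vanishing of $2(\nablasp)^2\curv+|\curv|_g^2\curv-2\curv$ is equivalent to $\Delta_{g_{f_{u_\infty}}}H+2H(H^2-K)=0$ on $I$, hence on $I\times\S^1$ by the rotational symmetry, i.e.\ $f_{u_\infty}$ is a Willmore surface. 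I expect the main obstacle to be the third paragraph --- turning the reparametrization-invariant a-priori estimates into honest $C^\infty$-compactness of the $\R^2$-valued maps $\tilde u_k$, which requires careful bookkeeping of the ambient geometry of $\H^2$ over the compact set $K$; verifying the Lipschitz bound on $g$ in the second paragraph is routine but also somewhat lengthy.
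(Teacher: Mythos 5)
Your proposal is correct and follows essentially the same route as the paper: uniform bounds from Steps 1--2 of the proof of Theorem \ref{thm:lte}, constant-speed reparametrization converted into parametric $W^{l,2}$-bounds and hence $C^\infty$-subconvergence, and the ``$L^1$ in time plus uniformly bounded time-derivative'' argument (the paper's $h(t)$, your $g(t)$) to conclude that the $L^2$-gradient tends to zero, so the limit is a critical point and $f_{u_\infty}$ is Willmore via \eqref{eq:l2-grads}. The only differences are cosmetic (Barbalat-type phrasing, Christoffel-symbol bookkeeping instead of the identity $\nabla_{\partial_x}^m u_k=(\nablas^m u(t_k,\cdot))\circ\psi_k\cdot L_k^m$).
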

\begin{proof}
    Consider $(t_k)_{k\in\N}\subseteq (0,\infty)$ with $t_k\nearrow\infty$. Denote by $u_k\colon [0,1]\to \H^2$ the constant $g_{\H^2}$-velocity reparametrizations of $u(t_k,\cdot)$. More precisely, choosing smooth diffeomorphisms $\psi_k\colon[0,1]\to I$ with $\partial_x\psi_k=L_k/|\partial_xu(t_k,\cdot)|_g\circ\psi_k$, we set $u_k\defeq u(t_k,\cdot)\circ \psi_k$ where $L_k\defeq\Ll_{\H^2}(u(t_k,\cdot))$. Particularly, $|\partial_xu_k|_g = L_k$ for any $k\in\N$ and 
    \begin{equation}\label{eq:sc-4}
        \nabla_x^m u_k = \left(\nabla_s^m u(t_k,\cdot)\right)\circ\psi_k\cdot (L_k)^m.
    \end{equation}
    Combined with \eqref{eq:lte-8.5} and $\curv=\nabla_s\partial_su$, \eqref{eq:sc-4} and \eqref{eq:sc-1} yield 
    \begin{equation}\label{eq:sc-5}
        \sup_{k\in\N} \|\nabla_x^mu_k\|_{\infty} < \infty\quad\text{for all }m\geq 2.
    \end{equation}
    Moreover, again by \eqref{eq:sc-1},
    \begin{equation}\label{eq:sc-6}
        |\partial_xu_k| \leq \sup_{k\in\N} \Ll_{\H^2}(u_k)=\sup_{k\in\N} \Ll_{\H^2}(u(t_k,\cdot))<\infty.
    \end{equation}
    By \eqref{eq:sc-5}, \eqref{eq:sc-6} and the clamped boundary conditions, $(u_k)_{k\in\N}\subseteq W^{m,2}([0,1])$ is a bounded sequence for any $m\in\N$. Therefore, there exists a subsequence converging to some $u_{\infty}$ in $C^{\infty}([0,1],\H^2)$.

    Finally, we wish to show that $f_{u_{\infty}}$ is a Willmore surface. By \eqref{eq:l2-grads}, it is sufficient to check that $\nabla_{L^2(ds_{u_{\infty}})}\E(u_{\infty}) = 0$. To this end, using \eqref{eq:def-V}, define 
    \begin{equation}
        h(t)\defeq \frac12\langle \nabla\E(u(t,\cdot)),|a|\nabla\E(u(t,\cdot))\rangle_{L^2(\dd s)}=2\int_I  \frac{1}{|a|} |V|_g^2\dd s.
    \end{equation}
    By \eqref{eq:1-var-elen} and \eqref{eq:lte-1}, 
    \begin{equation}
        \frac{\dd}{\dd t} \E(u(t,\cdot)) =  -\int_I \frac12|a|\cdot |\nabla\E(u(t,\cdot))|^2\dd s = - h(t).
    \end{equation}
    One concludes $h\in L^1([0,\infty))$. Moreover, since
    \begin{equation}
        \begin{aligned}
           \frac12 h'(t)
            &= \int_I \frac{ a'(u)\cdot \partial_tu}{a(u)^2} |V|_g^2 - \frac{2}{a(u)} \langle V,\nabla_t^{\bot} V\rangle_g\dd s + \int_I \frac{1}{a(u)} |V|_g^2 \langle \curv,V\rangle_g\dd s\\
            &=\int_I \llbracket 8,2 \rrbracket_6\dd s
        \end{aligned}
    \end{equation}
    by \eqref{eq:ev-eq-3} and \eqref{eq:prop-of-V}, \eqref{eq:lte-8.50} and \eqref{eq:sc-1} yield that $\| h' \|_{L^{\infty}([0,\infty))} < \infty$.
    Thus, $\lim_{t\to\infty} h(t)=0$. This yields $\nabla\E(u_{\infty})=0$ by the smooth convergence established above.
\end{proof}

Together, \Cref{thm:lte}, \Cref{prop:sc} and \Cref{cor:el-ctrl-len} immediately yield the following global existence and subconvergence result.
\begin{corollary}\label{cor:ex-sc-below-8}
    Let $u_0\colon I\to \H^2$ be a smooth immersion satisfying 
    \begin{equation}\label{eq:en-constr}
        \E(u_0)\leq 8.
    \end{equation}
    Furthermore, consider $a\colon\H^2\to (-\infty,0)$ smooth and suppose that $u\colon [0,T)\times I\to \H^2$ is a time-maximal solution of \eqref{eq:lte-1}. Then $T=\infty$ and $u$ sub-converges to a critical point of $\E$.
\end{corollary}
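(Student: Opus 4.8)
The plan is to reduce the statement to the three ingredients already available: Theorem \ref{thm:lte} upgrades a uniform-in-time bound on the hyperbolic length to global existence, Proposition \ref{prop:sc} then yields subconvergence to a critical point of $\E$, and Corollary \ref{cor:el-ctrl-len} produces the required length bound out of a \emph{strict} energy bound $\sup\E<8$ together with fixed endpoints. So the whole task is to verify \eqref{eq:lte-1.1}. The only delicate point is that \eqref{eq:en-constr} only gives $\E(u_0)\le 8$, whereas Corollary \ref{cor:el-ctrl-len} needs strict inequality; the borderline case $\E(u_0)=8$ will be handled separately.

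First I would record that, by the first-variation identity underlying \eqref{eq:lte-1.2}, the map $t\mapsto\E(u(t,\cdot))$ is non-increasing along \eqref{eq:lte-1}, so $\E(u(t,\cdot))\le\E(u_0)\le 8$ for all $t\in[0,T)$. Moreover, writing $I=[\alpha,\beta]$, the Dirichlet conditions in \eqref{eq:lte-1} pin the endpoints: $u(t,\alpha)=u_0(\alpha)$ and $u(t,\beta)=u_0(\beta)$ for every $t\in[0,T)$.

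Then I would split into two cases. If $\E(u(t_0,\cdot))<8$ for some $t_0\in[0,T)$, then by monotonicity $\E(u(t,\cdot))<8$ for all $t\in[t_0,T)$, so Corollary \ref{cor:el-ctrl-len}, applied with endpoints $u_0(\alpha),u_0(\beta)$ to the family $\{u(t,\cdot):t\in[t_0,T)\}$, gives $\sup_{t\in[t_0,T)}\Ll_{\H^2}(u(t,\cdot))<\infty$; on the compact interval $[0,t_0]$ the length is bounded because $t\mapsto\Ll_{\H^2}(u(t,\cdot))$ is continuous up to $t=0$, using the $W^{2,2}$-regularity from Remark \ref{rem:ste} and smoothness of $u_0$. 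Hence \eqref{eq:lte-1.1} holds, and Theorem \ref{thm:lte} gives $T=\infty$ while Proposition \ref{prop:sc} gives subconvergence to a critical point of $\E$. In the complementary case $\E(u(t,\cdot))\equiv 8$, the dissipation term in \eqref{eq:lte-1.2} vanishes identically; since $a<0$, this forces $\partial_tu=a(u)((\nablasp)^2\curv+\tfrac12|\curv|_g^2\curv-\curv)\equiv 0$, so $u(t,\cdot)\equiv u_0$. In particular $u_0$ is already a critical point of $\E$, the stationary solution is defined for all $t\ge 0$, whence $T=\infty$ by time-maximality, and subconvergence holds trivially.

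I expect the only (mild) obstacle to be exactly this strict-versus-non-strict threshold in Corollary \ref{cor:el-ctrl-len}: one has to observe that $\E(u_0)=8$ is compatible only with the stationary solution, which makes the borderline case the easiest rather than a genuine difficulty. Everything else is a direct assembly of the preceding Theorem \ref{thm:lte}, Proposition \ref{prop:sc}, and Corollary \ref{cor:el-ctrl-len}.
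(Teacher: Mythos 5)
Your proposal is correct and takes essentially the same route as the paper: the paper splits on whether $\nabla_{L^2(ds_{u_0})}\E(u_0)$ vanishes (then the flow is constant) or not (then $\partial_t\E<0$ at $t=0$, so the energy drops strictly below $8$ and Corollary \ref{cor:el-ctrl-len}, Theorem \ref{thm:lte} and Proposition \ref{prop:sc} apply), which is the same dichotomy you phrase as ``energy strictly below $8$ at some time'' versus ``energy identically $8$, hence vanishing dissipation and a stationary, critical $u_0$''. No gaps.
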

\begin{proof}
    We apply \Cref{thm:lte} and \Cref{prop:sc} suitably. To this end, firstly suppose that $\nabla_{L^2(ds_{u_0})}\E(u_0)=0$. Then $u(t,\cdot)=u_0$ for all $t\geq 0$ by \eqref{eq:lte-1} and the conclusion of the corollary still holds. Conversely, if $\nabla_{L^2(ds_{u_0})}\E(u_0)\neq 0$, then $\partial_t \E(u(t,\cdot))|_{t=0} < 0 $ by \eqref{eq:1-var-elen}, \eqref{eq:nabla-L2} and \eqref{eq:lte-1}. That is, there exists $\varepsilon>0$ such that $\E(u(t,\cdot))\leq \E(u(\varepsilon,\cdot))<8$ for all $t\geq \varepsilon$. \Cref{cor:el-ctrl-len} yields that $\sup_{t\geq\varepsilon}\Ll_{\H^2}(u(t,\cdot))<\infty$ and \eqref{eq:lte-1.1} resp. \eqref{eq:sc-1} follow.
\end{proof}

%-------------------------------------------------------------------------------------------------------
%-----Convergence---------------------------------------------------------------------------------------
%-------------------------------------------------------------------------------------------------------

\section{Convergence}\label{sec:conv}
In this section, we improve the subconvergence result in \Cref{prop:sc}. In particular, we show that the limit $u_{\infty}$ does not depend on the (sub-)sequence of times. 
\subsection{The \L ojasiewicz-{S}imon gradient inequality}

Firstly, we briefly review the arguments used in \cite{dallacquapozzispener2016} to show a \L osjasiewicz-Simon inequality for the Euclidean elastic energy with Dirichlet boundary data to argue that an analogous result also holds for the hyperbolic elastic energy with Dirichlet boundary data.

Fix $I=[a,b]$ and $p_a,p_b\in\H^2$ as well as $\tau_a,\tau_b\in\R^2$, define
\begin{equation}
    \X \defeq \{ u\in W^{4,2}(I,\H^2):|\partial_xu|>0\text{ and }u(y)=p_y,\partial_su(y)=\tau_y\text{ for all $y\in\partial I$} \}
\end{equation}
and choose $\overline{u}\in\X$. Whenever $\U\subseteq L^2(I,\R^2)$, we write $\U^{\perp}\defeq \{f\in\U:\langle f,\partial_x\overline{u}\rangle =0\text{ a.e.}\}$ in the following. Further define $V_c\defeq W^{4,2}(I,\R^2)\cap W_0^{2,2}(I,\R^2)$. 

For $\varepsilon>0$ sufficiently small, for all $f\in W^{4,2,\perp}(I,\R^2)$ with $\|f\|_{W^{4,2}}<\varepsilon$, the curve $\overline{u}+f$ remains immersed and its trace lies in $\H^2$. For such choices of $\varepsilon$, define
\begin{equation}
    U_{\varepsilon}\defeq \{f\in V_c^{\perp}:\|f\|_{W^{4,2}}<\varepsilon\}.
\end{equation}
For such normal perturbations, define the elastic energy
\begin{equation}
    E\colon U_{\varepsilon}\to\R,\quad f\mapsto \E(\overline{u}+f).
\end{equation}
\begin{remark}
    In the following, $\nabla_{L^2(\dd x)}E\colon U_{\varepsilon}\to L^{2,\perp}(I,\R^2)$ refers to the $L^2(\dd x)$-gradient of $E$ such that
    \begin{equation}
        E'(f)(\varphi) = \langle \nabla_{L^2(\dd x)}E(f),\varphi\rangle_{L^2(\dd x)}
    \end{equation}
    for any $f \in U_{\varepsilon}$ and $\varphi\in V_c^{\perp}$. Using \eqref{eq:nabla-L2}, a direct computation and \eqref{eq:1-var-elen} yield that
    \begin{equation}
        \begin{aligned}            
        \nabla_{L^2(\dd x)}E(f) = &\frac{|\partial_x(\overline{u}+f)|}{(\overline{u}^{(2)}+f^{(2)})^3}\nabla_{L^2(\dd s_{\overline{u}+f})}\E(\overline{u}+f) \\
        &- \left\langle \frac{|\partial_x(\overline{u}+f)|}{(\overline{u}^{(2)}+f^{(2)})^3}\nabla_{L^2(\dd s_{\overline{u}+f})}\E(\overline{u}+f),\frac{\partial_x\overline{u}}{|\partial_x\overline{u}|} \right\rangle \frac{\partial_x\overline{u}}{|\partial_x\overline{u}|}.
        \end{aligned}
    \end{equation} 
\end{remark}
\begin{proposition}\label{prop:anal-fred}
    The energy $E$ satisfies
    \begin{enumerate}[(a)]
        \item $E$ is analytic on $U_{\varepsilon}$,
        \item the $L^2(\dd x)$-gradient $\nabla_{L^2(\dd x)}E\colon U_{\varepsilon}\to L^{2,\perp}(I,\R^2)$ is analytic and
        \item the derivative $(\nabla_{L^2(\dd x)} E)'(0)\colon V_c^{\perp}\to L^{2,\perp}(I,\R^2)$ is Fredholm with index zero.
    \end{enumerate}
\end{proposition}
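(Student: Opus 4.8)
The plan is to follow the strategy of \cite{dallacquapozzispener2016}, where the same properties are proved for the Euclidean elastic energy under clamped boundary conditions: the only structural change here is the presence of the conformal factor $1/(u^{(2)})^{2}$ of the hyperbolic metric, which on $U_{\varepsilon}$ is a smooth function of $f$ with denominator bounded uniformly away from $0$, and hence influences none of the arguments below.

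\textbf{Parts (a) and (b).} These follow from the analyticity of superposition (Nemytskii) operators. Using the one-dimensional Sobolev embedding $W^{4,2}(I)\hookrightarrow C^{3}(I)$, one writes — via \eqref{eq:cov-der-h2} and \eqref{eq:def-curv} — the integrand of $E(f)=\E(\overline{u}+f)$ in the form $\Phi(x,f,\partial_{x}f,\partial_{x}^{2}f)$, where $\Phi$ is real-analytic on the open set determined by $\overline{u}^{(2)}+f^{(2)}>0$ and $|\partial_{x}(\overline{u}+f)|>0$; for $\varepsilon$ small the relevant range lies in this set. Since $f\mapsto(f,\partial_{x}f,\partial_{x}^{2}f)$ is bounded linear from $W^{4,2}(I,\R^{2})$ into $C^{1}(I,\R^{2})^{3}$, and superposition by a real-analytic function is analytic between such spaces of continuously differentiable functions, $f\mapsto\Phi(\cdot,f,\partial_{x}f,\partial_{x}^{2}f)$ is analytic into $C^{1}(I)$; composing with the bounded linear functional $\int_{I}\cdot\,dx$ yields (a). For (b), the Remark preceding the statement exhibits $\nabla_{L^{2}(dx)}E(f)$ as a rational expression in $f$ and its derivatives up to order $4$, affine-linear in $\partial_{x}^{4}f$, with denominators bounded below on $U_{\varepsilon}$; grouping it as $\sum_{j}\Psi_{j}(\cdot,f,\dots,\partial_{x}^{3}f)\,\partial_{x}^{k_{j}}f$ with $k_{j}\le 4$ and $\Psi_{j}$ real-analytic, analyticity of $\nabla_{L^{2}(dx)}E\colon U_{\varepsilon}\to L^{2}(I,\R^{2})$ follows because $f\mapsto(f,\dots,\partial_{x}^{3}f)$ is bounded linear into $C^{0}(I,\R^{2})^{4}$, $f\mapsto\partial_{x}^{4}f$ is bounded linear into $L^{2}(I,\R^{2})$, superposition by a real-analytic function is analytic on $C^{0}(I)$, and multiplication $C^{0}(I)\times L^{2}(I)\to L^{2}(I)$ is bounded bilinear. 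That the values lie in the closed subspace $L^{2,\perp}(I,\R^{2})$ is built into the formula, and hence so is the fact that $(\nabla_{L^{2}(dx)}E)'(0)$ maps into it.

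\textbf{Part (c).} Write $L\defeq(\nabla_{L^{2}(dx)}E)'(0)\colon V_{c}^{\perp}\to L^{2,\perp}(I,\R^{2})$. Differentiating the formula for $\nabla_{L^{2}(dx)}E$ at $f=0$ exhibits $L$ as a linear fourth-order differential operator with coefficients built from $\overline{u}$ and its derivatives up to order three, whose leading part comes from linearising $2(\nablasp)^{2}\curv$ at $\overline{u}$; modulo the fixed pointwise $g$-orthogonal projection onto the normal line of $\overline{u}$ and a smooth positive scalar, its principal symbol in the normal direction is $\xi\mapsto c(x)\xi^{4}$ with $c>0$, so $L$ is elliptic. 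Next, the symmetric bilinear form $\mathcal B(\varphi,\psi)\defeq\langle L\varphi,\psi\rangle_{L^{2}(dx)}=E''(0)[\varphi,\psi]$ extends — by integration by parts, using that the clamped conditions $\varphi=\partial_{x}\varphi=0$ on $\partial I$ are encoded in $V_{c}^{\perp}$ — to a continuous symmetric form on $W_{0}^{2,2,\perp}(I,\R^{2})$ involving at most two derivatives of each argument. Noting that $\varphi\in V_{c}^{\perp}$ is pointwise $g$-normal to $\overline{u}$ and using the Poincar\'e inequality under clamped boundary data, a G\aa rding-type estimate $\mathcal B(\varphi,\varphi)\ge c\|\varphi\|_{W^{2,2}}^{2}-C\|\varphi\|_{L^{2}}^{2}$ holds on $W_{0}^{2,2,\perp}(I,\R^{2})$. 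Hence $\mathcal B(\cdot,\cdot)+C\langle\cdot,\cdot\rangle_{L^{2}(dx)}$ is coercive there; by Lax--Milgram and elliptic regularity for fourth-order equations with $L^{2}$ right-hand side and clamped boundary conditions (as in \cite{dallacquapozzispener2016}), the operator $\widetilde L_{0}\defeq L+C\,\iota\colon V_{c}^{\perp}\to L^{2,\perp}(I,\R^{2})$, with $\iota$ the inclusion, is an isomorphism. Since $\iota$ factors through the compact embedding $W^{4,2}(I)\hookrightarrow\hookrightarrow L^{2}(I)$, it is compact; thus $L=\widetilde L_{0}-C\,\iota$ is a compact perturbation of an isomorphism and therefore Fredholm with index zero.

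\textbf{Main obstacle.} The abstract scheme is routine; the work lies entirely in (c): computing the linearisation $L$ (equivalently the second variation $E''(0)$) explicitly enough to read off its ellipticity and to confirm that, after integration by parts using the clamped conditions, the associated form $\mathcal B$ involves only two derivatives of each argument, together with tracking the $\perp$-projection through the elliptic regularity step. All of this is carried out in \cite{dallacquapozzispener2016} for the flat metric, and replacing it by the uniformly non-degenerate hyperbolic metric does not affect any of the estimates.
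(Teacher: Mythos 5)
Your proposal is correct in substance and, for parts (a) and (b), essentially coincides with the paper's (sketched) argument, which likewise reduces analyticity of $E$ and of $\nabla_{L^2(dx)}E$ to superposition by real-analytic functions using the explicit formulas and defers the details to \cite[Proof of Theorem 3.1]{dallacquapozzispener2016}. For part (c), however, you take a genuinely different route: the paper first shows that the model operator $(\nabla_{\partial_{s_{\overline{u}}}}^{\perp})^4\colon V_c^{\perp}\to L^{2,\perp}$ is Fredholm of index zero (as in \cite[Proposition 3.8]{dallacquapozzispener2016}) and then uses the explicit second-variation formulas of \cite[Equation (55)]{pozzetta2022} to recognize $(\nabla_{L^2(dx)}E)'(0)$ as a lower-order, hence relatively compact, perturbation of that model operator, concluding as in \cite[Corollary 3.13]{dallacquapozzispener2016}; you instead treat the full linearization directly, obtaining Fredholmness of index zero from ellipticity, a G\aa rding estimate for $\mathcal{B}=E''(0)$ on $W_0^{2,2,\perp}$, Lax--Milgram plus elliptic regularity for $L+C\iota$, and compactness of $\iota$. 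Both are standard and valid; the paper's route minimizes new computation by reducing to a known model operator, while yours is more self-contained but shifts the burden onto the two ingredients you assert rather than verify --- the G\aa rding inequality (which does hold, since $E''(0)$ involves at most two derivatives of each argument with leading term comparable to $\int 2|(\nablasp)^2\varphi|_g^2\,ds$) and elliptic regularity for the normally constrained problem (handled by writing normal fields as scalar multiples of the unit normal of $\overline{u}$, which lies in $W^{3,2}$); these are precisely the places where the explicit second-variation formulas the paper cites would be needed anyway, so your level of detail is comparable to the paper's sketch. One small point worth making explicit: since $g$ is conformal to the Euclidean metric, Euclidean orthogonality to $\partial_x\overline{u}$ (which defines $V_c^{\perp}$ and $L^{2,\perp}$) coincides with $g$-orthogonality, which your identification of $V_c^{\perp}$ with pointwise $g$-normal fields silently uses.
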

\begin{proof}[Sketch of a proof.]
    The properties (a) and (b) are shown as in \cite[Proof of Theorem 3.1]{dallacquapozzispener2016} using the explicit formulas above.

    For (c), proceeding as in \cite[Proposition~3.8]{dallacquapozzispener2016}, one firstly shows that $(\nabla_{\partial_{s_{\overline{u}}}}^{\perp})^4 \colon V_c^{\perp}\to L^{2,\perp}$ is Fredholm with index zero. Then explicit formulas for the second variation of $\E$ (cf. \cite[Equation~(55)]{pozzetta2022}) give rise to explicit formulas for $(\nabla_{L^2(\dd x)}E)'(0)$ and one concludes (c) as in \cite[Corollary~3.13]{dallacquapozzispener2016}.
\end{proof}

As argued in \cite[Appendix A]{dallacquapozzispener2016}, by \Cref{prop:anal-fred}, the \L ojasiewicz-{S}imon gradient inequality of \cite[Corollary 3.11]{chill2003} can be applied. One obtains a \L ojasiewicz-{S}imon gradient inequality for the elastic energy for normal perturbations. As in \cite[Proof of Theorem 4.8]{ruppspener2020}, \cite[Lemma 4.1]{dallacquapozzispener2016} enables generalizing this inequality to the case for not necessarily normal perturbations $u$ of $\overline{u}$. One obtains the following:

\begin{theorem}\label{thm:Loja-Sim}
    Let $\overline{u}\in\X$ satisfy $\nabla_{L^2(ds_{\overline{u}})}\E(\overline{u})=0$. Then there exist $C,\sigma>0$ and $\theta\in(0,\frac{1}{2}]$ such that, for all $u\in\X$ with $\|u-\overline{u}\|_{W^{4,2}}\leq\sigma$,
    \begin{equation}\label{eq:Loja-Sim}
        |\E(u)-\E(\overline{u})|^{1-\theta}\leq C \||\nabla_{L^2(\dd s_{u})}\E(u)|_g\|_{L^2(\dd s_u)}.
    \end{equation}
\end{theorem}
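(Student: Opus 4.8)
The plan is to reduce Theorem \ref{thm:Loja-Sim} to the abstract \L ojasiewicz--Simon gradient inequality of Chill (\cite[Corollary 3.11]{chill2003}) applied to the analytic functional $E$ on the Banach space $V_c^{\perp}$, and then to transfer the resulting estimate from normal perturbations back to arbitrary $W^{4,2}$-perturbations that preserve the Dirichlet data. First I would invoke Proposition \ref{prop:anal-fred}: it supplies exactly the three hypotheses of Chill's theorem, namely that $E$ is analytic on a neighbourhood $U_\varepsilon$ of $0$ in $V_c^{\perp}$, that its $L^2(dx)$-gradient $\nabla_{L^2(dx)}E$ is analytic as a map into $L^{2,\perp}(I,\R^2)$, and that the linearisation $(\nabla_{L^2(dx)}E)'(0)$ is Fredholm of index zero. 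Since $\overline u$ is a critical point of $\E$ with the prescribed boundary data, $0$ is a critical point of $E$ in the sense that $\nabla_{L^2(dx)}E(0)=0$; hence \cite[Corollary 3.11]{chill2003} (as set up in \cite[Appendix A]{dallacquapozzispener2016}) gives constants $C_0,\sigma_0>0$ and $\theta\in(0,\tfrac12]$ so that
\begin{equation}
    |E(f)-E(0)|^{1-\theta}\leq C_0\,\|\nabla_{L^2(dx)}E(f)\|_{L^2(dx)}
\end{equation}
for all $f\in V_c^{\perp}$ with $\|f\|_{W^{4,2}}\leq\sigma_0$. By construction $E(f)=\E(\overline u+f)$ and $E(0)=\E(\overline u)$, so this is precisely \eqref{eq:Loja-Sim} for perturbations of the form $u=\overline u+f$ with $f$ normal to $\overline u$, once one checks that the $L^2(dx)$-gradient and the $L^2(ds_u)$-gradient differ only by a factor bounded above and below on the $\sigma_0$-ball (this factor is the explicit weight $|\partial_x(\overline u+f)|/(\overline u^{(2)}+f^{(2)})^3$ appearing in the Remark preceding Proposition \ref{prop:anal-fred}, which is continuous and non-vanishing, hence bounded on the closed $W^{4,2}$-ball after shrinking $\sigma_0$).

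The remaining, genuinely non-trivial, point is the passage from normal perturbations to arbitrary elements of $\X$. Here I would follow the scheme of \cite[Proof of Theorem 4.8]{ruppspener2020}, which rests on \cite[Lemma 4.1]{dallacquapozzispener2016}: given any $u\in\X$ with $\|u-\overline u\|_{W^{4,2}}$ small, one constructs a diffeomorphism $\phi=\phi_u$ of $I$ fixing $\partial I$, depending analytically (or at least in a controlled $W^{4,2}$-Lipschitz fashion) on $u$, such that the reparametrisation $u\circ\phi$ is a \emph{normal} graph over $\overline u$, i.e.\ $u\circ\phi=\overline u+f$ with $f\in V_c^{\perp}$ and $\|f\|_{W^{4,2}}\leq\sigma_0$. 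Since $\E$ is reparametrisation-invariant, $\E(u)=\E(u\circ\phi)=\E(\overline u+f)$, and the \L ojasiewicz--Simon inequality for $f$ translates into one for $u$ provided one also controls $\|\,|\nabla_{L^2(ds_{u\circ\phi})}\E(u\circ\phi)|_g\|_{L^2(ds)}$ by $\|\,|\nabla_{L^2(ds_u)}\E(u)|_g\|_{L^2(ds)}$; but the $L^2(ds)$-norm of the geometric gradient is itself reparametrisation-invariant (the gradient is a normal vector field and $ds$ is the arclength measure), so this last step is immediate. Adjusting the constants $C,\sigma$ to absorb the (bounded) dependence on $\phi_u$ then yields \eqref{eq:Loja-Sim} in the stated generality, with the same exponent $\theta$.

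I expect the main obstacle to be the verification underpinning Proposition \ref{prop:anal-fred}(c) — that $(\nabla_{L^2(dx)}E)'(0)$ is Fredholm of index zero — and, to a lesser extent, the construction of the analytic reparametrisation $\phi_u$ normalising $u$ over $\overline u$ subject to the clamped boundary conditions. The Fredholm property is handled as in \cite[Proposition 3.8, Corollary 3.13]{dallacquapozzispener2016}: one shows first that the leading-order operator $(\nablas^{\perp})^4\colon V_c^{\perp}\to L^{2,\perp}$ is Fredholm of index zero (it is an elliptic operator on a compact interval with well-posed Dirichlet-type boundary conditions, invertible up to finite-dimensional kernel/cokernel), and then that $(\nabla_{L^2(dx)}E)'(0)-(\nablas^{\perp})^4$ is a lower-order, hence relatively compact, perturbation using the explicit second-variation formula for $\E$ from \cite[Equation (55)]{pozzetta2022}; since relatively compact perturbations preserve the Fredholm index, (c) follows. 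Because the hyperbolic case differs from the Euclidean case of \cite{dallacquapozzispener2016} only through the smooth, non-degenerate conformal factor $1/(y^{(2)})^2$ and its derivatives — all of which are harmless lower-order contributions on a compact trace $K\subset\H^2$ — every step of that argument carries over verbatim, which is why the theorem can be asserted with only a sketch.
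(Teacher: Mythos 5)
Your proposal follows essentially the same route as the paper: Proposition \ref{prop:anal-fred} feeds into Chill's abstract \L ojasiewicz--Simon inequality as set up in \cite[Appendix A]{dallacquapozzispener2016}, giving the inequality for normal perturbations, and the passage to general elements of $\X$ is carried out via \cite[Lemma 4.1]{dallacquapozzispener2016} as in \cite[Proof of Theorem 4.8]{ruppspener2020}, with the reparametrisation-invariance of $\E$ and of the $L^2(ds)$-norm of the geometric gradient handling the transfer. Your additional remarks on the weight relating the $L^2(dx)$- and $L^2(ds_u)$-gradients and on the Fredholm argument via \cite[Proposition 3.8, Corollary 3.13]{dallacquapozzispener2016} and \cite[Equation (55)]{pozzetta2022} match the paper's sketch.
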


\subsection{Convergence of the Willmore flow}

We follow the efficient strategy of \cite[Section 4.4]{ruppspener2020} using \eqref{eq:Loja-Sim} in order to upgrade the sub-convergence of \Cref{prop:sc} to full convergence of the constant-speed reparametrizations for $t\to\infty$. Some arguments differ from \cite{ruppspener2020} due to the factor $a(u)$ in \eqref{eq:lte-1}.

For simplicity, suppose that $I=[0,1]$ in the following. Let $T\in (0,\infty]$ and $u\colon [0,T)\times I\to\H^2$ smooth such that $u(t,\cdot)$ is an immersion for each $t\in [0,T)$. As in \cite[Definition 4.9]{ruppspener2020}, we say that the constant speed reparametrization $\widetilde{u}$ of $u$ is given by $\widetilde{u}(t,x)=u(t,\psi(t,x))$ with $\psi(t,\cdot)\colon I\to\ I$ satisfying $\partial_y\psi(t,y)=\Ll_{\H^2}(u(t,\cdot)) / |\partial_xu(t,\psi(t,y))|_g$.

In $\H^2$, one obtains the following variant of \cite[Lemma 4.10]{ruppspener2020} with the same proof.

\begin{lemma}\label{lem:rs-hl}
    Consider $T\in (0,\infty]$ and $u\colon [0,T)\times I\to\H^2$ smooth such that $u(t,y)=p_y$ for $y\in\partial I$ and $t\in [0,T)$. If $\widetilde{u}$ is the constant speed reparametrization of $u$, we have, for all $t\in[0,T)$,
    \begin{equation}\label{eq:rs-hl}
        \||\partial_t \widetilde{u}(t,\cdot)|_g\|_{L^2(\dd x)} \leq \sqrt{{2}/{\Ll_{\H^2}(u(t,\cdot))} + 8 \E(u(t,\cdot))} \cdot \||\partial_tu(t,\cdot)|_g\|_{L^2(ds_{u(t,\cdot)})}.
    \end{equation}
\end{lemma}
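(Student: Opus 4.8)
The plan is to follow the strategy of \cite[Lemma 4.10]{ruppspener2020}, adapting it to the hyperbolic setting. Write $\widetilde{u}(t,x) = u(t,\psi(t,x))$ with $\psi$ the reparametrization diffeomorphism satisfying $\partial_x\psi(t,x) = L(t)/|\partial_xu(t,\psi(t,x))|_g$ where $L(t) = \Ll_{\H^2}(u(t,\cdot))$. Differentiating in $t$ gives $\partial_t\widetilde{u}(t,x) = (\partial_tu)(t,\psi(t,x)) + \partial_t\psi(t,x)\cdot(\partial_xu)(t,\psi(t,x))$, so that $\partial_t\widetilde{u}$ decomposes into the (normal) part coming from $\partial_tu$ and a tangential part coming from $\partial_t\psi$. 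The first step is to obtain an expression for $\partial_t\psi$. Since $|\partial_x\widetilde{u}(t,\cdot)|_g \equiv L(t)$ is spatially constant, differentiating this identity in $t$ yields an ODE in $x$ for $\partial_t\psi(t,\cdot)$, which upon integration (using $\psi(t,0)=0$, i.e.\ $\partial_t\psi(t,0)=0$ since the endpoints are fixed) expresses $\partial_t\psi(t,x)$ as an integral over $[0,x]$ of quantities involving $\partial_t(ds_u)$ and $L'(t)$; concretely one uses \eqref{eq:ev-eq-3} together with $\langle\partial_tu,\partial_su\rangle_g = 0$, so that $\partial_t(ds_u) = -\langle\partial_tu,\curv\rangle_g\,ds_u$ and hence $L'(t) = -\int_I \langle\partial_tu,\curv\rangle_g\,ds_u$.

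The second step is to estimate the two pieces in $L^2(dx)$. For the normal piece, change variables from $x$ to $s_{u(t,\cdot)}$ via $\psi$, so that $\int_I |(\partial_tu)(t,\psi(t,x))|_g^2\,dx = \frac{1}{L(t)}\int_I |\partial_tu(t,\cdot)|_g^2\,ds_{u(t,\cdot)}$, giving a factor $L(t)^{-1/2}$ times $\||\partial_tu|_g\|_{L^2(ds_u)}$. For the tangential piece, one bounds $|\partial_t\psi(t,x)\,\partial_xu(t,\psi(t,x))|_g = |\partial_t\psi(t,x)|\cdot L(t)$ pointwise using the integral formula for $\partial_t\psi$ and Cauchy--Schwarz: $|\partial_t\psi(t,x)|$ is controlled by $\frac{1}{L(t)}\int_I |\langle\partial_tu,\curv\rangle_g|\,ds_u$ plus a contribution from $L'(t)/L(t)$, each of which is $\leq \frac{1}{L(t)}\|\curv\|_{L^2(ds_u)}\||\partial_tu|_g\|_{L^2(ds_u)}$ by Cauchy--Schwarz. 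Squaring, integrating over $x\in I$, and collecting the $L(t)$ powers produces a bound of the form $C\cdot(\E(u(t,\cdot)))\cdot\||\partial_tu|_g\|_{L^2(ds_u)}^2$ for the tangential term, where $\E(u(t,\cdot)) = \|\curv\|_{L^2(ds_u)}^2$. Combining both pieces with the triangle inequality and absorbing constants gives \eqref{eq:rs-hl} with the stated constant $\sqrt{2/\Ll_{\H^2}(u(t,\cdot)) + 8\E(u(t,\cdot))}$; the precise constants $2$ and $8$ come from keeping track of the Cauchy--Schwarz and triangle-inequality factors exactly as in the Euclidean case.

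The main point to check is that the proof of \cite[Lemma 4.10]{ruppspener2020} transfers verbatim to $\H^2$: every manipulation above is metric-intrinsic — arclength, the curvature vector $\curv = \nablas\partial_su$, the evolution formula \eqref{eq:ev-eq-3} for $ds_u$, and orthogonality of $\partial_tu$ to $\partial_su$ — and none of these use the flatness of the ambient space, so the only change is the systematic replacement of Euclidean quantities by their $g$-counterparts. I do not anticipate any genuine obstacle here; the statement is essentially a bookkeeping exercise once the decomposition $\partial_t\widetilde{u} = (\partial_tu)\circ\psi + \partial_t\psi\cdot(\partial_xu)\circ\psi$ is in place. Accordingly I would simply state that the proof is identical to that of \cite[Lemma 4.10]{ruppspener2020}, pointing out that the intrinsic formula \eqref{eq:ev-eq-3} is the only ingredient used and it holds in $\H^2$.
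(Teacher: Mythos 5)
Your proposal coincides with the paper's own proof, which consists precisely of the remark that \cite[Lemma 4.10]{ruppspener2020} transfers verbatim to $\H^2$ because every ingredient (arc-length, $\curv=\nablas\partial_su$, the evolution formula \eqref{eq:ev-eq-3}, the fixed endpoints forcing $\partial_tu=0$ and hence $\partial_t\psi=0$ on $\partial I$) is metric-intrinsic — exactly your concluding paragraph — and your sketch of the internals (decomposition $\partial_t\widetilde{u}=(\partial_tu)\circ\psi+\partial_t\psi\cdot(\partial_xu)\circ\psi$, change of variables giving the factor $2/\Ll_{\H^2}$, Cauchy–Schwarz on $\int\langle\partial_tu,\curv\rangle_g\,ds$ giving the factor $8\E$) is the right bookkeeping. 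One caveat: the lemma does not assume $\langle\partial_tu,\partial_su\rangle_g=0$, and the Rupp–Spener argument does not need it, since writing $\partial_t\psi$ via the inverse arc-length function and integrating by parts produces the boundary term $\langle\partial_tu,\partial_su\rangle_g$ at the running point, which combines with $(\partial_tu)\circ\psi$ into its normal projection; so either follow that argument as written, or note explicitly that you are proving only the normal-velocity case, which is the one actually used for the Willmore flow.
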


\begin{theorem}\label{thm:conv}
    Consider a smooth immersion $u_0\colon  I\to \H^2$ as well as a smooth solution $u\colon[0,\infty)\times I\to \H^2$ of \eqref{eq:lte-1} with
    \begin{equation}\label{eq:conv-1}
        \sup_{t\in[0,\infty)} \Ll_{\H^2}(u(t,\cdot))<\infty.
    \end{equation}
    If $\widetilde{u}(t,\cdot)$ denotes the constant speed reparametrization of $u(t,\cdot)$, then $\widetilde{u}(t,\cdot)\to u_{\infty}$ smoothly for $t\to\infty$ for a critical point $u_{\infty}$ of $\E$ with the boundary conditions of \eqref{eq:lte-1}.
\end{theorem}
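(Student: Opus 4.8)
The plan is to run the standard Łojasiewicz--Simon argument, carefully tracking the factor $a(u)$.

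\textbf{Set-up.} By \eqref{eq:lte-1.2} the map $t\mapsto\E(u(t,\cdot))$ is non-increasing, and since the clamped boundary data are fixed the boundary term in \eqref{eq:el-vs-will} does not change along the flow, so $\E(u(t,\cdot))=\tfrac{2}{\pi}\W(f_{u(t,\cdot)})+\mathrm{const}$ is bounded below and converges to some $\E_\infty\in\R$. By Proposition \ref{prop:sc}, applicable because of \eqref{eq:conv-1}, there is a sequence $t_k\to\infty$ with $\widetilde u(t_k,\cdot)\to u_\infty$ in $C^\infty(I,\H^2)$ for some critical point $u_\infty\in\X$ of $\E$ with the prescribed boundary data, and $\E_\infty=\E(u_\infty)$. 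If $\E(u(t^*,\cdot))=\E_\infty$ for some finite $t^*$, monotonicity forces $\nabla_{L^2(ds_u)}\E(u(t,\cdot))\equiv 0$ and hence $u(t,\cdot)\equiv u(t^*,\cdot)$ for $t\ge t^*$, so the assertion is immediate; thus assume $G(t)\defeq\E(u(t,\cdot))-\E_\infty>0$ for all $t$. Finally, as in the proof of Theorem \ref{thm:lte}, \eqref{eq:conv-1} and the boundary data yield a lower bound $\Ll_{\H^2}(u(t,\cdot))\ge\ell_0>0$ and a compact $K\subseteq\H^2$ containing all traces, on which $|a|$ is bounded from above and below by positive constants and $|\cdot|$ is comparable to $|\cdot|_g$; and re-running the a priori estimates of Theorem \ref{thm:lte} together with \eqref{eq:sc-4} at every time (exactly as in the proof of Proposition \ref{prop:sc}) gives $\Lambda_m\defeq\sup_{t\in[0,\infty)}\|\widetilde u(t,\cdot)\|_{W^{m,2}(I)}<\infty$ for every $m\in\N$.

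\textbf{Differential inequality.} Using $-\tfrac{d}{dt}\E(u(t,\cdot))=\int_I |a(u)|\,|\nabla_{L^2(ds_u)}\E(u)|_g^2\,ds\ge(\min_K|a|)\,\|\,|\nabla_{L^2(ds_u)}\E(u)|_g\,\|_{L^2(ds_u)}^2$, together with Theorem \ref{thm:Loja-Sim} applied to $\overline u=u_\infty$ — valid at every time $t$ with $\|\widetilde u(t,\cdot)-u_\infty\|_{W^{4,2}}\le\sigma$, and read off from $u(t,\cdot)$ since both sides of \eqref{eq:Loja-Sim} are reparametrization invariant — a routine chain-rule computation (differentiating $G^\theta$ and inserting the Łojasiewicz bound for $G^{1-\theta}$) produces a constant $c>0$ with
\begin{equation}
    -\frac{d}{dt}\bigl(G(t)^{\theta}\bigr)\ \ge\ c\,\|\,|\partial_t u(t,\cdot)|_g\,\|_{L^2(ds_{u(t,\cdot)})}
\end{equation}
on the set of such times. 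Combining this with Lemma \ref{lem:rs-hl}, the bounds $\Ll_{\H^2}(u(t,\cdot))\ge\ell_0$ and $\E(u(t,\cdot))\le\E(u_0)$, and the comparability of $|\cdot|$ and $|\cdot|_g$ on $K$, one obtains a constant $M'>0$ with
\begin{equation}
    \|\partial_t\widetilde u(t,\cdot)\|_{L^2(dx)}\ \le\ -M'\,\frac{d}{dt}\bigl(G(t)^{\theta}\bigr)
\end{equation}
whenever $\|\widetilde u(t,\cdot)-u_\infty\|_{W^{4,2}}\le\sigma$.

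\textbf{Bootstrap and conclusion.} Choose $t_0\in\{t_k\}$ so large that $\|\widetilde u(t_0,\cdot)-u_\infty\|_{W^{4,2}}<\sigma/2$ and $M'G(t_0)^\theta$ is as small as needed below (possible since $\widetilde u(t_k,\cdot)\to u_\infty$ in $W^{4,2}$ and $G(t_k)\to 0$), and set $t_1\defeq\sup\{T\ge t_0:\|\widetilde u(t,\cdot)-u_\infty\|_{W^{4,2}}\le\sigma\text{ for all }t\in[t_0,T]\}\in(t_0,\infty]$; note $\widetilde u(t,\cdot)\in\X$ for such $t$, since reparametrization preserves the clamped data and $|\partial_x\widetilde u|_g\equiv\Ll_{\H^2}(u(t,\cdot))\ge\ell_0>0$. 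On $[t_0,t_1)$ the second inequality integrates to $\int_{t_0}^{t}\|\partial_s\widetilde u(s,\cdot)\|_{L^2(dx)}\,ds\le M'G(t_0)^\theta$, hence $\|\widetilde u(t,\cdot)-u_\infty\|_{L^2(dx)}\le\|\widetilde u(t_0,\cdot)-u_\infty\|_{L^2}+M'G(t_0)^\theta=:\delta_0$; interpolating between $L^2$ and $W^{m,2}$ for a fixed $m>4$, using $\sup_{t}\|\widetilde u(t,\cdot)-u_\infty\|_{W^{m,2}}\le\Lambda_m+\|u_\infty\|_{W^{m,2}}$, gives $\|\widetilde u(t,\cdot)-u_\infty\|_{W^{4,2}}\le C\,\delta_0^{1-4/m}$ on $[t_0,t_1)$, which is $<\sigma/2$ once $t_0$ is far enough along the sequence. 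This contradicts the definition of $t_1$ unless $t_1=\infty$, so the estimates hold for all $t\ge t_0$; in particular $\int_{t_0}^\infty\|\partial_s\widetilde u(s,\cdot)\|_{L^2(dx)}\,ds<\infty$, so $\widetilde u(t,\cdot)$ is Cauchy in $L^2(dx)$ as $t\to\infty$, with limit $u_\infty$ (a subsequence already converges to $u_\infty$). One more interpolation against the bounds $\Lambda_m$ upgrades this to $\widetilde u(t,\cdot)\to u_\infty$ in $W^{m,2}(I,\H^2)$ for every $m$, hence in $C^\infty$, which is the claim; $u_\infty$ is the asserted critical point of $\E$ with the given boundary data by Proposition \ref{prop:sc}. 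The step I expect to be the real obstacle is not the Łojasiewicz machinery but keeping the bootstrap self-consistent: the uniform-in-time higher-order bounds $\Lambda_m$ (needed both to feed the inequality \eqref{eq:Loja-Sim} and to convert $L^2$-smallness into $W^{4,2}$-smallness) must be extracted from the a priori estimates of Theorem \ref{thm:lte} read off at \emph{every} time rather than only along $\{t_k\}$, with the constant-speed reparametrization controlled simultaneously; the factor $a(u)$ contributes only the minor bookkeeping of bounding $|a|$ above and below on $K$.
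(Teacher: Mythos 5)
Your proposal is correct, and its core is the same as the paper's: the identical Łojasiewicz--Simon differential inequality $-\frac{d}{dt}G^{\theta}\gtrsim\||\partial_t u|_g\|_{L^2(ds_u)}$ obtained from Theorem \ref{thm:Loja-Sim}, with $|a(u)|$ pinched between positive constants on the compact set $K$ and Lemma \ref{lem:rs-hl} converting to the fixed parametrization, followed by integration of $\|\partial_t\widetilde u\|_{L^2(dx)}$. Where you genuinely diverge is in the trapping step, i.e.\ in ruling out that $\widetilde u(t,\cdot)$ leaves the $\sigma$-ball around $u_\infty$ in $W^{4,2}$: the paper defines the exit times $s_k$, assumes they are all finite, applies Proposition \ref{prop:sc} along $(s_k)$ to extract a smooth limit $\psi$ with $\|\psi-u_\infty\|_{W^{4,2}}=\sigma$, and contradicts this with $\|\psi-u_\infty\|_{L^2(dx)}=0$ from the integrated estimate; you instead establish uniform-in-time bounds $\Lambda_m=\sup_t\|\widetilde u(t,\cdot)\|_{W^{m,2}}<\infty$ and interpolate between $L^2$ and $W^{m,2}$ to convert $L^2$-smallness directly into $W^{4,2}$-smallness $<\sigma/2$, closing a continuity argument. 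Your route is legitimate: Steps 1--2 of the proof of Theorem \ref{thm:lte} give $\sup_{t}\||\nabla_s^l\curv|_g\|_{\infty}<\infty$ uniformly on $(0,\infty)$ under \eqref{eq:conv-1}, and the identity \eqref{eq:sc-4}, together with the upper and lower length bounds and the compact containment of the traces, yields the claimed $\Lambda_m$ at every time (this is exactly the computation in Proposition \ref{prop:sc}, which does not use anything special about the sequence $t_k$). What each approach buys: the paper's exit-time argument avoids interpolation and explicit higher-order uniform bounds by recycling the subconvergence machinery, while yours is more quantitative and also streamlines the final upgrade from $L^2$-convergence to $C^\infty$-convergence (interpolation against $\Lambda_m$, versus the paper's subsequence argument). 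You also correctly flagged the only delicate point, namely that the a priori bounds must be read off at every time with the constant-speed reparametrization controlled simultaneously; the remaining ingredients (reparametrization invariance of both sides of \eqref{eq:Loja-Sim}, membership $\widetilde u(t,\cdot)\in\X$, the trivial case $\E(u(t^*,\cdot))=\E_\infty$) are handled as in the paper.
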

\begin{remark}
    With the same arguments as in the proof of \Cref{cor:ex-sc-below-8}, one can argue that the conclusion of the theorem still holds if one replaces \eqref{eq:conv-1} with the assumption that $\E(u_0)\leq 8$.
\end{remark}

\begin{proof}[Proof of \Cref{thm:conv}]
    Using \Cref{prop:sc}, there is a sequence of times $t_k\nearrow \infty$ and a critical point $u_{\infty}$ of $\E$ satisfying the boundary conditions of \eqref{eq:lte-1} such that $\widetilde{u}(t_k,\cdot)\to u_{\infty}$ smoothly. Particularly, by the monotonicity of $t\mapsto \E(u(t,\cdot))$,
    \begin{equation}\label{eq:conv-2}
        \text{$\E(u(t,\cdot))\to\E(u_{\infty})$.}
    \end{equation}
    Consider now two cases. Firstly, suppose that $\E(u(t^*,\cdot)) = \E(u_{\infty})$ for some $t^*\geq 0$. By \eqref{eq:1-var-elen} and \eqref{eq:lte-1}, we have for $t\geq t^*$
    \begin{equation}
        0=\frac{\dd}{\dd t}\E(u(t,\cdot)) = \frac12\int_I a(u) |\nabla\E(u)|_g^2\dd s.
    \end{equation}
    Particularly, $\partial_t u=0$ for all $t\geq t^*$. Hence, the flow is constant and converges.

    For the second case,  suppose that $\E(u(t,\cdot)) > \E(u_{\infty})$ for all $t\geq 0$. Fix $C_{\mathrm{LS}},\sigma$ and $\theta$ as in \Cref{thm:Loja-Sim} with $\overline{u}=u_{\infty}$. Furthermore, consider the mapping $G\colon [0,\infty)\to \R$, $t\mapsto (\E(u(t,\cdot))-\E(u_{\infty}))^{\theta}$. For each $k\in\N$, 
    \begin{equation}
        s_k\defeq \sup\{s\geq t_k:\|\widetilde{u}(t,\cdot)-u_{\infty}\|_{W^{4,2}}<\sigma\text{ for all $t\in[t_k,s)$}\}.
    \end{equation}
    Since $\E$ is invariant with respect to reparametrizations, using \eqref{eq:lte-1} and \eqref{eq:1-var-elen},
    \begin{equation}
        \begin{aligned}
            -G' 
            &= \theta (\E(\widetilde{u})-\E(u_{\infty}))^{\theta-1} \int_I (-\frac12a(u)) |\nabla\E(u)|_g^2\dd s.
        \end{aligned}
    \end{equation}
    By \Cref{rem:len-cpct-ctrl} and \eqref{eq:conv-1}, $ \overline{C}\geq -\frac12a(u)\geq \overline{c}>0$ on $[0,\infty)\times I$. Consequently, using \eqref{eq:Loja-Sim}
    \begin{equation}
        \begin{aligned}
            -G' &\geq \overline{c}\theta (\E(\widetilde{u})-\E(u_{\infty}))^{\theta-1} \||\nabla_{L^2(\dd s_u)}\E(u)|_g\|_{L^2(\dd s_u)}^2\\
            &\geq \frac{\overline{c}\theta}{C_{\mathrm{LS}}} \||\nabla_{L^2(\dd s_u)}\E(u)|_g\|_{L^2(\dd s_u)} =  \frac{\overline{c}\theta}{C_{\mathrm{LS}}} \big({\int_I \left|\frac{a(u)}{a(u)} \nabla_{L^2(\dd s_u)}\E(u)\right|_g^2\dd s_u}\big)^{\frac{1}{2}}\\
            &\geq \frac{\overline{c}\theta}{C_{\mathrm{LS}}\overline{C}} \||\partial_tu|_g\|_{L^2(\dd s_u)} \quad\text{on $[t_k,s_k)$}.
        \end{aligned}
    \end{equation}
    From \eqref{eq:rs-hl} and the lower bounds on $\Ll_{\H^2}(u(t,\cdot))$ deduced at the beginning of the proof of \Cref{thm:lte} and by \Cref{rem:len-cpct-ctrl}, we thus obtain on $[t_k,s_k)$ that
    \begin{equation}\label{eq:conv-4}
        -G' \geq C \|\partial_t\widetilde{u}\|_{L^2(\dd x)}. 
    \end{equation}
    Using \eqref{eq:conv-2}, for $t\in [t_k,s_k)$,
    \begin{equation}\label{eq:conv-3}
        \|\widetilde{u}(t,\cdot)-\widetilde{u}(t_k,\cdot)\|_{L^2(\dd x)} \leq \int_{t_k}^t \|\partial_t\widetilde{u}(\tau,\cdot)\|_{L^2(\dd x)}\dd\tau \leq \frac{1}{C} G(t_k) \to 0
    \end{equation}
    for $k\to\infty$. 

    Suppose now for contradiction, that all $s_k$ are finite. Then \eqref{eq:conv-3} extends to $t=s_k$ by continuity. Applying \Cref{prop:sc} to the sequence $s_k\geq t_k\nearrow \infty$, there exists a critical point $\psi$ of $\E$ such that $\widetilde{u}(s_k)\to\psi$ smoothly for $k\to\infty$. By the definition of $s_k$ and by continuity, $\|\psi-u_{\infty}\|_{W^{4,2}}=\sigma$. One the other hand,
    \begin{equation}
        \|\psi-u_{\infty}\|_{L^2(\dd x)} = \lim_{k\to\infty} \|\widetilde{u}(s_k)-\widetilde{u}(t_k)\|_{L^2(\dd x)} = 0
    \end{equation}
    by \eqref{eq:conv-3}. This is a contradiction!

    Thus, choose $k_0\in\N$ such that $s_{k_0}=\infty$. By the definition of $s_{k_0}$, $\|\widetilde{u}(t,\cdot)-u_{\infty}\|_{W^{4,2}}<\sigma$ for all $t\geq t_{k_0}$. Using \eqref{eq:conv-4}, 
    \begin{equation}
        \|\widetilde{u}(t)-\widetilde{u}(t')\|_{L^2(\dd x)} \leq \int_t^{t'} \|\partial_t\widetilde{u}(\tau,\cdot)\|_{L^2(\dd x)}\dd\tau \leq \frac{G(t)-G(t')}{C} \to 0
    \end{equation}
    as $t,t'\to\infty$. As $\widetilde{u}(t_k,\cdot)\to u_{\infty}$ smoothly, a subsequence-argument now yields that also $\widetilde{u}(t,\cdot)\to u_{\infty}$ smoothly as $t\nearrow \infty$.
\end{proof}

%-------------------------------------------------------------------------------------------------------
%-----Optimality----------------------------------------------------------------------------------------
%-------------------------------------------------------------------------------------------------------

\section{Optimality discussion}\label{sec:opti}
In our proof of long-time existence and convergence, the energy constraint \eqref{eq:en-constr} for the initial datum of the Willmore flow was crucial. In this section, we aim to show that this energy bound is sharp. To this end, a sequence of initial data with elastic energy above but arbitrarily close to $8$ is constructed whose Willmore flow develops a singularity.

As in \cite{muellerspener2020}, the so-called $\lambda$-figure-eights play an essential role in the construction of the initial data leading to singular behavior in the Willmore flow. So firstly, we review what is known for such hyperbolic elastica. Afterwards, some symmetry properties of the Willmore flow are exploited. Notably, in \Cref{rem:el-en-lf8}, we see that the elastic energy of appropriate sections of $\lambda$-figure-eights indeed approaches $8$, the energy threshold of \eqref{eq:en-constr}. Finally, we analyze the tangent vectors at endpoints of those sections of $\lambda$-figure-eights. Then we give the construction of the singular examples. Afterwards, some details in the proof of its singular behavior are filled in.
Altogether, in this section, we prove the following.

\begin{theorem}
    There is a sequence $u_0^n\colon[-1,1]\to\H^2$ of initial data satisfying
    \begin{equation}
        u_0^n(\pm 1)=(0,1)^t,\;\partial_su_0^n(\pm1)=\pm(0,1)^t\quad\text{and}\quad \E(u_0^n)\searrow 8\quad\text{as $n\to\infty$}
    \end{equation}
    and such that the maximal solutions $u^n\colon [0,T_{\mathrm{max},n})\times [-1,1]\to\H^2$ to the Willmore flow (that is \eqref{eq:lte-1} with $a(x,y)=-1/(2y^4)$) with $u^n(0,\cdot)=u_0^n$ each satisfy
    \begin{equation}
        \sup_{t\in[0,T_{\mathrm{max},n})} \mathcal{L}_{\H^2}(u^n(t,\cdot)) = \infty.
    \end{equation} 
\end{theorem}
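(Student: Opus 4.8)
The plan is to build the initial data $u_0^n$ out of segments of hyperbolic $\lambda_n$-figure-eight elastica and, arguing by contradiction, to exclude every possible limit of the associated Willmore flow once $\E(u_0^n)$ is close enough to $8$. Concretely, using the symmetries of $\lambda$-figure-eights and the analysis of their endpoint tangent vectors carried out above, for each $n$ I fix a $\lambda_n$-figure-eight and extract a symmetric segment which, after applying a suitable isometry of $\H^2$, has endpoints at $(0,1)^t$ and endpoint unit tangents close to $\pm(0,1)^t$; gluing in two short arcs near $(0,1)^t$ corrects the tangents to be exactly $\pm(0,1)^t$ at a negligible energy cost. By Remark \ref{rem:el-en-lf8}, letting $\lambda_n$ degenerate appropriately gives $\E(u_0^n)\searrow 8$, and the gluing can be arranged so that $u_0^n$ is not a critical point of $\E$, so the flow is genuinely non-stationary.

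\emph{Reduction.} Suppose, towards a contradiction, that $\sup_{t}\mathcal L_{\H^2}(u^n(t,\cdot))<\infty$ for some large $n$. Then Theorem \ref{thm:lte} gives $T_{\mathrm{max},n}=\infty$ and Theorem \ref{thm:conv} gives that the constant-speed reparametrizations of $u^n(t,\cdot)$ converge smoothly as $t\to\infty$ to a critical point $u_\infty^n$ of $\E$ with the same clamped boundary data; by the energy monotonicity \eqref{eq:lte-1.2}, $\E(u_\infty^n)\leq\E(u_0^n)$. On the other hand, \eqref{eq:el-vs-will} together with $u_\infty^n(\pm1)=(0,1)^t$ and $\partial_su_\infty^n(\pm1)=\pm(0,1)^t$ yields
\[
    \E(u_\infty^n)=8+\tfrac{2}{\pi}\W(f_{u_\infty^n})\geq 8 .
\]
Thus the only candidates for $u_\infty^n$ are open hyperbolic elastica realizing the prescribed clamped data with elastic energy in $[8,\E(u_0^n)]$, and it remains to rule out that the flow starting at $u_0^n$ converges to any such elastica.

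\emph{Excluding the limits.} This is the technical heart of the section; in the absence of a closing condition, the winding-number arguments used in \cite{blatt2009,muellerspener2020,dallacquamullerschatzlespener2020} are unavailable, and one instead has to understand all open hyperbolic elastica meeting the clamped data together with their energies near the threshold $8$. The degenerate endpoint is illustrative: a curve $v$ with this data and $\E(v)=8$ would have $\W(f_v)=0$ by \eqref{eq:el-vs-will}, so $f_v$ would be a minimal surface of revolution, i.e.\ a catenoid, which is impossible since a catenary is an embedded graph with non-vertical tangents and cannot satisfy $v(-1)=v(1)=(0,1)^t$, $\partial_sv(\pm1)=\pm(0,1)^t$ (equivalently, by the Li--Yau inequality of Section \ref{sec:liyau}, $\E(v)\leq 8$ would force $v$ to be an embedding, contradicting $v(-1)=v(1)$). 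The actual argument classifies the open elastica compatible with the boundary data and controls their length and minimal height above the rotation axis in terms of their energy --- here Corollary \ref{cor:el-ctrl-len} and the a-priori bounds of Section \ref{sec:priori-bounds} are the key inputs --- and then exploits that $u_0^n$ is modelled on a degenerating $\lambda_n$-figure-eight segment to show that the flow cannot converge to any of them. This contradiction forces $\sup_t\mathcal L_{\H^2}(u^n(t,\cdot))=\infty$ for all large $n$, and relabeling produces the asserted sequence.

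\emph{Main obstacle.} The hard part is precisely this last step. For closed curves the rotation index is a flow invariant that immediately rules out all "wrong" limits; here no such invariant survives, so one must genuinely classify the open hyperbolic elastica satisfying the clamped data and obtain quantitative control on them near the critical energy $8$ --- in effect a quantitative companion to the Li--Yau inequality, together with a careful study of segments of $\lambda$-figure-eights. By contrast, the reduction via Theorems \ref{thm:lte} and \ref{thm:conv} and the energy identity \eqref{eq:el-vs-will} is routine, and the construction of $u_0^n$ requires only the endpoint-tangent and energy asymptotics of $\lambda$-figure-eight segments developed earlier in the section.
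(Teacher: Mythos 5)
Your construction of $u_0^n$ and the reduction step coincide with the paper's: you take the symmetric segments $\gamma_n|_{[-K(p_n)/r_n,K(p_n)/r_n]}$ of $\lambda_n$-figure-eights with $\lambda_n\searrow 0$ (Remark \ref{rem:el-en-lf8}, Corollary \ref{cor:lf8-vh}), correct the endpoint tangents with low-energy arcs, and, assuming bounded hyperbolic length, invoke Theorems \ref{thm:lte} and \ref{thm:conv} to obtain smooth convergence to a critical point $u_\infty^n$ of $\E$ with the clamped data and $\E(u_\infty^n)\le\E(u_0^n)$. However, the step you yourself call the technical heart --- excluding every possible limit --- is exactly where your proposal stops, and the route you gesture at would not work. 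Since $\E(u_\infty^n)$ may lie strictly above $8$, neither the Li--Yau inequality nor the statement that self-intersecting free elastica have energy strictly greater than $8$ (Proposition \ref{cor:clos-cond} with Lemmas \ref{lem:exl-orb} and \ref{lem:en-orb}) suffices by itself, and the a-priori height/length bounds of Section \ref{sec:priori-bounds} and Corollary \ref{cor:el-ctrl-len}, which you name as the key inputs, play no role in the exclusion; they only enter the reduction.

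What the paper actually uses, and what is absent from your outline, is the following chain: (a) the reflection symmetry $u_0^n(-x)=-\overline{u_0^n(x)}$ is preserved by the Willmore flow (Lemma \ref{lem:sym-pres}) and passes to the limit --- this is the substitute for the winding-number invariant of the closed case; (b) the closing condition for free elastica (Proposition \ref{cor:clos-cond}) forces any self-intersecting limit to be orbit-like, circular ones being excluded since their energy is at least $4\pi$; (c) Lemma \ref{lem:exl-heart} exploits the inherited symmetry to show the limit must be a reparametrization of a \emph{canonically parametrized} orbit-like elastica, the other symmetric configuration carrying a \emph{uniform} energy excess $\E\ge 8+\varepsilon$, which is precisely what handles energies slightly above $8$; and (d) the explicit tangent formula \eqref{eq:par-el-1} shows that a canonically parametrized orbit-like elastica cannot have a vertical tangent at its self-intersection, since $\Real$ of the tangent vanishing would force $\scurv^2=0$ there, while $\scurv^2=\scurv_0^2\dn^2(r\,\cdot,p)>0$ for orbit-like elastica; this contradicts the boundary data \eqref{eq:constr-tv} and closes the argument. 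Without the flow-invariant symmetry, the uniform energy gap of Lemma \ref{lem:exl-heart}, and the endpoint-tangent contradiction, your proposal does not reach the required contradiction, so as it stands it has a genuine gap at the decisive step.
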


\begin{remark}
    By \eqref{eq:el-vs-will}, $\E(u_0^n)\searrow 8$ yields $\W(f_{u_0^n})\searrow 0$, cf. \Cref{prop:sing-ex}.
\end{remark}

\subsection{Parametrization of elastica}
Consider any smooth curve $\gamma\colon I\to\H^2$ on an interval $I\subseteq\R$ parametrized by hyperbolic arc-length. Then we fix the smooth normal field $N\colon I\to \C=\R^2$ along $\gamma$ determined by $N=i\cdot \gamma'$. Finally, the hyperbolic \emph{scalar curvature} of $\gamma$ is defined as $\scurv \defeq \langle \curv, N \rangle_g$ with $\curv$ as in \eqref{eq:def-curv}. 
\begin{definition}
    A smooth curve $\gamma\colon I\to\H^2$ is called \emph{elastica} if it is parametrized by hyperbolic arc-length and if its scalar curvature $\scurv$ satisfies
    \begin{equation}\label{eq:elastica-eq}
        2\scurv''+\scurv^3-(\lambda+2)\scurv = 0
    \end{equation}
    for some $\lambda\in\R$. If $\scurv$ satisfies \eqref{eq:elastica-eq} with $\lambda=0$, $\gamma$ is called \emph{free elastica}. Otherwise, $\gamma$ is sometimes referred to as \emph{$\lambda$-constrained elastica}.
\end{definition}

\begin{remark}
    Note that, for $\gamma\colon I\to\H^2$ smooth and parametrized by hyperbolic arc-length, \eqref{eq:elastica-eq} is equivalent to  $\nabla\E(u) = \lambda\curv$.
\end{remark}

The following two results of \cite{langersinger1984} completely characterize hyperbolic elastica:

\begin{lemma}[{\cite[Proposition 2.7]{muellerspener2020}}]
    Let $\gamma\colon I\to\H^2$ be an elastica. Then there is a constant $C\in\R$ such that
    \begin{equation}\label{eq:char-el-0}
        \scurv'^2+\frac{1}{4}\scurv^4-\frac{\lambda+2}{2}\scurv^2=C
    \end{equation}
    and $\zeta\defeq\scurv^2$ is a non-negative solution of 
    \begin{equation}\label{eq:char-el-1}
        (\zeta')^2+\zeta^3-(2\lambda+4)\zeta^2-4C\zeta=0.
    \end{equation}
\end{lemma}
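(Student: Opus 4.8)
The plan is the classical first-integral argument for the second-order ODE \eqref{eq:elastica-eq}. First I would multiply \eqref{eq:elastica-eq} by $\scurv'$ and recognize the result as a total derivative:
\begin{equation}
    0 = \scurv'\bigl(2\scurv'' + \scurv^3 - (\lambda+2)\scurv\bigr) = \frac{d}{ds}\Bigl(\scurv'^2 + \tfrac14\scurv^4 - \tfrac{\lambda+2}{2}\scurv^2\Bigr).
\end{equation}
Since $I$ is a connected interval and $\scurv$ is smooth, the bracketed quantity is constant on $I$; denoting this constant by $C\in\R$ gives \eqref{eq:char-el-0}.

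For the second claim, set $\zeta\defeq\scurv^2$, which is non-negative by construction and smooth. Then $\zeta' = 2\scurv\scurv'$, so $(\zeta')^2 = 4\scurv^2(\scurv')^2 = 4\zeta(\scurv')^2$. Solving \eqref{eq:char-el-0} for $(\scurv')^2 = C - \tfrac14\scurv^4 + \tfrac{\lambda+2}{2}\scurv^2 = C - \tfrac14\zeta^2 + \tfrac{\lambda+2}{2}\zeta$ and substituting yields
\begin{equation}
    (\zeta')^2 = 4\zeta\Bigl(C - \tfrac14\zeta^2 + \tfrac{\lambda+2}{2}\zeta\Bigr) = 4C\zeta - \zeta^3 + (2\lambda+4)\zeta^2,
\end{equation}
and rearranging gives exactly \eqref{eq:char-el-1}.

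There is no genuine obstacle in this statement: the whole argument is an elementary first-integral computation, self-contained and relying only on the definition of an elastica. The only place demanding any care is the very first step, where one must note that the integration ``constant'' is constant precisely because $I$ is connected; everything afterwards is purely algebraic manipulation in the single unknown function $\scurv$ (respectively $\zeta$). Alternatively, one could simply invoke \cite{langersinger1984,muellerspener2020}.
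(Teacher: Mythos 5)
Your argument is correct and is precisely the standard first-integral computation behind this statement; the paper itself offers no proof but simply cites \cite[Proposition 2.7]{muellerspener2020}, where the same elementary manipulation (multiply \eqref{eq:elastica-eq} by $\scurv'$, integrate using connectedness of $I$, then substitute $\zeta=\scurv^2$) is used. Nothing further is needed.
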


\begin{proposition}[{\cite[Proposition 2.8]{muellerspener2020}}]\label{prop:2.8}
    Each non-negative solution $\zeta$ of \eqref{eq:char-el-1} is global and attains a global maximum $\scurv_0^2\defeq\sup_{x\in\R}\zeta(x)$. Therefore, all non-negative solutions of \eqref{eq:char-el-1} are translations of solutions with the following initial conditions: $\zeta(0)=\scurv_0^2$ and $\zeta'(0)=0$. Further, for $0<\scurv_0^2<\lambda+2$, there exist no $\lambda$-constrained elastica and the cases with $\scurv_0^2\geq \lambda+2$ are exhaustively classified by the following:
    \begin{enumerate}[(a)]
        \item (Circular elastica) $\scurv_0^2=\lambda+2$, $C<0$ and $\zeta(s)=\lambda+2$.
        \item (Orbit-like elastica) $\scurv_0^2\in(\lambda+2,2\lambda+4)$, $C<0$ and $\zeta(s)=\scurv_0^2\dn^2(rs,p)$ where $r=\frac{1}{2}\sqrt{\frac{2\lambda+4}{2-p^2}}$ and $p\in(0,1)$ is such that $\scurv_0^2=\frac{2\lambda+4}{2-p^2}$.
        \item (Asymptotically geodesic elastica) $\scurv_0^2=2\lambda+4$, $C=0$ and $\zeta(s)=\scurv_0^2\frac{1}{\cosh^2(rs)}$ where $r=\frac{1}{2}\sqrt{2\lambda+4}$.
        \item (Wave-like elastica) $\scurv_0^2>2\lambda+4$, $C>0$ and $\zeta(s)=\scurv_0^2\cn^2(rs,p)$ where $r=\frac{1}{2}\sqrt{\frac{2\lambda+4}{2p^2-1}}$ and $p\in(\frac{1}{\sqrt{2}},1)$ is such that $\scurv_0^2=\frac{(2\lambda+4)p^2}{2p^2-1}$.
    \end{enumerate} 
\end{proposition}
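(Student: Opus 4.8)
The plan is to treat \eqref{eq:char-el-1} as an autonomous first-order ODE for $\zeta\ge0$ and run an elementary phase-plane analysis; the statement is classical and can alternatively be quoted from \cite{langersinger1984}. Write $P(\zeta)\defeq-\zeta^3+(2\lambda+4)\zeta^2+4C\zeta=-\zeta\bigl(\zeta^2-(2\lambda+4)\zeta-4C\bigr)$, so that \eqref{eq:char-el-1} reads $(\zeta')^2=P(\zeta)$; differentiating once gives the smooth autonomous second-order equation $\zeta''=\tfrac12P'(\zeta)$, valid on the whole interval by continuity. First I would establish global existence: the first integral \eqref{eq:char-el-0} bounds $\scurv^2$ by the larger root of $t^2-2(\lambda+2)t-4C$, i.e.\ by $(\lambda+2)+\sqrt{(\lambda+2)^2+4C}$ as soon as $(\lambda+2)^2+4C\ge0$ --- and if this quantity were negative then $P(\zeta)<0$ for every $\zeta>0$, so no nonconstant nonnegative solution could exist. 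Together with the bound on $\scurv'$ from \eqref{eq:char-el-0}, solutions of \eqref{eq:elastica-eq} extend to $\R$ and $\zeta=\scurv^2$ is bounded.

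Next I would show $\zeta$ attains $\scurv_0^2\defeq\sup_\R\zeta$, that $\scurv_0^2$ is the largest nonnegative root of $P$, and that all nonnegative solutions are translates of the one normalized by $\zeta(0)=\scurv_0^2$, $\zeta'(0)=0$. Since $(\zeta')^2=P(\zeta)\ge0$, the range of $\zeta$ lies in a connected component of $\{\zeta\ge0:P(\zeta)\ge0\}$, an interval with consecutive nonnegative roots of $P$ as endpoints. The ``time'' to travel from an interior value to an endpoint $\zeta_\ast$ is $\int\!\frac{d\zeta}{\sqrt{P(\zeta)}}$, which converges at a simple root of $P$ (a genuine turning point, reached in finite time and hence attained, after which $\zeta$ reflects) and diverges at a double root (only approached asymptotically). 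Inspecting the possible double-root configurations of the cubic $P$ shows that a double root is never a local maximum of $\zeta$: if $P$ has a double root at some $\zeta_\ast>0$ then $P\le0$ on $[0,\infty)$ and $\zeta$ must be constant, while if the double root is at $0$ then $\scurv_0^2$ is the remaining simple root. Hence $\scurv_0^2$ is always a root of $P$, it is attained, and (being also a root of the quadratic factor, when $\scurv_0>0$) it equals $(\lambda+2)+\sqrt{(\lambda+2)^2+4C}\ge\lambda+2$; in particular no nontrivial $\lambda$-constrained elastica can have $0<\scurv_0^2<\lambda+2$. Translating $s$ so that the maximum is at $s=0$, uniqueness for $\zeta''=\tfrac12P'(\zeta)$ gives that every nonnegative solution is a translate of the normalized one.

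For the classification I would use $C=\tfrac14\scurv_0^2(\scurv_0^2-2\lambda-4)$ --- obtained by evaluating \eqref{eq:char-el-0} at the maximum --- so that how $\scurv_0^2$ compares to $2\lambda+4$ governs the sign of $C$, and treat the four regimes $\scurv_0^2=\lambda+2$, $\lambda+2<\scurv_0^2<2\lambda+4$, $\scurv_0^2=2\lambda+4$, $\scurv_0^2>2\lambda+4$ in turn. In each regime one factors $P$ over $[0,\infty)$ --- namely $P(\zeta)=-\zeta(\zeta-(\lambda+2))^2$ in the first, forcing $\zeta\equiv\lambda+2$ (circular case), and $P(\zeta)=\zeta(\scurv_0^2-\zeta)(\zeta-\zeta_1)$ with $\zeta_1=(2\lambda+4)-\scurv_0^2$ otherwise ($\zeta_1\in(0,\lambda+2)$ in the second regime; $\zeta_1=0$ a double root in the third; $\zeta_1<0$ in the fourth) --- and then matches $(\zeta')^2=P(\zeta)$ with $\zeta(0)=\scurv_0^2$ to the profile $\scurv_0^2\dn^2(rs,p)$, $\scurv_0^2\sech^2(rs)$, resp.\ $\scurv_0^2\cn^2(rs,p)$ by comparing with the standard derivative identities $\bigl((\dn^2)'\bigr)^2=4\dn^2(1-\dn^2)(\dn^2-(1-p^2))$, $\bigl((\sech^2)'\bigr)^2=4\sech^4(1-\sech^2)$, $\bigl((\cn^2)'\bigr)^2=4\cn^2(1-\cn^2)\bigl((1-p^2)+p^2\cn^2\bigr)$. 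Equating coefficients of the resulting cubics in $\zeta$ pins down $r$ and the modulus $p$ (e.g.\ in the orbit-like case $\zeta_1=\scurv_0^2(1-p^2)$ and $r^2=\tfrac14\scurv_0^2$) and, after elementary rearrangement, reproduces exactly the formulas stated in (a)--(d); in the wave-like case one also reads off the constraint $p>1/\sqrt2$.

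The only genuine work is the phase-plane bookkeeping of the second paragraph --- global existence, the precise range of $\zeta$, and attainment of the maximum, with care at turning points and at $\zeta=0$ (where $\scurv$ vanishes and $\zeta=\scurv^2$ has a local minimum, though $\scurv$ itself stays smooth) --- together with the elliptic-function matching, which is routine but fiddly; one must in particular verify that how $\scurv_0^2$ sits relative to $2\lambda+4$ aligns exactly with the sign of $C$ and with which roots of the cubic $P$ are nonnegative. Everything else is arithmetic.
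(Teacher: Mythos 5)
The paper does not prove this proposition at all: it is quoted verbatim from \cite[Proposition 2.8]{muellerspener2020} (which in turn builds on the classification of \cite{langersinger1984}), so there is no in-paper argument to compare against. Your quadrature/phase-plane proof is exactly the standard route behind the cited result, and the substance checks out: the identity $C=\tfrac14\scurv_0^2(\scurv_0^2-2\lambda-4)$ obtained by evaluating \eqref{eq:char-el-0} at the maximum correctly ties the sign of $C$ to the position of $\scurv_0^2$ relative to $2\lambda+4$, the factorizations of $P$ in the four regimes are right, and the matching with $\bigl((\dn^2)'\bigr)^2$, $\bigl((\sech^2)'\bigr)^2$, $\bigl((\cn^2)'\bigr)^2$ reproduces the stated values of $r$ and $p$ (e.g.\ $r^2=\tfrac14\scurv_0^2$, $\zeta_1=(1-p^2)\scurv_0^2$ in the orbit-like case, and the constraint $p^2>\tfrac12$ in the wave-like case).

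One step needs repair. You claim that differentiating $(\zeta')^2=P(\zeta)$ gives $\zeta''=\tfrac12P'(\zeta)$ ``on the whole interval by continuity''; this only follows on the closure of $\{\zeta'\neq0\}$, not on intervals where $\zeta'\equiv0$. The first-order equation \eqref{eq:char-el-1} has spurious solutions that sit (for all time, or for a while) at a \emph{simple} root of $P$; these are not translates of the normalized solution, do not satisfy the formulas in (a)--(d), and -- sitting at the smaller root $\zeta_-=(\lambda+2)-\sqrt{(\lambda+2)^2+4C}$ -- would even produce $\scurv_0^2\in(0,\lambda+2)$, contradicting the exclusion you want. The fix is to use that the relevant $\zeta$ is $\scurv^2$ for $\scurv$ solving the elastica equation \eqref{eq:elastica-eq}: a direct computation from \eqref{eq:elastica-eq} and \eqref{eq:char-el-0} gives $\zeta''=2C+2(\lambda+2)\zeta-\tfrac32\zeta^2=\tfrac12P'(\zeta)$ everywhere (alternatively, if $\scurv$ is constant on a nontrivial interval, uniqueness for \eqref{eq:elastica-eq} forces it to be globally constant with $\scurv^2\in\{0,\lambda+2\}$). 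With the second-order equation in hand, your uniqueness/translation argument and the identification $\scurv_0^2=(\lambda+2)+\sqrt{(\lambda+2)^2+4C}$ go through as written; the rest is, as you say, arithmetic.
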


\begin{remark}\label{rem:en-circ-el}
    The unique circular elastica up to isometries of $\H^2$ is given by the Clifford torus. Particularly, if $\gamma\colon\R\to\H^2$ is a circular elastica, $\gamma$ is periodic and, for every interval $I$ whose length is greater than the period of $\gamma$, $\E(\gamma|_I)\geq \frac{2}{\pi} 2\pi^2 = 4\pi$. Here we used \eqref{eq:el-vs-will} and that the Clifford torus has $2\pi^2$ as Willmore energy.
\end{remark}

The following existence result on isometries of $\H^2$ will be used repeatedly.

\begin{lemma}[{\cite[Lemma 2.9]{muellerspener2020}}]\label{lem:isom-hyp}
    Let $z\in\H^2$ and $v\in T_z\H^2$ such that $|v|_g=1$. For each $y>0$, there exists an isometry $\Phi$ of $\H^2$ with $\Phi(z)=iy$ and $d\Phi_z(v)=y+0i$.
\end{lemma}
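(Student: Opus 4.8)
The plan is to exhibit $\Phi$ as a composition of two explicit isometries of the type in Remark \ref{rem:el-en-isom}: first, one carrying the base point $z$ to $iy$, then an elliptic ``rotation'' fixing $iy$ that aligns the image of $v$ with the prescribed unit vector $y+0i$. The underlying fact is that the orientation-preserving isometry group of $\H^2$ acts transitively on the unit tangent bundle, and the proof just makes this transitivity explicit.

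First, writing $z=x_0+iy_0$ with $y_0>0$, I would take $\Phi_0(w)\defeq\frac{y}{y_0}(w-x_0)$. This is a M\"obius map $w\mapsto\frac{aw+b}{cw+d}$ with $a=y/y_0$, $b=-yx_0/y_0$, $c=0$, $d=1$, so $ad-bc=y/y_0>0$ and $\Phi_0$ is an isometry of $\H^2$ by Remark \ref{rem:el-en-isom}. By construction $\Phi_0(z)=iy$, and, $\Phi_0$ being an isometry, $w_0\defeq(d\Phi_0)_z(v)$ is a unit vector in $T_{iy}\H^2$.

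Next I would rotate $w_0$ into the horizontal direction while keeping $iy$ fixed. For $\theta\in\R$ let $R_\theta$ be the conjugate, by the scaling $w\mapsto w/y$, of the standard elliptic rotation fixing $i$, that is
\begin{equation}
    R_\theta(w)\defeq y\cdot\frac{(\cos\theta)(w/y)+\sin\theta}{-(\sin\theta)(w/y)+\cos\theta}.
\end{equation}
Writing out its coefficients, $R_\theta$ is again a M\"obius map with positive determinant, hence an isometry by Remark \ref{rem:el-en-isom}, and $R_\theta(iy)=iy$. Since the differential of a holomorphic self-map of $\H^2$ at a point acts, as an $\R$-linear self-map of $T_p\H^2\cong\C$, by multiplication by the complex derivative, the chain rule together with the elementary fact that the derivative of $w\mapsto\frac{(\cos\theta)w+\sin\theta}{-(\sin\theta)w+\cos\theta}$ at $w=i$ equals $e^{2i\theta}$ gives $(dR_\theta)_{iy}=e^{2i\theta}$ on $T_{iy}\H^2$. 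As $\theta$ ranges over $[0,\pi)$, $e^{2i\theta}$ sweeps out the whole unit circle, so there is $\theta_0$ with $(dR_{\theta_0})_{iy}(w_0)=y+0i$ (both are unit vectors at $iy$, since $|y+0i|_g=1$ there). Then $\Phi\defeq R_{\theta_0}\circ\Phi_0$ is an isometry of $\H^2$ with $\Phi(z)=iy$ and $(d\Phi)_z(v)=y+0i$, as required.

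There is no genuine analytic obstacle here; the only point requiring care is the bookkeeping in the rotation step — namely, that the elliptic M\"obius transformations fixing $iy$ realise, through their differentials, \emph{every} rotation of $T_{iy}\H^2$, which reduces to the elementary derivative computation above. Alternatively, one may simply invoke the well-known transitivity of $\mathrm{Isom}^{+}(\H^2)$ on the unit tangent bundle $T^{1}\H^2$ and specialise it to the pair of unit tangent vectors $(z,v)$ and $(iy,\,y+0i)$.
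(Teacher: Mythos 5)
Your construction is correct: the affine map $\Phi_0(w)=\frac{y}{y_0}(w-x_0)$ and the conjugated elliptic rotation $R_\theta$ are both M\"obius maps with positive determinant, hence isometries by Remark \ref{rem:el-en-isom}, and the derivative computation $(dR_\theta)_{iy}=e^{2i\theta}$ is right, so the composition realises any prescribed unit tangent vector at $iy$. Note that the paper itself offers no proof of this lemma --- it is quoted verbatim from \cite[Lemma 2.9]{muellerspener2020} --- and your two-step argument (move the base point, then rotate in the fibre) is exactly the standard proof of transitivity of the orientation-preserving isometry group on the unit tangent bundle, so there is nothing to reconcile with the paper's treatment.
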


\begin{lemma}\label{lem:ext-el}
    Suppose that $\gamma\colon I\to\H^2$ is an elastica. Then there exists a unique smooth extension to a globally defined elastica $\widetilde{\gamma}\colon\R\to\H^2$.
\end{lemma}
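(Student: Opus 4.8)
The plan is to decouple the problem into an ODE for the scalar curvature and a reconstruction of the curve from its curvature. Fix $s_0\in I$ and let $\scurv\colon I\to\R$ be the scalar curvature of $\gamma$, with $\lambda$ the parameter in \eqref{eq:elastica-eq}. Since $\scurv$ solves the autonomous second-order ODE $2\scurv''=(\lambda+2)\scurv-\scurv^3$, whose right-hand side is smooth (polynomial), it extends to a unique maximal solution $\widetilde\scurv\colon J\to\R$ agreeing with $\scurv$ to first order at $s_0$, and $\widetilde\scurv|_I=\scurv$ by uniqueness. Multiplying the equation by $\widetilde\scurv'$ and integrating shows that $\widetilde\scurv$ satisfies \eqref{eq:char-el-0} with the constant $C$ fixed by the value and first derivative of $\scurv$ at $s_0$; since $\widetilde\scurv'^2\ge 0$, this forces $\widetilde\scurv$ (equivalently $\zeta=\widetilde\scurv^2$, see Proposition~\ref{prop:2.8}) to remain bounded, so $\widetilde\scurv'$ is bounded as well and the standard continuation theorem yields $J=\R$.

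Next I reconstruct the curve. Using the formula \eqref{eq:cov-der-h2} and writing $N\defeq i\gamma'$, consider the first-order system for a pair $(\gamma,T)$, with $\gamma$ valued in $\H^2$ and $T$ a vector field along it, given by $\partial_s\gamma=T$ and $\nabla_{\partial_s}T=\widetilde\scurv\,(iT)$; in the half-plane coordinates its right-hand side is smooth, and one computes $\partial_s|T|_g^2=2\widetilde\scurv\langle iT,T\rangle_g=0$ since multiplication by $i$ is a $g$-isometry. Taking as data at $s_0$ the position $\gamma(s_0)$ and tangent $\gamma'(s_0)$ of the original elastica, the traced curve thus has unit hyperbolic speed; hence on any bounded interval it stays in a closed hyperbolic ball, which is compact because $\H^2$ is complete. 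On such a compact set the hyperbolic metric and its Christoffel symbols are bounded, so $(\gamma,T)$ cannot leave a compact subset of $\H^2\times\R^2$ in finite arc-length, and the maximal solution $\widetilde\gamma$ is global. By construction $\widetilde\gamma$ is parametrized by hyperbolic arc-length and has scalar curvature $\langle\nabla_{\partial_s}\widetilde\gamma',i\widetilde\gamma'\rangle_g=\widetilde\scurv$, which solves \eqref{eq:elastica-eq}; hence $\widetilde\gamma$ is an elastica. Moreover $\gamma$ and $\widetilde\gamma|_I$ are arc-length curves with the same curvature and the same position and tangent at $s_0$, so by uniqueness for the Frenet system $\widetilde\gamma|_I=\gamma$.

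For uniqueness of the extension, let $\widehat\gamma\colon\R\to\H^2$ be any global elastica, with parameter $\widehat\lambda$, satisfying $\widehat\gamma|_I=\gamma$. Its scalar curvature $\widehat\scurv$ equals $\scurv$ on $I$, and subtracting the $\lambda$- and $\widehat\lambda$-versions of \eqref{eq:elastica-eq} gives $(\widehat\lambda-\lambda)\scurv\equiv 0$ on $I$. If $\scurv\not\equiv 0$ on $I$ then $\widehat\lambda=\lambda$, so $\widehat\scurv$ and $\widetilde\scurv$ solve the same ODE with the same value and first derivative at $s_0$ and hence agree on $\R$; if $\scurv\equiv 0$ on $I$ then $\widehat\scurv(s_0)=\widehat\scurv'(s_0)=0$ forces $\widehat\scurv\equiv 0=\widetilde\scurv$ by uniqueness for its own elastica equation. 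In either case $\widehat\gamma$ and $\widetilde\gamma$ are arc-length curves with identical curvature and identical position and tangent at $s_0$, so $\widehat\gamma=\widetilde\gamma$.

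The main obstacle is the global solvability of the Frenet system: one must rule out the curve, or its tangent direction, escaping to the ideal boundary of $\H^2$ in finite arc-length, and this is exactly where geodesic completeness of the hyperbolic plane (through the unit-speed bound) enters. Keeping track of the sign conventions hidden in $N=i\gamma'$ and treating the degenerate geodesic case $\scurv\equiv 0$ require only minor additional care.
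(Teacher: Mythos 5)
Your proof is correct, and it follows the same two-step skeleton as the paper (first extend the scalar curvature to all of $\R$, then recover the curve from its curvature), but it fills in both steps by different, more self-contained means. The paper extends $\scurv$ by invoking the explicit classification of Proposition \ref{prop:2.8} --- the curvature is a globally defined Jacobi elliptic function (or $\sech$, or a constant) --- and then cites the fundamental theorem of curve theory from Blatt's paper as a black box to produce the unique global curve with that curvature. You instead extend $\scurv$ by elementary ODE continuation, using the first integral \eqref{eq:char-el-0} to confine $(\widetilde\scurv,\widetilde\scurv')$ to a compact set (the quartic term dominates, so the admissible region is bounded), and you prove the reconstruction step directly: global solvability of the Frenet-type system $\partial_s\gamma=T$, $\nablas T=\widetilde\scurv\,(iT)$ follows from $|T|_g\equiv 1$ together with completeness of $\H^2$, which keeps the solution in a compact subset of $\H^2\times\R^2$ on bounded parameter intervals. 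What your route buys is independence from the explicit elliptic-function formulas and from the cited fundamental theorem; it also makes the uniqueness statement more airtight than the paper's one-line assertion, since you explicitly rule out an extension being an elastica for a different parameter $\widehat\lambda$ (via $(\widehat\lambda-\lambda)\scurv\equiv 0$ on $I$) and handle the geodesic case $\scurv\equiv 0$ separately. What the paper's route buys is brevity and the explicit global form $\widetilde\scurv(s)=\scurv_0 f(rs)$, which is then reused later (e.g.\ in Theorem \ref{thm:par-el} and the figure-eight analysis), so the classification is not wasted effort there.
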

\begin{proof}
    By \Cref{prop:2.8}, for some $\scurv_0\in\R$ with $\scurv_0^2=\sup_{x\in\R}\zeta(x)$ where $\zeta$ is a global solution of \eqref{eq:char-el-1} with $\zeta|_I=\scurv^2$, we have $\scurv(s) = \scurv_0\cdot f(rs)$ where $f$ satisfies $f\in\{\cn(\cdot+s_*,p),\dn(\cdot+s_*,p),\frac{1}{\cosh(\cdot+s_*)},1\}$ for some $s_*\in\R$. By the fundamental theorem of curve theory (cf. \cite[p.417]{blatt2009}), there exists a unique globally defined extension $\widetilde{\gamma}\colon\R\to\H^2$ of $\gamma$ globally parametrized by arc-length with curvature globally given by $\widetilde{\scurv}(s)=\scurv_0\cdot f(r s)$. One then easily checks that $\widetilde{\scurv}$ satisfies \eqref{eq:elastica-eq}, i.e., $\widetilde{\gamma}$ is again an elastica on its domain.
\end{proof}

Whenever we use the unique extension of \Cref{lem:ext-el} in the following, we do not distinguish between $\scurv$ and $\widetilde{\scurv}$.

With these tools, an explicit parametrization for \emph{globally defined} elastica has been achieved in \cite[Theorem 2.22]{muellerspener2020}. The following is a direct consequence of the cited result and \Cref{lem:ext-el}.

\begin{theorem}\label{thm:par-el}
    Let $\gamma\colon I\to\H^2$ be an elastica with non-constant curvature and denote by ${\gamma}$ also its globally defined extension, cf. \Cref{lem:ext-el}. Then
    \begin{equation}\label{eq:def-theta}
        \theta\defeq {\scurv}^2-\lambda+2i{\scurv}'
    \end{equation}
    never vanishes. Choose $s_*\in\R$ with ${\scurv}(s_*)^2=\scurv_0^2$ and suppose that ${\gamma}(s_*)=iy$ and ${\gamma}'(s_*)=y+0i$ for some $y>0$. Then there exist $a,c\in\R$ with $|a|+|c|>0$ satisfying both $ac=-\frac{1}{4}(\lambda^2+4C)$ and $-ay^2+c=(\scurv_0^2-\lambda)y$ such that
    \begin{equation}\label{eq:par-el-1}
        \gamma'=\frac{a\gamma^2+c}{\theta}\quad\text{on $I$}.
    \end{equation}
    Moreover, there exists $z_1\in i\R_{\neq 0}$ such that $\gamma$ is parametrized by 
    \begin{equation}\label{eq:par-el-2}
        \gamma(x)=f\left( \int_{s_*}^x \frac{1}{\theta(s)} \dd s+z_1\right)
    \end{equation}
    where $\theta$ is given as in \eqref{eq:def-theta} and
    \begin{equation}\label{eq:par-el-3}
        f(z)=\begin{cases}
            \sqrt{\frac{c}{a}}\tan(\sqrt{ac}z)&\text{for $a,c>0$,}\\
            -\sqrt{\frac{c}{a}}\cot(\sqrt{ac}z)&\text{for $a,c<0$,}\\
            \sqrt{-\frac{c}{a}}\tanh(\sqrt{-ac}z)&\text{for $ac<0$,}\\
            \frac{1}{az}&\text{for $c=0,a\neq 0$,}\\
            cz&\text{for $c\neq 0, a=0$}.
        \end{cases}
    \end{equation}
\end{theorem}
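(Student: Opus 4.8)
The statement is really a translation of the cited result \cite[Theorem 2.22]{muellerspener2020}, which gives the parametrization for \emph{globally defined} elastica, into the language of arbitrary (non-global) elastica via the extension result Lemma \ref{lem:ext-el}. So the plan is as follows. First, I would invoke Lemma \ref{lem:ext-el} to obtain the unique globally defined extension $\gamma\colon\R\to\H^2$ of the given elastica; since the extension agrees with the original curve on $I$, every identity proved for the extension restricts to an identity on $I$, and conversely the scalar curvature $\scurv$ (and hence $\theta$, which is algebraic in $\scurv,\scurv'$) is unchanged. Thus it suffices to prove all claims for the globally defined elastica, where \cite[Theorem 2.22]{muellerspener2020} applies directly.

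Next I would verify the normalization hypothesis can be met: by Proposition \ref{prop:2.8}, any non-negative solution $\zeta=\scurv^2$ attains its global maximum $\scurv_0^2$, so there exists $s_*\in\R$ with $\scurv(s_*)^2=\scurv_0^2$ (equivalently $\scurv'(s_*)=0$ since $\zeta'(s_*)=0$ at a maximum). Using Lemma \ref{lem:isom-hyp} with $z=\gamma(s_*)$, $v=\gamma'(s_*)$ and $y$ the prescribed value, there is an isometry $\Phi$ of $\H^2$ with $\Phi(\gamma(s_*))=iy$ and $d\Phi_{\gamma(s_*)}(\gamma'(s_*))=y+0i$; since $\E$ and the elastica equation \eqref{eq:elastica-eq} are isometry-invariant (Remark \ref{rem:el-en-isom} and the fact that $\scurv$ is unchanged under orientation-preserving isometries), $\Phi\circ\gamma$ is again an elastica with the same $\scurv$, hence the same $\theta$, $\scurv_0$, $\lambda$, $C$. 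So after replacing $\gamma$ by $\Phi\circ\gamma$ we may assume the normalization of the theorem. The non-vanishing of $\theta=\scurv^2-\lambda+2i\scurv'$ then follows: if $\theta(s)=0$ then $\scurv'(s)=0$ and $\scurv(s)^2=\lambda$, but plugging into \eqref{eq:char-el-0} gives $C=\frac14\lambda^2-\frac{\lambda+2}2\lambda=-\frac14(\lambda^2+4\lambda)\cdot\frac12$... — more cleanly, one checks via \eqref{eq:char-el-1} that $\zeta=\lambda$ with $\zeta'=0$ forces $\lambda^3-(2\lambda+4)\lambda^2-4C\lambda=0$, and this is incompatible with the classification in Proposition \ref{prop:2.8} unless one is in a degenerate case already excluded by ``non-constant curvature''; I would simply cite that this non-vanishing is part of \cite[Theorem 2.22]{muellerspener2020}.

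Finally, the explicit formulas \eqref{eq:par-el-1}, \eqref{eq:par-el-2}, \eqref{eq:par-el-3}, together with the existence of $a,c\in\R$ with $|a|+|c|>0$, $ac=-\frac14(\lambda^2+4C)$, $-ay^2+c=(\scurv_0^2-\lambda)y$, and of $z_1\in i\R_{\neq0}$, are exactly the content of \cite[Theorem 2.22]{muellerspener2020} for the globally defined extension; I would state that they transfer verbatim to $\gamma$ on $I$ by restriction. The one point requiring a sentence of care is that \eqref{eq:par-el-2} is an identity of curves on all of $\R$ for the extension, so in particular it holds on $I\subseteq\R$ for the original curve — no convergence or patching issue arises because $\theta$ never vanishes, so $s\mapsto\int_{s_*}^s\theta(\sigma)^{-1}\,d\sigma$ is a well-defined smooth (complex-valued) function on all of $\R$. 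The \textbf{main obstacle} is essentially bookkeeping rather than mathematics: one must make sure the normalizing isometry $\Phi$ does not alter any of the quantities $\lambda,C,\scurv_0,\theta,a,c$ (it does not, since all are built from $\scurv$ and its derivatives in arc-length, which are isometry-invariant) and that the choice of $s_*$ as a point where $\scurv^2$ is maximal is consistent with the hypotheses of the cited theorem. Given that, the proof is a short reduction.
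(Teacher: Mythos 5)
Your proposal matches the paper's own treatment: the theorem is stated there as a direct consequence of \cite[Theorem 2.22]{muellerspener2020} applied to the global extension furnished by Lemma \ref{lem:ext-el}, with all formulas restricting from $\R$ back to $I$, which is exactly your reduction. The only superfluous part is the normalization step via Lemma \ref{lem:isom-hyp} (the condition $\gamma(s_*)=iy$, $\gamma'(s_*)=y+0i$ is already a hypothesis of the theorem, so no isometry needs to be applied), and your half-finished direct check of the non-vanishing of $\theta$ is rightly replaced by citing that this is part of the quoted result.
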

\textbf{Notation:} An elastica $\gamma\colon I\to\H^2$ with $0\in I$ is called \emph{canonically parametrized} if $\gamma(0)=iy$, $\gamma'(0)=y+0i$ and $\scurv(0)^2=\scurv_0^2$.

\begin{definition}[{\cite[Definition 6.1]{muellerspener2020}}]
    For $\lambda>0$, we call a curve $\gamma$ a \emph{$\lambda$-figure-eight} if $\gamma$ is a $\lambda$-constrained, wave-like elastica with winding number $0$.
\end{definition}

\begin{remark}\label{rem:a-c-for-lf8}
    Using the notation of \Cref{thm:par-el}, for canonically parametrized $\lambda$-figure-eights, one has $ac=-\frac{1}{4}(\lambda^2+4C)$ and $-ay^2+c=(\scurv_0^2-\lambda)y$. By \Cref{prop:2.8} (d), $C>0$ and $\scurv_0^2-\lambda > \lambda+4>0$. So $a<0$ and $c>0$.
\end{remark}

The following lemma is a direct consequence of (the proof of) \cite[Corollary 6.4]{muellerspener2020}.

\begin{lemma}\label{lem:ex-fig8}
    Let $\lambda\in (0,\frac{64}{\pi^2}-2)$. Then, there exists a closed $\lambda$-figure-eight $\gamma_{\lambda}$ with $\gamma_{\lambda}(0)=i$ and $\gamma_{\lambda}'(0)=1+0i$. Moreover, $\E(\gamma_{\lambda})\searrow  16$ for $\lambda\searrow 0$. \\Furthermore, with the notation in \Cref{prop:2.8} (d), $p(\gamma_{\lambda}) \nearrow 1$, $r(\gamma_{\lambda})\to 1$ and $\scurv_0^2(\gamma_{\lambda})\to 4$ for $\lambda\searrow 0$.
\end{lemma}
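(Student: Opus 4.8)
The plan is to deduce the statement from \cite[Corollary~6.4]{muellerspener2020} combined with the classification of wave-like elastica in Proposition~\ref{prop:2.8}(d) and the parametrization of Theorem~\ref{thm:par-el}; this lemma is essentially a repackaging of the cited result in the form needed for the singular construction. A $\lambda$-figure-eight is by definition a $\lambda$-constrained, wave-like elastica of winding number $0$, and the existence of a \emph{closed} one for every $\lambda\in(0,\tfrac{64}{\pi^2}-2)$ is precisely \cite[Corollary~6.4]{muellerspener2020}; the endpoint $\tfrac{64}{\pi^2}-2$ is exactly where the closing condition ceases to be solvable in the wave-like regime. To reduce to the normalized situation $\gamma_\lambda(0)=i$, $\gamma_\lambda'(0)=1+0i$, I would compose with the isometry of Lemma~\ref{lem:isom-hyp} (applied with $z=\gamma_\lambda(0)$, $v=\gamma_\lambda'(0)/|\gamma_\lambda'(0)|_g$, $y=1$); closedness and the wave-like type are preserved, and both $\E$ and the elastica equation \eqref{eq:elastica-eq} are isometry-invariant (Remark~\ref{rem:el-en-isom}), so nothing is lost. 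It is useful to recall \emph{why} the curve can be closed: in its canonical parametrization, Remark~\ref{rem:a-c-for-lf8} and Theorem~\ref{thm:par-el} give $a<0$, $c>0$, hence $f(z)=\sqrt{-c/a}\tanh(\sqrt{-ac}\,z)$ in \eqref{eq:par-el-3}, and the closing condition becomes a period identity for the antiderivative $x\mapsto\int_{s_*}^x\theta(s)^{-1}\,ds$ of \eqref{eq:par-el-2}, which \cite{muellerspener2020} verify on the stated range of $\lambda$ by a continuity argument.

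\emph{Limit of the elliptic modulus.} By Proposition~\ref{prop:2.8}(d), $\gamma_\lambda$ carries a unique modulus $p=p(\gamma_\lambda)\in(\tfrac{1}{\sqrt2},1)$ with $\scurv_0^2=\tfrac{(2\lambda+4)p^2}{2p^2-1}$ and $r=\tfrac12\sqrt{\tfrac{2\lambda+4}{2p^2-1}}$, and the closing equation exhibits $p$ as a continuous function of $\lambda$. The analysis of \cite[Section~6]{muellerspener2020} shows $p(\gamma_\lambda)\nearrow1$ as $\lambda\searrow0$ — intuitively, the quarter-period $K(p)/r$ of the curvature must diverge so that the two lobes of the closed curve ``stretch out'', which is forced by the energy remaining bounded while $\scurv_0^2$ stays away from $0$. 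Granting $p\to1$ and $\lambda\to0$, the displayed formulas give at once $2p^2-1\to1$, hence $r\to\tfrac12\sqrt4=1$ and $\scurv_0^2\to\tfrac{4}{1}=4$.

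\emph{Energy.} Parametrizing the closed curve $\gamma_\lambda$ by hyperbolic arclength over $[0,L_\lambda]$ with $L_\lambda=4K(p)/r$ the period of the curvature $\scurv=\scurv_0\cn(r\,\cdot,p)$ (the arclength of one full traversal, over both lobes), the substitution $u=rs$ gives
\[
\E(\gamma_\lambda)=\int_0^{L_\lambda}\scurv^2\,ds=\frac{\scurv_0^2}{r}\int_0^{4K(p)}\cn^2(u,p)\,du,
\]
which evaluates in closed form through the incomplete elliptic integral of the second kind. As $p\to1$ one has $\cn(\cdot,p)\to\sech$ locally uniformly, the two bumps of $\cn^2$ over a period separate and their tails contribute a vanishing error, so $\int_0^{4K(p)}\cn^2(u,p)\,du\to 2\int_{-\infty}^{\infty}\sech^2u\,du=4$; combined with $\scurv_0^2/r\to4$ this yields $\E(\gamma_\lambda)\to16$, each lobe contributing limiting energy $4\int_{-\infty}^{\infty}\sech^2u\,du=8$. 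The monotonicity ``$\searrow$'' is part of the explicit $\lambda$-dependence worked out in \cite[Section~6]{muellerspener2020}.

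\emph{Main obstacle.} The genuine content — and the step I expect to be hardest if one wanted a self-contained proof — is the closing identity and the monotone limit $p(\gamma_\lambda)\nearrow1$: both require non-trivial manipulations of Jacobi elliptic functions, carried out in \cite[Section~6]{muellerspener2020}. The remaining steps are routine bookkeeping: transporting the curve by an isometry to impose $\gamma_\lambda(0)=i$, $\gamma_\lambda'(0)=1+0i$, and passing to the limit $p\to1$ in the formulas of Proposition~\ref{prop:2.8}(d) via the standard degeneration $\cn(\cdot,p)\to\sech$.
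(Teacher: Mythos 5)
Your proposal follows essentially the same route as the paper: the paper proves this lemma simply by declaring it a direct consequence of (the proof of) \cite[Corollary~6.4]{muellerspener2020}, i.e.\ existence of the closed $\lambda$-figure-eight and the limits of $p$, $r$, $\scurv_0^2$ and $\E$ are read off from the cited result together with the formulas of Proposition~\ref{prop:2.8}(d), exactly as you do. Your additional details (normalizing via Lemma~\ref{lem:isom-hyp} and computing $\E(\gamma_\lambda)=\frac{\scurv_0^2}{r}\int_0^{4K(p)}\cn^2(u,p)\,du\to 16$ through the degeneration $\cn(\cdot,p)\to\sech$) are correct and consistent with the paper's later use of these facts in Remark~\ref{rem:el-en-lf8}.
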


\subsection{Symmetry of the Willmore flow of $\lambda$-figure-eights}

\begin{remark}\label{rem:sym-scurv-1}
    In this section, the symmetry properties induced by the mapping $P\colon\H^2\to\H^2$, $z=x+iy\mapsto -\overline{z} = -x + iy$ are studied. Let $\gamma\colon I\to\H^2$ be an immersion, $\alpha\in I$ and consider $\widetilde{\gamma}\defeq P\circ \gamma(\alpha-\cdot)$. We argue that $\scurv_{\widetilde{\gamma}}(x) = \scurv_{\gamma}(\alpha-x)$. Using that $P$ is an $\R$-linear isometry of $\H^2$, one finds that $\curv_{\widetilde{\gamma}}(x) = P(\curv_{\gamma} (\alpha-x))$. Using $i\cdot P z = - P(iz)$ for $z\in\C$, one obtains $N_{\widetilde{\gamma}}(x) = P(N_{\gamma}(\alpha-x))$ so that
    \begin{equation}\label{eq:sym-of-scurv}
        \scurv_{\widetilde{\gamma}}(x)=\langle P(\curv_{\gamma}(\alpha-x)),P(N_{\gamma}(\alpha-x))\rangle_g=\scurv_{\gamma}(\alpha-x).
    \end{equation}
\end{remark}

\begin{lemma}\label{lem:fig8-sym}
    Consider a canonically parametrized $\lambda$-figure-eight $\gamma_{\lambda}$ as in \Cref{lem:ex-fig8}. Then, one has that $\gamma_{\lambda}( -{K(p)}/{r} )=\gamma_{\lambda}( {K(p)}/{r} )$ and 
    \begin{equation}\label{eq:fig-8-sym}
        \gamma_{\lambda}(-x)=-\overline{\gamma_{\lambda}(x)}\quad\text{for all $x\in\R$}
    \end{equation}
    where $\overline{\,\cdot\,}$ denotes complex conjugation. 
\end{lemma}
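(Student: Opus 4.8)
\emph{Plan.} Write $P(z)=-\overline z$ as in Remark~\ref{rem:sym-scurv-1}; recall $P$ is an $\R$-linear isometry of $\H^2$ with $P^2=\mathrm{id}$, and that $z=P(z)$ holds exactly when $\Real z=0$. The identity \eqref{eq:fig-8-sym} says $\gamma_\lambda$ is invariant under the reflection $P$ composed with time reversal; the plan is to deduce it from the uniqueness part of the fundamental theorem of curve theory. The equality $\gamma_\lambda(-K(p)/r)=\gamma_\lambda(K(p)/r)$ then reduces, via \eqref{eq:fig-8-sym}, to showing that $\gamma_\lambda(K(p)/r)$ lies on the symmetry axis $i\R$, for which I would use the explicit parametrization of Theorem~\ref{thm:par-el}.

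\emph{Proof of \eqref{eq:fig-8-sym}.} First I would observe that $\scurv_{\gamma_\lambda}$ is even: since $\gamma_\lambda$ is canonically parametrized, $\zeta\defeq\scurv_{\gamma_\lambda}^2$ attains its global maximum $\scurv_0^2$ at $0$, so $\zeta'(0)=0$; as $\gamma_\lambda$ is wave-like, $\scurv_0^2>2\lambda+4>0$, hence $\scurv_{\gamma_\lambda}(0)\neq 0$ and therefore $\scurv_{\gamma_\lambda}'(0)=0$; since $\scurv_{\gamma_\lambda}$ solves the autonomous equation \eqref{eq:elastica-eq}, the function $s\mapsto\scurv_{\gamma_\lambda}(-s)$ solves it with the same Cauchy data at $0$, so $\scurv_{\gamma_\lambda}$ is even. (By Lemma~\ref{lem:ext-el} we regard $\gamma_\lambda$ as defined on all of $\R$.) Now set $\widetilde\gamma\defeq P\circ\gamma_\lambda(-\,\cdot\,)$, i.e.\ the curve of Remark~\ref{rem:sym-scurv-1} with $\alpha=0$. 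By \eqref{eq:sym-of-scurv} and the evenness just shown, $\scurv_{\widetilde\gamma}(x)=\scurv_{\gamma_\lambda}(-x)=\scurv_{\gamma_\lambda}(x)$; moreover $\widetilde\gamma$ is parametrized by hyperbolic arc-length (as $P$ is an isometry and $x\mapsto-x$ preserves speed), $\widetilde\gamma(0)=P(i)=i=\gamma_\lambda(0)$, and, using $\R$-linearity of $P$, $\widetilde\gamma'(0)=-P(\gamma_\lambda'(0))=-P(1)=1=\gamma_\lambda'(0)$. Thus $\widetilde\gamma$ and $\gamma_\lambda$ are arc-length curves in $\H^2$ with the same scalar curvature and the same initial point and unit tangent, so by the uniqueness part of the fundamental theorem of curve theory (cf.\ \cite[p.~417]{blatt2009}) they coincide on $\R$. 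Hence $P(\gamma_\lambda(-x))=\gamma_\lambda(x)$, and applying $P$ gives \eqref{eq:fig-8-sym}.

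\emph{Proof of the point equality, and the main obstacle.} By \eqref{eq:fig-8-sym} at $x=K(p)/r$ we have $\gamma_\lambda(-K(p)/r)=P(\gamma_\lambda(K(p)/r))$, so it remains to show $\Real\,\gamma_\lambda(K(p)/r)=0$. Applying Theorem~\ref{thm:par-el} with $s_*=0$ and using Remark~\ref{rem:a-c-for-lf8} (which gives $a<0<c$, hence $ac<0$),
\[
\gamma_\lambda(x)=\sqrt{-c/a}\;\tanh\!\Big(\sqrt{-ac}\,\Big(\textstyle\int_0^x\theta(s)^{-1}\,ds+z_1\Big)\Big),\qquad z_1\in i\R_{\neq 0},
\]
with $\theta=\scurv_{\gamma_\lambda}^2-\lambda+2i\,\scurv_{\gamma_\lambda}'$. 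Since $\sqrt{-c/a},\sqrt{-ac}>0$ and $\tanh$ maps $i\R$ into $i\R$, it suffices to check $\int_0^{K(p)/r}\theta(s)^{-1}\,ds\in i\R$, i.e.
\[
\Real\int_0^{K(p)/r}\frac{ds}{\theta(s)}=\int_0^{K(p)/r}\frac{\scurv_{\gamma_\lambda}(s)^2-\lambda}{\bigl(\scurv_{\gamma_\lambda}(s)^2-\lambda\bigr)^2+4\,\scurv_{\gamma_\lambda}'(s)^2}\,ds=0.
\]
Inserting $\scurv_{\gamma_\lambda}(s)=\scurv_0\cn(rs,p)$ and the values of $\scurv_0^2$ and $r$ from Proposition~\ref{prop:2.8}(d) and substituting $u=rs$, this becomes the vanishing of an explicit integral of Jacobi elliptic functions over $[0,K(p)]$, which one verifies using the half/quarter-period identities for $\sn,\cn,\dn$; equivalently, this closing property is exactly what is arranged in the construction underlying \cite[Corollary~6.4]{muellerspener2020}. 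I expect this last verification to be the only non-formal point: conceptually the (unique) self-intersection of the $P$-symmetric figure-eight must be $P$-fixed and hence lie on $i\R$, but turning this into a self-contained argument requires either the elliptic-function computation above or importing the closing condition from \cite{muellerspener2020}.
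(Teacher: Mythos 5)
Your proof of \eqref{eq:fig-8-sym} is correct and is essentially the paper's own argument: evenness of $\scurv$ (the paper gets it directly from $\scurv(s)=\scurv_0\cn(rs,p)$ in Proposition \ref{prop:2.8}, you get it from the autonomous ODE, either is fine) together with Remark \ref{rem:sym-scurv-1} and the uniqueness part of the fundamental theorem of curve theory applied to $P\circ\gamma_\lambda(-\,\cdot)$.

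The second half, however, has a genuine gap. Your reduction of $\gamma_\lambda(-K(p)/r)=\gamma_\lambda(K(p)/r)$ to $\Real\int_0^{K(p)/r}\theta(s)^{-1}\,ds=0$ is fine, but the proposed verification ``by half/quarter-period identities for $\sn,\cn,\dn$'' cannot succeed, because that integral does not vanish identically on the wave-like regime. Indeed, by \eqref{eq:char-el-0} one has $|\theta|^2=4\scurv^2+\lambda^2+4C$, so after substituting $\phi=\am(rs,p)$ the quantity in question is, up to a positive factor,
\[
\int_0^{\pi/2}\frac{\scurv_0^2\cos^2\phi-\lambda}{\bigl(4\scurv_0^2\cos^2\phi+\lambda^2+4C\bigr)\sqrt{1-p^2\sin^2\phi}}\,d\phi,
\]
and using $4C=\scurv_0^4-2(\lambda+2)\scurv_0^2$ together with $\phi\mapsto\tfrac{\pi}{2}-\phi$ this is exactly the closing condition \cite[(6.1)]{muellerspener2020}, reproduced as \eqref{eq:lf8-vh-2} in Appendix \ref{app:tot-curv}. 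That condition is a nontrivial relation tying $p$ to $\lambda$ (this is the whole point of \cite[Corollary 6.4]{muellerspener2020} and of Lemma \ref{lem:lf8-vh}); for generic admissible parameters the integral is nonzero, e.g.\ for $\lambda=1$, $p^2=0.8$ (so $\scurv_0^2=8$, $4C=16$) it is strictly positive. So the vanishing holds only because $\gamma_\lambda$ is one of the \emph{closed} figure-eights of Lemma \ref{lem:ex-fig8}, not for every canonically parametrized wave-like elastica. Your fallback remark that the vanishing ``is exactly what is arranged in \cite[Corollary 6.4]{muellerspener2020}'' is the right idea, but as written it is asserted rather than proved: to close the argument you must either carry out the identification above and then quote that the constructed $\gamma_\lambda$ satisfy (6.1), or simply do what the paper does and cite the proof of \cite[Proposition 3.4]{muellerspener2020}, where the self-intersection at $\pm K(p)/r$ is established.
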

\begin{proof}
    The fact that $\gamma_{\lambda}\left( -{K(p)}/{r}\right) = \gamma_{\lambda}\left( {K(p)}/{r}\right)$ follows from the proof of \cite[Proposition 3.4]{muellerspener2020}. For the second part of the statement, suppose that $\gamma_{\lambda}(0)=i$. With $P$ as in \Cref{rem:sym-scurv-1}, $P\circ \gamma_{\lambda}(0)=P(i)=i$ and $(P\circ \gamma_{\lambda}(-x))'|_{x=0} = 1+0i$, using that $\gamma_{\lambda}'(0)=1+0i$. The fundamental theorem of curve theory now yields \eqref{eq:fig-8-sym}, using \eqref{eq:sym-of-scurv} with $\alpha=0$ and that $\scurv$ is even by \Cref{prop:2.8}.
\end{proof}

\begin{remark}\label{rem:sym-scurv}
    Consider a curve $\gamma\colon[-L,L]\to\H^2$ parametrized by arc-length with $\gamma(-x)=-\overline{\gamma(x)}$ for all $x\in[-L,L]$. Suppose, $\gamma$ has the self-intersection $\gamma(-x)=\gamma(x)=p_x$. Then $p_x\in i\R_{>0}$. Especially, with $x=0$, $\gamma(0)\in i\R_{>0}$. Furthermore, $\Imag(\gamma)$ is an even function. As a consequence, $(\Imag(\gamma))'(0)=0$, i.e., $\gamma'(0)\in\R$. Moreover, by \eqref{eq:sym-of-scurv}, $\scurv(-x) = \scurv(x)$. That is, the signed curvature $\scurv$ is even.
\end{remark}

\begin{lemma}\label{lem:sym-pres}
    Suppose that $u_0\colon [-1,1]\to\H^2$ is a smooth immersion with 
    \begin{equation}\label{eq:sym-pres-1}
        u_0(-x)=-\overline{u_0(x)}\quad\text{for all $x\in[-1,1]$}
    \end{equation}
    and denote by $u\colon [0,\infty)\times[-1,1]\to\H^2$ the solution of the Willmore flow, that is, of \eqref{eq:lte-1} with $a(x,y)=-1/(2y^4)$. Then also 
    \begin{equation}\label{eq:sym-pres-2}
        u(t,-x)=-\overline{u(t,x)}\quad\text{for all $t\geq 0$ and $x\in[-1,1]$}.
    \end{equation}
\end{lemma}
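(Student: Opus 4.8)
The plan is to exploit that the Willmore flow \eqref{eq:lte-1} (with $a(x,y)=-1/(2y^4)$) is equivariant under the $\R$-linear isometry $P\colon\H^2\to\H^2$, $z=x+iy\mapsto-\overline{z}$, of Remark~\ref{rem:sym-scurv-1}, combined with the orientation-reversing reparametrization $x\mapsto-x$ of $[-1,1]$. Concretely, I would set $\widetilde{u}(t,x)\defeq P\bigl(u(t,-x)\bigr)$ and show that $\widetilde{u}$ is a smooth solution of exactly the same initial-boundary-value problem \eqref{eq:lte-1} on $[0,\infty)\times[-1,1]$ as $u$. By uniqueness of solutions, this forces $\widetilde{u}=u$, i.e.\ $u(t,x)=-\overline{u(t,-x)}$ for all $t\geq0$ and $x\in[-1,1]$, which is precisely \eqref{eq:sym-pres-2}.

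To check that $\widetilde{u}$ solves \eqref{eq:lte-1}, I would use the geometric invariance of the elastic energy and its $L^2$-gradient: both $\E$ and the normal vector field $\nabla_{L^2(ds_u)}\E(u)$ are invariant under reparametrizations and transform covariantly under isometries of $\H^2$. Since $P$ is $\R$-linear, $dP=P$ everywhere, so $\nabla_{L^2(ds_{\widetilde{u}})}\E(\widetilde{u}(t,\cdot))(x)=P\bigl(\nabla_{L^2(ds_u)}\E(u(t,\cdot))(-x)\bigr)$. Differentiating $\widetilde{u}(t,x)=P(u(t,-x))$ in time and inserting the flow equation \eqref{eq:wf-eq} for $u$ gives
\begin{equation}
    \partial_t\widetilde{u}(t,x)=P\bigl(\partial_tu(t,-x)\bigr)=-\frac{1}{4\bigl(u^{(2)}(t,-x)\bigr)^4}\,P\bigl(\nabla_{L^2(ds_u)}\E(u(t,\cdot))(-x)\bigr).
\end{equation}
Because $P$ preserves the second coordinate, $u^{(2)}(t,-x)=\widetilde{u}^{(2)}(t,x)$, so the right-hand side equals $-\tfrac{1}{4(\widetilde{u}^{(2)})^4}\nabla_{L^2(ds_{\widetilde{u}})}\E(\widetilde{u}(t,\cdot))$; that is, $\widetilde{u}$ solves \eqref{eq:lte-1} with $a(x,y)=-1/(2y^4)$. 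Equivalently, one may observe that this symmetry is inherited from the invariance of the Willmore flow under the ambient reflection $(x_1,x_2,x_3)\mapsto(-x_1,x_2,x_3)$ of $\R^3$, which corresponds to $P$ on the level of profile curves.

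Matching the data is then a short computation using \eqref{eq:sym-pres-1}: at $t=0$ one has $\widetilde{u}(0,x)=P(u_0(-x))=-\overline{u_0(-x)}=u_0(x)$, and the Dirichlet conditions give $\widetilde{u}(t,\pm1)=P(u_0(\mp1))=u_0(\pm1)$. For the clamping condition, differentiating \eqref{eq:sym-pres-1} yields $\partial_xu_0(-x)=\overline{\partial_xu_0(x)}$, and since $P$ preserves second coordinates the hyperbolic speeds at $\mp1$ and $\pm1$ coincide, so $\partial_su_0(\mp1)=\overline{\partial_su_0(\pm1)}$; combined with $\partial_{s_{\widetilde{u}}}(t,x)=-P\bigl(\partial_{s_u}(t,-x)\bigr)$ and $P(\overline{w})=-w$ this gives $\partial_{s_{\widetilde{u}}}(t,\pm1)=\partial_su_0(\pm1)$. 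Hence $\widetilde{u}$ and $u$ solve the same problem, and uniqueness of solutions of \eqref{eq:lte-1} (cf.\ Remark~\ref{rem:ste} and \cite{ruppspener2020}) finishes the proof. The only subtle points are the sign bookkeeping in the clamping condition caused by the orientation-reversing reparametrization and the appeal to uniqueness; everything else is the isometry- and reparametrization-invariance of $\E$ and $\nabla_{L^2}\E$.
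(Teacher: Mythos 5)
Your proposal is correct and follows essentially the same route as the paper: you define $\widetilde{u}(t,x)=P(u(t,-x))$ with the reflection $P(z)=-\overline{z}$, verify via the isometry-equivariance of the flow (and the fact that $P$ preserves the second coordinate, hence the factor $a$) that $\widetilde{u}$ solves the same initial-boundary value problem \eqref{eq:lte-1}, and conclude by (geometric) uniqueness. Your extra sign bookkeeping for the clamping condition is accurate and only makes explicit what the paper leaves implicit.
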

\begin{proof}
    Define $\widetilde{u}(t,x)\defeq Pu(t,-x)$ for all $t\geq 0$ and $x\in [-1,1]$ where $P$ is as in \Cref{rem:sym-scurv-1}. Using that $P$ is a linear isometry of $\H^2$, one verifies that
    \begin{equation}
        \begin{aligned}            
        \partial_t\widetilde{u}(t,x) = a(\widetilde{u}(t,x)) \big[(\nabla_{\partial_{s_{\widetilde{u}}}}^{\perp})^2\curv_{\widetilde{u}}+\frac{1}{2}|\curv_{\widetilde{u}}|_g^2\curv_{\widetilde{u}}-\curv_{\widetilde{u}}\big](t,x).
        \end{aligned}
    \end{equation}
    Particularly, since by assumption $\widetilde{u}(0,x)=Pu_0(-x) = u_0(x)$ for any $x\in[-1,1]$, $\widetilde{u}$ also solves \eqref{eq:lte-1}. \Cref{prop:geo-un} then yields $\widetilde{u}=u$, i.e. \eqref{eq:sym-pres-2}.
\end{proof}

\begin{remark}\label{rem:sym-un-speed}
    Suppose that $u\colon[-1,1]\to\H^2$ is a smooth curve with $u(-x)=-\overline{u(x)}\quad\text{for all $x\in[-1,1]$}$. Then also the constant $g_{\H^2}$-speed reparametrization $\widetilde{u}\colon[-1,1]\to\H^2$ satisfies the same symmetry relation.     
\end{remark}

\subsection{Tangent vectors of simply closed $\lambda$-figure-eights}

Firstly, we consider the asymptotics of the parameters of $\lambda$-figure-eights more closely. This then helps in understanding the behavior of tangent vectors at the self-intersection at $K(p)/r$, cf. \Cref{lem:fig8-sym}.

\begin{lemma}\label{lem:lf8-vh}
    Consider a sequence of $\lambda_n$-figure-eights $(\gamma_n)_{n\in\N}$ with parameters $\lambda_n\searrow 0$, $p_n\nearrow 1$ parametrized in their canonical form. Then we have
    \begin{equation}\label{eq:lf8-vh-1}
        \lim_{n\to\infty}\frac{1-p_n^2}{\lambda_n^2} = \infty.
    \end{equation}
\end{lemma}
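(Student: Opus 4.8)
The plan is to convert the (known) fact that each $\gamma_n$ is a \emph{closed} $\lambda_n$-figure-eight into one explicit integral identity relating $\lambda_n$ and $p_n$, and then to read off the asymptotics from elementary estimates of that integral. By Lemma~\ref{lem:ex-fig8} each $\gamma_n$ is parametrized so that $\gamma_n(0)=i$, $\gamma_n'(0)=1+0i$ and $\scurv(s)^2=\scurv_{0,n}^2\cn^2(r_ns,p_n)$, hence canonically parametrized in the sense of Theorem~\ref{thm:par-el} with $y=1$; and $a<0<c$ by Remark~\ref{rem:a-c-for-lf8}. So Theorem~\ref{thm:par-el} gives $\gamma_n(x)=\sqrt{-c/a}\,\tanh\!\big(\sqrt{-ac}\,(\int_0^x\theta^{-1}\,ds+z_1)\big)$ with $\theta=\scurv^2-\lambda_n+2i\scurv'$. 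Since $\scurv$ has minimal period $L_n\defeq 4K(p_n)/r_n$ and $\gamma_n$ is a closed curve, $\gamma_n$ is $L_n$-periodic, which (as $\tanh$ has period $i\pi$) forces $\sqrt{-ac}\int_0^{L_n}\theta^{-1}\,ds\in i\pi\Z$; because $\theta(-s)=\overline{\theta(s)}$ and $\theta$ is $L_n$-periodic, $\int_0^{L_n}\theta^{-1}\,ds\in\R$, hence it vanishes. Using \eqref{eq:char-el-0} one computes $|\theta|^2=4\scurv^2+\mu_n$ with $\mu_n\defeq\lambda_n^2+4C$, and from Proposition~\ref{prop:2.8}~(d), namely $\lambda_n+2=2r_n^2(2p_n^2-1)$ and $\scurv_{0,n}^2=4r_n^2p_n^2$, one gets $4C=\scurv_{0,n}^2(\scurv_{0,n}^2-2\lambda_n-4)=16r_n^4p_n^2\varepsilon_n>0$, where $\varepsilon_n\defeq 1-p_n^2$. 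Substituting $u=r_ns$ and exploiting periodicity and the evenness of $\cn^2$, the closure condition becomes
\begin{equation*}
\int_0^{K(p_n)}\frac{\scurv_{0,n}^2\cn^2(u,p_n)-\lambda_n}{4\scurv_{0,n}^2\cn^2(u,p_n)+\mu_n}\,du=0 ,
\end{equation*}
and the change of variables $y=\cn^2(u,p_n)$ (so $\sn^2u=1-y$, $\dn^2u=\varepsilon_n+p_n^2y$, $du=-\,dy/(2\sqrt{y(1-y)(\varepsilon_n+p_n^2y)})$) turns this into
\begin{equation*}
\int_0^1\frac{\scurv_{0,n}^2 y-\lambda_n}{\big(4\scurv_{0,n}^2 y+\mu_n\big)\sqrt{y(1-y)(\varepsilon_n+p_n^2y)}}\,dy=0 .
\end{equation*}

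The integrand here is negative precisely on $(0,y_n)$ with $y_n\defeq\lambda_n/\scurv_{0,n}^2$, so the positive contribution $I_n^+$ over $(y_n,1)$ equals the negative contribution $I_n^-$ over $(0,y_n)$. By Lemma~\ref{lem:ex-fig8}, $p_n,r_n\to1$, $\scurv_{0,n}^2\to4$ and $\mu_n\to0$, so all constants below may be chosen uniform in $n$. Using $\frac{\scurv_{0,n}^2y-\lambda_n}{4\scurv_{0,n}^2y+\mu_n}\le\frac14$ and $\sqrt{y(1-y)(\varepsilon_n+p_n^2y)}\ge p_n\,y\sqrt{1-y}$, I bound
\begin{equation*}
I_n^+\le\frac{1}{4p_n}\int_{y_n}^1\frac{dy}{y\sqrt{1-y}}=\frac{1}{4p_n}\,\ln\frac{1+\sqrt{1-y_n}}{1-\sqrt{1-y_n}}=O\big(\ln(1/\lambda_n)\big).
\end{equation*}
For $I_n^-$ I argue by contradiction: if $\varepsilon_n/\lambda_n^2$ does not tend to $\infty$, pass to a subsequence with $\varepsilon_n\le M\lambda_n^2$, so that $\mu_n=\lambda_n^2+16r_n^4p_n^2\varepsilon_n\le C'\lambda_n^2$; then for $n$ large $2\mu_n\le\lambda_n$, hence $[\mu_n/\scurv_{0,n}^2,\,2\mu_n/\scurv_{0,n}^2]\subseteq(0,y_n)$, and on this interval $\lambda_n-\scurv_{0,n}^2y\ge\lambda_n-2\mu_n\ge\lambda_n/2$, $\;4\scurv_{0,n}^2y+\mu_n\le 9\mu_n$, and (since $\varepsilon_n\le\mu_n$ and $y\le\mu_n$ for $n$ large) $\sqrt{y(1-y)(\varepsilon_n+p_n^2y)}\le\sqrt2\,\mu_n$; restricting the $I_n^-$-integral to this interval of length $\mu_n/\scurv_{0,n}^2$ gives $I_n^-\ge c\,\lambda_n/\mu_n\ge c/(C'\lambda_n)$ for a fixed $c>0$. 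Together with $I_n^+=I_n^-$ this forces $c/(C'\lambda_n)\le O(\ln(1/\lambda_n))$, i.e.\ $\lambda_n\ln(1/\lambda_n)$ bounded below by a positive constant, contradicting $\lambda_n\to0$. Hence $\varepsilon_n/\lambda_n^2\to\infty$.

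I expect the main obstacle to be Step~1: rigorously identifying the minimal period $4K(p_n)/r_n$ of the closed $\lambda_n$-figure-eight (and hence that the relevant quantization reads $\sqrt{-ac}\int_0^{L_n}\theta^{-1}\,ds\in i\pi\Z$), together with the symmetry argument $\int_0^{L_n}\theta^{-1}\,ds\in\R$ that collapses the quantization to vanishing of the period integral; the simplification $|\theta|^2=4\scurv^2+\mu_n$ via \eqref{eq:char-el-0} and the two substitutions are then routine. In Step~2 the only point requiring care is that the elliptic-function inequalities are genuine two-sided bounds with constants uniform in $n$, which is precisely what the convergences $p_n,r_n\to1$, $\scurv_{0,n}^2\to4$ of Lemma~\ref{lem:ex-fig8} deliver; the remaining estimates are elementary one-variable calculus.
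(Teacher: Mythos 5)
Your proposal is correct, and it follows the same overall strategy as the paper — assume $\smash{(1-p_n^2)/\lambda_n^2}$ stays bounded and derive a contradiction from the closure condition of the $\lambda_n$-figure-eights, by showing that the negative contribution of size $\sim 1/\lambda_n$ in that condition cannot be balanced — but the execution differs in two respects. First, the paper simply cites the closing condition \cite[(6.1)]{muellerspener2020} (the trigonometric integral in \eqref{eq:lf8-vh-2}), whereas you re-derive it from Theorem \ref{thm:par-el}: since $ac<0$ the parametrizing function is $\tanh$, closedness over one curvature period forces $\sqrt{-ac}\int_0^{L_n}\theta^{-1}\,ds\in i\pi\Z$, and hence the real part $\int_0^{L_n}(\scurv^2-\lambda_n)/|\theta|^2\,ds$ vanishes (note you do not even need the symmetry argument $\theta(-s)=\overline{\theta(s)}$ for this: membership in $i\pi\Z$ already kills the real part, which is the only part you use); after $|\theta|^2=4\scurv^2+\lambda_n^2+4C$ via \eqref{eq:char-el-0} and the substitution $y=\cn^2(u,p_n)$ your algebraic identity on $[0,1]$ is indeed a positive multiple of the paper's \eqref{eq:lf8-vh-2}. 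Second, your estimates differ: you split at the sign change $y_n=\lambda_n/\scurv_{0,n}^2$ and bound the positive part by $O(\log(1/\lambda_n))$ while the negative part is $\gtrsim 1/\lambda_n$ under the contradiction hypothesis, whereas the paper splits the trigonometric integral at $\pi/4$, uses a tangent substitution and explicit antiderivatives, and bounds the positive part by $\varepsilon\cdot O(1/\lambda_n)$ with $\varepsilon$ small. Your log-bound on the positive part is cleaner and sharper; the self-contained derivation of the closure identity makes the argument independent of the external formula (at the mild cost of invoking the period $4K(p_n)/r_n$, which the paper anyway quotes from \cite[Proposition 3.4]{muellerspener2020}, and in fact only a multiple of the period is needed). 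One small slip: from $2\mu_n\le\lambda_n$ you only get $\lambda_n-2\mu_n\ge 0$; to obtain $\lambda_n-\scurv_{0,n}^2y\ge\lambda_n/2$ on $[\mu_n/\scurv_{0,n}^2,2\mu_n/\scurv_{0,n}^2]$ you should require $4\mu_n\le\lambda_n$, which holds for large $n$ since $\mu_n\le C'\lambda_n^2$, so the argument is unaffected.
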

    A tediously computational proof can be found in Appendix~\ref{app:tot-curv}.

\begin{corollary}\label{cor:lf8-vh}
    Consider a sequence of $\lambda_n$-figure-eights $(\gamma_n)_{n\in\N}$ with $\lambda_n\searrow 0$, $p_n\nearrow 1$ parametrized in their canonical form. Then, we have
    \begin{equation}\label{eq:lf8-vh-mainres}
        \lim_{n\to\infty} \frac{\gamma_n'(\pm K(p_n)/r_n)}{|\gamma_{n}'(K(p_n)/r_n)|\mathrm{sign}(\scurv_{0,n})} =  \pm i.
    \end{equation}
\end{corollary}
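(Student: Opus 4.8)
The plan is to evaluate all relevant quantities explicitly at $s=\pm K(p_n)/r_n$ using the parametrization of Theorem~\ref{thm:par-el}, and then to pass to the limit using Lemma~\ref{lem:lf8-vh}. Write $\gamma_n$ for the canonically parametrized $\lambda_n$-figure-eight; by Proposition~\ref{prop:2.8}(d) its scalar curvature is $\scurv_n(s)=\scurv_{0,n}\cn(r_ns,p_n)$, the sign of $\scurv_{0,n}$ being the only remaining ambiguity (which is precisely why $\sgn(\scurv_{0,n})$ appears in the statement), and by Remark~\ref{rem:a-c-for-lf8} the constants $a_n,c_n$ of Theorem~\ref{thm:par-el} satisfy $a_n<0<c_n$. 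By Lemma~\ref{lem:fig8-sym} together with Remark~\ref{rem:sym-scurv}, the self-intersection point lies on the imaginary axis, say $\gamma_n(\pm K(p_n)/r_n)=iy_{*,n}$ with $y_{*,n}>0$. Throughout, $|\cdot|$ denotes the Euclidean norm on $\C\cong\R^2$.

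First I would record the curvature data at the quarter period. From $\cn(\pm K(p_n),p_n)=0$ we get $\scurv_n(\pm K(p_n)/r_n)=0$, and from $\tfrac{d}{du}\cn=-\sn\dn$ with $\sn(\pm K(p_n),p_n)=\pm1$ and $\dn(K(p_n),p_n)=\sqrt{1-p_n^2}$ we get $\scurv_n'(\pm K(p_n)/r_n)=\mp\scurv_{0,n}r_n\sqrt{1-p_n^2}$. Hence, with $\theta_n$ as in \eqref{eq:def-theta},
\begin{equation}
    \theta_n(\pm K(p_n)/r_n)=-\lambda_n\mp 2i\,\scurv_{0,n}r_n\sqrt{1-p_n^2}.
\end{equation}
Substituting $\gamma_n(\pm K(p_n)/r_n)=iy_{*,n}$ into \eqref{eq:par-el-1} gives $\gamma_n'(\pm K(p_n)/r_n)=(-a_ny_{*,n}^2+c_n)/\theta_n(\pm K(p_n)/r_n)$, and the numerator $-a_ny_{*,n}^2+c_n$ is a strictly positive real number because $a_n<0<c_n$ and $y_{*,n}>0$. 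In particular $|\gamma_n'(\pm K(p_n)/r_n)|=(-a_ny_{*,n}^2+c_n)/\sqrt{\lambda_n^2+4\scurv_{0,n}^2r_n^2(1-p_n^2)}$ is independent of the choice of sign, so the quotient in \eqref{eq:lf8-vh-mainres} is well defined; moreover $\gamma_n'(\pm K(p_n)/r_n)/|\gamma_n'(\pm K(p_n)/r_n)|=\overline{\theta_n(\pm K(p_n)/r_n)}/|\theta_n(\pm K(p_n)/r_n)|$, so
\begin{equation}
    \frac{\gamma_n'(\pm K(p_n)/r_n)}{|\gamma_n'(K(p_n)/r_n)|\,\sgn(\scurv_{0,n})}=\frac{-\lambda_n\pm 2i\,\scurv_{0,n}r_n\sqrt{1-p_n^2}}{\sgn(\scurv_{0,n})\sqrt{\lambda_n^2+4\scurv_{0,n}^2r_n^2(1-p_n^2)}}.
\end{equation}

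Then I would let $n\to\infty$. By Lemma~\ref{lem:ex-fig8} we have $r_n\to1$ and $\scurv_{0,n}^2\to4$, so $r_n|\scurv_{0,n}|$ stays bounded away from $0$; and by Lemma~\ref{lem:lf8-vh} we have $\lambda_n^2/(1-p_n^2)\to0$, hence $\lambda_n/\big(r_n|\scurv_{0,n}|\sqrt{1-p_n^2}\big)\to0$ (note $p_n<1$, so $\sqrt{1-p_n^2}>0$). Dividing numerator and denominator of the previous display by the positive quantity $2r_n|\scurv_{0,n}|\sqrt{1-p_n^2}$, the numerator tends to $\pm i$ (using $\scurv_{0,n}/|\scurv_{0,n}|=\sgn(\scurv_{0,n})$) and the denominator tends to $\sqrt{0+1}=1$, which is exactly \eqref{eq:lf8-vh-mainres}.

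The only genuinely delicate ingredient is Lemma~\ref{lem:lf8-vh} — that $\lambda_n$ decays strictly faster than $\sqrt{1-p_n^2}$ — without which the final limit would be an unresolved $0/0$ indeterminacy; its (computational) proof is deferred to Appendix~\ref{app:tot-curv}. Everything else is elementary: the standard special values and derivative of the Jacobi elliptic functions, the observation from the $P$-symmetry (Remark~\ref{rem:sym-scurv}) that the self-intersection is purely imaginary so that $a_n\gamma_n^2+c_n$ is real and positive there, and the resulting sign bookkeeping.
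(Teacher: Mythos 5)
Your proposal is correct and follows essentially the same route as the paper: evaluate $\scurv$ and $\scurv'$ at the quarter period, plug into the parametrization $\gamma'=(a\gamma^2+c)/\theta$ of Theorem \ref{thm:par-el} at the purely imaginary self-intersection point, use $-a_ny_{*,n}^2+c_n>0$ from Remark \ref{rem:a-c-for-lf8}, and conclude via Lemma \ref{lem:lf8-vh} together with the asymptotics $r_n\to1$, $\scurv_{0,n}^2\to4$. The only cosmetic difference is that you normalize the explicit complex number and treat both signs $\pm K(p_n)/r_n$ directly, whereas the paper passes through the $\arctan$ representation of the angle at $+K(p_n)/r_n$ and gets the other endpoint from the symmetry \eqref{eq:fig-8-sym}; both are fine.
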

\begin{proof}
    By \Cref{thm:par-el}, there are parameters $a_{n},c_{n}\in\R$ such that
    \begin{equation}
        \gamma_{n}' = \smash{\nicefrac{(a_{n}\gamma_{n}^2+c_{n})}{(\scurv_{\gamma_n}^2-\lambda_n+2i\scurv_{\gamma_n}')}.}
    \end{equation}
    Moreover, $\gamma_{n}(K(p_n)/r_n)=w_{n}i$ for $w_{n}>0$. By \Cref{prop:2.8} and \Cref{rem:der-jef},
    \begin{equation}
    \begin{aligned}
        \scurv_{\gamma_n}(K(p_n)/r_n)&= \scurv_{0,n}\, \cn(K(p_n),p_n) = \scurv_{0,n} \cos(\pi/2) = 0 \quad\text{and}\\
        \scurv_{\gamma_n}'(K(p_n)/r_n)&=-r_{n}\scurv_{0,n}\sn(K(p_n),p_n)\dn(K(p_n),p_n)=-r_n\scurv_{0,n}\sqrt{1-p_n^2}.
    \end{aligned}
    \end{equation}
    We therefore obtain
    \begin{equation}\label{eq:lf8-vert-0}
        \begin{aligned}
            \smash{\gamma_{n}'\left({K(p_n)}/{r_n}\right)} &= \nicefrac{(-a_{n}w_{n}^2+c_{n})}{(-\lambda_n-2ir_{n}\scurv_{0,n}\sqrt{1-p_n^2})}\\
            &= {\underbrace{\frac{(-a_{_n}w_{n}^2+c_{n})}{\lambda_n^2+4r_{n}^2\scurv_{0,n}^2(1-p_n^2)}}_{\in\,\R}}\cdot ( -\lambda_n + 2ir_{n}\scurv_{0,n}\sqrt{1-p_n^2} ).
        \end{aligned}
    \end{equation}
    By \Cref{rem:a-c-for-lf8}, $-a_nw_n^2+c_n>0$. So combined with \eqref{eq:lf8-vert-0}, 
    \begin{equation}\label{eq:lf8-ver-1}
        \text{$\Real(\gamma_{n}'(K(p_n)/r_n))<0$ and $\sgn(\scurv_{0,n})\cdot \Imag(\gamma_{n}'(K(p_n)/r_n))>0$}.
    \end{equation}
    Moreover,
    \begin{equation}\label{eq:lf8-vert}
        \sgn{\scurv_{0,n}}\cdot \frac{\Imag(\gamma_{n}'(K(p_n)/r_n))}{\Real(\gamma_{n}'(K(p_n)/r_n))} = -2r_{n}|\scurv_{0,n}|\frac{\sqrt{1-p_n^2}}{\lambda_n} \longrightarrow -\infty
    \end{equation}
    using \eqref{eq:lf8-vh-1} and the asymptotics for $r_n$ and $\scurv_{0,n}$ in \Cref{lem:ex-fig8}. Thus,
    \begin{equation}
        \frac{\gamma_{n}'(K(p_n)/r_n)}{|\gamma_{n}'(K(p_n)/r_n)|} = e^{i(\pi + \arctan\left(\frac{\Imag(\gamma_{n}'(K(p_n)/r_n))}{\Real(\gamma_{n}'(K(p_n)/r_n))}\right))},
    \end{equation}
    and the claim follows. The limit at $-{K(p_n)}/{r_n}$ follows by \eqref{eq:fig-8-sym}.
\end{proof}

\begin{remark}\label{rem:el-en-lf8}
    Let $\gamma_{\lambda}$ again denote a canonically parametrized $\lambda$-figure-eight and w.l.o.g. suppose that $\gamma_{\lambda}(0)=i$. By \eqref{eq:fig-8-sym} and since $\gamma_{\lambda}$ has period $4{K(p)}/{r}$ by \cite[Proposition 3.4]{muellerspener2020}, we obtain $\gamma_{\lambda}({2K(p)}/{r}) = -\overline{\gamma_{\lambda}(-{2K(p)}/{r})}=-\overline{\gamma_{\lambda}({2K(p)}/{r})}$. Therefore, $\gamma_{\lambda}(2{K(p)}/{r})\in i\R$ so that one can choose $\omega>0$ with $\gamma_{\lambda}(2{K(p)}/{r})=\omega i$.
    
    For $F_{\omega}\colon\H^2\to\H^2$, $z\mapsto -\omega/z$, write $\widetilde{\gamma}\defeq F_{\omega}\circ \gamma_{\lambda}(2K(p)/r-\cdot)$, it is clear that $\gamma_{\lambda}(0)=i$. Furthermore, as in the proof of \Cref{cor:lf8-vh}, one deduces that $\gamma_{\lambda}'(2K(p)/r) = (-a\omega^2+c)/(\scurv_0^2-\lambda)\in (0,\infty)$. As $|\gamma_{\lambda}'|_g\equiv 1$, this yields $\gamma_{\lambda}'(2K(p)/r)=\omega+0i$. Thus, $\widetilde{\gamma}'(0)=1+0i$. Using that $F_{\omega}$ is an isometry of $\H^2$, one can compute that $ \curv_{\widetilde{\gamma}}(x)= (dF_{\omega})_{\widetilde{\gamma}(x)}(\curv_{\gamma_{\lambda}}(2K(p)/r-x))$ as well as $N_{\widetilde{\gamma}} = -(dF_{\omega})_{\widetilde{\gamma}(x)}(N_{\gamma_{\lambda}}(2K(p)/r-x))$. So $\scurv_{\widetilde{\gamma}}(x)=-\scurv_{\gamma_{\lambda}}(2K(p)/r-x)=\scurv_{\gamma_{\lambda}}(x)$, using \Cref{prop:2.8}. By the fundamental theorem of curve theory,
    \begin{equation}
        F_{\omega}\circ \gamma_{\lambda}(2K(p)/r-x)=\widetilde{\gamma}(x)=\gamma_{\lambda}(x) \quad\text{for all $x\in\R$}.
    \end{equation} 
    So $\gamma_{\lambda}|_{[-K(p)/r,K(p)/r]}$ and $\gamma_{\lambda}|_{[K(p)/r,3K(p)/r]}$ have the same elastic energy. Since $\E(\gamma_{\lambda}|_{[-K(p)/r,3K(p)/r]})\searrow 16$ for $\lambda\searrow 0$ by \Cref{lem:ex-fig8}, we conclude that
    \begin{equation}
        \E_{\H^2}(\gamma_{\lambda}|_{[-K(p)/r,K(p)/r]})\searrow 8.
    \end{equation}
\end{remark}

\subsection{Construction of singular examples}

By \Cref{lem:ex-fig8} and \Cref{cor:lf8-vh}, we can consider a sequence $(\gamma_n)_{n\in\N}$ of canonically parametrized $\lambda_n$-figure-eights with $\lambda_n\searrow 0$ and $\scurv_{0,n}>0$ for each $n\in\N$ for which the tangent vectors at $\smash{\pm {K(p_n)}/{r_n}}$ satisfy 
\begin{equation}
    \lim_{n\to\infty} \gamma_n'(\pm K(p_n)/r_n)/|\gamma_n'(\pm K(p_n)/r_n)|=\pm (0,1)^t.
\end{equation} 

\textbf{Construction of the initial datum.} For $x\in\R$, consider the circle $C_x\subseteq\R^2$ with center $(x,\sqrt{2})^t$ and radius $1$. Further, for fixed $h > \sqrt{2}(1+\sqrt{2})/(\sqrt{2}-1)$, write $C'\defeq (h/\sqrt{2})\cdot C_0 + (-h/\sqrt{2},0)^t$. Note that, since scalings and translations in the first component are isometries of $\H^2$, both $C_x$ and $C'$ can be parametrized as circular elastica, i.e. the corresponding surfaces of revolution are Clifford tori. Thus, one has $|\curv|_g^2\equiv 2$ everywhere on $C_x$ and $C'$. 

A short computation yields that, for $x\in (0,1)$, $z^{(2)} > w^{(2)}$ for all $z\in C'$ and $w\in C_x$. Thus, for any $x\in (0,1)$, there is a largest value $\varepsilon_x\in (0,1)$ such that $\varepsilon_x\cdot C'$ touches $C_x$ tangentially in exactly one point $z_x^*$ with $(z_x^*)^{(1)}<0$. Denote by $z_x$ the point in $C_x\cap (\{0\}\times\R)$ with minimal second component. Then write $\Gamma_x$ for the curve which concatenates the segment of $C_x$ between $z_x$ and $z_x^*$ with the segment of $\varepsilon_x \cdot C'$ connecting $z_x^*$ and $\varepsilon_x\cdot(0,h)^t=\vcentcolon w_x$. Write $\Gamma_x'\defeq \{-\overline{z}\in\C:z\in\Gamma_x\}$. For an illustration, cf. \Cref{fig:CxandCprime}.

We now use this construction to suitably extend the $\lambda_n$-figure-eights $\gamma_n$. This will yield the singular initial data. By \eqref{eq:lf8-ver-1} and $\scurv_{0,n}>0$, for each $n\in\N$, there exists $x_n\in(0,1)$ such that ${\gamma_n'({K(p_n)}/{r_n})}\in T_{z_{x_n}} (C_{x_n})$. Since scaling does not affect the normalized tangent vectors, we may w.l.o.g. suppose that $\gamma_n$ is scaled such that ${\gamma_n( {K(p_n)}/{r_n})=z_{x_n}}$. 

\begin{figure}[htb]
    \centering
    \begin{subfigure}{5cm}
        \includegraphics[width=\linewidth]{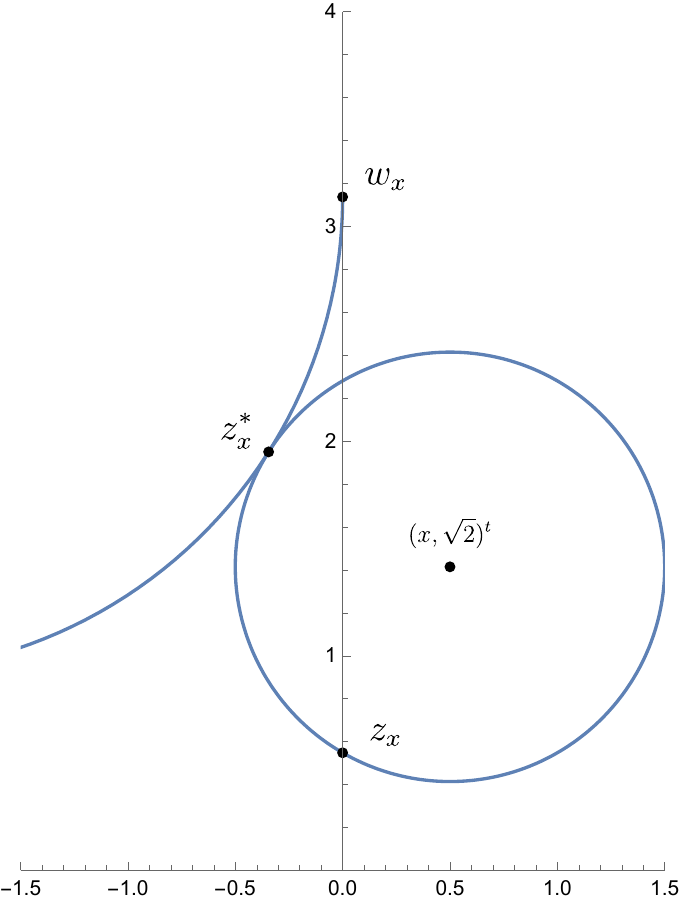}
        \caption{$x=\frac12$}
    \end{subfigure}
    \qquad
    \begin{subfigure}{5cm}
        \includegraphics[width=\linewidth]{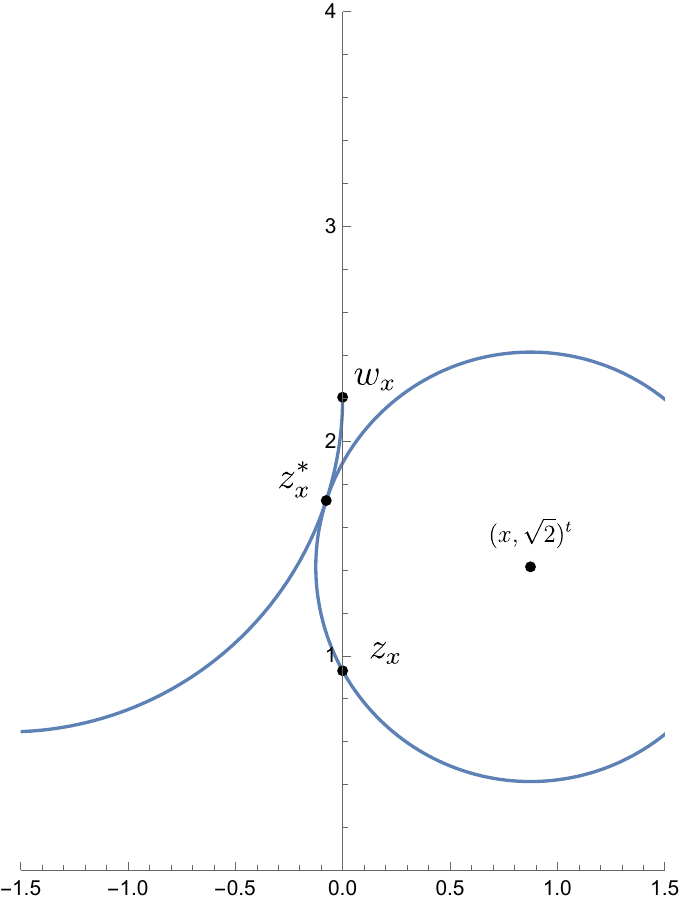}
        \caption{$x=\frac78$}
    \end{subfigure}
    \caption{Illustration of $C_x$ and a section of $\varepsilon_x\cdot C'$.}
    \label{fig:CxandCprime}
\end{figure}

\begin{remark}\label{rem:en-of-cons}
    It holds that $\E(\Gamma_{x_n})=\E(\Gamma_{x_n}')\to 0$ for $n\to\infty$. Firstly, since $\Gamma_{x_n}$ and $\Gamma_{x_n}'$ differ only by an isometry of $\H^2$, their elastic energies agree. Further, since $|\curv|_g^2\equiv 2$ on $\Gamma_{x_n}$ by construction, we only need to argue that the hyperbolic length of $\Gamma_{x_n}$ converges to $0$ for $n\to\infty$. An immediate geometric consequence of the above construction is that 
    \begin{equation}
        |\tau - \langle \tau,(0,1)^t\rangle(0,1)^t| \leq |\tau_n - (0,1)^t|
    \end{equation}
    for all $\tau\in T_p\Gamma_{x_n}$ with $|\tau|=1$ and for all $p\in\Gamma_{x_n}$ where $\tau_n=\gamma_n'(\frac{K(p_n)}{r_n})/|\gamma_n'(\frac{K(p_n)}{r_n})|$. That is, the least vertical of all tangent vectors in $\Gamma_{x_n}$ is the one at $z_{x_n}$. Since $\tau_n\to (0,1)^t$ and since normalized tangent vectors are invariant under scaling, the claim follows. 
\end{remark}

We construct the initial datum $u_0^n$ then as follows. First choose a suitable $W^{2,2}$-parametrization of $\Gamma_{x_n}$ starting in $w_{x_n}$ and ending in $z_{x_n}$. Then, concatenate with a suitable order-reversing reparametrization of $\gamma_n|_{[-K(p_n)/r_n,K(p_n)/r_n]}$ such that the concatenation is in $W^{2,2}$. Finally, concatenate again with a suitable $W^{2,2}$-parametrization of $\Gamma_{x_n}'$ starting in $z_{x_n}$ and ending in $w_{x_n}$ such that the entire curve is in $W^{2,2}$. Now rescale the entire curve we obtained by $1/(w_{x_n})^{(2)}$. We denote by $u_0^n$ the constant $g_{\H^2}$-speed reparametrization of the final curve on $[-1,1]$. Compare also \Cref{fig:singular_initial_datum} for plots of $u_0^n$ for some values of $\lambda_n$. By construction, one obtains
\begin{equation}\label{eq:constr-tv}
    u_0^n(\pm1) = (0,1)^t\quad\text{and}\quad  (\partial_s u_0^n)(\pm 1) = \pm (0,1)^t.
\end{equation}
Note that, by \Cref{rem:el-en-lf8,rem:en-of-cons}, $\E(u_0^n)\searrow 8$ for $n\to\infty$. 

\begin{figure}
    \centering
    \begin{subfigure}{4cm}
        \includegraphics[width=\linewidth]{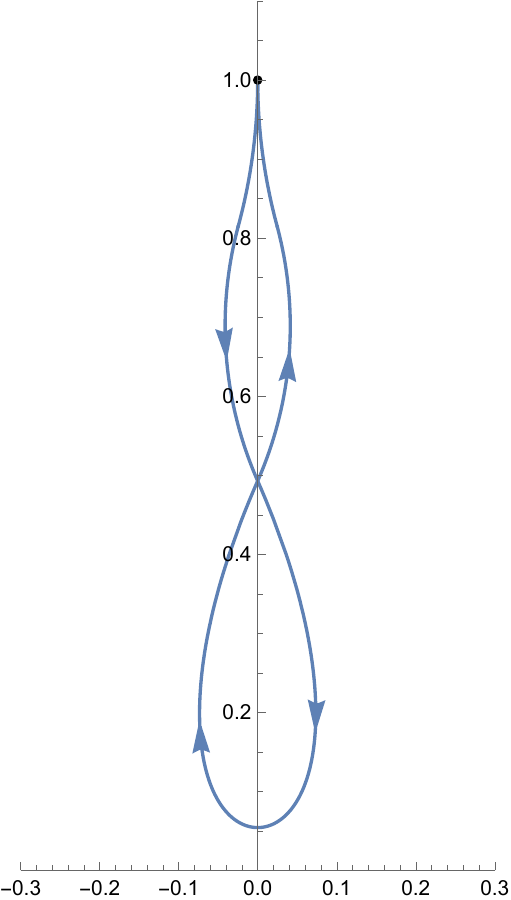}
        \caption{$\lambda_n=\frac25$}
    \end{subfigure}
    \qquad
    \begin{subfigure}{4cm}
        \includegraphics[width=\linewidth]{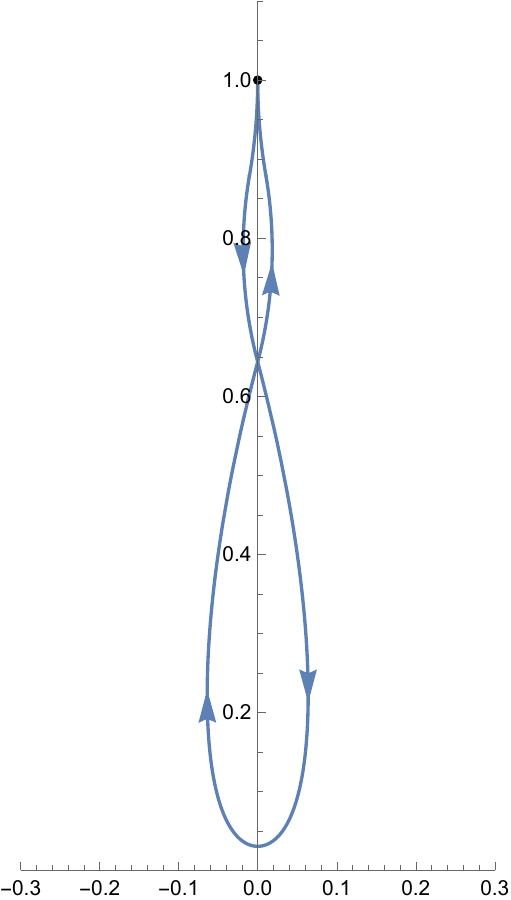}
        \caption{$\lambda_n=\frac{1}{10}$}
    \end{subfigure}
    \caption{Illustration of the initial datum $u_0^n$.}%
    \label{fig:singular_initial_datum}%
\end{figure}
 
\textbf{Assumption: There is no singularity along the Willmore flow.} By short-time existence, there is a maximal solution $u^n\colon [0,T_{\mathrm{max},n})\times I\to \H^2$ of the Willmore flow (i.e. \eqref{eq:lte-1} with $a(x,y)=-1/(4y^4)$) with initial datum $u_0^n$ as constructed above. The lack of smoothness poses no issue, cf. \Cref{rem:ste}. Suppose that $\sup_{t\in[0,T_{\mathrm{max},n})}\Ll_{\H^2}(u^n)<\infty$. By \Cref{thm:lte,thm:conv}, $T_{\mathrm{max},n}=\infty$ and the constant $g_{\H^2}$-speed reparametrizations of $u^n$ converge to a critical point $u^n_{\infty}$ of $\E$ parametrized by constant $g_{\H^2}$-speed still satisfying the Dirichlet boundary data induced by $u_{0}^n$, cf. \eqref{eq:constr-tv}. Moreover, $\E(u^n_{\infty})\leq\E(u_0^n)\to 8$ for $n\to\infty$. 

\begin{remark}
    We wish to obtain a contradiction by showing that $u_{\infty}^n$ cannot be a critical point of $\E$ (for $n$ sufficiently large). To this end, we proceed as follows.
    \begin{enumerate}[(I)]
        \item Argue that $u_{\infty}^n$ is necessarily a reparametrization of a simply closed, canonically parametrized free \emph{orbit-like} elastica.
        \item Show that there is no canonically parametrized segment of a free orbit-like elastica satisfying the boundary data in \eqref{eq:constr-tv} to reach a contradiction.
    \end{enumerate}
\end{remark}

\textbf{Ad (I).} By the boundary data \eqref{eq:constr-tv}, $u_{\infty}^n$ has a self-intersection. Consequently, as we will argue in \Cref{rem:exl-norb}, for $n$ sufficiently large, the only remaining category for $u_{\infty}^n$ is that of a free orbit-like elastica. 

Since the initial datum $u_0^n$ satisfies the symmetry relation $u_0^n(-x)=-\overline{u_0^n(x)}$ by construction and \Cref{lem:fig8-sym}, \Cref{lem:sym-pres} yields $u^n(t,-x) = -\overline{u^n(t,x)}$ for any $t\geq 0$ and $x\in[-1,1]$. So by the convergence and \Cref{rem:sym-un-speed}, 
\begin{equation}\label{eq:u-inf-sym}
    u^n_{\infty}(-x) = -\overline{u^n_{\infty}(x)}\quad\text{for all $x\in[-1,1]$}.
\end{equation}
We will show in \Cref{lem:exl-heart} that, choosing $n$ sufficiently large, this symmetry relation is sufficient for w.l.o.g. supposing that $u_{\infty}^n$ is given as a reparametrization of a canonically parametrized orbit-like elastica $\gamma_{\infty}^n$. Note that, by \eqref{eq:u-inf-sym}, $u_{\infty}^n(0)=iy$ and $\partial_su_{\infty}^n(0)=\pm y+0i$ for some $y>0$. Therefore, since $u_{\infty}^n$ is parametrized by constant $g_{\H^2}$-speed and since $\partial_su_{\infty}^n(0)= \pm y+0i$, we have $u_{\infty}^n(x) = \gamma_{\infty}^n(\pm \ell_n\cdot x)$ for $x\in[-1,1]$ where $\ell_n>0$ is the smallest positive number with $\gamma^n_{\infty}(\ell_n)=\gamma^n_{\infty}(-\ell_n)$.

\textbf{Ad (II).} Since $\gamma^n_{\infty}(\ell_n)=\gamma^n_{\infty}(-\ell_n)$, \Cref{rem:sym-scurv} yields $\gamma^n_{\infty}(\ell_n)=\omega i$ for some $\omega>0$. Moreover, since $\gamma^n_{\infty}$ is canonically parametrized, \eqref{eq:par-el-1} and \eqref{eq:def-theta} yield 
\begin{equation}\label{eq:ol-tan-vec}
    (\gamma_{\infty}^n)'(\ell_n) = \frac{-a\omega^2+c}{\scurv(\ell_n)^2+2i\scurv'(\ell_n)}=\frac{-a\omega^2+c}{\scurv(\ell_n)^4+(2\scurv'(\ell_n))^2}(\scurv(\ell_n)^2-2i\scurv'(\ell_n)).
\end{equation}
Particularly, since $|(\gamma_{\infty}^n)'(\ell_n)|_g=1$, we have $-a\omega^2+c\neq 0$. Further, \eqref{eq:constr-tv} yields $\Real((\gamma_{\infty}^n)'(\ell_n))=0$ so that \eqref{eq:ol-tan-vec} yields $\scurv(\ell_n)^2=0$. However, by \Cref{prop:2.8}, $\scurv^2(\ell_n)=\scurv_0^2\dn^2(r\ell_n,p) \neq 0$ by \Cref{def:jef}. A contradiction!

\subsection{Auxiliary results on free elastica}

The following closing-condition is a corollary of \Cref{thm:par-el}. For notation and some results on the elliptic integrals and functions below, refer to Appendix~\ref{app:ell}. Throughout this section, we use $C$ defined in \eqref{eq:char-el-0} and the notation of \Cref{prop:2.8}.

\begin{proposition}\label{cor:clos-cond}
    Consider a free elastica $\gamma\colon I\to\R$, $\alpha<\beta$ with $\alpha,\beta\in I$ and $\gamma(\alpha)=\gamma(\beta)$. Then $C<0$, i.e., $\gamma$ is either orbit-like or circular. If $\gamma$ is orbit-like and canonically parametrized, one obtains 
    \begin{equation}\label{eq:clos-cond-1}
        \begin{aligned}
            \pi\leq m\pi &= \frac{1}{2}\sqrt{1-p^2}\sqrt{2-p^2}\int_{\am(r\alpha,p)}^{\am(r\beta,p)} \frac{\sqrt{1-p^2\sin^2(\theta)}}{1-p^2(2-p^2)\sin^2(\theta)}\dd\theta\\
        \end{aligned}
    \end{equation}
    for some $m\in\N$. If $\gamma$ is circular, $\E(\gamma)\geq 4\pi$.
\end{proposition}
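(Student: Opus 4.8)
The plan is to distinguish the curvature of $\gamma$ and, in the generic case, to read everything off the explicit parametrization of Theorem \ref{thm:par-el}. If $\scurv\equiv 0$, then $\gamma$ is a geodesic of $\H^2$, hence globally injective, so $\gamma(\alpha)=\gamma(\beta)$ with $\alpha<\beta$ cannot occur. If $\scurv^2\equiv\lambda+2=2$, then $\gamma$ is circular; here $C<0$ by Proposition \ref{prop:2.8}(a), and since a circular elastica is, up to an isometry of $\H^2$, the profile curve of the Clifford torus and hence a simple closed curve, the identity $\gamma(\alpha)=\gamma(\beta)$ with $\alpha<\beta$ forces $\beta-\alpha$ to be at least one period, so that $\E(\gamma)\geq 4\pi$ by Remark \ref{rem:en-circ-el}.

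From now on $\scurv$ is non-constant, so Theorem \ref{thm:par-el} applies. Write $\theta\defeq\scurv^2+2i\scurv'$ (recall $\lambda=0$) and $w(x)\defeq\int_{s_*}^x\tfrac{1}{\theta(s)}\,ds+z_1$, so that $\gamma=f\circ w$ with $f$ as in \eqref{eq:par-el-3}, with $\theta$ nowhere zero, and with $ac=-\tfrac14(\lambda^2+4C)=-C$. The key point is that $\Real\tfrac1\theta=\scurv^2/|\theta|^2\geq 0$, with equality only on the zero set of $\scurv$, which is discrete since $\scurv$ is real-analytic and non-constant; hence, as $\alpha<\beta$, $\Real(w(\beta)-w(\alpha))=\int_\alpha^\beta\scurv^2/|\theta|^2\,ds>0$. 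If $C=0$, then $ac=0$ and $f$ is one of $z\mapsto\tfrac1{az}$, $z\mapsto cz$, hence injective, so $\gamma(\alpha)=\gamma(\beta)$ would give $w(\alpha)=w(\beta)$, contradicting this. If $C>0$, then $ac<0$ and $f(z)=\sqrt{-c/a}\,\tanh(\rho z)$ with $\rho=\sqrt{-ac}>0$; since $\tanh z_1=\tanh z_2$ if and only if $z_1-z_2\in\pi i\Z$, we would get $\Real(w(\beta)-w(\alpha))=0$, again impossible. Therefore $C<0$, and since orbit-like elastica are the only non-constant-curvature case with $C<0$ in Proposition \ref{prop:2.8}, $\gamma$ is orbit-like.

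Now let $\gamma$ be orbit-like and canonically parametrized, so $ac=-C>0$ and $f$ equals $z\mapsto\sqrt{c/a}\,\tan(\sqrt{ac}\,z)$ or $z\mapsto-\sqrt{c/a}\,\cot(\sqrt{ac}\,z)$. Since $\tan z_1=\tan z_2$ and $\cot z_1=\cot z_2$ each hold if and only if $z_1-z_2\in\pi\Z$, the relation $\gamma(\alpha)=\gamma(\beta)$ gives $\sqrt{ac}\,(w(\beta)-w(\alpha))=m\pi$ for some $m\in\Z$; by the inequality of the previous paragraph and $\sqrt{ac}>0$ we must have $\Imag(w(\beta)-w(\alpha))=0$ and $m\geq 1$, whence $m\pi=\sqrt{ac}\int_\alpha^\beta\scurv^2/|\theta|^2\,ds$. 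To evaluate this, use Proposition \ref{prop:2.8}(b) with $\lambda=0$: $\scurv^2=\scurv_0^2\dn^2(rs,p)$ with $\scurv_0^2=\tfrac{4}{2-p^2}$ and $r=\tfrac1{\sqrt{2-p^2}}$; evaluating \eqref{eq:char-el-0} at $s=0$, where $\scurv'=0$, gives $C=\scurv_0^2(\tfrac14\scurv_0^2-1)=\tfrac{4(p^2-1)}{(2-p^2)^2}$, hence $\sqrt{ac}=\tfrac{2\sqrt{1-p^2}}{2-p^2}$. With $\dn'=-p^2\sn\cn$, $\dn^2=1-p^2\sn^2$, $\cn^2=1-\sn^2$ and $\scurv_0^2=4r^2$ one finds $|\theta|^2=\scurv_0^4(\dn^4+p^4\sn^2\cn^2)=\scurv_0^4(1-p^2(2-p^2)\sn^2(rs))$, so $\scurv^2/|\theta|^2=\dn^2(rs)/(4r^2(1-p^2(2-p^2)\sn^2(rs)))$; substituting $\phi=\am(rs,p)$, so that $\dn(rs,p)=\sqrt{1-p^2\sin^2\phi}$ and $ds=d\phi/(r\sqrt{1-p^2\sin^2\phi})$, and using $\sqrt{ac}/(4r^3)=\tfrac12\sqrt{1-p^2}\sqrt{2-p^2}$ yields exactly \eqref{eq:clos-cond-1} with $m\geq 1$.

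The main obstacle is this last evaluation: verifying the simplification $\dn^4+p^4\sn^2\cn^2=1-p^2(2-p^2)\sn^2$ and tracking the constants correctly through the substitution $\phi=\am(rs,p)$ so that precisely the integrand of \eqref{eq:clos-cond-1} comes out. Everything else is soft once Theorem \ref{thm:par-el} and Proposition \ref{prop:2.8} are available.
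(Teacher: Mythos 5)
Your proposal is correct and takes essentially the same route as the paper: the same reduction via the explicit parametrization of Theorem \ref{thm:par-el}, the same case analysis on the sign of $ac=-C$ using injectivity resp. the $i\pi\Z$- and $\pi\Z$-periodicities of $f$, the positivity of $\Real\int_\alpha^\beta\frac{1}{\theta}\,ds$ to exclude $C\geq 0$ and to get $m\geq 1$, and the same elliptic-function evaluation (the paper simply writes $|\theta|^2=4C+4\scurv^2$ via \eqref{eq:char-el-0} instead of your Jacobi identity $\dn^4+p^4\sn^2\cn^2=1-p^2(2-p^2)\sn^2$, which gives the same integrand). The only detail to add is that, when $\gamma$ is not canonically parametrized, one should first compose with an isometry of $\H^2$ (Lemma \ref{lem:isom-hyp}) to achieve the normalization $\gamma(s_*)=iy$, $\gamma'(s_*)=y+0i$ required by Theorem \ref{thm:par-el}; since isometries preserve $\scurv$, $C$ and self-intersections, this is harmless and is exactly how the paper's proof begins.
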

\begin{proof}
    Consider the case where $\scurv$ is non-constant. By \Cref{lem:isom-hyp} and \Cref{thm:par-el}, there is an isometry $\Phi\colon\H^2\to\H^2$ such that $\Phi\gamma(s_*)=i$ and $d\Phi_{\gamma(s_*)}(\gamma'(s_*))=1$ for a fixed $s_{*}\in\R$ with ${\scurv}(s_*)^2=\scurv_0^2$. Then
    \begin{equation}\label{eq:clos-cond-1.1}
        \Phi\gamma(x) = f\left(\int_{s_*}^x\frac{1}{\theta}\dd s+z_1\right)
    \end{equation}
    with $f$ as in \eqref{eq:par-el-3} and $\theta$ as in \eqref{eq:def-theta}. Now observe that, by \eqref{eq:def-theta}, \eqref{eq:char-el-0} and \Cref{thm:par-el}, $4\scurv^2+4C = |\theta|^2>0$. So
    \begin{equation}\label{eq:clos-cond-3}
        \frac{1}{\theta(s)} = \frac{{\scurv}^2-2i{\scurv}'}{{\scurv}^4+4({\scurv}')^2} \underset{\eqref{eq:char-el-0}}{=} \frac{{\scurv}^2}{4C+4{\scurv}^2} +i \frac{-2{\scurv}'}{4C+4{\scurv}^2}.
    \end{equation}    
    Note that, in either of the last two cases of \eqref{eq:par-el-3}, $\gamma(\alpha)=\gamma(\beta)$ yields
    \begin{equation}\label{eq:clos-cond-2}
        \int_{\alpha}^{\beta} \frac{1}{\theta}\dd s = 0. 
    \end{equation}
    In either of those cases, \eqref{eq:clos-cond-2} and \eqref{eq:clos-cond-3} together yield $\scurv\equiv 0$, a contradiction. 

    Now suppose that $ac<0$. Since $\tanh(z)=\tanh(w)$ if and only if $z-w\in i\pi \Z$, \eqref{eq:clos-cond-1.1}, \eqref{eq:clos-cond-3} and $C=-ac$ yield
    \begin{equation}
        i\pi \Z\ni \sqrt{C}\int_{\alpha}^{\beta} \frac{1}{\theta}\dd s = \sqrt{C}\int_{\alpha}^{\beta} \frac{\scurv^2}{4C+4\scurv^2}\dd s +i\sqrt{C}\int_{\alpha}^{\beta}\frac{-2\scurv'}{4C+4\scurv^2}\dd s.
    \end{equation}
    Particularly, we again have $\scurv\equiv 0$, a contradiction. 
    
    Therefore, only the first two cases in \eqref{eq:par-el-3} remain and $C=-ac <0$, i.e., $\gamma$ is necessarily orbit-like. Moreover, $\tan(z)=\tan(w)$ if and only if $\cot(z)=\cot(w)$ if and only if $z-w\in \pi\Z$ for $z,w\in \C$. Therefore, \eqref{eq:clos-cond-1.1} and \eqref{eq:clos-cond-3} yield again
    \begin{equation}
        \pi \Z\ni \sqrt{-C}\int_{\alpha}^{\beta} \frac{1}{\theta}\dd s = \sqrt{-C}\int_{\alpha}^{\beta} \frac{\scurv^2}{4C+4\scurv^2}\dd s +i\sqrt{-C}\int_{\alpha}^{\beta}\frac{-2\scurv'}{4C+4\scurv^2}\dd s.
    \end{equation}
    Particularly,
    \begin{equation}\label{eq:clos-cond-4}
        \int_{\alpha}^{\beta}\frac{-2\scurv'}{4C+4\scurv^2}\dd s = 0 \quad \text{and}\quad
        m\pi = \sqrt{-C }\int_{\alpha}^{\beta} \frac{\scurv^2}{4C+4\scurv^2}\dd s,
    \end{equation}
    for some $m\in\N$. If $\gamma$ is canonically parametrized, by \Cref{prop:2.8}, using 
    \begin{equation}
        \text{$\scurv^2(s)=\scurv_0^2\dn^2(rs,p)=\scurv_0^2(1-p^2\sin^2(\am(rs,p)))$ and $\partial_s\am(s,p)=\dn(s,p)$,}
    \end{equation} we have 
    \begin{equation}
        \sqrt{-C}\int_{\alpha}^{\beta} \frac{\scurv^2}{4C+4\scurv^2}\dd s = \frac{\scurv_0^2\sqrt{-C}}{r} \int_{\am(r\alpha,p)}^{\am(r\beta,p)} \frac{\sqrt{1-p^2\sin^2(\theta)}}{4C+4\scurv_0^2(1-p^2\sin^2(\theta))}\dd\theta.
    \end{equation}
    Moreover, \Cref{prop:2.8} yields the following relations:
    \begin{equation}\label{eq:exl-orb-2-1}
        \scurv_0^2=\frac{4}{2-p^2},\quad
             \sqrt{-C} = (\scurv_0^2-\frac{1}{4}\scurv_0^4)^{\frac12} = \frac{2\sqrt{1-p^2}}{2-p^2},\quad
             r = \frac{1}{\sqrt{2-p^2}}
    \end{equation}
    and finally, using \eqref{eq:char-el-0},
    \begin{equation}\label{eq:exl-orb-2}
        \begin{aligned}
             4C+4\scurv_0^2(1-p^2\sin^2(\theta)) &= \scurv_0^4-4\scurv_0^2p^2\sin^2(\theta)\\
             &= \frac{16}{(2-p^2)^2}\cdot (1 - p^2(2-p^2)\sin^2(\theta)).
        \end{aligned}
    \end{equation}
    Altogether, we conclude  \eqref{eq:clos-cond-1}. Now consider the case where $\scurv$ is constant. If $\scurv\equiv 0$, then $\gamma$ has no self-intersections. Otherwise, $\gamma$ is circular and $\E(\gamma)\geq 4\pi$ by \Cref{rem:en-circ-el}. 
\end{proof}

\begin{remark}\label{rem:exl-norb}
    \Cref{cor:clos-cond} immediately yields the following. 
    If we are considering (segments of) free elastica with self-intersections whose elastic energy is sufficiently close to $8$, only orbit-like elastica need to be taken into account. So we collect further properties of orbit-like elastica in the next lemma.
\end{remark}

\begin{lemma}\label{lem:exl-orb}
    For a free canonically parametrized orbit-like elastica $\gamma\colon I\to\H^2$, $\gamma(\alpha)=\gamma(\beta)$ for $\alpha<\beta$ already implies that $\am(r\beta,p)-\am(r\alpha,p) > \pi$. If additionally $\alpha',\beta'\in I$ with $\alpha'\leq\alpha<\beta<\beta'$ and $\gamma(\alpha')=\gamma(\beta')$, then $\am(r\beta',p)-\am(r\alpha',p)>2\pi$. 
\end{lemma}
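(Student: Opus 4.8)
The plan is to derive both statements from the closing condition \eqref{eq:clos-cond-1} of Proposition \ref{cor:clos-cond} together with the scalar integral inequality
\begin{equation}\label{eq:plan-star}
    \sqrt{(1-p^2)(2-p^2)}\int_0^{\pi/2}\frac{\sqrt{1-p^2\sin^2\theta}}{1-p^2(2-p^2)\sin^2\theta}\,d\theta < \pi\qquad\text{for all }p\in(0,1).
\end{equation}
Abbreviate $\phi_1\defeq\am(r\alpha,p)$, $\phi_2\defeq\am(r\beta,p)$ and $g(\theta)\defeq\frac{\sqrt{1-p^2\sin^2\theta}}{1-p^2(2-p^2)\sin^2\theta}$; since $0<p<1$ for orbit-like elastica by Proposition \ref{prop:2.8}(b), we have $g>0$, and clearly $g(\theta+\pi)=g(\theta)=g(-\theta)$, so $\int_{\phi}^{\phi+\pi}g=\int_0^\pi g=2\int_0^{\pi/2}g$ for every $\phi$. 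Because $s\mapsto\am(rs,p)$ is strictly increasing ($\partial_s\am=\dn>0$, $r>0$), $\phi_2>\phi_1$. If $\phi_2-\phi_1\le\pi$, monotonicity of the integral gives $\int_{\phi_1}^{\phi_2}g\le\int_{\phi_1}^{\phi_1+\pi}g=2\int_0^{\pi/2}g$, so \eqref{eq:clos-cond-1} and \eqref{eq:plan-star} would yield $\pi\le m\pi=\frac{1}{2}\sqrt{(1-p^2)(2-p^2)}\int_{\phi_1}^{\phi_2}g\le\sqrt{(1-p^2)(2-p^2)}\int_0^{\pi/2}g<\pi$, a contradiction; hence $\am(r\beta,p)-\am(r\alpha,p)>\pi$. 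For the second claim, apply \eqref{eq:clos-cond-1} also to the pair $\alpha'<\beta'$ to obtain $m'\in\N$ with $m'\pi=\frac{1}{2}\sqrt{(1-p^2)(2-p^2)}\int_{\am(r\alpha',p)}^{\am(r\beta',p)}g$. Since $\alpha'\le\alpha<\beta<\beta'$ and $\am(r\cdot,p)$ is strictly increasing, $[\am(r\alpha,p),\am(r\beta,p)]\subsetneq[\am(r\alpha',p),\am(r\beta',p)]$, and $g>0$ forces $m'\pi>m\pi\ge\pi$, i.e.\ $m'\ge2$. Rerunning the contradiction argument with $\pi$ replaced by $2\pi$ (using $\int_{\phi}^{\phi+2\pi}g=2\int_0^\pi g=4\int_0^{\pi/2}g$) then gives $\am(r\beta',p)-\am(r\alpha',p)>2\pi$.

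It remains to establish \eqref{eq:plan-star}. Writing $k\defeq\sqrt{1-p^2}\in(0,1)$ rationalizes the integrand: $1-p^2\sin^2\theta=\cos^2\theta+k^2\sin^2\theta$, $1-p^2(2-p^2)\sin^2\theta=\cos^2\theta+k^4\sin^2\theta$ and $(1-p^2)(2-p^2)=k^2(1+k^2)$. On $[0,\pi/2]$ I use the majorant $\sqrt{\cos^2\theta+k^2\sin^2\theta}\le\cos\theta+k\sin\theta$, which is strict in the open interval. The integral of the majorant over the remaining denominator is elementary: substituting $w=\sin\theta$ and $w=\cos\theta$ in the two resulting pieces gives
\begin{equation}
    \int_0^{\pi/2}\frac{\cos\theta+k\sin\theta}{\cos^2\theta+k^4\sin^2\theta}\,d\theta=\frac{1}{\sqrt{1-k^4}}\Bigl(\operatorname{artanh}\sqrt{1-k^4}+\tfrac{1}{k}\arctan\tfrac{\sqrt{1-k^4}}{k^2}\Bigr).
\end{equation}
Multiplying by $k\sqrt{1+k^2}$, simplifying $\frac{k\sqrt{1+k^2}}{\sqrt{1-k^4}}=\frac{k}{\sqrt{1-k^2}}=\frac{k}{p}$, and using $\sqrt{1-k^4}=p\sqrt{2-p^2}$ together with $\arctan\frac{p\sqrt{2-p^2}}{1-p^2}=\arcsin(p\sqrt{2-p^2})$ (because $1+\bigl(\frac{p\sqrt{2-p^2}}{1-p^2}\bigr)^2=(1-p^2)^{-2}$), one arrives at
\begin{equation}
    \sqrt{(1-p^2)(2-p^2)}\int_0^{\pi/2}g<\frac{\sqrt{1-p^2}}{p}\operatorname{artanh}\bigl(p\sqrt{2-p^2}\bigr)+\frac{1}{p}\arcsin\bigl(p\sqrt{2-p^2}\bigr).
\end{equation}

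Finally I would bound the two summands by $\sqrt{2}$ and $\tfrac{\pi}{2}$ respectively, via
\begin{equation}
    \operatorname{artanh}\bigl(p\sqrt{2-p^2}\bigr)<\frac{\sqrt{2}\,p}{\sqrt{1-p^2}}\qquad\text{and}\qquad\arcsin\bigl(p\sqrt{2-p^2}\bigr)<\frac{\pi}{2}\,p\qquad(p\in(0,1)).
\end{equation}
Each of these is a one-variable monotonicity statement: both differences vanish at $p=0$; the first has derivative $\frac{\sqrt{2}}{(1-p^2)^{3/2}}-\frac{2}{(1-p^2)\sqrt{2-p^2}}$, which is nonnegative since it is equivalent to $2p^2\ge0$; the second has derivative $\frac{\pi}{2}-\frac{2}{\sqrt{2-p^2}}$, which changes sign exactly once on $(0,1)$, and since the difference also vanishes at $p=1$ it stays nonnegative on $[0,1]$. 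Combining the last three displays with $\sqrt{2}<\tfrac{\pi}{2}$ yields $\sqrt{(1-p^2)(2-p^2)}\int_0^{\pi/2}g<\sqrt{2}+\tfrac{\pi}{2}<\pi$, which is \eqref{eq:plan-star}. The one genuinely delicate point is \eqref{eq:plan-star} itself: a bound on $g$ that is uniform in $\theta$ degrades as $p\to1$ (the integrand spikes near $\theta=\pi/2$ on a window of width $\sim\sqrt{1-p^2}$, and that cancellation must be retained), so the specific majorant $\cos\theta+k\sin\theta$ — elementary yet sharp enough near $\theta=\pi/2$ — is the crux; everything else is bookkeeping with \eqref{eq:clos-cond-1}.
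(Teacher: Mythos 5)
Your proposal is correct, and its overall skeleton coincides with the paper's: both proofs feed the self-intersection into the closing condition \eqref{eq:clos-cond-1}, exploit the $\pi$-periodicity and positivity of the integrand $g$ together with the strict monotonicity of $s\mapsto\am(rs,p)$, and reach the contradiction from a bound, uniform in $p\in(0,1)$, of $\tfrac12\sqrt{(1-p^2)(2-p^2)}\int_{\phi}^{\phi+\pi}g\,d\theta$ strictly below $\pi$ (your handling of the second claim, forcing $m'\ge 2$ from strict monotone inclusion of the amplitude intervals and then rerunning the argument over a $2\pi$-window, is exactly the paper's argument). Where you genuinely diverge is in how that key uniform inequality is established. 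The paper rewrites the half-period integral in terms of the complete elliptic integrals $K(p)$ and $\Pi(p^2(2-p^2),p)$, invokes the Byrd--Friedman identity (117.02) to bound $\Pi$, and combines $\sqrt{1-p^2}K(p)\le\pi/2$ with the asymptotics $\sqrt{1-p^2}K(p)\to 0$ as $p\nearrow 1$ to produce a non-explicit constant $\eta\in(0,1)$ with the bound $(1+\eta)\pi/2$. You instead substitute $k=\sqrt{1-p^2}$, use the pointwise majorant $\sqrt{\cos^2\theta+k^2\sin^2\theta}\le\cos\theta+k\sin\theta$ on $[0,\pi/2]$, integrate explicitly, and close with two elementary one-variable monotonicity estimates, arriving at the explicit bound $\sqrt2+\pi/2<\pi$; I checked the substitutions, the two elementary integrals, the identity $\arctan\frac{p\sqrt{2-p^2}}{1-p^2}=\arcsin(p\sqrt{2-p^2})$, and both derivative computations, and they are all correct, with your majorant indeed retaining the needed cancellation in the $O(\sqrt{1-p^2})$-window near $\theta=\pi/2$. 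What your route buys is an elementary, fully explicit constant with no recourse to elliptic-integral identities or a compactness-type limiting argument; what the paper's route buys is that its intermediate estimate \eqref{eq:exl-orb-4} is phrased in the elliptic-integral language reused verbatim later (notably in Lemma \ref{lem:exl-heart}), though your explicit bound, having slack $\pi-(\sqrt2+\pi/2)>0$, would serve there as well.
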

\begin{proof}
    We study the term on the right-hand-side of \eqref{eq:clos-cond-1}. Firstly,
    \begin{equation}\label{eq:exl-orb-3-1}
    \begin{aligned}
        \frac{1}{2}\sqrt{1-p^2}&\sqrt{2-p^2}\int_{-\frac{\pi}{2}}^{\frac{\pi}{2}} \frac{\sqrt{1-p^2\sin^2(\theta)}}{1-p^2(2-p^2)\sin^2(\theta)}\dd\theta\\&=\sqrt{1-p^2}\sqrt{2-p^2}\int_0^{\frac{\pi}{2}} \frac{\sqrt{1-p^2\sin^2(\theta)}}{1-p^2(2-p^2)\sin^2(\theta)}\dd\theta.\\
    \end{aligned}
    \end{equation}
    Moreover, by \Cref{def:ell-int} and a short computation,
    \begin{equation}\label{eq:exl-orb-3-2}
        K(p)+ (1-p^2)\Pi(p^2(2-p^2),p) 
        = (2-p^2)\int_0^{\frac{\pi}{2}} \frac{\sqrt{1-p^2\sin^2(\theta)}}{1-p^2(2-p^2)\sin^2(\theta)}\dd\theta.
    \end{equation}
    Combining \eqref{eq:exl-orb-3-1} and \eqref{eq:exl-orb-3-2}, one thus obtains from \eqref{eq:bf-2} that
    \begin{equation}\label{eq:exl-orb-3}
    \begin{aligned}
        \frac{1}{2}\sqrt{1-p^2}&\sqrt{2-p^2}\int_{-\frac{\pi}{2}}^{\frac{\pi}{2}} \frac{\sqrt{1-p^2\sin^2(\theta)}}{1-p^2(2-p^2)\sin^2(\theta)}\dd\theta\\
        &< \frac{(1-p^2)^{\frac{3}{2}}}{\sqrt{2-p^2}} \cdot \Big( \frac{K(p)}{1-p^2} + \frac{\pi}{2}\sqrt{\frac{p^2(2-p^2)}{(1-p^2(2-p^2))(p^2-p^4)}} \Big)
    \end{aligned}
    \end{equation}
    for all $p\in (0,1)$. 
    Next, observe that, by \eqref{eq:ieq-Kp}, 
    \begin{equation}\label{eq:ol-nh-4.02}
        \frac{\sqrt{1-p^2}}{\sqrt{2-p^2}}K(p) \leq \min\Big\{\frac{\pi}{2\sqrt{2-p^2}},\sqrt{1-p^2}K(p)\Big\}.
    \end{equation}
    As by \eqref{eq:as-Kp} $\sqrt{1-p^2}K(p)\to 0$ for $p\nearrow 1$, \eqref{eq:ol-nh-4.02} shows that there is $\delta\in (0,1)$ with
    \begin{equation}\label{eq:ol-nh-4.03}
        \frac{\sqrt{1-p^2}}{\sqrt{2-p^2}}K(p) \leq \delta\cdot\frac{\pi}{2}\quad\text{for all $p\in (0,1)$}.
    \end{equation}
    Combining \eqref{eq:exl-orb-3} and \eqref{eq:ol-nh-4.03}, we obtain for all $p\in (0,1)$
    \begin{equation}\label{eq:exl-orb-4}
        \frac{1}{2}\sqrt{1-p^2}\sqrt{2-p^2}\int_{-\frac{\pi}{2}}^{\frac{\pi}{2}} \frac{\sqrt{1-p^2\sin^2(\theta)}}{1-p^2(2-p^2)\sin^2(\theta)}\dd\theta < (1+\delta)\cdot\frac{\pi}{2}.
    \end{equation}
    
    Since $\sin^2$ is $\pi$-periodic and by \eqref{eq:exl-orb-4}, \eqref{eq:clos-cond-1} cannot be satisfied if $\am(r\beta,p)-\am(r\alpha,p)\leq \pi$. 
    
    Now additionally considering $\alpha'\leq\alpha<\beta<\beta'$ with $\gamma(\alpha')=\gamma(\beta')$, \eqref{eq:clos-cond-1} yields
    \begin{equation}\begin{aligned}
            \pi &\leq \frac{1}{2}\sqrt{1-p^2}\sqrt{2-p^2}\int_{\am(r\alpha,p)}^{\am(r\beta,p)} \frac{\sqrt{1-p^2\sin^2(\theta)}}{1-p^2(2-p^2)\sin^2(\theta)}\dd\theta \\
            &<\frac{1}{2}\sqrt{1-p^2}\sqrt{2-p^2}\int_{\am(r\alpha',p)}^{\am(r\beta',p)} \frac{\sqrt{1-p^2\sin^2(\theta)}}{1-p^2(2-p^2)\sin^2(\theta)}\dd\theta= \widetilde{m}\pi
    \end{aligned}
    \end{equation}
    for some $\widetilde{m}\in\N$. Particularly, $\widetilde{m}\geq 2$. 
    Again by \eqref{eq:exl-orb-4} and the $\pi$-periodicity of $\sin^2$, $\am(r\beta',p)-\am(r\alpha',p)>2\pi$ which shows the remainder of the claim.
\end{proof}

\begin{lemma}\label{lem:en-orb}
    Let $\gamma\colon[\alpha,\beta]\to\H^2$ be the canonical parametrization of an orbit-like elastica with parameter $p\in(0,1)$ such that $\am(r\beta,p)-\am(r\alpha,p)>m\pi$ for some $m\in\N$. Then $\E(\gamma)> 8m$.
\end{lemma}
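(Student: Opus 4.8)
The plan is to reduce everything to a sharp inequality for the complete elliptic integral of the second kind $E(p)=\int_0^{\pi/2}\sqrt{1-p^2\sin^2\theta}\,d\theta$. First I would use that, for a free canonically parametrized orbit-like elastica, Proposition \ref{prop:2.8}(b) (with $\lambda=0$) gives $\scurv^2(s)=\scurv_0^2\dn^2(rs,p)$ with $\scurv_0^2=\tfrac{4}{2-p^2}$ and $r=\tfrac{1}{\sqrt{2-p^2}}$, cf.\ \eqref{eq:exl-orb-2-1}. Since $\gamma$ is parametrized by hyperbolic arc-length, $\E(\gamma)=\int_\alpha^\beta\scurv^2\,ds$, and substituting $u=rs$ and then $\theta=\am(u,p)$, using $\partial_u\am(u,p)=\dn(u,p)$ and $\dn(u,p)=\sqrt{1-p^2\sin^2(\am(u,p))}$, one gets
\begin{equation}
    \E(\gamma)=\frac{\scurv_0^2}{r}\int_{\am(r\alpha,p)}^{\am(r\beta,p)}\sqrt{1-p^2\sin^2\theta}\,d\theta=\frac{4}{\sqrt{2-p^2}}\int_{\am(r\alpha,p)}^{\am(r\beta,p)}\sqrt{1-p^2\sin^2\theta}\,d\theta.
\end{equation}
The integrand is positive and $\pi$-periodic with $\int_0^\pi\sqrt{1-p^2\sin^2\theta}\,d\theta=2E(p)$, so the hypothesis $\am(r\beta,p)-\am(r\alpha,p)>m\pi$ yields $\int_{\am(r\alpha,p)}^{\am(r\beta,p)}\sqrt{1-p^2\sin^2\theta}\,d\theta>2mE(p)$, hence $\E(\gamma)>\tfrac{8m\,E(p)}{\sqrt{2-p^2}}$.

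It then suffices to prove $E(p)\geq\sqrt{2-p^2}$ for all $p\in(0,1)$, equivalently $h(p):=E(p)^2+p^2-2\geq 0$. Since $E(1)=\int_0^{\pi/2}\cos\theta\,d\theta=1$ we have $h(1)=0$, so it is enough to show $h'\leq 0$ on $(0,1)$. Using the classical identity $E'(p)=\tfrac{E(p)-K(p)}{p}$ (from Appendix \ref{app:ell}) one computes $h'(p)=\tfrac{2}{p}\bigl(p^2-E(p)(K(p)-E(p))\bigr)$, so it remains to check $E(p)(K(p)-E(p))\geq p^2$. I would do this by writing $K(p)-E(p)=\int_0^{\pi/2}\tfrac{p^2\sin^2\theta}{\sqrt{1-p^2\sin^2\theta}}\,d\theta$ and applying Cauchy--Schwarz:
\begin{equation}
    p=\int_0^{\pi/2}p\sin\theta\,d\theta=\int_0^{\pi/2}(1-p^2\sin^2\theta)^{1/4}\cdot\frac{p\sin\theta}{(1-p^2\sin^2\theta)^{1/4}}\,d\theta\leq\sqrt{E(p)}\,\sqrt{K(p)-E(p)},
\end{equation}
so $p^2\leq E(p)(K(p)-E(p))$ and thus $h'\leq 0$. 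Consequently $h(p)\geq h(1)=0$, i.e.\ $E(p)\geq\sqrt{2-p^2}$; combined with the strict inequality from the periodicity count in the first paragraph, $\E(\gamma)>8m$.

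The routine parts are the two changes of variables and the $\pi$-periodicity count. The one genuinely delicate point is that the inequality $E(p)\geq\sqrt{2-p^2}$ is \emph{sharp} (equality in the limit $p\to1$, which corresponds to the asymptotically geodesic free elastica of Proposition \ref{prop:2.8}(c), whose total elastic energy is exactly $8$): no crude estimate such as $\dn\geq\sqrt{1-p^2}$ suffices, and one really needs the monotonicity-of-$h$ argument together with the Cauchy--Schwarz bound $E(p)(K(p)-E(p))\geq p^2$ (or an equivalent $E$--$K$ identity). I expect this to be the main obstacle, though it is elementary once the right reformulation is in place.
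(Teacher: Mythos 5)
Your proof is correct and follows essentially the same route as the paper: reduce $\E(\gamma)$ via Proposition \ref{prop:2.8} and the substitutions $u=rs$, $\theta=\am(u,p)$ to $\tfrac{4}{\sqrt{2-p^2}}\int_{\am(r\alpha,p)}^{\am(r\beta,p)}\sqrt{1-p^2\sin^2\theta}\,d\theta$, use the $\pi$-periodicity and positivity of the integrand together with $\am(r\beta,p)-\am(r\alpha,p)>m\pi$ to bound this strictly below by $\tfrac{8m}{\sqrt{2-p^2}}E(p)$, and conclude with $E(p)\geq\sqrt{2-p^2}$. The only difference is that the paper simply invokes \eqref{eq:ieq-Ep} (quoted from Langer--Singer in Remark \ref{rem:rem-a2}), whereas you supply a correct self-contained proof of that inequality via the monotonicity of $h(p)=E(p)^2+p^2-2$, the classical identity $E'(p)=(E(p)-K(p))/p$ (which, to be precise, is not actually stated in Appendix \ref{app:ell}), and the Cauchy--Schwarz bound $p^2\leq E(p)(K(p)-E(p))$.
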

\begin{proof}
    Using \Cref{prop:2.8}, we have $\scurv(s)^2 = \scurv_0^2 \cdot \dn^2(rs,p)\text{ for all $s\in[\alpha,\beta]$}$ where $\scurv_0^2 = \frac{4}{2-p^2}$ and $r=\frac{1}{\sqrt{2-p^2}}$. Consequently, by the $\pi$-periodicity of $\sin^2$,
    \begin{align}
        & \E(\gamma) = \int_{\alpha}^{\beta} \scurv^2(s)\dd s = \sqrt{2-p^2}\frac{4}{2-p^2} \int_{r\alpha}^{r\beta} \dn^2(s,p)\dd s\label{eq:en-orb-2} \\
        &= \frac{4}{\sqrt{2-p^2}}\int_{\am(r\alpha,p)}^{\am(r\beta,p)} \sqrt{1-p^2\sin^2(\theta)}\dd\theta \geq \frac{8m}{\sqrt{2-p^2}}E(p) \underset{\eqref{eq:ieq-Ep}}{>}8m.
        \qedhere
    \end{align}
\end{proof}

In the proof of (I) in the previous section, we argue that the limit $u_{\infty}^n$ satisfies the same symmetry relation as $u_0^n$. In order to fully classify the limit with this information, we require the following.

\begin{lemma}\label{lem:exl-heart}
    Let $\gamma\colon[-L,L]\to\H^2$ parametrize a free orbit-like elastica with $\gamma(-L)=\gamma(L)$ and with parameter $p\in (0,1)$. Moreover, suppose that
    \begin{equation}\label{eq:exl-heart-1}
        \gamma(-x) = -\overline{\gamma(x)} \quad\text{for all $x\in[-L,L]$}.
    \end{equation}
    Then $\gamma(0)= iy$ and $\gamma'(0)=\pm y+0i$ for some $y>0$. Moreover, we either have $\scurv(0)^2=\scurv_0^2(1-p^2)$ or $\scurv(0)^2=\scurv_0^2$ with $\scurv_0^2$ as in \Cref{prop:2.8}. 
    
    Lastly, there exists $\varepsilon>0$ such that, for any $\gamma$ as above with $\scurv(0)^2=\scurv_0^2(1-p^2)$, 
    \begin{equation}
        \E(\gamma)\geq 8+\varepsilon.
    \end{equation}
\end{lemma}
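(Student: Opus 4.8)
The plan is to handle the three assertions in turn, the last one being the substantive part. The first two follow at once from Remark~\ref{rem:sym-scurv}: the relation $\gamma(-x)=-\overline{\gamma(x)}$ gives $\gamma(0)\in i\R_{>0}$ and $\gamma'(0)\in\R$, hence $\gamma'(0)=\pm y+0i$ because $|\gamma'(0)|_g=1$ and $\gamma(0)=iy$; the same remark gives that $\scurv$ is even, so $\zeta\defeq\scurv^2$ is an even solution of the elastica ODE. Since $\gamma$ is orbit-like, Proposition~\ref{prop:2.8}(b) writes $\zeta(s)=\scurv_0^2\dn^2(r(s-s_0),p)$, and evenness forces $rs_0\in K(p)\Z$; therefore $\zeta(0)\in\{\scurv_0^2,\scurv_0^2(1-p^2)\}$, the two critical values of $\dn^2$.

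Now assume $\scurv(0)^2=\scurv_0^2(1-p^2)$. Then, by the previous paragraph and periodicity of $\dn$, $\scurv(x)^2=\scurv_0^2\dn^2(rx+K(p),p)$ on $[-L,L]$, with $\scurv_0^2=\tfrac{4}{2-p^2}$ and $r=\tfrac{1}{\sqrt{2-p^2}}$ by Proposition~\ref{prop:2.8}(b). \emph{Step 1} is an explicit energy formula. With $\phi\defeq\am(K(p)+rL,p)$, the substitution $u=rx+K(p)$, the primitive $\int\dn^2(\cdot,p)=E(\am(\cdot,p),p)$, and standard identities for $\am$ and $E$ (notably $\am(K(p)-rL,p)=\pi-\phi$, which comes from $\int_0^{2K(p)}\dn(\cdot,p)=\pi$ and oddness of $\am$, together with oddness of $E(\cdot,p)$ and $E(\psi+\pi,p)=E(\psi,p)+2E(p)$) combine to yield
\begin{equation*}
    \E(\gamma)=\frac{2\scurv_0^2}{r}\bigl(E(\phi-\pi,p)+E(p)\bigr)=\frac{8}{\sqrt{2-p^2}}\bigl(E(\phi-\pi,p)+E(p)\bigr).
\end{equation*}

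\emph{Step 2} is a uniform lower bound $\phi-\pi>\delta_0$ with $\delta_0>0$ independent of $p$. Transporting $\gamma$ to a canonically parametrized elastica $\widehat\gamma$ by an isometry of $\H^2$ (which preserves $\E$), the equality $\gamma(-L)=\gamma(L)$ becomes a self-intersection of $\widehat\gamma$ at the parameters $K(p)/r\pm L$, so Lemma~\ref{lem:exl-orb} gives $\am(K(p)+rL,p)-\am(K(p)-rL,p)>\pi$, i.e.\ $\phi>\pi$, and Proposition~\ref{cor:clos-cond} together with the substitution $\theta\mapsto\tfrac{\pi}{2}+t$ rewrites the closing condition \eqref{eq:clos-cond-1} as $m\pi=G(\phi-\tfrac{\pi}{2},p)$ for some $m\in\N$, where
\begin{equation*}
    G(\rho,p)\defeq\sqrt{1-p^2}\sqrt{2-p^2}\int_0^{\rho}\frac{\sqrt{1-p^2\cos^2 t}}{1-p^2(2-p^2)\cos^2 t}\,dt.
\end{equation*}
Here $G(\tfrac{\pi}{2},p)<(1+\eta)\tfrac{\pi}{2}<\pi$ is exactly the estimate \eqref{eq:exl-orb-4} in the proof of Lemma~\ref{lem:exl-orb} (with $\eta\in(0,1)$ from \eqref{eq:ol-nh-4.03}), while on $[\tfrac{\pi}{2},\tfrac{\pi}{2}+\delta_0]$ one has $\cos^2 t\le\delta_0^2$, $p^2(2-p^2)\le1$ and $\sqrt{(1-p^2)(2-p^2)}\le\sqrt2$, so $G(\tfrac{\pi}{2}+\delta_0,p)-G(\tfrac{\pi}{2},p)\le\sqrt2\,\delta_0/(1-\delta_0^2)$. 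Choosing $\delta_0>0$ small enough that the right-hand side is $<(1-\eta)\tfrac{\pi}{2}$ makes $G(\tfrac{\pi}{2}+\delta_0,p)<\pi$ for all $p\in(0,1)$, and since $G(\cdot,p)$ is increasing and $m\pi\ge\pi$ this forces $\phi-\tfrac{\pi}{2}>\tfrac{\pi}{2}+\delta_0$, i.e.\ $\phi-\pi>\delta_0$.

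Combining the two steps: $E(\phi-\pi,p)\ge E(\delta_0,p)\ge\int_0^{\delta_0}|\cos\theta|\,d\theta=\sin\delta_0$ (monotonicity of $E(\cdot,p)$, $\sqrt{1-p^2\sin^2\theta}\ge|\cos\theta|$, $\delta_0\le\tfrac{\pi}{2}$), while $\sqrt{2-p^2}\le\sqrt2$ and $E(p)/\sqrt{2-p^2}>1$ by \eqref{eq:ieq-Ep}; hence $\E(\gamma)>8+\tfrac{8}{\sqrt2}\sin\delta_0$, so $\varepsilon\defeq4\sqrt2\,\sin\delta_0>0$ works. The step I expect to be most delicate is the bookkeeping in Step 2: Proposition~\ref{cor:clos-cond} is stated for the canonical parametrization, whose distinguished point is a curvature \emph{maximum}, whereas our curve has its curvature \emph{minimum} at the symmetry centre, so some care with the half-period identities for $\am$ and $\dn$ — especially for arguments exceeding one period — is required to reach the clean normal form $m\pi=G(\phi-\tfrac{\pi}{2},p)$ and the formula of Step 1; once that is in place, the uniform bound and the final estimate follow from already-established inequalities.
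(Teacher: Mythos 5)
Your proposal is correct and follows essentially the same route as the paper's proof: the symmetry remark pins down the two possible values of $\scurv(0)^2$, and in the case $\scurv(0)^2=\scurv_0^2(1-p^2)$ both arguments use the closing condition rewritten over the amplitude interval centred at $\tfrac{\pi}{2}$, the uniform-in-$p$ bound \eqref{eq:exl-orb-4} plus a uniformly small correction near the interval's ends to force an excess $\delta>0$ beyond $[0,\pi]$, and then the energy estimate via $E(p)>\sqrt{2-p^2}$. The only differences are cosmetic: you phrase the energy in closed form with elliptic-$E$ addition identities and route the closing condition through a canonical reparametrization and Lemma \ref{lem:exl-orb}, whereas the paper estimates the same integrals directly from \eqref{eq:clos-cond-4} with the shifted $\dn$-profile.
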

\begin{proof}
    Indeed, by \Cref{rem:sym-scurv} and \eqref{eq:exl-heart-1}, $\scurv^2$ needs to be an even function. As $\scurv(s)^2=\scurv_0^2\dn(rs,p)^2$ by \Cref{prop:2.8}, one concludes that $\scurv(0)^2$ either is a local minimum or maximum of $\scurv^2$. Thus, using \Cref{def:jef}, $\scurv(0)^2\in\{\scurv_0^2,\scurv_0^2(1-p^2)\}$.
    
    Now suppose that $\scurv(0)^2=\scurv_0^2(1-p^2)$. Particularly, \Cref{prop:2.8} yields that $\scurv$ is a translation of $s\mapsto \scurv_0\cdot \dn(rs,p)$. As $\dn(x,p)=\sqrt{1-p^2}$ if and only if $\am(x,p)\in \pi/2+\pi\Z$ if and only if $x\in K(p)+2K(p)\Z$ by \Cref{def:jef} and \Cref{rem:der-jef}, $\scurv(s) = \scurv_0 \cdot \dn(rs+K(p),p)$.     
    Moreover, using $m\in\N$ in \eqref{eq:clos-cond-4}, the assumption $\gamma(-L)=\gamma(L)$ yields that
    \begin{equation}\label{eq:exl-heart-2}
        \pi \leq \sqrt{-C}\int_{-L}^{L} \frac{\scurv(s)^2}{4C+4\scurv(s)^2}\dd s.
    \end{equation}
    Thus, using \eqref{eq:exl-orb-2-1} and \eqref{eq:exl-orb-2} and \Cref{rem:der-jef}, 
    \begin{equation}\label{eq:exl-heart-3}
        \begin{aligned}
            \sqrt{-C}\int_{-L}^L&\frac{\scurv(s)^2}{4C+4\scurv(s)^2}\dd s 
            = \frac{\sqrt{-C}\scurv_0^2}{r} \int_{-rL+K(p)}^{rL+K(p)} \frac{\dn(s,p)\partial_s\am(s,p)}{4C+4\scurv_0^2\dn^2(s,p)}\dd s\\
            &= 8 \frac{\sqrt{1-p^2}}{(2-p^2)^{3/2}} \int_{\am(-Lr+K(p),p)}^{\am(Lr+K(p),p)} \frac{\sqrt{1-p^2\sin^2(\theta)}}{4C+4\scurv_0^2(1-p^2\sin^2(\theta))}\dd\theta\\
            &= \frac{1}{2} \sqrt{1-p^2}\sqrt{2-p^2} \int_{\am(-Lr+K(p),p)}^{\am(Lr+K(p),p)} \frac{\sqrt{1-p^2\sin^2(\theta)}}{1-p^2(2-p^2)\sin^2(\theta)}\dd\theta.
        \end{aligned}
    \end{equation}
    Moreover, note that, for $0<\delta<\pi/4$,
    \begin{equation}
    \begin{aligned}
        \int_{-\delta}^0&\frac{\sqrt{1-p^2\sin^2(\theta)}}{1-p^2(2-p^2)\sin^2(\theta)}\dd\theta = 
        \int_0^{\delta}\frac{\sqrt{1-p^2\sin^2(\theta)}}{1-p^2(2-p^2)\sin^2(\theta)}\dd\theta \\
        &= \int_0^{\delta} \frac{\sqrt{1+(1-p^2)\tan^2(\theta)}\sqrt{1+\tan^2(\theta)}}{1+(1-p^2)^2\tan^2(\theta)}\dd\theta\\
        &\leq \int_0^{\tan(\delta)} \frac{\sqrt{1+(1-p^2)s^2}}{1+(1-p^2)^2s^2}\dd s \leq \int_0^{\tan(\delta)} \frac{1+(1-p^2)s^2}{1+(1-p^2)^2s^2}\dd s \leq 2\tan(\delta)\to 0
    \end{aligned}
    \end{equation}
    as $\delta\searrow 0$. Combining the previous estimates and using \eqref{eq:exl-orb-4}, we obtain that there exists $\delta>0$ sufficiently small such that, for all $p\in (0,1)$, 
    \begin{equation}\label{eq:exl-heart-4}
        \frac{1}{2} \sqrt{1-p^2}\sqrt{2-p^2} \int_{-\delta}^{\pi+\delta} \frac{\sqrt{1-p^2\sin^2(\theta)}}{1-p^2(2-p^2)\sin^2(\theta)}\dd\theta < \pi.
    \end{equation}
    Using \cite[(122.03)]{byrdfriedman1954}, $(\am(-Lr+K(p),p),\am(Lr+K(p),p))$ is an open interval centered at $\frac{\pi}{2}$. In view of \eqref{eq:exl-heart-2} and \eqref{eq:exl-heart-3}, by \eqref{eq:exl-heart-4} and since this interval is the domain of integration on the right-hand-side of \eqref{eq:exl-heart-3},
    \begin{equation}
        (\am(-Lr+K(p),p),\am(Lr+K(p),p)) \supseteq (-\delta,\pi+\delta).
    \end{equation}
    Consequently, similarly as in \eqref{eq:en-orb-2},
    \begin{align}
        \E(\gamma) &= \frac{4}{\sqrt{2-p^2}} \int_{\am(Lr+K(p),p)}^{\am(-Lr+K(p),p)} \sqrt{1-p^2\sin^2(\theta)}\dd\theta \\
        &\geq \frac{4}{\sqrt{2-p^2}} \int_{0}^{\pi} \sqrt{1-p^2\sin^2(\theta)}\dd\theta + \frac{8}{\sqrt{2-p^2}}\int_0^{\delta} \sqrt{1-p^2\sin^2(\theta)}\dd\theta\\
        &\geq 8 + \frac{8}{\sqrt{2}} \int_0^{\delta} \sqrt{1-\sin^2{\theta}}\dd\theta =\vcentcolon 8+\varepsilon.\qedhere
    \end{align}
\end{proof}

%-------------------------------------------------------------------------------------------------------
%-----Li-Yau--------------------------------------------------------------------------------------------
%-------------------------------------------------------------------------------------------------------

\section{A Li-Yau inequality for open curves in $\H^2$}\label{sec:liyau}

A  similar analysis to this section for \emph{closed} planar curves is undertaken in \cite{muellerrupp2021}. 

Set $W_{\mathrm{imm}}^{2,2}([0,1],\H^2)\defeq \{u\in W^{2,2}([0,1],\R^2):u([0,1])\subseteq\H^2,\text{ $u$ is immersed}\}$, equipped with the Euclidean Sobolev norm. 

\begin{theorem}\label{thm:li-yau}
    Any $u\in W_{\mathrm{imm}}^{2,2}([0,1],\H^2)$ with $\E(u)\leq 8$ is an embedding.
\end{theorem}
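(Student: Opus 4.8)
The plan is to prove the contrapositive in a sharp form: if $u \in W^{2,2}_{\mathrm{imm}}([0,1],\H^2)$ has a self-intersection, then $\E(u) > 8$. Suppose $u(x_1) = u(x_2) =: p$ for $x_1 < x_2$. The two natural sub-curves to consider are $u|_{[x_1,x_2]}$, which is a closed loop, and $u|_{[0,x_1]} \cup u|_{[x_2,1]}$, the complementary arcs. I would first reduce to the case of a single closed loop: by monotonicity of $\E$ under restriction (the integrand $|\curv|_g^2 \geq 0$ is local), $\E(u) \geq \E(u|_{[x_1,x_2]})$, and it suffices to show that \emph{any} closed $W^{2,2}$-immersion $\gamma \colon \S^1 \to \H^2$ satisfies $\E(\gamma) > 8$, with the bound achieved only in a degenerate limit — so that for the \emph{open} curve with a genuine self-intersection one gets the strict, non-degenerate inequality. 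Actually the cleanest statement to target is: every closed $W^{2,2}$-immersed curve in $\H^2$ has $\E \geq 8$, hence every self-intersecting open curve has $\E \geq 8$, and then I would upgrade $\geq$ to $>$ by a rigidity/compactness argument ruling out equality.

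For the core estimate $\E(\gamma) \geq 8$ on closed curves, I would use the relation \eqref{eq:el-vs-will} between $\E$ and the Willmore energy: for a closed profile curve the boundary term vanishes, so $\E(\gamma) = \frac{2}{\pi}\W(f_\gamma)$, and $f_\gamma$ is a closed (if $\gamma$ avoids the axis) or singular surface of revolution in $\R^3$. If $f_\gamma$ were a smooth immersed torus, the Li–Yau inequality for Willmore surfaces would give $\W(f_\gamma) \geq 4\pi$ at a branch/self-intersection point — but here we do not a priori have a self-intersection of $f_\gamma$, only closedness. A more robust route avoids surfaces entirely: adapt the argument of Lemma \ref{lem:bbl}. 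Apply an isometry of $\H^2$ (as in Lemma \ref{lem:isom-hyp}, Remark \ref{rem:el-en-isom}) so that the lowest point of $\gamma$ — the point minimizing the $y$-coordinate — sits at a convenient height, then invert via $z \mapsto -1/z$ so that, if the claim failed, one could force the geometry into the graph regime and run exactly the computation in the proof of Lemma \ref{lem:bbl}: split the curve at the two points where it crosses a horizontal line $\{y = \alpha\}$ near the minimum, reparametrize each piece as a graph using that the tangent is nearly vertical there, and bound $\E$ from below by $\int \frac{1}{g\sqrt{1+(g')^2}}\,dy - (\text{boundary terms})$, which blows up. The point is that a closed curve must come back down, so it crosses such a horizontal line an even number $\geq 2$ of times, contributing two "vertical-tangent" pieces each worth $4 + o(1)$ in $\E$, totalling $8 + o(1)$; if instead $\E < 8$ there is a uniform lower height bound, but a closed curve in a compact region with $\E < 8$ cannot exist because — and this is where I would invoke Fenchel's theorem in $\H^2$ (\cite[Theorem 2.3]{dallacquaspener2017}) as in the proof of Theorem \ref{thm:lte}: a closed curve has $\int|\curv|_g\,ds \geq 2\pi$, so $\E(\gamma)\,\Ll_{\H^2}(\gamma) \geq 4\pi^2$, and one then needs a matching upper bound forcing a contradiction, or one argues directly via the isoperimetric-type inequality for $\E$.

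Concretely, I would structure the final proof as: (1) contrapositive setup, extract self-intersection and pass to the closed sub-loop $\gamma = u|_{[x_1,x_2]}$; (2) by density/approximation reduce from $W^{2,2}$ to smooth immersions (elastic energy is lower semicontinuous under $W^{2,2}$-weak convergence, and a smooth approximation of a self-intersecting curve still self-intersects, or: work directly in $W^{2,2}$ since \eqref{eq:el-en-graphs} and the graph argument only need $W^{2,2}$ regularity); (3) run the Lemma \ref{lem:bbl}-style dichotomy on $\gamma$: either the infimal height along $\gamma$ is zero, in which case the graph computation gives $\E(\gamma) \geq 8$ by catching two near-vertical passages (one going down to the near-axis region, one coming back up — here closedness is what guarantees \emph{two} such passages, playing the role of the two boundary points in Lemma \ref{lem:bbl}), or the height is bounded below, in which case one uses Fenchel plus the length/area control (Corollary \ref{cor:el-ctrl-len} reasoning) to again conclude; (4) conclude $\E(u) \geq \E(\gamma) \geq 8$, contradicting $\E(u) \leq 8$ unless equality holds throughout, and finally rule out the equality case by noting it forces $|\curv|_g \equiv 0$ on $[0,x_1]\cup[x_2,1]$ (geodesic arcs) together with a degenerate limiting loop, which is incompatible with a genuine $W^{2,2}$ self-intersection.

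\textbf{The main obstacle} will be step (3) in the case where $\gamma$ stays in a compact subset of $\H^2$: unlike Lemma \ref{lem:bbl}, there is no a-priori fixed pair of boundary points to anchor the argument, so I must produce the "two vertical passages" purely from the topology of a closed loop, and I expect the cleanest way is to apply a suitable Möbius transformation sending a non-axis point of $\gamma$ toward the boundary at infinity (as in Lemma \ref{lem:el}) to \emph{force} the near-axis behaviour, reducing the bounded case to the unbounded/near-axis case. Making this reduction rigorous — and in particular checking that the inversion $z \mapsto -1/z$ can always be arranged so that $\min (R\circ\gamma)^{(2)} \to 0$ along a minimizing-type argument while keeping two transverse crossings of a horizontal line with nearly vertical tangents — is the technical heart; the rest is the interpolation-free graph estimate already present in the proof of Lemma \ref{lem:bbl}.
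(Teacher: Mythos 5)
Your plan is genuinely different from the paper's (which minimizes $\E$ over the class of non-embedded curves with the clamped data of $u$, uses Lemmata \ref{lem:bbl}, \ref{lem:el} and Corollary \ref{cor:el-ctrl-len} for compactness of a minimizing sequence, shows the self-intersection loop of the limit is a segment of a free elastica, and then rules it out via the closing condition of Proposition \ref{cor:clos-cond} together with Lemmata \ref{lem:exl-orb} and \ref{lem:en-orb}, treating the borderline case $\E(u)=8$ by running the clamped elastic flow), but as it stands it has a gap that is not merely technical. The case your argument must handle is precisely the one it cannot: if the loop $\gamma=u|_{[x_1,x_2]}$ had $\E(\gamma)<8$, then the very Lemma \ref{lem:bbl}/Lemma \ref{lem:el} machinery you invoke shows $\gamma$ stays in a compact subset of $\H^2$ away from the axis, so the ``near-axis, two vertical passages each worth $4+o(1)$'' branch never occurs for the curve you need to exclude. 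In the compact regime your proposed tools do not produce the constant $8$: Fenchel's theorem in the form used in the proof of Theorem \ref{thm:lte} requires matching tangents at the endpoints, whereas your sub-loop only has matching positions (a corner) --- for the same reason the boundary term in \eqref{eq:el-vs-will} does \emph{not} vanish for this loop, contrary to what you assert --- and even granting a total-curvature bound, Cauchy--Schwarz only yields $\E(\gamma)\geq c^2/\Ll_{\H^2}(\gamma)$ with no upper length bound small enough to force a contradiction. The Möbius-transformation idea does not repair this: an isometry preserves $\E$ and maps one fixed compact loop to another fixed compact loop; the $4+o(1)$ boundary-term estimates of Lemma \ref{lem:bbl} are intrinsically asymptotic, requiring a \emph{sequence} with anchored endpoints above a fixed height and minimal height tending to $0$, and no such sequence is created from a single curve. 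Indeed the threshold $8$ is the energy of the borderline (asymptotically geodesic) free elastica, and the near-optimal self-intersecting competitors --- halves of $\lambda$-figure-eights, cf. Remarks \ref{rem:el-en-lf8} and \ref{rem:li-yau-opti} --- escape every compact set; loops confined to a compact region have energy strictly above $8$ by an amount invisible to graph/Fenchel estimates, which is why some sharp rigidity input about critical points (the elastica classification and closing condition) is unavoidable.

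Two further points. First, your reduction target ``every closed $W^{2,2}$-immersion has $\E\geq 8$'' conflates $C^1$-closed curves with loops formed by a self-intersection; for the latter the infimum is exactly $8$ and not attained, so the reduction is essentially the theorem itself rather than a simplification. Second, your treatment of the equality case $\E(u)=8$ (``it forces $|\curv|_g\equiv 0$ on the complementary arcs, incompatible with a genuine self-intersection'') is not an argument: nothing prevents geodesic arcs attached to a loop of energy exactly $8$ a priori; one must rule out self-intersecting curves of energy exactly $8$, which the paper does by flowing with the clamped elastic flow (either the energy drops strictly below $8$ while the endpoints of the loop remain identified, contradicting the strict case, or the curve is a critical point and the classification applies).
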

\begin{proof}
    Define 
    \begin{equation}
        \A \defeq \{ \gamma\in W_{\mathrm{imm}}^{2,2}([0,1],\H^2): \gamma\text{ non-embedded, }\partial_s^k\gamma(y)=\partial_s^ku(y)\text{ for }y,k=0,1 \}.
    \end{equation}
    We claim that $m\defeq \inf_{\gamma\in\A}\E(\gamma)\geq 8$. 

    To this end, consider a minimizing sequence $(\widetilde{\gamma}_n)_{n\in\N}\subseteq\A$ with $\E(\widetilde{\gamma}_n)\to m$. Suppose for contradiction that $m<8$. W.l.o.g., $\sup_{n\in\N} \E(\widetilde{\gamma}_n)<8$. Define $\gamma_n$ to be the constant $g_{\H^2}$-speed reparametrization of $\widetilde{\gamma}_n$. Further, using \Cref{cor:el-ctrl-len}, one obtains
    \begin{equation}\label{eq:liyau-1}
        \frac{1}{\gamma_n^{(2)}}|\partial_x\gamma_n| = |\partial_x\gamma_n|_g = \Ll_{\H^2}(\gamma_n) =\vcentcolon \ell_n \leq C
    \end{equation}
    for some $C>0$ and for all $n\in\N$. Further, by \Cref{rem:len-cpct-ctrl}, there are $L,l>0$ such that $l\leq \gamma_n^{(2)} \leq L$ uniformly in $n$. Thus, by \eqref{eq:liyau-1},  $|\partial_x\gamma_n|\leq L \ell_n$. Moreover, since $\partial_{s_{\gamma_n}} = \frac{1}{\ell_n}\partial_x$ and since, by \eqref{eq:cov-der-h2} and \eqref{eq:def-curv},
    \begin{equation}
        \ell_n^2 \curv_n = \partial_x^2\gamma_n + \frac{1}{\gamma_n^{(2)}}
        \begin{pmatrix}
            -2\partial_x\gamma_n^{(1)}\partial_x\gamma_n^{(2)}\\
            (\partial_x\gamma_n^{(1)})^2-(\partial_x\gamma_n^{(2)})^2
        \end{pmatrix},
    \end{equation}
    we have
    \begin{equation}
        \int_0^1 |\partial_x^2\gamma_n|^2\dd x \leq 2\ell_n^4 \int_0^1  |\curv_n|^2\dd x + {8\ell_n^4L^2} \leq 2\ell_n^4L^2 \int_0^1  |\curv_n|_g^2\dd s + {8\ell_n^4L^2}\leq 24C^4L^2
    \end{equation}
    uniformly in $n$. Consequently, $(\gamma_n)_{n\in\N}\subseteq W^{2,2}$ is uniformly bounded so that, w.l.o.g. passing to a subsequence, $\gamma_n\rightharpoonup \gamma$ for some $\gamma\in W^{2,2}$. Since $W^{2,2}([0,1],\R^2)\hookrightarrow C^1([0,1])$ compactly, we may also suppose that $\gamma_n\to\gamma$ in $C^1$. As $\gamma_n^{(2)}\geq l>0$, $\gamma\colon[0,1]\to\H^2$. 

    By \cite[Lemma 4.3 and Remark 4.4]{muellerrupp2021}, $\gamma$ is non-embedded. As at the beginning of the proof of \Cref{thm:lte}, one deduces from the bound on the elastic energy that the $g_{\H^2}$-length $\ell_n$ of each $\gamma_n$ is bounded from below uniformly away from $0$. Therefore, since $|\partial_x\gamma_n|_g=\ell_n$, we obtain $|\partial_x\gamma|_g>0$. Particularly, $\gamma$ is also immersed and $\gamma\in \A$. Furthermore, $\E(\gamma)\leq \liminf_{n\to\infty}\E(\gamma_n) = m$. As $\gamma$ is not embedded, there are $0\leq x_1<x_2\leq 1$ with $\gamma(x_1)=\gamma(x_2)$. 
    \paragraph{Claim:} $\gamma|_{[x_1,x_2]}$ is a reparametrization of a segment of a free elastica.

    To this end, consider $\varphi\in C_c^{\infty}((x_1,x_2),\R^2)$ arbitrary. For $|t|$ sufficiently small, $\gamma+t\varphi\in \A$.
    By the minimizing property of $\gamma$,
    \begin{equation}
        0 = \frac{\dd}{\dd t}\Big|_{t=0} \E(\gamma+t\varphi) = \frac{\dd}{\dd t}\Big|_{t=0} \E((\gamma+t\varphi)|_{(x_1,x_2)}).
    \end{equation}
    Now the arguments in \cite[Section 5]{eichmanngrunau2019} yield that $\gamma|_{(x_1,x_2)}\in C^{\infty}$ solves $\nabla\E(\gamma)=0$ on $(x_1,x_2)$. We wish to verify that $\gamma$ remains smooth also on the closure $[x_1,x_2]$. To this end, since $\gamma$ is again parametrized with constant $g_{\H^2}$-speed, \Cref{lem:ext-el} yields that there is a smooth extension $\widetilde{\gamma}\colon\R\to\H^2$ of $\gamma$. So $\gamma\in C^{\infty}([x_1,x_2])$ and the claim follows. 
    
    By \Cref{cor:clos-cond}, \Cref{lem:exl-orb} and \Cref{lem:en-orb}, we obtain $\E(\gamma|_{[x_1,x_2]})>8$; thus contradicting the assumption $\E(\gamma)<8$. Altogether, we have $m\geq 8$.
    
    This proves that any $u\in W^{2,2}_{\mathrm{imm}}([0,1],\H^2)$ with $\E(u)<8$ is necessarily embedded. Now suppose that $\E(u)=8$ and w.l.o.g. $u(0)=u(1)$. By \Cref{rem:ste}, there is a solution $h\colon [0,T)\times [0,1]\to\H^2$ of the clamped elastic flow, i.e., \eqref{eq:lte-1} with $a\equiv -1$, such that $h(0,\cdot)=u$ and $\partial_x^kh(t,y)=\partial_x^ku(y)$ for $y,k=0,1$. 

    Firstly, suppose that $h$ is non-constant. That is, using \eqref{eq:1-var-elen} there exists $t'>0$ with $\partial_t \E(h(t,\cdot)) |_{t=t'}<0$. Hence, $\E(h(t,\cdot))<\E(h(t',\cdot))\leq 8$ for all $t>t'$. However, $h(t,0)=u(0)=u(1)=h(t,1)$ for such $t$, a contradiction to the above.
    
    Conversely, suppose that $h(t,\cdot)=u$ for all $t\geq 0$. Especially, by \Cref{rem:ste}, $u\in C^{\infty}([0,1])$ and, by \eqref{eq:lte-1}, $u$ is a reparametrization of a segment of a free elastica. However, by \Cref{cor:clos-cond}, \Cref{lem:exl-orb} and \Cref{lem:en-orb}, there is no non-embedded segment of a free elastica with elastic energy below or equal to $8$. Thus, $\E(u) > 8$, a contradiction. 
\end{proof}

\begin{remark}\label{rem:li-yau-opti}
    The energy threshold of $8$ in \Cref{thm:li-yau} is optimal in its generality. Indeed, using a sequence $(\gamma_n)_{n\in\N}$ of $\lambda_n$-figure-eights as in \Cref{lem:ex-fig8} with $\lambda_n\searrow 0$, one can consider $\gamma_n|_{\left[-{K(p_n)}/{r_n},{K(p_n)}/{r_n}\right]}$, cf. \Cref{rem:el-en-lf8}.
\end{remark}

\begin{remark}
    Recall that, by the classical Li-Yau inequality for immersed surfaces in $\R^3$ (cf. \cite{liyau1982}) and \eqref{eq:el-vs-will}, one obtains that any closed, immersed $u\in W^{2,2}(\S^1,\H^2)$ with $\E(u)<16$ is necessarily embedded. Our sufficient energy threshold is lower, but the result allows for a larger class of curves. In fact, the Li-Yau inequality for closed curves can be concluded from our version.
\end{remark}

\begin{corollary}
    Let $u_0\colon I \to \H^2$ be a smooth immersion with $\E(u_0)\leq 8$. The solution $u$ to \eqref{eq:lte-1} exists globally, converges to an embedded critical point of $\E$ after reparametrization and is embedded for all times $t\geq 0$.
\end{corollary}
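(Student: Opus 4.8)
The plan is to assemble the corollary from the results already established, as essentially no new argument is needed. First I would invoke Corollary~\ref{cor:ex-sc-below-8} with the given $a\colon\H^2\to(-\infty,0)$: the hypothesis $\E(u_0)\leq 8$ is precisely \eqref{eq:en-constr}, so the time-maximal solution $u$ of \eqref{eq:lte-1} is global, $T=\infty$, and it sub-converges to a critical point of $\E$. To promote sub-convergence to full convergence of the constant $g_{\H^2}$-speed reparametrizations, I would then apply Theorem~\ref{thm:conv} together with the remark following it, which records that its conclusion remains valid under the assumption $\E(u_0)\leq 8$ in place of the uniform length bound \eqref{eq:conv-1}. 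Concretely, one re-runs the dichotomy from the proof of Corollary~\ref{cor:ex-sc-below-8}: either $\nabla_{L^2(ds_{u_0})}\E(u_0)=0$ and the flow is stationary (so trivially convergent), or $\E$ strictly decreases past some time $\varepsilon>0$, whence $\E(u(t,\cdot))<8$ for $t\geq\varepsilon$ and Corollary~\ref{cor:el-ctrl-len} supplies the uniform length bound needed to quote Theorem~\ref{thm:conv}. This produces a critical point $u_{\infty}$ of $\E$, realized as the smooth limit of the reparametrizations $\widetilde{u}(t,\cdot)$, satisfying the prescribed Dirichlet boundary data.

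For embeddedness, the key point is the energy monotonicity \eqref{eq:lte-1.2}: since $a<0$, the map $t\mapsto\E(u(t,\cdot))$ is non-increasing, so $\E(u(t,\cdot))\leq\E(u_0)\leq 8$ for every $t\geq 0$. Each $u(t,\cdot)$ is a smooth immersion, hence, after the affine reparametrization onto $[0,1]$ (which changes neither the trace nor the elastic energy), an element of $W^{2,2}_{\mathrm{imm}}([0,1],\H^2)$ with $\E\leq 8$. Therefore Theorem~\ref{thm:li-yau} applies and yields that $u(t,\cdot)$ is an embedding for all $t\geq 0$. Finally, by the smooth convergence $\widetilde{u}(t,\cdot)\to u_{\infty}$ and continuity of $\E$, one has $\E(u_{\infty})=\lim_{t\to\infty}\E(u(t,\cdot))\leq 8$, and $u_{\infty}$ is again a smooth immersion, so a second application of Theorem~\ref{thm:li-yau} shows that $u_{\infty}$ is embedded as well.

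I do not anticipate a genuine obstacle: the argument is a bookkeeping exercise combining Corollary~\ref{cor:ex-sc-below-8}, Theorem~\ref{thm:conv} (with its remark), the monotonicity \eqref{eq:lte-1.2}, and Theorem~\ref{thm:li-yau}. The only minor care needed is bridging the length-bound hypotheses of Theorems~\ref{thm:lte}/\ref{thm:conv} with the energy hypothesis $\E(u_0)\leq 8$ — handled exactly as in the proof of Corollary~\ref{cor:ex-sc-below-8} via Corollary~\ref{cor:el-ctrl-len} — and noting that Theorem~\ref{thm:li-yau} is phrased on $[0,1]$ while \eqref{eq:lte-1} is posed on a general compact interval $I$, which is immaterial since reparametrization preserves both the image and the value of $\E$.
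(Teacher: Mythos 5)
Your proposal is correct and is exactly the intended argument: the paper states this corollary without proof precisely because it follows by combining Corollary~\ref{cor:ex-sc-below-8}, Theorem~\ref{thm:conv} (with the remark replacing the length bound by $\E(u_0)\leq 8$), the energy monotonicity \eqref{eq:lte-1.2}, and Theorem~\ref{thm:li-yau} applied to each $u(t,\cdot)$ and to the limit $u_\infty$. Your handling of the minor points (the dichotomy via Corollary~\ref{cor:el-ctrl-len}, and the affine reparametrization onto $[0,1]$ preserving trace and energy) matches the paper's line of reasoning.
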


\begin{remark}
    Alternatively, one could try to extend the Li-Yau inequality known for closed surfaces to one for immersed surfaces with boundary and, in the setting of \Cref{thm:li-yau}, apply it to the surface of revolution $f_u$. Following the approach involving Simon's monotonicity formula (cf. \cite[Equation (2.1)]{novagapozzetta2020}), one obtains for an immersed surface $f\colon \Sigma\to\R^3$ with boundary that
    \begin{equation}
        \Haus^0(f^{-1}(p))\pi \leq \frac{1}{4}\W(f) + \frac{1}{2} \int_{\partial\Sigma} \frac{\frac{\partial f}{\partial\eta} \cdot (f-p)}{|f-p|^2} \dd \sigma_f
    \end{equation}
    for $p\in f(\Sigma)$, the outer conormal $\frac{\partial f}{\partial \eta}$ at the boundary and the induced volume element $\sigma_f$ on $\partial\Sigma$. One obtains that, if 
    \begin{equation}
        \W(f) <8\pi-2\sup_{p\in\R^3}\int_{\partial\Sigma} \frac{\frac{\partial f}{\partial\eta} \cdot (f-p)}{|f-p|^2} \dd \sigma_f,
    \end{equation}
    then $f$ is necessarily an embedding.  As the boundary is at least $C^2$, one can argue that the integral is finite for any $p\in\R^3$, cf. \cite[Equation (4.2)]{novagapozzetta2020}. Moreover, since the integrand converges to $0$ for $|p|\to\infty$, the supremum in the above is finite.
\end{remark}

\appendix
%-------------------------------------------------------------------------------------------------------
%-----Appendix A----------------------------------------------------------------------------------------
%-------------------------------------------------------------------------------------------------------

\section{Elliptic integrals and Jacobi elliptic functions}\label{app:ell}

\begin{definition}[Elliptic integrals]\label{def:ell-int}
    Let $\alpha,p\in[0,1)$ and $\varphi\in\R$. Then 
    \begin{enumerate}[(a)]
        \item $\displaystyle K(\varphi,p)\defeq \int_0^{\varphi} \frac{1}{\sqrt{1-p^2\sin^2(\theta)}}\dd\theta$ is the elliptic integral of first kind,
        \item $\displaystyle E(\varphi,p)\defeq \int_0^{\varphi} \sqrt{1-p^2\sin^2(\theta)}\dd\theta$ the elliptic integral of second kind and
        \item $\displaystyle \Pi(\varphi,\alpha^2,p)\defeq \int_0^{\varphi} \frac{1}{(1-\alpha^2\sin^2(\theta))\sqrt{1-p^2\sin^2(\theta)}}\dd\theta$ the elliptic integral of third kind.
    \end{enumerate}
    In case $\varphi=\frac{\pi}{2}$, one omits $\varphi$ in the above notation and calls the respective terms \emph{complete} elliptic integrals.
\end{definition}

\begin{remark}\label{rem:rem-a2}
    As on \cite[p.19]{langersinger1984}, one can show that
    \begin{equation}\label{eq:ieq-Ep}
        E(p) > \sqrt{2-p^2}
    \end{equation}
    for all $p\in (0,1)$. Moreover, we clearly have
    \begin{equation}\label{eq:ieq-Kp}
        \sqrt{1-p^2}K(p) = \int_0^{\frac{\pi}{2}} \frac{\sqrt{1-p^2}}{\sqrt{1-p^2\sin^2(\theta)}}\dd\theta \leq \int_0^{\frac{\pi}{2}} 1\dd\theta = \frac{\pi}{2}.
    \end{equation}
    Lastly, for $0<p^2<\alpha^2<1$ and $0\leq \varphi<\frac{\pi}{2}$, \cite[(117.02)]{byrdfriedman1954} yields
    \begin{equation}\label{eq:bf-1}
        \Pi(\alpha^2,p) + \underbrace{\Pi(p^2/\alpha^2,p)}_{\geq \, K(p)} = K(p) + \frac{\pi}{2}\sqrt{\frac{\alpha^2}{(1-\alpha^2)(\alpha^2-p^2)}}
    \end{equation}
    and thus, especially,
    \begin{equation}\label{eq:bf-2}
        \Pi(\alpha^2,p) \underset{\eqref{eq:bf-1}}{\leq} \frac{\pi}{2}\sqrt{\frac{\alpha^2}{(1-\alpha^2)(\alpha^2-p^2)}}
    \end{equation}
\end{remark}

\begin{lemma}
    One has $K(p)=O(\log\left[1/\sqrt{1-p^2}\right])$ for $p\nearrow 1$. Particularly,
    \begin{equation}\label{eq:as-Kp}
        \lim_{p\nearrow 1} \sqrt{1-p^2}K(p) = 0.
    \end{equation}
\end{lemma}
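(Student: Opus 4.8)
The plan is to estimate $K(p)$ directly from its integral representation in Definition \ref{def:ell-int}, isolating the logarithmic singularity that develops near $\theta=\tfrac{\pi}{2}$ as $p\nearrow 1$. First I would rewrite $1-p^2\sin^2\theta = (1-p^2)+p^2\cos^2\theta$ and split $K(p)=\int_0^{\pi/2}(1-p^2\sin^2\theta)^{-1/2}\,d\theta$ at $\theta=\tfrac{\pi}{4}$. On $[0,\tfrac{\pi}{4}]$ one has $\cos^2\theta\geq\tfrac12$, so $1-p^2\sin^2\theta\geq p^2/2\geq 1/4$ once $p^2\geq\tfrac12$; hence this part contributes at most $\tfrac{\pi/4}{1/2}=\tfrac{\pi}{2}$, uniformly in such $p$.

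For the near-singular part $[\tfrac{\pi}{4},\tfrac{\pi}{2}]$ I would substitute $\phi=\tfrac{\pi}{2}-\theta$, turning the integral into $\int_0^{\pi/4}\big((1-p^2)+p^2\sin^2\phi\big)^{-1/2}\,d\phi$. Using concavity of $\sin$ on $[0,\tfrac{\pi}{4}]$, one has $\sin\phi\geq c\,\phi$ there with $c=\tfrac{2\sqrt2}{\pi}$, so this integral is bounded by $\int_0^{\pi/4}\big((1-p^2)+p^2c^2\phi^2\big)^{-1/2}\,d\phi=\tfrac{1}{pc}\operatorname{arcsinh}\!\big(\tfrac{pc\pi/4}{\sqrt{1-p^2}}\big)$. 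Since $\operatorname{arcsinh}(x)=\log\!\big(x+\sqrt{x^2+1}\big)\leq\log(2x+1)$ for $x\geq 0$, this term is $O\!\big(\log[1/\sqrt{1-p^2}]\big)$ as $p\nearrow 1$. Combining the two pieces yields $K(p)=O\!\big(\log[1/\sqrt{1-p^2}]\big)$, and then $\sqrt{1-p^2}\,K(p)=O\!\big(\sqrt{1-p^2}\,\log[1/\sqrt{1-p^2}]\big)\to 0$, because $t\log(1/t)\to 0$ as $t\searrow 0$.

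The only delicate point is the treatment of the interval $[\tfrac{\pi}{4},\tfrac{\pi}{2}]$ where the integrand degenerates; the substitution together with the elementary bound $\sin\phi\geq c\phi$ is exactly what converts the would-be divergence into a logarithm, and everything else is a routine uniform estimate. (Alternatively, one could simply invoke the classical asymptotic expansion $K(p)=\log\!\big(4/\sqrt{1-p^2}\big)+o(1)$ as $p\nearrow 1$, from which both claims are immediate, but the self-contained argument above avoids citing it.)
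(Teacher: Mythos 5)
Your proof is correct and takes essentially the same route as the paper: split $K(p)$ at $\theta=\tfrac{\pi}{4}$, bound the regular piece uniformly in $p$, and show that the near-singular piece grows only like $\log\bigl[1/\sqrt{1-p^2}\bigr]$, after which $\sqrt{1-p^2}\,\log\bigl[1/\sqrt{1-p^2}\bigr]\to 0$ gives \eqref{eq:as-Kp}. The only cosmetic difference is the mechanism for the singular piece: the paper substitutes $u=\tan\theta$ and evaluates $\int_0^1 (u^2+1-p^2)^{-1/2}\,du$ explicitly, while you use the concavity bound $\sin\phi\geq \tfrac{2\sqrt{2}}{\pi}\phi$ together with an $\operatorname{arcsinh}$ estimate; both yield the same logarithmic control.
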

\begin{proof}
    Note that, for $p\nearrow 1$,
    \begin{equation}
        K(p) = \int_0^{\frac{\pi}{2}} \frac{1}{\sqrt{1-p^2(\sin(\theta))^2}}\dd\theta = O(1) + \int_0^{\frac{\pi}{4}} \frac{1}{\sqrt{1-p^2\cos^2(\theta)}}\dd\theta.
    \end{equation}
    Using $\sin^2+\cos^2=1$ and substituting $u=\tan(\theta)$, one obtains
    \begin{align}
        \int_0^{\frac{\pi}{4}} &\frac{1}{\sqrt{1-p^2\cos^2(\theta)}}\dd\theta = \int_0^{\frac{\pi}{4}} \frac{\sqrt{1+\tan^2(\theta)}}{\sqrt{\tan^2(\theta)+1-p^2}}\dd\theta = \int_0^{1} \frac{1}{\sqrt{u^2+1-p^2}\sqrt{1+u^2}}\dd u \\
        &\leq  \int_0^{1} \frac{1}{\sqrt{u^2+1-p^2}}\dd u = \frac{1}{2}\log\big[ \frac{1+\sqrt{2-p^2}}{\sqrt{2-p^2}-1} \big] \leq \log\big[ \frac{C}{\sqrt{1-p^2}} \big].\qedhere
    \end{align}
\end{proof}

\begin{definition}[Jacobi elliptic functions]\label{def:jef}
    Let $0<p<1$. We define $\am(\cdot,p)\colon\R\to\R$ to be the inverse function of the strictly increasing mapping $\R\to\R$, $\varphi\mapsto K(\varphi,p)$. Moreover, one defines $ \cn(\cdot,p)\defeq \cos(\am(\cdot,p))$,
       $ \sn(\cdot,p)\defeq \sin(\am(\cdot,p))$ and
         $ \dn(\cdot,p)\defeq \sqrt{1-p^2\sn(\cdot,p)}$.
\end{definition}

\begin{remark}\label{rem:der-jef}
    We collect some well-known identities. Firstly,
    \begin{align}
        &\partial_x \cn(x,p)=-\sn(x,p)\dn(x,p),\qquad &&\partial_x \sn(x,p)=\cn(x,p)\dn(x,p)\\
        &\partial_x\dn(x,p)=-p^2\cn(x,p)\sn(x,p),&&\partial_x\am(x,p)=\dn(x,p).
    \end{align}
    All periods of the elliptic functions are given as follows where $l\in\Z$ and $x\in\R$.
    \begin{align}
        &\am(lK(p),p)=l\frac{\pi}{2},\qquad &&\cn(x+4lK(p),p)=\cn(x,p),\\
        &\sn(x+4lK(p),p)=\sn(x,p),&&\dn(x+2lK(p),p)= \dn(x,p).
    \end{align} 
\end{remark}

%-------------------------------------------------------------------------------------------------------
%-----Appendix B----------------------------------------------------------------------------------------
%-------------------------------------------------------------------------------------------------------

\section{Proof of \Cref{lem:lf8-vh}}\label{app:tot-curv}

\begin{proof}[Proof of \Cref{lem:lf8-vh}.]
    For contradiction, after passing to a subsequence, suppose that there exists $M>0$ with 
    \begin{equation}\label{eq:lf8-vh-1.1}
        \frac{1-p_n^2}{\lambda_n^2} \leq M\quad\text{for all $n\in\N$}.
    \end{equation}
    By \cite[(6.1)]{muellerspener2020}, the parameters $p_n$ and $\lambda_n$ must satisfy
    \begin{equation}\label{eq:lf8-vh-2}
    \begin{aligned}
        0&=\int_0^{\frac{\pi}{2}} \frac{\sin^2(\theta)-\frac{\lambda_n}{\scurv_0^2}}{\left(1-\frac{4\scurv_0^2}{(\scurv_0^2-\lambda_n)^2}\cos^2(\theta)\right)\sqrt{1-p_n^2\cos^2(\theta)}}\dd\theta\quad\text{for all $n\in\N$}
    \end{aligned}
    \end{equation}
    where 
    \begin{equation}\label{eq:lf8-vh-2.1}
        \scurv_0^2 = \frac{(2\lambda_n+4)p_n^2}{2p_n^2-1}.
    \end{equation}
    One can compute that $p_n^2-\frac{4\scurv_0^2}{(\scurv_0^2-\lambda_n)^2}=\frac{p_n^2(4+\lambda_n-4p_n^2)^2}{(\lambda_n+4p_n^2)^2}>0$. Especially, $\frac{4\scurv_0^2}{(\scurv_0^2-\lambda_n)^2}<1$ and \eqref{eq:lf8-vh-2} is well-defined. Firstly, $\int_{\frac{\pi}{4}}^{\frac{\pi}{2}} \frac{\sin^2(\theta)-{\lambda_n}/{\scurv_0^2}}{\left(1-{4\scurv_0^2}/{(\scurv_0^2-\lambda_n)^2}\cos^2(\theta)\right)\sqrt{1-p_n^2\cos^2(\theta)}}\dd\theta$ is non-singular for $n\to\infty$. So we only consider the integral on $[0,\pi/4]$. Next, 
    \begin{equation}
        \begin{aligned}
            \int_{0}^{\frac{\pi}{4}}& \frac{\sin^2(\theta)}{\big(1-\frac{4\scurv_0^2}{(\scurv_0^2-\lambda_n)^2}\cos^2(\theta)\big)\sqrt{1-p_n^2\cos^2(\theta)}}\dd\theta \\
            &= \int_{0}^{\frac{\pi}{4}} \frac{\tan^2(\theta)\sqrt{1+\tan^2(\theta)}}{\big(\tan^2(\theta)+1-\frac{4\scurv_0^2}{(\scurv_0^2-\lambda_n)^2}\big)\sqrt{\tan^2(\theta)+1-p_n^2}}\dd\theta\\
            &= \int_{0}^{1} \frac{\vartheta^2}{\big(\vartheta^2+1-\frac{4\scurv_0^2}{(\scurv_0^2-\lambda_n)^2}\big)\sqrt{\vartheta^2+1-p_n^2}\sqrt{1+\vartheta^2}}\dd \vartheta.
        \end{aligned}
    \end{equation}
    Let $\varepsilon\in (0,1)$ and consider again the integral 
    \begin{equation}\label{eq:lf8-vh-3}
        \begin{aligned}
            \int_{0}^{1} &\frac{\vartheta^2}{\big(\vartheta^2+\underbrace{1-\frac{4\scurv_0^2}{(\scurv_0^2-\lambda_n)^2}}_{=\vcentcolon A_n}\big)\sqrt{\vartheta^2+\smash[b]{\underbrace{1-p_n^2}_{=\vcentcolon B_n}}}\sqrt{1+\vartheta^2}}\dd \vartheta \\
            &\leq \varepsilon \int_0^{\varepsilon} \frac{\theta}{(\theta^2+A_n)\sqrt{\theta^2+B_n}}\dd\theta + \int_{\varepsilon}^1 \frac{\theta}{(\theta^2+A_n)\sqrt{\theta^2+B_n}}\dd\theta\\
            &= \varepsilon \frac{\arctan\big(\sqrt{\frac{B_n+\varepsilon^2}{A_n-B_n}}\big)-\arcsin\big(\sqrt{\frac{B_n}{A_n}}\big)}{\sqrt{A_n-B_n}} + \frac{\arcsin\big(\sqrt{\frac{1+B_n}{1+A_n}}\big)-\arctan\big(\sqrt{\frac{B_n+\varepsilon^2}{A_n-B_n}}\big)}{\sqrt{A_n-B_n}}\\
            &\leq \frac{\varepsilon\pi}{2}\frac{1}{\sqrt{A_n-B_n}}+ \frac{\arcsin\big(\sqrt{\frac{1+B_n}{1+A_n}}\big)-\arctan\big(\sqrt{\frac{B_n+\varepsilon^2}{A_n-B_n}}\big)}{\sqrt{A_n-B_n}}.
        \end{aligned}
    \end{equation}
    Now note that, for $n$ sufficiently large, $p_n\geq \frac{1}{2}$ and $\lambda_n\leq 1$ so that, using \eqref{eq:lf8-vh-1.1},
    \begin{equation}\label{eq:lf8-vh-4}
        \sqrt{A_n-B_n} = \frac{p_n(4(1-p_n^2)+\lambda_n)}{\lambda+4p_n^2} \; \smash{\begin{cases}
            \geq \frac{1}{10} \lambda_n\text{ and}\\
            \leq 4\frac{1-p_n^2}{\lambda_n^2}\lambda_n^2 + \lambda_n \leq (1+4M) \lambda_n. 
        \end{cases}}
    \end{equation}
    Moreover, $\sqrt{\frac{1+B_n}{1+A_n}}\nearrow 1$ and, by \eqref{eq:lf8-vh-4},
    \begin{equation}
        \frac{\sqrt{B_n+\varepsilon^2}}{\sqrt{A_n-B_n}} \geq \frac{\varepsilon}{\sqrt{(1+4M)\lambda_n}} \to \infty.
    \end{equation}
    Thus,
    \begin{equation}
        \arcsin\frac{\sqrt{1+B_n}}{\sqrt{1+A_n}}-\arctan\frac{\sqrt{B_n+\varepsilon^2}}{\sqrt{A_n-B_n}} \to \frac{\pi}{2} - \frac{\pi}{2} = 0.
    \end{equation}
    Particularly, we conclude for \eqref{eq:lf8-vh-3} that
    \begin{equation}
        \begin{aligned}
            \int_{0}^{1} &\frac{\vartheta^2}{\big(\vartheta^2+1-\frac{4\scurv_0^2}{(\scurv_0^2-\lambda_n)^2}\big)\sqrt{\vartheta^2+1-p_n^2}\sqrt{1+\vartheta^2}}\dd \vartheta \leq \frac{5\pi\varepsilon+o(1)}{\lambda_n}.
        \end{aligned}
    \end{equation}
    For the second contribution in \eqref{eq:lf8-vh-2}, using that $A_n\geq B_n$, 
    \begin{equation}
        \begin{aligned}
            \int_0^{\frac{\pi}{4}}& \frac{-\frac{\lambda_n}{\scurv_0^2}}{\big(1-\frac{4\scurv_0^2}{(\scurv_0^2-\lambda_n)^2}\cos^2(\theta)\big)\sqrt{1-p_n^2\cos^2(\theta)}}\dd\theta\\
            &=-\frac{\lambda_n}{\scurv_0^2} \int_0^{\frac{\pi}{4}} \frac{(1+\tan^2(\theta))^{\frac{3}{2}}}{\big(\tan^2(\theta)+1-\frac{4\scurv_0^2}{(\scurv_0^2-\lambda_n)^2}\big)\sqrt{\tan^2(\theta)+1-p_n^2}}\dd\theta\\
            &= -\frac{\lambda_n}{\scurv_0^2} \int_0^{1} \frac{\sqrt{1+\vartheta^2}}{\big(\vartheta^2+1-\frac{4\scurv_0^2}{(\scurv_0^2-\lambda_n)^2}\big)\sqrt{\vartheta^2+1-p_n^2}}\dd \vartheta\\
            &\leq -\frac{\lambda_n}{\scurv_0^2} \int_0^{1} \frac{1}{\big(\vartheta^2+1-\frac{4\scurv_0^2}{(\scurv_0^2-\lambda_n)^2}\big)^{\frac{3}{2}}}\dd \vartheta.
        \end{aligned}
    \end{equation}
    This integral can be explicitly computed (using $\int_0^1 1/(\vartheta^2+\Lambda)^{3/2}\dd \vartheta = \frac{1}{\Lambda\sqrt{1+\Lambda}}$ for $\Lambda>0$). Also using \eqref{eq:lf8-vh-2.1} to evaluate $\scurv_0^2$, one arrives at
    \begin{equation}
        \begin{aligned}
            \int_0^{\frac{\pi}{4}}& \frac{-\frac{\lambda_n}{\scurv_0^2}}{\big(1-\frac{4\scurv_0^2}{(\scurv_0^2-\lambda_n)^2}\cos^2(\theta)\big)\sqrt{1-p_n^2\cos^2(\theta)}}\dd\theta\\
            &\leq -\frac{\lambda_n}{\scurv_0^2}\frac{(\lambda_n+4p_n^2)^3}{\sqrt{2}\sqrt{\lambda_n^2+8p_n^2+12\lambda_np_n^2-8\lambda_np_n^4}(\lambda_n^2+16p_n^2(1-p_n^2)+16\lambda_np_n^2(1-p_n^2))}\\
            &\leq -\frac{1}{\sqrt{42}\scurv_0^2} \frac{(\lambda_n+4p_n^2)^3}{\lambda_n + 16\frac{1-p_n^2}{\lambda_n}+16(1-p_n^2)} = -\frac{1}{\lambda_n} \cdot \frac{1}{\sqrt{42}\scurv_0^2} \frac{(\lambda_n+4p_n^2)^3}{1 + 16\frac{1-p_n^2}{\lambda_n^2}+16\lambda_n\frac{1-p_n^2}{\lambda_n^2}}\\
            &\underset{\eqref{eq:lf8-vh-1.1}}{\leq} -\frac{C(M)}{\lambda_n}.
        \end{aligned}
    \end{equation}
    Altogether, we have that
    \begin{equation}
    \begin{aligned}
        \int_0^{\frac{\pi}{4}}& \frac{\cos^2(\theta)-\frac{\lambda_n}{\scurv_0^2}}{\big(1-\frac{4\scurv_0^2}{(\scurv_0^2-\lambda_n)^2}\sin^2(\theta)\big)\sqrt{1-p_n^2\sin^2(\theta)}}\dd\theta \\
        &\leq \frac{5\pi\varepsilon+o(1) - C(M)}{\lambda_n} \to -\infty \quad\text{as $n\to\infty$,}
    \end{aligned}
    \end{equation}
    if we choose $\varepsilon$ sufficiently small. Thus, we obtain a contradiction to \eqref{eq:lf8-vh-2}!
\end{proof}

%-------------------------------------------------------------------------------------------------------
%-----Well-posedness ----------------------------------------------------------------------------------------
%-------------------------------------------------------------------------------------------------------
\section{On the well-posedness of the clamped Willmore flow}\label{app:wp}

We briefly comment on the well-posedness of the clamped Willmore flow equation
\begin{equation}\label{eq:cwf}
    \begin{cases}
        \partial_t u = -\frac{1}{2(u^{(2)})^4} ((\nabla_s^{\bot})^2\curv + \frac{1}{2}|\curv|_g^2\curv-\curv )&\text{on } [0,T)\times I\\
        u(0,\cdot)=u_0&\text{on }I\\
        u(t,y)=u_0(y)&\text{for }(t,y)\in [0,T)\times\partial I\\
        \partial_su(t,y)=\partial_su_0(y)&\text{for }(t,y)\in [0,T)\times\partial I
    \end{cases}
\end{equation}
where $u_0\colon I\to\H^2$ is a suitably regular immersion and $I\subseteq\R$ a non-degenerate compact interval. Well-posedness is not immediate for such geometric evolution equations since they typically have inherent invariances. In the case of \eqref{eq:cwf}, one has an invariance with respect to reparametrizations. As a result, the equation is only degenerate parabolic and classical theory cannot be immediately applied. Still, following the general strategy outlined in \cite[Section 3]{dziukkuwertschaetzle2002}, for a smooth immersion $u_0$ satisfying suitable compatibility conditions, one obtains existence of a smooth maximal solution $u\colon[0,T)\times I\to\H^2$ of \eqref{eq:cwf}, see also \cite[Theorem 1.3]{dallacqualinpozzi2021}. A detailed discussion however lies outside the scope of this article. We now focus on proving uniqueness of solutions of \eqref{eq:cwf}.

By prescribing a suitable tangential velocity, we can transform \eqref{eq:cwf} into a quasi-linear parabolic equation. To this end, one computes that
\begin{equation}\label{eq:dec-grad-e}
    \frac{1}{2(  u ^{(2)})^4}\nabla\E( u ) = \Big[ \frac{\partial_x^4 u }{|\partial_x u |^4} \Big]^{\perp,e} + P( \frac{1}{u^{(2)}},\frac{1}{|\partial_x u |},\partial_x u ,\dots,\partial_x^3 u )
\end{equation}
where $P$ is a polynomial in its arguments with $P( \frac{1}{u^{(2)}},\frac{1}{|\partial_x u |},\partial_x u ,\dots,\partial_x^3 u )\perp\partial_x u $ and $v^{\perp,e}=v-\langle v,\partial_x  u /|\partial_x u |\rangle\partial_x u /|\partial_x u |$ denotes the projection onto the orthogonal complement of $\mathrm{span}\{\partial_xu\}$ in $\R^2$.

Consider the following \emph{analytic equation} associated to \eqref{eq:cwf} with linearized boundary conditions
\begin{equation}\label{eq:ae-cwf}
    \begin{cases}
        \partial_t \widetilde{u} = -\frac12\Big( \frac{\partial_x^4 \widetilde{u} }{|\partial_x \widetilde{u}|^4}  + P( \frac{1}{\widetilde{u}^{(2)}},\frac{1}{|\partial_x \widetilde{u}|},\partial_x \widetilde{u},\dots,\partial_x^3 \widetilde{u})\Big) &\text{on } [0,T)\times I\\
        \widetilde{u}(0,\cdot)=u_0&\text{on }I\\
        \widetilde{u}(t,y)=u_0(y)&\text{for }(t,y)\in [0,T)\times\partial I\\
        \partial_x\widetilde{u}(t,y)=\partial_xu_0(y)&\text{for }(t,y)\in [0,T)\times\partial I.
    \end{cases}
\end{equation}
If one further analyzes the algebra of the terms in the polynomial $P$ in \eqref{eq:dec-grad-e}, with similar computations as in \cite[Appendix A]{ruppspener2020}, one can show that, for any immersion $u_0\in W^{2,2}(I,\H^2)$ in the energy space of the functional $\E$, without supposing further compatibility conditions, there exists a unique maximal solution $\widetilde{u}\colon [0,T)\times I\to \H^2$ to \eqref{eq:ae-cwf} with 
\begin{equation}
    \widetilde{u}\in C^{\infty}((0,T),\H^2)\cap  W^{1,2}([0,T'],L^2(I,\H^2))\cap L^2([0,T'],W^{4,2}(I,\H^2))
\end{equation}
for any $0<T'<T$. Especially, $\widetilde{u}(t,\cdot)\to u_0$ in $W^{2,2}(I,\H^2)$ for $t\searrow 0$. 

The subsequent proof of uniqueness of \eqref{eq:cwf} follows the arguments of \cite[Lemma 3.17]{goessweinmenzelpluda2020} and \cite[Appendix C]{dallacqualinpozzi2021}. Even though we only prove uniqueness for smooth solutions of \eqref{eq:cwf}, we employ techniques that generally allow for weaker notions of solution. This is mainly to avoid technical discussions of compatibility conditions of the initial datum $u_0$ in \eqref{eq:ae-cwf}. 

\begin{proposition}\label{prop:geo-un}
    Consider a smooth immersion $u_0\colon I\to\H^2$ and two smooth solutions $u_1,u_2\colon [0,T)\times I\to\H^2$ to \eqref{eq:cwf}. Then $u_1=u_2$.
\end{proposition}
This proposition is immediate once we have proved the following result. 
\begin{lemma}\label{lem:geo-un}
    In the setting of \Cref{prop:geo-un}, there exist $0<T'\leq T$ and $\varphi\colon[0,T')\times I\to I$ continuous and smooth in $(0,T')\times I$ with $\varphi(0,x)=x$ such that $u_1(t,x)=u_2(t,\varphi(t,x))$ and, for every $t\in[0,T')$, $\varphi(t,\cdot)$ is a diffeomorphism. 
\end{lemma}
\begin{proof}
    W.l.o.g. $I=[0,1]$. In the following, denote by $u$ either $u_1$ or $u_2$. We show that we can reparametrize $u$ to a solution $\widetilde{u}$ of \eqref{eq:ae-cwf} and then use well-posedness of the analytic equation. To this end, consider the following semi-linear partial differential equation with smooth coefficients: 
    \begin{align}
        \partial_t\xi &= -\frac12\partial_y^4\xi \frac{1}{|\partial_y u |^4} + 5 \frac{\partial_y^2\xi\partial_y^3\xi}{\partial_y\xi} \frac{1}{|\partial_y u |^4} - 2 \partial_y^3\xi \frac{\langle\partial_y^2 u ,\partial_y u \rangle}{|\partial_y u |^6} + \frac{15}{2} \frac{(\partial_y^2\xi)^2}{\partial_y\xi} \frac{\langle\partial_y^2 u ,\partial_y u \rangle}{|\partial_yu|^6} \\
        &\quad -\frac{15}{2} \frac{(\partial_y^2\xi)^3}{(\partial_y\xi)^2}\frac{1}{|\partial_y u |^4} - 3\partial_y^2\xi\frac{\langle\partial_y^3 u ,\partial_y u \rangle}{|\partial_yu|^6}+\frac12 \partial_y\xi \frac{\langle\partial_y^4u,\partial_yu\rangle}{|\partial_yu|^6}\label{eq:pde-xi}
    \end{align}
    with $\xi(0,y)=y$ for all $y\in I$ and the boundary conditions
    \begin{equation}\label{eq:bc-xi}
        \xi(t,y)=y\text{ and }\partial_y\xi(t,y)=\frac{|\partial_yu(t,y)|_g}{|\partial_yu_0(y)|_g}\in C^{\infty}([0,T))\text{ for }y=0,1.
    \end{equation}
    Again, by linearizing the equation and applying fixed point methods, following precisely the strategy in \cite{ruppspener2020}, one shows that there exist $0<T'<T$ and $\xi\colon[0,T']\times I\to\R$ with 
    \begin{equation}
        \xi\in C^{\infty}((0,T']\times I,\R) \cap W^{1,2}(0,T',L^2(I,\R)) \cap L^2(0,T',W^{4,2}(I,\R))
    \end{equation}
    and $\partial_x\xi>0$. Particularly, using \eqref{eq:bc-xi}, we have that $\xi(t)\colon I\to I$ is a diffeomorphism for all $0\leq t\leq T'$. Consider now $\Phi\colon[0,T')\times I\to I$ where $\Phi(t)\colon I\to I$ is the inverse map of $\xi(t)\colon I\to I$. Clearly, $\Phi\in C^{\infty}((0,T')\times I)$. Since derivatives of $\Phi$ can be computed classically in $(0,T')\times I$, one can simply use the embedding properties of the space $W^{1,2}(0,T',L^2(I,\R)) \cap L^2(0,T',W^{4,2}(I,\R))$ and some standard Hölder estimates to also obtain $\Phi\in W^{1,2}(0,T',L^2(I,\R)) \cap L^2(0,T',W^{4,2}(I,\R))$, cf. \cite[Proposition B.3 and proof of Proposition 2.7]{ruppspener2020}. For $\widetilde{u}(t,x)=u(t,\Phi(t,x))$, following the arguments of \cite[Lemma 5.3]{garcke2020}, one obtains $\widetilde{u}\in C^{\infty}((0,T')\times I)\cap W^{1,2}(0,T',L^2(I)) \cap L^2(0,T',W^{4,2}(I))$. Furthermore,
    \begin{equation}\label{eq:ev-eq-for-u-tilde}
        \partial_t\widetilde{u}(t,x) = \partial_t u(t,\Phi(t,x)) + \partial_y u(t,\Phi(t,x)) \partial_t\Phi(t,x).
    \end{equation}
    As $\nabla\E$ is invariant with respect to reparametrizations, \eqref{eq:dec-grad-e} and \eqref{eq:cwf} yield
    \begin{align}
        \partial_tu(t&,\Phi(t,x)) = -\frac{1}{4(u^{(2)}(t,\Phi(t,x)))^4}\nabla\E(u(t))(\Phi(t,x)) \\
        &= -\frac{1}{4(\widetilde{u}^{(2)}(t,x))^4}\nabla\E(\widetilde{u}(t))(x)\\
        &= -\frac12\left[\Big[ \frac{\partial_x^4 \widetilde{u}(t,x)}{|\partial_x \widetilde{u}(t,x) |^4} \Big]^{\perp,e} + P( \frac{1}{\widetilde{u}(t,x)^{(2)}},\frac{1}{|\partial_x \widetilde{u}(t,x) |},\partial_x \widetilde{u}(t,x) ,\dots,\partial_x^3 \widetilde{u}(t,x))\right].\label{eq:ev-eq-for-u-tilde-1}
    \end{align}
    Further, since $\Phi(t,\xi(t,y)) = y$, one has $\partial_t \Phi(t,\xi(t,y)) = -\partial_x\Phi(t,\xi(t,y))\partial_t\xi(t,y)$. Writing $x=\xi(t,y)$ and using 
    \begin{align}
        \partial_y\xi&=\frac{1}{\partial_x\Phi},\quad \partial_y^2\xi = -\frac{\partial_x^2\Phi}{(\partial_x\Phi)^3},\quad \partial_y^3\xi=-\frac{\partial_x^3\Phi}{(\partial_x\Phi)^4}+3\frac{(\partial_x^2\Phi)^2}{(\partial_x\Phi)^5},\\
        \partial_y^4\xi&=-\frac{\partial_x^4\Phi}{(\partial_x\Phi)^5}+10\frac{\partial_x^2\Phi}{(\partial_x\Phi)^6}\partial_x^3\Phi-15\frac{(\partial_x^2\Phi)^3}{(\partial_x\Phi)^7}
    \end{align}
    where we write $\xi=\xi(t,y)$ and $\Phi=\Phi(t,x)$, cf. \cite[p.1341]{dallacqualinpozzi2021}, as well as 
    \begin{equation}
        \partial_x^4\widetilde{u}=\partial_y^4u(\partial_x\Phi)^4+6\partial_y^3u(\partial_x\Phi)^2\partial_x^2\Phi+4\partial_y^2u\partial_x\Phi\partial_x^3\Phi+3\partial_y^2u(\partial_x^2\Phi)^2+\partial_yu\partial_x^4\Phi,
    \end{equation}
    where $\widetilde{u}=\widetilde{u}(t,x)$ and $u=u(t,y)$, \eqref{eq:pde-xi} yields that
    \begin{equation}
        \partial_t\Phi(t,x) = -\frac12\frac{1}{|\partial_x\widetilde{u}(t,x)|^6}\langle \partial_x^4\widetilde{u}(t,x),\partial_x\widetilde{u}(t,x) \rangle \partial_x\Phi(t,x)
    \end{equation}
    so that, by \eqref{eq:ev-eq-for-u-tilde} and \eqref{eq:ev-eq-for-u-tilde-1}, $\widetilde{u}$ solves \eqref{eq:ae-cwf}. Indeed, for $x\in\{0,1\}$, writing again $x=\xi(t,y)$, one has $\Phi(t,x)=x=\xi(t,y)=y$ and thus $\widetilde{u}(t,x)=u_0(x)$ by \eqref{eq:cwf} as well as
    \begin{equation}
        \partial_x\widetilde{u}(t,x) = \partial_yu(t,y)\partial_x\Phi(t,x) = \partial_yu(t,y) \frac{1}{\partial_y\xi(t,y)} = \frac{\partial_yu_0(y)\frac{|\partial_yu(t,y)|_g}{|\partial_yu_0(y)|_g}}{\frac{|\partial_yu(t,y)|_g}{|\partial_yu_0(y)|_g}}=\partial_xu_0(x),
    \end{equation}
    again using \eqref{eq:cwf}. Since \eqref{eq:ae-cwf} is well-posed in $W^{1,2}(0,T',L^2(I)) \cap L^2(0,T',W^{4,2}(I))$ by the previous discussion, the statement is proved with the above argument applied with $u=u_1$, $u=u_2$. 
\end{proof}
\begin{proof}[Proof of \Cref{prop:geo-un}.]
    Let 
    \begin{equation}
        t_0=\sup\{\tau\in[0,T):u_1(t,x)=u_2(t,x)\text{ for all $x\in I$ and $0\leq t\leq \tau$}\}.
    \end{equation} 
    If $t_0=T$, we are done. Thus, suppose that $t_0<T$. By continuity, we have $u_1(t_0,x)=u_2(t_0,x)$ for all $x\in I$. Using \Cref{lem:geo-un}, there exist $0<T'\leq T-t_0$ and, writing $t_1=t_0+T'$, $\varphi\colon [t_0,t_1)\times I\to I$ continuous and smooth on $(t_0,t_1)\times I$ with $u_1(t,x)=u_2(t,\varphi(t,x))$ for $t\in[t_0,t_1)$ and $x\in I$. Further, $\varphi(t_0,x)=x$ for all $x\in I$. Using the invariance of $\nabla\E$ with respect to reparametrizations, one obtains for $t\in (t_0,t_1)$, using \eqref{eq:cwf},
    \begin{align}
        -&\frac{1}{4(u_1(t,x)^{(2)})^4}\nabla\E(u_1(t))(x) = \partial_t[ u_2(t,\varphi(t,x))]\\
        &= -\frac{1}{4(u_2(t,\varphi(t,x))^{(2)})^4}\nabla\E(u_2(t))(\varphi(t,x)) + \partial_xu_2(t,\varphi(t,x))\partial_t\varphi(t,x)\\
        &= -\frac{1}{4(u_1^{(2)}(t,x))^4}\nabla\E(u_1(t))(x) + \partial_xu_2(t,\varphi(t,x))\partial_t\varphi(t,x),
    \end{align}
    i.e. $\partial_t\varphi(t,x)=0$ for all $t_0<t<t_1$ and $x\in I$. Since $\varphi(t_0,x)=x$, by continuity, we have $u_1(t,x)=u_2(t,x)$ for all $t\in [t_0,t_1)$ and $x\in I$, a contradiction!
\end{proof}

\section{Geometric formulae for cylindrical surfaces of revolution}\label{app:geom-form-surf-rev}

Let $I=[a,b]\subseteq\R$ be a compact interval, $u\colon I\to\H^2$ an immersion with $|\partial_xu|\equiv 1$, $\Sigma=I\times\S^1$ and $f_u\colon \Sigma\to\R^3$ the associated surface of revolution as in \eqref{eq:surf-rev}. Then $(\partial_x,\partial_{\theta})$ is a global orthogonal frame on $\Sigma$ in the orientation of $\Sigma$. Indeed, one computes in the associated local coordinates that
\begin{equation}
    (g_{ij})_{ij=1,2} = \begin{pmatrix}
        1&0\\0&(u^{(2)})^2
    \end{pmatrix}\quad\text{and}\quad(g^{ij})_{ij=1,2}=\begin{pmatrix}
        1&0\\0&(u^{(2)})^{-2}
    \end{pmatrix}.
\end{equation}
Moreover, the normal field $\nu$ satisfies
\begin{equation}
    \nu(x,\theta) = \frac{\partial_xf_u\times\partial_{\theta}f_u}{|\partial_xf_u\times\partial_{\theta}f_u|}(x,\theta) = \begin{pmatrix}
        \partial_xu^{(2)}(x)\\-\partial_xu^{(1)}(x)\cos\theta\\-\partial_xu^{(1)}(x)\sin\theta
    \end{pmatrix}.
\end{equation}
One computes for the second fundamental form $A_{ij}=\langle \partial^2_{ij}f_u,\nu\rangle$, the mean curvature $H=\frac12g^{ij}A_{ij}$ and $A^0_{ij}=A-Hg_{ij}$
\begin{align}
    (A_{ij})_{ij=1,2} &= \begin{pmatrix}
        \partial_x^2u^{(1)}\partial_xu^{(2)}-\partial_x^2u^{(2)}\partial_xu^{(1)}&0\\0&\partial_xu^{(1)}u^{(2)}
    \end{pmatrix}, \\
    H &= \frac12(\partial_x^2u^{(1)}\partial_xu^{(2)}-\partial_x^2u^{(2)}\partial_xu^{(1)}+\frac{\partial_xu^{(1)}}{u^{(2)}}),\\
    A_{11}^0 = -\frac{1}{(u^{(2)})^2}A_{22}^0 &= \frac12(\partial_x^2u^{(1)}\partial_xu^{(2)}-\partial_x^2u^{(2)}\partial_xu^{(1)}-\frac{\partial_xu^{(1)}}{u^{(2)}}) \quad\text{and}\quad A_{12}^0=A_{21}^0=0.
\end{align}
Thus $|A^0|^2 = g^{ij}g^{kl} A^0_{ik}A^0_{jl}=\frac12(\partial_x^2u^{(1)}\partial_xu^{(2)}-\partial_x^2u^{(2)}\partial_xu^{(1)}-\frac{\partial_xu^{(1)}}{u^{(2)}})^2$. If $\curv=\nabla_s\partial_su$ is the curvature of $u$ in $\H^2$, using \cite[Equation~(2.9)]{eichmanngrunau2019},
\begin{equation}
    |\curv(x)|_g^2 = (\partial_x^2u^{(1)}\partial_xu^{(2)}u^{(2)}-\partial_x^2u^{(2)}\partial_xu^{(1)}u^{(2)}-\partial_xu^{(1)})^2|_x = 2(u^{(2)}(x))^2 |A^0|^2(x,\theta) 
\end{equation}
for all $x\in I$ and $\theta\in\S^1$. In particular, 
\begin{align}
    \pi\E(u) &= \pi\int_{I}|\curv(x)|_g^2\dd s= \int_{\S^1}\int_I \frac12 |\curv(x)|^2 \frac{1}{u^{(2)}(x)}\dd x\dd\theta = \int_{\Sigma} |A^0|^2(x,\theta) \cdot u^{(2)}(x)\dd x\otimes\dd\theta \\
    &= \int_{\Sigma} |A^0|^2\dd\mu = \W_0(f_u).
\end{align}
By direct computation, $H^2-\frac{1}{2}|A^0|^2=K=\det A$, the Gauss curvature. Using the Gauss-Bonnet Theorem and $\chi(\Sigma)=0$, one thus finds
\begin{equation}
    \W(f_u)=\int_{\Sigma} H^2\dd\mu = \frac{1}{2}\W_0(f_u) + \int_{\Sigma}K\dd\mu = \frac12\W_0(f_u) - \int_{\partial\Sigma} \scurv_g
\end{equation}
where $\int_{\partial\Sigma} \scurv_g=2\pi \partial_xu^{(2)}\Big|_{\partial I}$ is the total geodesic curvature of $\partial\Sigma$.

\section*{Acknowledgments}

This project has been supported by the Deutsche Forschungsgemeinschaft (DFG,
German Research Foundation), project no. 404870139. The author would like to thank Anna Dall’Acqua for helpful discussions and comments. Moreover, the author is grateful to the referee for their valuable comments on the original manuscript.

% \bibliographystyle{abbrv}
% \bibliography{biblio}

\end{document}